\RequirePackage{fix-cm}
\documentclass[smallextended]{svjour3}
\usepackage{caption}
\usepackage{graphicx}
\usepackage{booktabs}
\usepackage{multirow}
\usepackage{amsmath,amssymb,amsfonts}
\usepackage{mathrsfs}
\usepackage[title]{appendix}
\usepackage{xcolor}
\usepackage{manyfoot}
\usepackage{booktabs}
\usepackage{algorithm}
\usepackage{algorithmicx}
\usepackage{algpseudocode}
\usepackage{listings}
\usepackage[justification=centering]{caption}
\usepackage{hyperref}
\hypersetup{hidelinks}
\usepackage{geometry}
\usepackage{nicefrac}
\usepackage{float}
\usepackage{enumitem}
\usepackage{pifont}

\usepackage[final]{changes}
\definechangesauthor[name={Chao Ding}, color=blue]{Chao}
\definechangesauthor[name={FXY Feng}, color=violet]{Feng}

\newcounter{appthm}[section]
\newcounter{applemma}[section]
\newcounter{appprop}[section]
\newcounter{appdef}[section]
\renewcommand{\theappthm}{\thesection.\arabic{appthm}}
\renewcommand{\theapplemma}{\thesection.\arabic{applemma}}
\renewcommand{\theappprop}{\thesection.\arabic{appprop}}
\renewcommand{\theappdef}{\thesection.\arabic{appdef}}
\newenvironment{appthm}[1][]{
  \refstepcounter{appthm}
  \par\noindent\textbf{Theorem~\theappthm\ifx&#1&\else\ (#1)\fi.}
  \itshape
}{\par}
\newenvironment{applemma}[1][]{
  \refstepcounter{applemma}
  \par\noindent\textbf{Lemma~\theapplemma\ifx&#1&\else\ (#1)\fi.}
  \itshape
}{\par}
\newenvironment{appprop}[1][]{
  \refstepcounter{appprop}
  \par\noindent\textbf{Proposition~\theappprop\ifx&#1&\else\ (#1)\fi.}
  \itshape
}{\par}
\newenvironment{appdef}[1][]{
  \refstepcounter{appdef}
  \par\noindent\textbf{Definition~\theappdef\ifx&#1&\else\ (#1)\fi.}
  \upshape
}{\par}
\usepackage{hyperref}
\makeatletter
\newcommand{\restatedthmhead}[3]{
  \par\addvspace{\topsep}\noindent
  \textbf{#1}
  \ifx\@empty#2\@empty\else\ #2\fi
  \ifx\@empty#3\@empty\else\ (#3)\fi
  .\
}
\makeatletter
\newcommand{\restatedenv}[3]{
  \par\addvspace{\topsep}
  \noindent\textbf{#1~#2 (restated).}
  #3
}
\newenvironment{theoremrep}[1]
  {\restatedenv{Theorem}{\ref{#1}}{\itshape}}
  {\par\addvspace{\topsep}}
\newenvironment{lemmarep}[1]
  {\restatedenv{Lemma}{\ref{#1}}{\itshape}}
  {\par\addvspace{\topsep}}
\newenvironment{propositionrep}[1]
  {\restatedenv{Proposition}{\ref{#1}}{\itshape}}
  {\par\addvspace{\topsep}}

\makeatother
\usepackage{float}
\usepackage{soul}
\newcommand{\la}{\langle}
\newcommand{\ra}{\rangle}

\newcommand{\Rbb}{\mathbb{R}}
\newcommand{\Sbb}{\mathbb{S}}

\newcommand{\Xbb}{\mathbb{X}}
\newcommand{\Ybb}{\mathbb{Y}}
\newcommand{\Zbb}{\mathbb{Z}}

\newcommand{\Ncal}{{\cal N}}
\newcommand{\Mcal}{{\cal M}}
\newcommand{\Scal}{{\cal S}}
\newcommand{\Mpq}{{\cal M}_{p,q}}
\newcommand{\tMcal}{\widetilde{\Mcal}}
\newcommand{\tNcal}{\widetilde{\Ncal}}
\newcommand{\tMpq}{\widetilde{\Mcal}_{p,q}}
\newcommand{\Mpo}{{\cal M}_{p,0}}
\newcommand{\tOmega}{\widetilde{\Omega}}

\newcommand{\Bcal}{{\cal B}}

\newcommand{\Ucal}{{\cal U}}
\newcommand{\Vcal}{{\cal V}}
\newcommand{\Tcal}{{\cal T}}
\newcommand{\Lcal}{{\cal L}}
\newcommand{\Dcal}{{\cal D}}
\newcommand{\Ecal}{{\cal E}}

\newcommand{\Wcal}{{\cal W}}
\newcommand{\Xcal}{{\cal X}}

\newcommand{\mubar}{\overline{\mu}}
\newcommand{\Pbar}{\overline{P}}
\newcommand{\zbar}{\overline{z}}
\newcommand{\xbar}{\overline{x}}
\newcommand{\ybar}{\overline{y}}

\newcommand{\zhatk}{{\widehat{z}^k}}
\newcommand{\vhatk}{{\widehat{v}^k}}

\newcommand{\vLM}{v^{\text{LM}}}

\newcommand{\aff}{{\rm aff}}
\newcommand{\lin}{{\rm lin}}
\newcommand{\rank}{\rm{rank}}
\newcommand{\dd}{d}
\newcommand{\app}{{\rm app}}
\newcommand{\appl}{{\rm appl}}
\newcommand{\im}{{\rm Im}}

\newcommand{\Hess}{{\rm Hess}}

\newcommand{\tE}{\widetilde{E}}

\newcommand{\ep}{ $\hfill \Box$}

\newcommand{\mN}{\mathcal{N}}

\newcommand{\mQ}{\mathcal{Q}}
\newcommand{\mT}{\mathcal{T}}
\newcommand{\mU}{\mathcal{U}}
\newcommand{\mV}{\mathcal{V}}

\newcommand{\dist}{{\rm dist}}

\newcommand{\nn}{{\nonumber}}

\newcommand{\argmin}{{\rm argmin}}

\newcommand{\SLMN}{\texttt{SLMN}}

\DeclareMathOperator{\KerOp}{Ker}

\usepackage{tikz}
\usetikzlibrary{positioning, shapes, arrows.meta}

\geometry{a4paper,left=2cm,right=2cm,top=2.5cm,bottom=2.5cm}
\raggedbottom
\begin{document}
	\title{Stratification for Nonlinear Semidefinite Programming}
	\author{
	Chenglong Bao \and
    Chao Ding \and
	Fuxiaoyue Feng \and
    Jingyu Li
	}
	\institute{
        C.L. Bao \at
        Yau Mathematical Sciences Center, Tsinghua University, Beijing 100084, P.R. China; Yanqi Lake Beijing Institute of Mathematical Sciences and Applications, Beijing 101408, P.R. China. \\
        \email{clbao@tsinghua.edu.cn}
        \and
		C. Ding \at
		State Key Laboratory of Mathematical Sciences, Academy of Mathematics and Systems Science, Chinese Academy of Sciences, Beijing 100190, P.R. China; School of Mathematical Sciences, University of Chinese Academy of Sciences, Beijing 100049, P.R. China; Institute of Applied Mathematics, Academy of Mathematics and Systems Science, Chinese Academy of Sciences, Beijing 100190, P.R. China   \\
		\email{dingchao@amss.ac.cn}
		\and
        F.X.Y. Feng\at
		Institute of Applied Mathematics, Academy of Mathematics and Systems Science, Chinese Academy of Sciences, Beijing,  P.R. China,  School of Mathematical Sciences, University of Chinese Academy of Science, Beijing 100049,  P.R. China. \\
		\email{fengfuxiaoyue@amss.ac.cn}
        \and
        J.Y. Li\at
        Qiuzhen College, Tsinghua University, Beijing 100084, P.R. China. \\
        \email{lijingyu23@mails.tsinghua.edu.cn}
	}
	\date{This version: \today}
	\maketitle
	\begin{abstract}

        We develop a stratification framework for nonlinear semidefinite programming (NLSDP) that exposes the geometric structure underlying the generally nonsmooth Karush--Kuhn--Tucker (KKT) system. By exploiting the inertia stratification of the symmetric matrix space $\mathbb{S}^{n}$ and lifting it to the primal--dual space, we derive a stratum-wise variational analysis. In particular, we introduce stratum-restricted strong metric regularity and provide an exact characterization in terms of two verifiable weak-form conditions, namely the weak second order condition (W-SOC) and the weak strict Robinson constraint qualification (W-SRCQ). Moreover, the W-SRCQ is interpreted geometrically via transversality, which implies its genericity in the ambient space and its stability along individual strata. By combining the stratum-wise theory with an across-strata propagation analysis, we further show that classical strong-form regularity conditions are equivalent to the local uniform validity of their stratum-restricted counterparts. On the algorithmic side, we propose a globalized stratified Gauss--Newton method for minimizing a nonsmooth least-squares merit function, which incorporates stratum-tangential Levenberg--Marquardt steps, normal escape directions, and an eigenvalue-triggered correction mechanism. It is proved that the resulting iterates converge globally to a directionally stationary point. Furthermore, under the W-SOC and the strict Robinson constraint qualification (SRCQ), the method identifies the active stratum and attains local quadratic convergence to KKT pairs. These regularity requirements are weaker than the standard nondegeneracy assumptions commonly imposed in the NLSDP literature.
		\vskip 10 true pt
		\noindent {\bf Key words:} nonlinear semidefinite programming; stratification; variational analysis; perturbation analysis; strong metric regularity;  Gauss--Newton.
		\vskip 10 true pt
		\noindent {\bf }
	\end{abstract}

\section{Introduction}\label{sec:intro-2}
Consider the nonlinear semidefinite programming (NLSDP) problem
\begin{equation}\label{prog:SDP}
\begin{aligned}
\min_{x\in\Xbb} \quad & f(x) \\
\text{s.t.} \quad & g(x) \in \Sbb_{+}^n ,
\end{aligned}
\end{equation}
where $\mathbb{X}$ is a Euclidean space, the objective function $f:\mathbb{X}\to\mathbb{R}$ and the constraint mapping $g:\mathbb{X}\to\mathbb{S}^n$ are twice continuously differentiable, and $\Sbb_{+}^n$ denotes the closed convex cone of symmetric positive semidefinite matrices.
The NLSDP~\eqref{prog:SDP} naturally includes both classical nonlinear programming and linear semidefinite programming as special cases.
Representative applications include certified trajectory optimization in robotics~\cite{kang2025global}, network design and control in transportation systems~\cite{coogan2017offset,ouyang2018conic}, and structural optimization and material design~\cite[Chapter~10]{stinglsolution}; further examples arise in control and systems theory~\cite{lee2016sequential} and robust optimization~\cite{duarte2016model}.
From a theoretical standpoint, the NLSDP continuously serves as a pivotal driving model for advancing modern variational analysis, perturbation theory, and numerical optimization methodologies; see, e.g.,~\cite{bonnans2013perturbation} and the references therein.

A primal-dual pair $z=(x,y)\in\mathbb{X}\times\mathbb{S}^n$ is a Karush-Kuhn-Tucker (KKT) pair for~\eqref{prog:SDP} if it solves the nonsmooth equation
\begin{equation}\label{eq:kkt-mapping}
F(z):=
\begin{bmatrix}
\nabla f(x)+\nabla g(x)y\\[2pt]
-g(x)+\Pi_{\mathbb{S}^n_{+}}\big(G(z)\big)
\end{bmatrix}
=0,
\end{equation}
where $\Pi_{\mathbb{S}^n_{+}}:\mathbb{S}^n\to\mathbb{S}^n$ denotes the metric projection onto the positive semidefinite cone $\mathbb{S}^n_{+}$, the symbol $\nabla$ denotes the gradient (i.e., the adjoint of the differential), and $G(z):=g(x)+y$. The inherent nonsmoothness of the KKT system~\eqref{eq:kkt-mapping} stems exclusively from the projection operator $\Pi_{\mathbb{S}^n_{+}}$. Although globally Lipschitz continuous and directionally differentiable, this operator fails to be Fréchet differentiable at any matrix possessing zero eigenvalues. Consequently, even if the underlying problem functions $f$ and $g$ are entirely smooth, the composite KKT mapping $F$ remains nonsmooth, thereby precluding the direct application of classical smooth Newton methods.

Semismooth Newton-type methods~\cite{qi1993nonsmooth,qi1993convergence} constitute a standard approach for solving the nonsmooth KKT system~\eqref{eq:kkt-mapping}.
Their local superlinear or quadratic convergence is typically established under \emph{strong regularity}~\cite[Section~2]{robinson1980strongly}.
For the NLSDP~\eqref{prog:SDP}, Sun~\cite{sun2006strong} proved that, under the Robinson constraint qualification (RCQ, Definition~\ref{def:RCQ}), strong regularity at a local minimizer is equivalent to constraint nondegeneracy (Definition~\ref{def:CN-nonKKT}) together with the strong second order sufficient condition (S-SOSC, Definition~\ref{def:S-SOSC-nonKKT}).
These strong-form assumptions, however, are frequently violated in degenerate regimes.
More recently, Feng et al.~\cite{feng2025quadratically} relaxed the classical requirements for superlinear or quadratic local convergence by introducing the weak strict Robinson constraint qualification (W-SRCQ, Definition~\ref{def:W-SRCQ-nonKKT}) and the weak second order condition (W-SOC, Definition~\ref{def:W-SOC-nonKKT}), accompanied by a correction mechanism. Despite these advances, two issues remain. First, the relationship between the weak conditions and the classical perturbation/regularity theory of NLSDP has not been fully clarified, and geometric interpretations and stability properties are largely missing. Second, the correction mechanism in~\cite{feng2025quadratically} is local, and a globalization strategy supported by a complete global convergence theory is not yet available. We address both issues in the present work.

In recent years, solving~\eqref{prog:SDP} in regimes where low-rank solutions are expected has attracted considerable attention.
A representative geometric approach is manifold optimization on the fixed-rank manifold
\[
\mathcal{M}_r := \{X \in \mathbb{S}^n_+ \mid \rank(X) = r\},
\]
which is a smooth Riemannian submanifold of $\mathbb{S}^n$. This setting permits intrinsic algorithms such as the Riemannian Newton method, which attains local quadratic convergence under standard assumptions on the retraction and the nonsingularity of the Riemannian Hessian~\cite{absil2008optimization}.
A limitation is that the rank $r$ must be specified in advance; otherwise, one encounters the manifold identification issue~\cite{zhou2016riemannian,gao2022riemannian}. Alternatively, the Burer--Monteiro factorization $X=RR^\top$ with $R\in\mathbb{R}^{n\times r}$ converts the low-rank SDP into a nonconvex problem in Euclidean space~\cite{burer2003nonlinear,burer2005local}. Under rank conditions such as $r(r+1)/2>m$ (with $m$ being the number of affine constraints), the BM formulation enjoys favorable landscape properties for generic data~\cite{boumal2020deterministic}.
On the other hand, choosing $r$ overly large may lead to ill-conditioning and increased computational costs.

In this paper, rather than optimizing a smooth function over a prescribed manifold, we analyze the nonsmooth KKT system by decomposing the ambient space into smooth manifolds on which the relevant mappings are smooth. This viewpoint is related to the theory of \emph{partial smoothness}~\cite{lewis2002active}, where Clarke regularity and normal sharpness yield a tractable local geometry along an active manifold. In the NLSDP setting, normal sharpness is typically absent due to the interaction between nonconvexity and nondifferentiability. To this end, a finer geometric description is required, and it is provided by a \emph{stratification} framework for~\eqref{prog:SDP}.

Stratification is a mathematical technique for decomposing a singular space (e.g., a nonsmooth set or an algebraic variety) into a disjoint union of smooth manifolds, termed strata. Originally developed to study singularities of mappings and spaces, it has become a fundamental tool in algebraic geometry~\cite{whitney1965local,whitney1965tangents,goresky1988stratified}, singularity theory~\cite{arnold2012singularities,mather1970notes}, and nonsmooth optimization~\cite{van1998tame,ioffe2009invitation}. The formal concept emerged in the mid-20th century through the seminal works of Hassler Whitney, Ren\'e Thom and John Mather. Whitney~\cite{whitney1965local,whitney1965tangents} introduced geometric regularity conditions guaranteeing that a stratified space admits a locally trivial structure along each stratum. Thom~\cite{thom1969ensembles} subsequently showed that these conditions provide the precise geometric prerequisites for topological rigidity. Mather~\cite{mather1970notes} later formalized this theory by introducing control data and establishing the stability of stratified mappings. This framework has since been generalized to broader classes of sets, such as definable sets in o-minimal structures, enabling its application to a broad class of functions and constraints in applied mathematics. In particular, stratification plays a central role in tame optimization. While a comprehensive review of this extensive literature is beyond the scope of this paper, we refer interested readers to~\cite{ioffe2009invitation} for a detailed exposition.

In this paper, we do not use stratification as a modeling assumption on the feasible set. Instead, we employ it as an analytical tool for the KKT system. Specifically, when restricted to the set of symmetric matrices with fixed inertia , the metric projection $\Pi_{\mathbb{S}^n_{+}}$ becomes smooth. This property naturally induces the \emph{inertia stratification} of $\mathbb{S}^n$ into smooth manifolds of matrices with prescribed inertia. Through the mapping $G(z)=g(x)+y$, this structure is lifted to a stratification of the primal-dual space $\mathbb{X}\times\mathbb{S}^n$. When restricted to an individual stratum, the KKT mapping becomes smooth and admits an explicit differential. This structural insight enables the design of a globally convergent Newton method that achieves local quadratic convergence under significantly weaker conditions. Furthermore, this stratification framework allows us to revisit the classical variational and perturbation theory of the NLSDP~\eqref{prog:SDP}, uncovering fundamental geometric properties underlying standard perturbation results.

Perturbation analysis describes the sensitivity of stationary points and multipliers with respect to data perturbations and provides the basis for error bounds and local convergence rates; see, e.g.,~\cite{dontchev2009implicit,bonnans2013perturbation}.
A central object is the KKT solution mapping together with its regularity properties (e.g., strong regularity), which yield quantitative Lipschitz stability estimates.
In nonlinear programming, strong regularity admits characterizations in terms of the S-SOSC and the linear independence constraint qualification (LICQ)~\cite{robinson1980strongly,jongen1987inertia,bonnans_pseudopower_1995,dontchev_characterizations_1996}.
For the NLSDP~\eqref{prog:SDP}, Sun~\cite{sun2006strong} established that, under the RCQ, strong regularity at a local minimizer is equivalent to the S-SOSC and constraint nondegeneracy.
These results, while fundamental, treat the KKT system as a single nonsmooth equation and therefore lead naturally to neighborhood conditions on generalized Jacobians or to error-bound-type assumptions that are often difficult to verify.
This issue is reflected in the analysis of augmented Lagrangian and Newton-type methods, where convergence rates typically hinge on error bounds or calmness properties of the KKT solution mapping.

Our analysis employs the stratification framework at two levels.
Within a fixed stratum, the smoothness of $\Pi_{\mathbb{S}^n_{+}}$ allows the KKT mapping to be viewed as a smooth equation on a manifold, and the corresponding manifold differential admits a closed form.
Building on this observation, we introduce \emph{stratum-restricted} strong metric regularity, tailored to the geometry induced by the inertia stratification.
It is shown that, on each stratum, this regularity admits an explicit characterization and is equivalent to the weak conditions of~\cite{feng2025quadratically} under the weak second order necessary condition (W-SONC).
Moreover, stratum-restricted strong metric regularity is proved to be necessary for the superlinear convergence of semismooth Newton methods: if it fails, then every element of the Clarke generalized Jacobian $\partial_C F(\overline z)$ is singular.
A transversality-based interpretation further yields geometric insight into stability and genericity within a given stratum. Beyond individual strata, we investigate how these properties propagate across adjacent strata.
This, together with the stratum-wise analysis, yields a refined perturbation theory: weak-form conditions remain stable under partial-inertia-preserving perturbations, whereas classical strong-form conditions correspond to the uniform validity of the weak conditions over a neighborhood.
Finally, since the stability of the KKT solution mapping also controls the local behavior of augmented Lagrangian and other primal--dual methods, the proposed stratum-restricted framework provides a route to relaxing the assumptions used in the analysis of algorithms for~\eqref{prog:SDP} and related nonpolyhedral problems.

On the algorithmic side, we leverage the stratification framework to develop the \emph{Stratified Gauss--Newton} (SGN) method.
The method addresses the KKT system~\eqref{eq:kkt-mapping} by minimizing the nonsmooth least-squares merit function $\varphi(z):=\tfrac12\|F(z)\|^2$.
It combines three components: (i) a Levenberg--Marquardt step tangent to the current stratum; (ii) normal steps intended to escape stratum-confined stationarity; and (iii) an eigenvalue-triggered correction mechanism that updates the inertia.
A merit-function acceptance rule yields a monotone decrease of $\varphi$ and guarantees global convergence to a directionally stationary (D-stationary) point (Definition~\ref{def:D-stationary}).
Furthermore, by combining the stratified perturbation theory with the local analysis, it is shown that, under the W-SOC and the SRCQ, the iterates eventually identify the active stratum and attain quadratic convergence.

The remainder of the paper is organized as follows.
Section~\ref{section:pre} reviews the metric projection onto $\mathbb{S}^n_{+}$, standard regularity conditions for NLSDP, and basic material from differential geometry and stratification theory.
Section~\ref{sec:geometric} introduces the inertia stratification of $\mathbb{S}^n$, proves that $\Pi_{\mathbb{S}^n_{+}}$ is $C^\infty$-smooth on each stratum, and derives its explicit differential.
Moreover, a stratum-wise variational theory is developed to analyze regularity both within and across strata, including the characterization of stratum-restricted strong regularity via the W-SOC and the W-SRCQ and its geometric interpretation via stability, genericity, and transversality.
Section~\ref{section:global} presents the stratified Gauss--Newton method and establishes global convergence together with local quadratic convergence.
Section~\ref{sec:numerical} reports numerical results, and Section~\ref{sect:conclusion} concludes the paper.
Technical proofs are deferred to the appendices.

\section{Preliminaries}
\label{section:pre}
We first establish the notation used throughout the paper. Let $\mathbb{X}$ be an Euclidean space. We denote the space of $n \times n$ symmetric matrices by $\mathbb{S}^n$, and the cone of symmetric positive semidefinite matrices by $\mathbb{S}^n_+$. The metric projection onto $\mathbb{S}^n_+$ is denoted by $\Pi_{\mathbb{S}^n_+}$. For a smooth mapping $F$ between Euclidean spaces, $F'(x)$ denotes its Fréchet derivative (Jacobian), and $\nabla F(x)$ denotes its adjoint (or gradient, if $F$ is real-valued). For a directionally differentiable function $f$, $f'(x;d)$ denotes its directional derivative at $x$ along the direction $d$. Additional notation is summarized as follows:
\begin{itemize}[left=0pt, nosep]
    \item $A_{ij}$: the $(i,j)$-th entry of a matrix $A\in\mathbb{R}^{m\times n}$.
    \item $A_{i}$: the $i$-th column of $A\in\mathbb{R}^{m\times n}$.
    \item $A_{\alpha\beta}$: the submatrix of $A$ consisting of the rows and columns indexed by the sets $\alpha$ and $\beta$, respectively.
    \item $\circ$: the Hadamard (elementwise) product of matrices; for $A, B \in \mathbb{R}^{m \times n}$, $(A \circ B)_{ij} = A_{ij}B_{ij}$.
    \item $\operatorname{Diag}(\cdot)$: the operator mapping a vector to a diagonal matrix whose main diagonal is given by the vector.
    \item $\operatorname{diag}(\cdot)$: the operator extracting the main diagonal of a matrix into a vector.
    \item $\lin(C)$: the linearity space of a closed convex set $C\subseteq \mathbb{X}$ (i.e., the largest linear subspace contained in $C$).
    \item $\aff(C)$: the affine hull of $C$ (i.e., the smallest affine set containing $C$).
    \item $\Mpq$ (or $\Mcal$): the manifold of symmetric matrices in $\mathbb{S}^n$ with exactly $p$ positive and $q$ negative eigenvalues.
    \item $\tMcal_{p,q}$ (or $\tMcal$): the preimage $G^{-1}(\Mpq)$ (see~\eqref{eq:tilde-Mpq}).
    \item $\im ( \cdot)$: the image of some map
    \item $\oplus$: the direct sum of vector spaces.
    \item $\succeq$ ($\preceq$) and $\succ$ ($\prec$): the Loewner partial order. $A \succeq B$ ($A \preceq B$) means $A-B$ is positive (negative) semidefinite; $A \succ B$ ($A \prec B$) means $A-B$ is positive (negative) definite.

\end{itemize}

\subsection{Variational properties of $\Pi_{\mathbb{S}^n_+}$}

This subsection reviews fundamental properties of the metric projection $\Pi_{\mathbb{S}^n_+}$. For a given matrix $A \in \mathbb{S}^n$, let its eigenvalues (counted with multiplicities) be ordered as
\[
\lambda_{1}(A) \ge \lambda_{2}(A) \ge \cdots \ge \lambda_{n}(A).
\]
Let $\lambda(A)$ be the vector of these ordered eigenvalues, and define $\Lambda(A) := \operatorname{Diag}(\lambda(A))$. We partition the index set $\{1, \ldots, n\}$ into $\alpha(A)$, $\beta(A)$, and $\gamma(A)$ according to the signs of the eigenvalues:
\begin{equation}\label{eq:index}
\alpha(A) := \{ i \mid \lambda_i(A) > 0 \}, \quad
\beta(A) := \{ i \mid \lambda_i(A) = 0 \} \quad \text{and} \quad
\gamma(A) := \{ i \mid \lambda_i(A) < 0 \}.
\end{equation}

When the context is unambiguous, we suppress the explicit dependence on $A$ and simply write $\alpha$, $\beta$, and $\gamma$. The positive and negative inertia indices of $A$ are denoted by $p:= |\alpha|$ and $q:= |\gamma|$, respectively. The eigenvalue decomposition of $A$ takes the block form
\begin{equation}\label{eq:eig-de}
A
= \begin{bmatrix}
P_{\alpha} & P_{\beta} & P_{\gamma}
\end{bmatrix}
\begin{bmatrix}
\Lambda(A)_{\alpha\alpha} & 0 & 0 \\[3pt]
0 & 0 & 0 \\[3pt]
0 & 0 & \Lambda(A)_{\gamma\gamma}
\end{bmatrix}
\begin{bmatrix}
P_{\alpha}^{\top} \\[3pt]
P_{\beta}^{\top} \\[3pt]
P_{\gamma}^{\top}
\end{bmatrix},
\end{equation}
where the orthogonal matrix $P \in \mathcal{O}^{n}$ is partitioned accordingly. Let $\mathcal{O}^{n}(A)$ denote the set of all such orthogonal matrices $P$ satisfying~\eqref{eq:eig-de}. We refer to the tuple $(\alpha,\beta,\gamma,p,q,P,\lambda)$ as an \emph{indexed eigenvalue decomposition (IED)} of $A$.

\begin{remark}
    We refer to \emph{an} IED rather than \emph{the} IED because the orthogonal matrix $P\in\mathcal{O}^n(A)$ is generally not unique. Nevertheless, our subsequent constructions and analytical results are invariant to the specific choice of $P$.
\end{remark}

The metric projection of $A$ onto $\mathbb{S}^n_+$ is explicitly given by $\Pi_{\mathbb{S}^n_+}(A)= P_{\alpha}\Lambda(A)_{\alpha\alpha}P_{\alpha}^{\top}$. Furthermore, $\Pi_{\mathbb{S}^n_+}$ is globally Lipschitz continuous and directionally differentiable (i.e., B-differentiable~\cite{bonnans1998sensitivity,bonnans1999second}) and strongly semismooth~\cite{sun2002semismooth}, although it is generally not Fr\'echet differentiable. Specifically, the directional derivative of $\Pi_{\mathbb{S}^n_+}$ at $A$ along the direction $H \in \mathbb{S}^n$ is formulated as
\begin{equation}\label{eq:Pi-dir}
\Pi_{\mathbb{S}^n_+}^{\prime}(A;H)
= P
\begin{bmatrix}
\widetilde{H}_{\alpha\alpha} & \widetilde{H}_{\alpha\beta} & \Xi_{\alpha\gamma} \circ \widetilde{H}_{\alpha\gamma} \\
\widetilde{H}_{\alpha\beta}^{\top} & \Pi_{\mathbb{S}^{|\beta|}_+}(\widetilde{H}_{\beta\beta}) & 0 \\
\Xi_{\alpha\gamma}^{\top} \circ \widetilde{H}_{\alpha\gamma}^{\top} & 0 & 0
\end{bmatrix}
P^{\top},
\end{equation}
where $P \in \mathcal{O}^n(A)$, $\widetilde{H} := P^{\top} H P$, and the symmetric weight matrix $\Xi \in \mathbb{S}^n$ has entries defined by
\begin{equation}\label{eq:Xi}
\Xi_{ij}
:= \frac{\max\{\lambda_{i}(A),0\} - \max\{\lambda_{j}(A),0\}}
{\lambda_{i}(A) - \lambda_{j}(A)},
\quad i,j = 1,\ldots,n,
\end{equation}
adopting the convention that $0/0 := 1$.

\subsection{Regularity conditions}
In this subsection, we collect several variational notions and regularity conditions that will be used throughout the paper. A comprehensive account of the background material can be found, e.g., in~\cite{rockafellar_variational_1998,bonnans2013perturbation}.

We begin with the tangent cone. For a closed convex set $\mathcal{K} \subseteq \mathbb{S}^n$, the tangent cone to $\mathcal{K}$ at $y \in \mathcal{K}$ is defined by
\begin{equation}\label{eq:def-Tangent-g}
    \mathcal{T}_{\mathcal{K}}(y) := \bigl\{ d \in \mathbb{S}^n \mid \exists\, t_k \downarrow 0 \text{ such that } \operatorname{dist}(y + t_k d, \mathcal{K}) = o(t_k) \bigr\}.
\end{equation}

When $\mathcal{K}$ is a smooth embedded submanifold $\mathcal{M}$ of a Euclidean space $\mathbb{X}$, the variational tangent cone $\mathcal{T}_{\mathcal{M}}(x)$ coincides with the classical tangent space $\mathcal{T}_x\mathcal{M}$. To keep track of whether we are working in the variational or differential-geometric convention, we retain both notations: we write $\mathcal{T}_{\mathcal{K}}(y)$ for the tangent cone to a set $\mathcal{K}$ at $y$, and $\mathcal{T}_A\mathcal{M}$ for the tangent space to a manifold $\mathcal{M}$ at $A$. In particular, switching the positions of the set and the base point will always indicate the intended geometric context.

We next recall the Robinson constraint qualification (RCQ) for~\eqref{prog:SDP}. Let $x$ be a feasible solution of~\eqref{prog:SDP}. As shown in~\cite[Theorem~3.9 and Proposition~3.17]{bonnans2013perturbation}, the RCQ stated below is equivalent to the nonemptiness and boundedness of the associated Lagrange multiplier set $M(x)$.

\begin{definition}\label{def:RCQ}
	The \emph{Robinson constraint qualification (RCQ)} holds at a feasible point $x$ if
    \begin{equation*}
        g'(x)\mathbb{X} + \mathcal{T}_{\mathbb{S}^n_+}(g(x)) = \mathbb{S}^n.
    \end{equation*}
\end{definition}

We next introduce the strict Robinson constraint qualification (SRCQ) at a primal--dual point. This condition can be viewed as an extension of the classical SRCQ formulated at KKT pairs; see, e.g.,~\cite[Proposition~4.50]{bonnans2013perturbation} for the standard form and its consequences.
\begin{definition}\label{def:SRCQ-nonKKT}
    For $z = (x,y) \in \mathbb{X} \times \mathbb{S}^n$ with an IED $(\alpha,\beta,\gamma,p,q, P, \lambda)$ of $G(z)$, the  \emph{strict Robinson constraint qualification (SRCQ)} holds at $z$ if
    \begin{equation*}
        g'(\overline{x})\mathbb{X} + \Bigl\{\overline P B\overline P^\top\in\mathbb{S}^n \,\Big|\,
B_{\beta\beta}\succeq 0,\ B_{\beta\gamma}=0,\ B_{\gamma\gamma}=0\Bigr\}= \mathbb{S}^n.
    \end{equation*}
\end{definition}

By exploiting IED of $G(z)$, one can directly verify that, when $z=(x,y)$ is a KKT pair, Definition~\ref{def:SRCQ-nonKKT} reduces to the classical SRCQ. In particular, under this classical SRCQ, the Lagrange multiplier set $M(\overline{x})$ is a singleton; see~\cite[Proposition~4.50]{bonnans2013perturbation}.

Following the characterization in~\cite[(36)]{sun2006strong}, we also introduce an extension of constraint nondegeneracy that is formulated at a primal--dual pair and hence explicitly involves the multiplier variable. This explicit dependence is immaterial in the classical setting, since classical constraint nondegeneracy implies multiplier uniqueness and the dual variable can be suppressed. By contrast, once one allows situations where multiplier uniqueness may fail, keeping the dual variable becomes natural and, for our purposes, essential. Moreover, at a feasible point $x$, the classical constraint nondegeneracy is equivalent to its extended version in Definition~\ref{def:CN-nonKKT} evaluated at any $(x,y)$ satisfying the complementarity condition $\Sbb^n_+ \ni g(x) \perp y \in \Sbb^n_-$.

\begin{definition}\label{def:CN-nonKKT}
    For $z = (x,y) \in \mathbb{X} \times \mathbb{S}^n$ with an IED $(\alpha,\beta,\gamma,p,q, P, \lambda)$ of $G(z)$, the \emph{constraint nondegeneracy} holds at $z$ if
    \begin{equation}\label{eq:CN-nonKKT}
        g'(x)\mathbb{X} + \bigl\{ P B P^\top \mid B \in \mathbb{S}^n,\ B_{\beta\beta} = 0,\ B_{\beta\gamma} = 0,\ B_{\gamma\gamma} = 0 \bigr\} = \mathbb{S}^n.
    \end{equation}
\end{definition}

We will also make use of the weak strict Robinson constraint qualification (W-SRCQ) and the weak second order condition (W-SOC) proposed recently by Feng et al.~\cite{feng2025quadratically}. These notions are originally formulated at KKT pairs of~\eqref{prog:SDP} and serve as relaxed regularity/second order requirements in that setting. Since our subsequent sensitivity analysis involves general primal--dual points (not necessarily satisfying the KKT conditions), we extend both concepts to arbitrary primal--dual pairs.

We begin with the extension of the W-SRCQ. As shown in~\cite[Lemma~1]{feng2025quadratically}, the definition below reduces to the original W-SRCQ at any KKT pair; cf.~\cite[Definition~7]{feng2025quadratically}.

\begin{definition}\label{def:W-SRCQ-nonKKT}
    For $z = (x,y) \in \mathbb{X} \times \mathbb{S}^n$ with an IED $(\alpha,\beta,\gamma,p,q, P, \lambda)$ of $G(z)$, the \emph{weak strict Robinson constraint qualification (W-SRCQ)} holds at $z$ if
    \begin{equation}\label{eq:W-SRCQ-nonKKT}
        g'(x)\mathbb{X} + \bigl\{ P B P^\top \mid B \in \mathbb{S}^n,\ B_{\beta\gamma} = 0,\ B_{\gamma\gamma} = 0 \bigr\} = \mathbb{S}^n.
    \end{equation}
\end{definition}

To formulate second order conditions at a general primal--dual point, we associate with $z=(x,y)$ the quadratic form below (defined via an IED of $G(z)$):
\begin{equation}\label{eq:def-Qz}
    \mQ_{z}(v_x) := \langle v_x, \nabla^2_{xx} L(x, y) v_x \rangle + 2 \sum_{i \in \alpha} \sum_{j \in \gamma} \frac{-\lambda_j}{\lambda_i} \big[ P^\top (\nabla g(x)^* v_x) P \big]_{ij}^2.
\end{equation}
When $(x, y)$ is a KKT pair of~\eqref{prog:SDP}, it is well known~\cite[(28)]{sun2006strong} that
\[
\mQ_{z}(v_x) = \langle v_x, \nabla^2_{xx} L(\overline{x}, \overline{y}) v_x \rangle - \sigma\left( \overline{y}, \mathcal{T}^2_{\mathbb{S}^n_+}(g(\overline{x}), g'(\overline{x}) v_x) \right),
\]
where $\sigma(\cdot, \cdot)$ denotes the support function and $\mathcal{T}^2_{\mathbb{S}^n_+}$ is the second order tangent sets~\cite[(3.50)]{bonnans2013perturbation} to $\mathbb{S}^n_+$.

Using $\mQ_z$, we now state second order sufficient conditions at an arbitrary primal--dual pair. The following SOSC is adapted from~\cite[(3.276)]{bonnans2013perturbation}; one readily checks that it reduces to the classical SOSC when $(x,y)$ is a KKT pair.

\begin{definition}\label{def:SOSC-nonKKT}
	For $z = (x,y) \in \mathbb{X} \times \mathbb{S}^n$ with an IED $(\alpha,\beta,\gamma,p,q, P, \lambda)$ of $G(z)$. The \emph{second order sufficient condition (SOSC)} holds at $z$ if
    \begin{equation}\label{eq:SOSC-P}
        \mQ_{z}(v_x) > 0, \quad \forall\, v_x \in \mathcal{C}(z) \setminus \{0\},
    \end{equation}
    where $\mathcal{C}(z)$ is given by
    \begin{equation}\label{eq:def-C-nonKKT}
    	\mathcal{C}(z) := \left\{ v_x \in \mathbb{X} \mid [P^\top (\nabla g(x)^*v_x) P]_{\beta\beta} \succeq 0,\ [P^\top (\nabla g(x)^*v_x) P]_{\beta\gamma} = 0,\ [P^\top (\nabla g(x)^*v_x) P]_{\gamma\gamma} = 0 \right\}.
    \end{equation}
\end{definition}

We also require a stronger variant based on the outer-approximation set used in~\cite{sun2006strong}. Specifically, relying on the explicit expressions for $\operatorname{app}(\overline{x}, \overline{y})$ in~\cite[(38)]{sun2006strong}, we extend the strong SOSC (S-SOSC) from~\cite[Definition~3.2]{sun2006strong} to arbitrary primal--dual pairs. Again, the definition below reduces to the original one whenever $(\overline{x}, \overline{y})$ is a KKT pair.

\begin{definition}\label{def:S-SOSC-nonKKT}
	For $z = (x,y) \in \mathbb{X} \times \mathbb{S}^n$ with an IED $(\alpha,\beta,\gamma,p,q, P, \lambda)$ of $G(z)$, the \emph{strong second order sufficient condition (S-SOSC)} holds at $z$ if
    \begin{equation}\label{eq:S-SOSC-nonKKT}
        \mQ_{z}(v_x) > 0 \quad \forall\, v_x \in \operatorname{app}(z) \setminus \{0\},
    \end{equation}
    where $\operatorname{app}(x, y)$ is given by
    \begin{equation}\label{eq:def-app-nonKKT}
    	\operatorname{app}(z) := \left\{ v_x \in \mathbb{X} \mid [P^\top (\nabla g(x)^*v_x) P]_{\beta\gamma} = 0,\ [P^\top (\nabla g(x)^*v_x) P]_{\gamma\gamma} = 0 \right\}.
    \end{equation}
\end{definition}

We now turn to weak second order conditions in the sense of~\cite{feng2025quadratically}. Motivated by the characterizations in~\cite[(31) and (34)]{feng2025quadratically}, we define the extended W-SOC as follows.

\begin{definition}\label{def:W-SOC-nonKKT}
	For $z = (x,y) \in \mathbb{X} \times \mathbb{S}^n$ with an IED $(\alpha,\beta,\gamma,p,q, P, \lambda)$ of $G(z)$, the \emph{weak second order condition (W-SOC)} holds at $z$ if
    \begin{equation}\label{eq:W-SOC-nonKKT}
        \mQ_{z}(v_x) \neq 0 \quad \forall\, v_x \in \operatorname{appl}(z) \setminus \{0\},
    \end{equation}
    where $\operatorname{appl}(z)$ is given by
    \begin{equation}\label{eq:def-appl-nonKKT}
    	\operatorname{appl}(z) := \left\{ v_x \in \mathbb{X} \mid [P^\top (\nabla g(x)^*v_x) P]_{\beta\beta} = 0,\ [P^\top (\nabla g(x)^*v_x) P]_{\beta\gamma} = 0,\ [P^\top (\nabla g(x)^*v_x) P]_{\gamma\gamma} = 0 \right\}.
    \end{equation}
\end{definition}

\begin{remark}
    In Definition~\ref{def:W-SOC-nonKKT}, we modify the W-SOC from a strict inequality (``$>0$'') as originally proposed in~\cite{feng2025quadratically} to a non-vanishing condition (``$\neq 0$''). This adjustment streamlines the exact equivalence established later in Theorem~\ref{thm:WR-dF-nons}. By the intermediate value property of continuous functions, this condition ensures that the quadratic form maintains a constant sign. Furthermore, under standard assumptions at a KKT pair (e.g., if~\eqref{prog:SDP} is convex or satisfies the weak second order necessary condition), our definition naturally reduces to ``$>0$'', recovering the formulation in~\cite{feng2025quadratically}.
\end{remark}

It can be verified directly from the definitions that
\[
\mbox{constraint nondegeneracy} \quad \Longrightarrow \quad \mbox{SRCQ} \quad \Longrightarrow \quad \mbox{W-SRCQ}
\]
and
\[
\mbox{S-SOSC} \quad \Longrightarrow \quad \mbox{SOSC} \quad \Longrightarrow \quad \mbox{W-SOC}.
\]
Finally, we introduce a weak second order necessary condition (W-SONC), which extends the notion in~\cite[Definition~1]{fukuda2023weak} from KKT pairs to general primal--dual points and will be used in our subsequent analysis.

\begin{definition}\label{def:W-SONC}
    Let $z = (x,y) \in \mathbb{X} \times \mathbb{S}^n$ with an IED $(\alpha,\beta,\gamma,p,q,P,\lambda)$ of $G(z)$. The \emph{weak second order necessary condition (W-SONC)} holds at $z$ if
    \begin{equation*}
        \mQ_{z}(v_x) \ge 0 \quad \forall\, v_x \in \operatorname{appl}(z),
    \end{equation*}
    where $\operatorname{appl}(z)$ is defined in~\eqref{eq:def-appl-nonKKT}.
\end{definition}

\begin{remark}
    In~\cite{fukuda2023weak}, the ``W-SOC'' refers to the \emph{weak second order necessary} condition. In the present paper, however, we reserve ``W-SOC'' for the condition in Definition~\ref{def:W-SOC-nonKKT}, which is introduced for our regularity analysis and should not be confused with a second order sufficient condition. To avoid ambiguity, we therefore refer to the weak second order necessary condition in Definition~\ref{def:W-SONC} as \emph{W-SONC}. There is also a minor difference between Definition~\ref{def:W-SONC} and~\cite[Definition~1]{fukuda2023weak}. The latter is stated in terms of $\lin(\mathcal{C}(x))$, whereas we use the set $\appl(z)$. From a practical standpoint, explicitly computing $\lin(\mathcal{C}(x))$ can be inconvenient, while $\appl(z)$ provides a tractable surrogate that preserves the variational information needed in our analysis. This choice is consistent with the use of the outer-approximation set $\app(z)$ in~\cite{sun2006strong}. We refer to~\cite{fukuda2023weak} for a detailed discussion of the W-SONC at KKT pairs, which is not central to our development. We only emphasize one key point: even in nonlinear programming, barrier methods~\cite{gould1999note} and augmented Lagrangian methods~\cite{andreani2018note} may produce limit points that violate the classical second order necessary condition, whereas such limit points are still guaranteed to satisfy the W-SONC  (see~\cite{andreani2017second}).
\end{remark}

\begin{remark}\label{remark:convex-W-SONC}
    Suppose that problem~\eqref{prog:SDP} is convex, i.e., $f$ is convex and $g$ is matrix-concave, meaning that
    \[
        g(t x_1 + (1-t)x_2) \succeq t g(x_1) + (1-t)g(x_2)
        \quad \forall\, x_1,x_2\in\Xbb,\ t\in[0,1].
    \]
    Then, for any $x\in\Xbb$ and any $y\in\Sbb^n_-$, we have
    \[
        \nabla_{xx}^2 L(x,y) \succeq 0.
    \]
    Consequently, the W-SONC holds automatically at every $z=(x,y)\in \Xbb\times \Sbb^n_-$.
\end{remark}

From this point onward, unless stated otherwise, the terms SRCQ, constraint nondegeneracy, W-SRCQ, SOSC, S-SOSC, and W-SOC are understood in their extended senses.

The constraint qualifications and second order conditions introduced in Definitions~\ref{def:RCQ}--\ref{def:W-SONC} will be referred to as \emph{problem-level} regularity conditions, in the sense that they are formulated directly in terms of the problem data (derivatives and spectral information) and can, at least in principle, be checked without solving auxiliary variational systems. In contrast, we call the conditions introduced next \emph{solution-level} regularity conditions, since they are typically expressed through properties of set-valued mappings and often involve solving equations and/or evaluating distances, which can be substantially more demanding computationally. These solution-level notions coincide with the conventional meaning of regularity in variational analysis. Nonetheless, we also view the above constraint qualifications and second order conditions as regularity conditions, because they capture the geometric structure underlying a well-behaved local solution mapping. Among solution-level conditions, the most prominent one is the \emph{strong metric regularity}, which we recall next.

\begin{definition}\label{def:strongly metric regular}
    We say a set-valued mapping $\Phi:\Xbb\rightrightarrows\Ybb$ is \emph{strongly metrically regular} at $(\overline  x,\overline  y)$, if $\overline  y\in \Phi(\overline  x)$, and there exist constants $ \kappa > 0 $ and neighborhoods $ \mU$ of $ \overline {x} $, $ \mV $ of $ \overline {y} $, such that for all $ x \in \mU $ and $ y \in \mV$, the following inequality holds
    \begin{equation*}
        \dist(x, \Phi^{-1}(y)) \leq \kappa \cdot \dist(y, \Phi(x))
    \end{equation*}
    and $\Phi^{-1}(y) \cap \mathcal{U}$ is a singleton for every $y \in \mathcal{V}$.
\end{definition}

It is well-known that, at a KKT pair of \eqref{prog:SDP}, the strong metric regularity of the KKT natural mapping $F$ we considered is equivalent to the strong regularity \cite{robinson1980strongly} of the KKT system formulated as a generalized equation \cite{dontchev2009implicit}. Moreover, under the RCQ at a local optimum, this strong regularity is in turn equivalent to the simultaneous satisfaction of the S-SOSC (Definition~\ref{def:S-SOSC-nonKKT}) and constraint nondegeneracy (Definition~\ref{def:CN-nonKKT})~\cite{sun2006strong}, whereas strong metric subregularity corresponds to the conjunction of the SRCQ and the SOSC~\cite{ding2017characterization}.

Metric regularity is a weaker property, as it does not require the inverse mapping to be single-valued.
\begin{definition}\label{def:metric regular}
    We say a set-valued mapping $\Phi:\Xbb\rightrightarrows\Ybb$ is \emph{metrically regular} at $(\overline{x},\overline{y})$, if $\overline{y}\in \Phi(\overline{x})$, and there exist a constant $\kappa > 0$ and neighborhoods $\mathcal{U}$ of $\overline{x}$, $\mathcal{V}$ of $\overline{y}$, such that for all $x \in \mathcal{U}$ and $y \in \mathcal{V}$, the following inequality holds:
    \begin{equation*}
        \dist(x, \Phi^{-1}(y)) \leq \kappa \cdot \dist(y, \Phi(x)).
    \end{equation*}
\end{definition}

Although metric regularity is, in general, weaker than strong metric regularity for an arbitrary set-valued mapping,
the two properties coincide for the \emph{KKT (natural) mapping} associated with several important nonpolyhedral conic programs.
In particular, such an equivalence has been established for nonlinear second order cone programming~\cite{chen_Aubin_25},
for NLSDP~\cite{chen_characterizations_2026}, and, more broadly, for $C^2$-cone reducible conic programs~\cite{ma2025aubin}.

Another closely related notion is the \emph{local error bound}. Various error bound formulations appear in the literature, and a systematic discussion is beyond the scope of this paper. We record one standard version below, noting that it follows directly from metric regularity.
\begin{definition}\label{def:local error bound}
    Let \( F: \mathbb{X} \to \mathbb{Y} \) be a single-valued mapping, and let \( \overline{x} \in \mathbb{X} \) satisfy \( F(\overline{x}) = 0 \).
    We say that the equation \( F(x) = 0 \) admits a \emph{local error bound} at \( \overline{x} \) if there exist a constant \( \kappa > 0 \) and a neighborhood \( \mathcal{U} \) of \( \overline{x} \) such that for all \( x \in \mathcal{U} \),
    \begin{equation*}
        \dist\bigl(x, F^{-1}(0)\bigr) \leq \kappa \cdot \|F(x)\|.
    \end{equation*}
\end{definition}

\subsection{Manifolds and stratification}

This subsection briefly reviews essential concepts from differential geometry and stratification theory used throughout the paper. For standard definitions regarding smooth manifolds (e.g., local charts, tangent spaces, tangent maps, tangent bundles, Riemannian metrics, and exponential maps), we refer the reader to comprehensive texts such as~\cite{lee2012introduction,absil2008optimization}.

We first clarify a potential notational ambiguity. Let $\mathcal{M}$ be a manifold embedded in a Euclidean space $\mathbb{Y}$, and let $p \in \mathcal{M}$. Under the canonical identification $\mathcal{T}_p \mathbb{Y} \cong \mathbb{Y}$, we identify any tangent vector $v \in \mathcal{T}_p \mathbb{Y}$ with an element of $\mathbb{Y}$. This natural identification permits the vector sum $p + v$ within the ambient space $\mathbb{Y}$. To distinguish the underlying geometries, we use $dh_x$ to denote the differential of a mapping between manifolds, and reserve $h'(x)$ for the Fr\'echet derivative of mappings defined on Euclidean spaces. In accordance with standard optimization conventions, the adjoint of the linear operator $h'(x)$ is denoted by $\nabla h(x) := (h'(x))^*$.

Next, we recall the concept of transversality from differential topology (see, e.g.,~\cite[Chapter~3.2]{hirsch2012differential}). This topological concept, which slightly generalizes the definition typically employed in variational analysis~\cite{bonnans2013perturbation}, formalizes the geometric intuition of a smooth map intersecting a submanifold in a ``regular'' manner.

\begin{definition}[{\cite[Chapter 3.2]{hirsch2012differential}}]\label{def:transversal}
    Suppose $h:\Mcal \rightarrow \Ncal$ is a smooth mapping between smooth manifolds $\mathcal{M}$ and $\mathcal{N}$, where $\Ncal' \subseteq \Ncal$ is a submanifold of $\Ncal$.  We say $h$ \emph{intersects with} $\Ncal'$ \emph{transversally} at $x$ in $\Ncal$, or simply  $h$ \emph{is transverse to} $\Ncal'$ at $x$ in $\Ncal$, denoted by $h \pitchfork_{x} \Ncal'$ in $\Ncal$, whenever $h(x) \in \Ncal'$,
    \begin{align}\label{eq:transversality}
        dh_x(\Tcal_x \Mcal) + \Tcal_{h(x)} \Ncal' = \Tcal_{h(x)} \Ncal,
    \end{align}
    {where $dh_x$ is the differential of $h$ at $x$ and $\Tcal$ denotes the tangent space.}
    Moreover, for a given closed subset $\Lcal$  in $\Mcal$ if \eqref{eq:transversality} holds whenever $x\in \Lcal$ and $h(x) \in \Ncal'$, then we say $h$ \emph{intersects with} $\Ncal'$ \emph{transversally along} $\Lcal$ in $\Ncal$, denoted by $h \pitchfork_{\Lcal} \Ncal'$ in $\Ncal$. We also simply denote $h \pitchfork \Ncal'$ in $\Ncal$ if $\Lcal \equiv \Mcal$.
\end{definition}

Stratification is a fundamental notion in topology and singularity theory. It is well-known that smooth manifolds provide a flexible framework that covers many nonlinear spaces, such as orthogonal groups and Grassmannians. However, sets with singularities lie outside the class of manifolds. The concept of a stratified space is designed as a natural extension of the manifold framework to accommodate singular sets, such as real algebraic varieties~\cite{whitney1965local,whitney1965tangents}, which arise frequently in polynomial optimization and related areas \cite{ioffe2009invitation}. For background on (Whitney) stratifications, we refer the reader to~\cite{whitney1965local,whitney1965tangents,mather1970notes,molina2020handbook}. In the present work, the following basic definition will suffice for our subsequent developments.

\begin{definition}\label{def:stratification}
    Let \( \Xbb \) be a Euclidean space and \( \Xcal \subseteq \Xbb \) a closed subset. A \emph{stratification} of \( \Xcal \) is a locally finite partition \( S = \{\Scal_i\}_{i \in I} \) of \( \Xcal \) into connected embedded \( C^k \) submanifolds (typically $k \geq 1$) \( \Scal_i \) (called \emph{strata}) such that
    \begin{enumerate}
        \item \emph{Frontier condition}: For any two strata \( \Scal_i, \Scal_j \in S \), if \( \Scal_i \cap \overline{\Scal_j} \neq \emptyset \), then \( \Scal_i \subseteq \overline{\Scal_j} \), where \( \overline{\Scal_j}\) means the closure of $\Scal_j$. This implies the boundary \( \overline{\Scal_j} \setminus \Scal_j \) is a union of strata.
        \item \emph{Local finiteness}: Every point \( x \in \Xcal \) has a neighborhood in $\Xbb$ intersecting finitely many strata.
    \end{enumerate}
    A topological space with a stratification structure is called a \emph{stratified space}.
\end{definition}

Roughly speaking, a stratification may be viewed as a decomposition of a given set into smooth pieces together with their well-organized boundaries. In Section~\ref{sec:geometric}, we shall introduce a stratification of the Euclidean space $\mathbb{S}^n$ tailored to our purposes, even though $\mathbb{S}^n$ is itself a linear space and hence a smooth manifold without singularities.

\section{The inertia stratification of $\Sbb^n$}\label{sec:geometric}

In this section, we study a stratification of the symmetric matrix space $\mathbb{S}^n$ based on eigenvalue inertia. Specifically, $\mathbb{S}^n$ is decomposed into finitely many smooth strata comprising matrices with prescribed inertia. We refer to this partition as the \emph{inertia stratification} (see~\eqref{eq:index-stra}). This structure is naturally suited for semidefinite optimization: the cone $\mathbb{S}^n_+$ is the union of strata with zero negative inertia, and the metric projection $\Pi_{\mathbb{S}^n_+}$ is smooth when restricted to any individual stratum.

While $\mathbb{S}^n$ admits infinitely many stratifications in the sense of Definition~\ref{def:stratification}, we focus on the inertia-based partition because it admits a natural geometric description and often appears implicitly in related literature. For completeness, we rigorously define this structure and summarize its essential geometric properties.

For given integers $p, q \in \mathbb{Z}_+$ with $p+q \le n$, we define the set
\begin{equation}\label{eq:M_pq}
    \mathcal{M}_{p,q} := \bigl\{A \in \mathbb{S}^n \mid |\alpha(A)| = p,\ |\gamma(A)| = q \bigr\}.
\end{equation}
When the context is clear, we simply write $\mathcal{M}$ in place of $\mathcal{M}_{p,q}$. As shown in~\cite{HELMKE19951}, each $\mathcal{M}_{p,q}$ is a smooth embedded submanifold of $\mathbb{S}^n$ with dimension $\dim(\mathcal{M}_{p,q}) = n(p+q) - \frac{1}{2}(p+q)(p+q-1)$. The family $\{\mathcal{M}_{p,q}\}_{0\le p+q\le n}$ yields a finite disjoint decomposition of $\mathbb{S}^n$:
\begin{equation}\label{eq:index-stra}
    \mathbb{S}^n = \bigcup_{0\le p+q\le n} \mathcal{M}_{p,q}.
\end{equation}
This decomposition satisfies the frontier condition and local finiteness required by Definition~\ref{def:stratification}. Hence, it forms a stratification of $\mathbb{S}^n$---referred to as the \emph{inertia stratification}\footnote{The family $\{\mathcal{M}_{p,q}\}_{0\le p+q\le n}$ formally constitutes a Whitney stratification of $\mathbb{S}^n$ (see, e.g.,~\cite{whitney1965local,whitney1965tangents,mather1970notes} for definitions). This follows from the canonical Whitney stratification of the real algebraic variety of symmetric matrices with prescribed inertia, defined by the vanishing of suitable minors. We refer to~\cite{whitney1965local,whitney1965tangents} for a rigorous treatment. Moreover, Olikier~\cite{Olikier2023FirstorderOO} shows that the inertia stratification satisfies certain regularity properties naturally suited for first order optimality analysis.}---and each $\mathcal{M}_{p,q}$ is termed a \emph{stratum} of $\mathbb{S}^n$.

Figure~\ref{fig:S2-stra} illustrates the stratification of $\mathbb{S}^2 \simeq \mathbb{R}^3$. This space decomposes into three $3$-dimensional strata, two $2$-dimensional strata, and the trivial zero-dimensional stratum $\{0\}$. Moreover, the adjacency among these strata ensures that the closure of any stratum is precisely the union of itself and all strata of strictly lower dimensions, consistent with the frontier condition in Definition~\ref{def:stratification}.

\begin{figure}[htbp]
    \centering
    \includegraphics[width=0.8\textwidth]{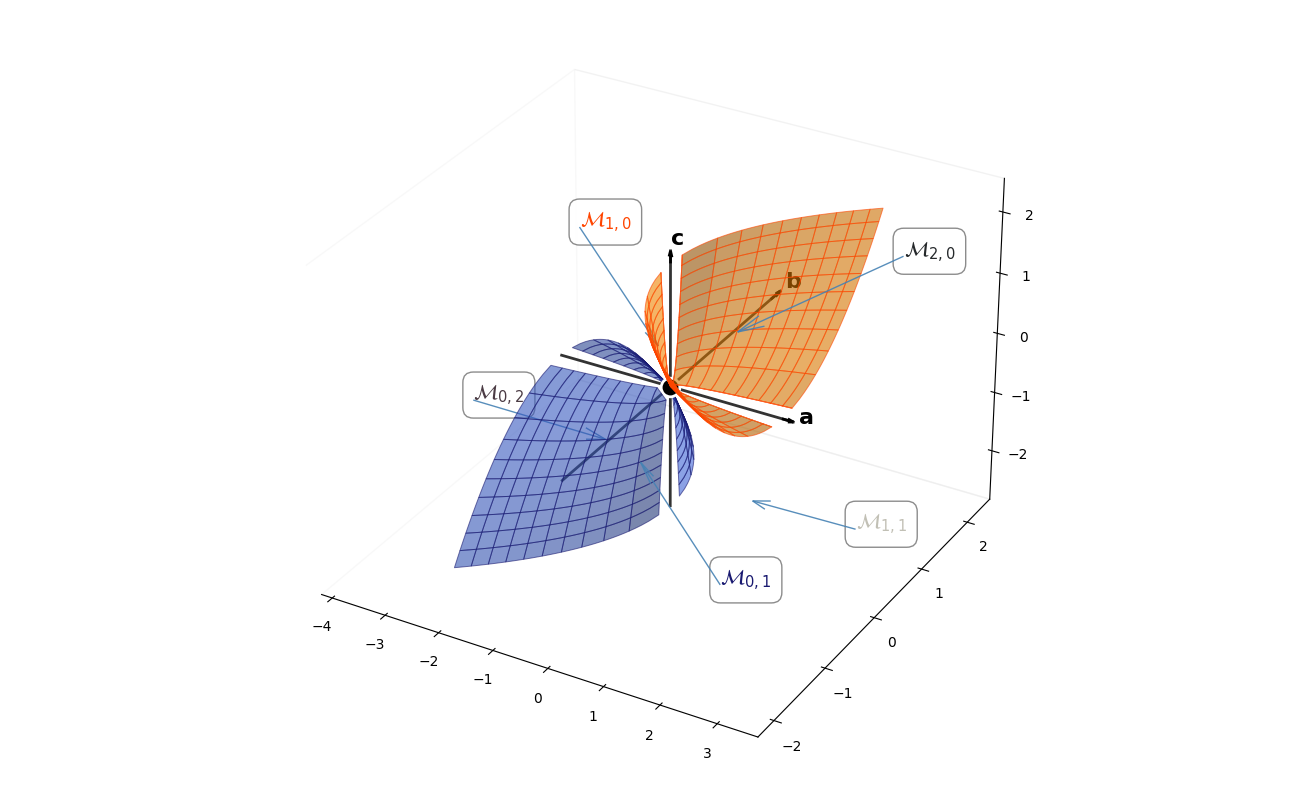}
    \caption{The inertia stratification of $\mathbb{S}^2$.}
    \label{fig:S2-stra}
\end{figure}

\begin{remark}\label{remark:add_SDPs}
    The inertia stratification framework and our subsequent analyses extend seamlessly to NLSDPs with multiple SDP constraints $g_i(x) \in \mathbb{S}^{n_i}_+$ for $i=1,\ldots,\ell$ (which inherently encompass polyhedral constraints of the form $h_E(x)=0$ and $h_I(x)\ge 0$). Indeed, the product space $\mathbb{X} \times \mathbb{S}^{n_1} \times \cdots \times \mathbb{S}^{n_\ell}$ inherits a stratification whose strata take the form
    \begin{equation*}
        \widetilde{\mathcal{M}}_{p_1,\ldots,p_\ell,q_1,\ldots,q_\ell} = \bigl\{ (x,y_1,\ldots,y_\ell) \mid g_i(x) + y_i \in \mathcal{M}_{p_i,q_i},\ i=1,\ldots,\ell \bigr\}.
    \end{equation*}
    Because this set is the preimage of the product manifold $\mathcal{M}_{p_1,q_1} \times \cdots \times \mathcal{M}_{p_\ell,q_\ell}$ under the smooth submersion $(x,y_1,\ldots,y_\ell) \mapsto (g_1(x)+y_1,\ldots,g_\ell(x)+y_\ell)$, it is a smooth embedded submanifold. Geometric objects such as tangent spaces and manifold derivatives can be computed componentwise. To streamline the presentation, we focus exclusively on~\eqref{prog:SDP}.
\end{remark}

Before examining further properties of the inertia stratification, we recall a convenient representation of the tangent space $\mathcal{T}_{A}\mathcal{M}_{p,q}$ at a point $A \in \mathcal{M}_{p,q}$. It is well established (see, e.g.,~\cite{HELMKE19951}) that
\begin{equation}\label{eq:TAM-0}
    \mathcal{T}_{A}\mathcal{M}_{p,q} = \bigl\{ H = X^{\top}A + AX \in \mathbb{S}^n \mid X \in \mathbb{R}^{n\times n} \bigr\}.
\end{equation}
The next proposition provides an alternative characterization based on the eigenvalue decomposition~\eqref{eq:eig-de}.

\begin{proposition}\label{prop:TAM}
Let $(\alpha,\beta,\gamma,p,q,P,\lambda)$ be an IED of $A \in \mathcal{M}_{p,q}$. Then, we have
\begin{equation}\label{eq:TAM}
    \mathcal{T}_{A} \mathcal{M}_{p,q} = \bigl\{ H \in \mathbb{S}^n \mid (P^{\top} H P)_{\beta\beta} = 0 \bigr\}.
\end{equation}
\end{proposition}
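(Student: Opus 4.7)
The plan is to start from the intrinsic characterization \eqref{eq:TAM-0} of the tangent space, pass to the basis provided by the indexed eigenvalue decomposition of $A$, and then conclude by a short block computation together with a dimension count.

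First I would change variables by setting $Y := P^{\top} X P$ and $\tilde H := P^{\top} H P$ for $H = X^{\top} A + AX$. Since $P^{\top} A P = \Lambda$, the generator in \eqref{eq:TAM-0} transforms as
\begin{equation*}
\tilde H \;=\; P^{\top}(X^{\top} A + AX)P \;=\; Y^{\top}\Lambda + \Lambda Y.
\end{equation*}
Because $\Lambda$ is diagonal with entries vanishing on the index set $\beta$, both $(\Lambda Y)_{\beta\beta}$ and $(Y^{\top}\Lambda)_{\beta\beta}$ are zero by a direct entrywise computation. Hence $\tilde H_{\beta\beta}=0$, which establishes the inclusion ``$\subseteq$'' in \eqref{eq:TAM}.

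For the reverse inclusion I would use dimension counting. The right-hand side of \eqref{eq:TAM} is the linear subspace $\mathcal{V}_{\beta} := \{H\in\Sbb^n : (P^{\top} H P)_{\beta\beta}=0\}$ of $\Sbb^n$, of dimension $\tfrac{n(n+1)}{2}-\tfrac{|\beta|(|\beta|+1)}{2}$. Writing $r:=p+q$ and $|\beta|=n-r$, elementary algebra gives
\begin{equation*}
\tfrac{n(n+1)}{2}-\tfrac{(n-r)(n-r+1)}{2} \;=\; nr-\tfrac{r(r-1)}{2} \;=\; \dim\Mpq,
\end{equation*}
where the last equality is the dimension formula recalled just before \eqref{eq:index-stra} (as an embedded submanifold). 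Since $\Tcal_A\Mpq$ is a linear subspace of $\Sbb^n$ contained in $\mathcal{V}_{\beta}$ and of the same dimension, equality follows. No serious obstacle is expected; the only delicate point is the bookkeeping of the block structure under the rotation by $P$. If one preferred to avoid invoking the dimension formula for $\Mpq$, the inclusion ``$\supseteq$'' can equally well be obtained constructively by solving the entrywise system $\tilde H_{ij}=Y_{ji}\Lambda_{jj}+\Lambda_{ii}Y_{ij}$ block by block, using that $\Lambda_{jj}\neq 0$ for $j\notin\beta$ to resolve the off-$\beta$ entries and using the hypothesis $\tilde H_{\beta\beta}=0$ to handle the $\beta\beta$-block, which reduces to the vacuous identity $0=0$.
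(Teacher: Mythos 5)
Your proposal is correct and follows essentially the same route as the paper: the inclusion ``$\subseteq$'' via the block computation $\widetilde H = Y^{\top}\Lambda + \Lambda Y$ with $\Lambda_{\beta\beta}=0$, and equality via a dimension count matching $\dim\Mcal_{p,q} = n(p+q)-\tfrac12(p+q)(p+q-1)$. The only cosmetic difference is that you verify the dimension of the right-hand subspace explicitly rather than merely asserting it, which is fine.
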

\begin{proof}
Let $\mathcal{R}$ denote the right-hand side of~\eqref{eq:TAM}. For any given $H \in \mathcal{T}_{A} \mathcal{M}_{p,q}$, it follows from~\eqref{eq:TAM-0} that $H = X^{\top}A + AX$ for some $X \in \mathbb{R}^{n\times n}$. Defining $\widetilde{H} := P^{\top}HP$, we obtain
\[
    \widetilde{H} = P^{\top}X^{\top}P
    \begin{bmatrix}
        \Lambda_{\alpha\alpha} & 0 & 0 \\
        0 & \Lambda_{\beta\beta} & 0 \\
        0 & 0 & \Lambda_{\gamma\gamma}
    \end{bmatrix}
    +
    \begin{bmatrix}
        \Lambda_{\alpha\alpha} & 0 & 0 \\
        0 & \Lambda_{\beta\beta} & 0 \\
        0 & 0 & \Lambda_{\gamma\gamma}
    \end{bmatrix}
    P^{\top}XP.
\]
Since $\Lambda_{\beta\beta} = 0$, this yields $\widetilde{H}_{\beta\beta} = 0$, implying $\mathcal{T}_{A}\mathcal{M}_{p,q} \subseteq \mathcal{R}$. Conversely, the dimension of $\mathcal{R}$ is clearly $\dim(\mathcal{R}) = n(p+q) - \frac{1}{2}(p+q)(p+q-1)$. Because $\dim(\mathcal{T}_{A}\mathcal{M}_{p,q})$ precisely matches this value~\cite[Proposition 2.1]{HELMKE19951}, we conclude that $\mathcal{T}_{A}\mathcal{M}_{p,q} = \mathcal{R}$. \ep
\end{proof}

Having established the stratified structure and characterized its tangent spaces, we shall analyze the differentiability of the metric projection $\Pi_{\mathbb{S}^n_+}$ along the strata. Specifically, when restricted to a fixed stratum $\mathcal{M}_{p,q}$, the mapping $\Pi_{\mathbb{S}^n_+}: \mathcal{M}_{p,q} \to \mathcal{M}_{p,0}$ is not only differentiable with an explicit closed-form differential, but is $C^\infty$-smooth, strengthening previously known $C^1$-differentiability results (see Remark~\ref{remark:C2-PiK}). These regularity properties fundamentally underpin the variational analysis and algorithmic developments in the sequel. As the proof relies on analytic functional calculus for symmetric matrices, it is deferred to Appendix~\ref{appendix:smooth}.

\begin{theorem}\label{thm:diff-PiK}
    For any stratum $\mathcal{M} = \mathcal{M}_{p,q}$, the restriction of $\Pi_{\mathbb{S}^n_+}$ to $\mathcal{M}$, denoted by $\Pi_{\mathbb{S}^n_+}|_{\mathcal{M}} : \mathcal{M} \to \mathbb{S}^n$, is a $C^\infty$-smooth map. Moreover, for an arbitrary $A \in \mathcal{M}$ with an IED $(\alpha,\beta,\gamma,p,q,P,\lambda)$, the manifold differential $\xi_A := d(\Pi_{\mathbb{S}^n_+}|_{\mathcal{M}})_A : \mathcal{T}_{A} \mathcal{M} \to \mathcal{T}_{\Pi_{\mathbb{S}^n_+}(A)} \mathbb{S}^n$ at $H \in \mathcal{T}_{A} \mathcal{M}$ is explicitly given by
    \begin{equation}\label{eq:xi_A}
        \xi_A(H) = P
        \begin{bmatrix}
            \widetilde{H}_{\alpha \alpha} & \widetilde{H}_{\alpha \beta} & \Xi_{\alpha \gamma} \circ \widetilde{H}_{\alpha \gamma} \\
            \widetilde{H}_{\alpha \beta}^\top & 0 & 0 \\
            \widetilde{H}_{\alpha \gamma}^\top \circ \Xi_{\alpha \gamma}^\top & 0 & 0
        \end{bmatrix}
        P^\top,
    \end{equation}
    where $\widetilde{H} := P^\top H P$.
\end{theorem}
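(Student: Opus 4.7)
The plan is to split the proof into two pieces: first establish the $C^\infty$ regularity of $\Pi_{\Sbb^n_+}|_{\Mcal}$ on $\Mcal$, and then identify its differential along $\Mcal$ with the restriction of the known directional derivative formula~\eqref{eq:Pi-dir} to tangent directions.

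For the smoothness step, the key observation is that on $\Mcal_{p,q}$ the inertia indices are constant, so the positive eigenvalues of any $A \in \Mcal_{p,q}$ form a group of eigenvalues bounded away from zero and from the remaining (nonpositive) eigenvalues. Concretely, I would pick a positively oriented simple closed contour $\Gamma$ in the complex plane that encircles exactly the positive eigenvalues of $A$, with $0$ and all nonpositive eigenvalues of $A$ lying strictly in the unbounded component of $\mathbb{C}\setminus\Gamma$. Continuity of the spectrum together with constancy of inertia along $\Mcal_{p,q}$ ensures that $\Gamma$ retains this separating property on a whole neighborhood $\mathcal{U}$ of $A$ in $\Mcal$. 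Analytic functional calculus then yields the local representation
\begin{equation*}
\Pi_{\Sbb^n_+}(A') \;=\; \frac{1}{2\pi i}\oint_{\Gamma} z\,(zI-A')^{-1}\,dz \qquad \text{for all } A'\in\mathcal{U},
\end{equation*}
since $f(z)=z$ is holomorphic and the contour picks up precisely the positive spectral part. Because the resolvent $A'\mapsto (zI-A')^{-1}$ is real-analytic in $A'$ uniformly in $z\in\Gamma$, the right-hand side is a real-analytic function of $A'$ on $\mathcal{U}$. Hence $\Pi_{\Sbb^n_+}|_{\Mcal}$ is real-analytic, in particular $C^\infty$.

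With smoothness in hand, the differential $d(\Pi_{\Sbb^n_+}|_{\Mcal})_A$ applied to a tangent vector $H\in\Tcal_A \Mcal_{p,q}$ must coincide with the classical directional derivative $\Pi_{\Sbb^n_+}'(A;H)$ furnished by~\eqref{eq:Pi-dir}. Proposition~\ref{prop:TAM} then plays the decisive role: every $H\in\Tcal_A\Mcal_{p,q}$ satisfies $\widetilde H_{\beta\beta}=0$ where $\widetilde H=P^\top H P$. Substituting this into~\eqref{eq:Pi-dir} annihilates the middle block $\Pi_{\Sbb^{|\beta|}_+}(\widetilde H_{\beta\beta})$, and the resulting expression is exactly~\eqref{eq:xi_A}. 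A minor point is that the IED is not unique when eigenvalues have multiplicities, but both the left-hand side of~\eqref{eq:xi_A} and its right-hand side are intrinsic, so the formula is independent of the choice of $P\in\mathcal{O}^n(A)$.

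The main obstacle is the smoothness step, because the very same mapping $\Pi_{\Sbb^n_+}$ fails to be $C^1$ on the ambient $\Sbb^n$: what makes the restriction smooth is precisely the constraint that perturbations preserve inertia, so that no eigenvalue can cross zero. The functional-calculus argument encodes this cleanly by relying on a contour whose validity is stable only under inertia-preserving perturbations in $\Mcal$; over arbitrary perturbations in $\Sbb^n$ the contour would have to cross the origin. An alternative route would be to invoke Kato's analytic perturbation theory for eigenprojectors associated with isolated groups of eigenvalues, but the contour-integral approach is the most transparent and simultaneously supplies the explicit real-analytic parametrization needed for later computations.
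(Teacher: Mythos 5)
Your proposal is correct and follows essentially the same route as the paper: smoothness via analytic functional calculus (the paper uses the Cauchy-integral calculus applied to a function equal to $z$ near the positive eigenvalues and $0$ near the nonpositive ones, which is exactly your contour representation), and the differential obtained by restricting the directional-derivative formula~\eqref{eq:Pi-dir} to $\Tcal_A\Mcal$, where Proposition~\ref{prop:TAM} forces $\widetilde H_{\beta\beta}=0$ and kills the nonsmooth block.
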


\begin{remark}
    It is straightforward to verify that $\Pi_{\mathbb{S}^n_+}$ maps $\mathcal{M}_{p,q}$ onto $\mathcal{M}_{p,0}$, which is smoothly embedded in $\mathbb{S}^n$. Because $\Pi_{\mathbb{S}^n_+} : \mathcal{M}_{p,q} \to \mathbb{S}^n$ is a $C^\infty$ map, standard properties of smooth embeddings ensure that the corestriction $\Pi_{\mathbb{S}^n_+} : \mathcal{M}_{p,q} \to \mathcal{M}_{p,0}$ is also $C^\infty$-smooth. Hereafter, we suppress the explicit restriction notation $|_{\mathcal{M}_{p,q}}$ and simply write $\Pi_{\mathbb{S}^n_+}$ depending on the context.
\end{remark}

\begin{remark}\label{remark:C2-PiK}
    The $C^\infty$-smoothness established in Theorem~\ref{thm:diff-PiK}, alongside the closed-form differential~\eqref{eq:xi_A}, refines existing results for the metric projection $\Pi_{\mathbb{S}^n_+}$ along the strata. While the $C^1$-differentiability of $\Pi_{\mathbb{S}^n_+}$ relative to $\mathcal{M}_{p,q}$ can be deduced via L\"owner functional calculus or the theory of $C^1$-partly smooth mappings (see, e.g.,~\cite[Example~4.14]{lewis2002active} and~\cite[Proposition~9.7]{drusvyatskiy2014optimality}), Theorem~\ref{thm:diff-PiK} advances this literature by establishing full $C^\infty$-smoothness and providing an explicit manifold differential formula.
\end{remark}

The nonsmoothness of the KKT mapping $F(z)$ in~\eqref{eq:kkt-mapping} stems exclusively from the projection term $\Pi_{\mathbb{S}^n_+}(G(z))$. To isolate and exploit the hidden smooth structure, we lift the inertia stratification of $\mathbb{S}^n$ to the primal-dual space $\mathbb{X} \times \mathbb{S}^n$ via the mapping $G$. For each pair $(p,q)$ with $0 \le p+q \le n$, we define
\begin{equation}\label{eq:tilde-Mpq}
    \widetilde{\mathcal{M}}_{p,q} := G^{-1}(\mathcal{M}_{p,q}) = \bigl\{ z=(x,y) \in \mathbb{X} \times \mathbb{S}^n \mid G(z) = g(x)+y \in \mathcal{M}_{p,q} \bigr\}.
\end{equation}
Since $G$ is a smooth submersion (see, e.g.,~\cite[Chapter~4]{lee2012introduction}), it follows from~\cite[Corollary~6.31]{lee2012introduction} that each $\widetilde{\mathcal{M}}_{p,q}$ is a smoothly embedded submanifold of $\mathbb{X} \times \mathbb{S}^n$. Consequently, the finite disjoint union
\[
    \mathbb{X} \times \mathbb{S}^n = \bigcup_{0\le p+q\le n} \widetilde{\mathcal{M}}_{p,q}
\]
constitutes a stratification of $\mathbb{X} \times \mathbb{S}^n$, which we continue to call the \emph{inertia stratification}. When the indices $p$ and $q$ are clear from context, we simply write $\widetilde{\mathcal{M}}$ for $\widetilde{\mathcal{M}}_{p,q}$.

By Theorem~\ref{thm:diff-PiK}, while $F$ is globally nonsmooth on $\mathbb{X} \times \mathbb{S}^n$, its restriction to any fixed stratum $\widetilde{\mathcal{M}}_{p,q}$ is completely $C^\infty$-smooth. The following lemma characterizes the tangent space of $\widetilde{\mathcal{M}}_{p,q}$ and facilitates the computation of the manifold differential of $F$. For brevity, we omit the straightforward proof.

\begin{lemma}\label{lemma:TzMt}
    Fix $(p,q)$ with $0 \le p+q \le n$, and let $\mathcal{M} := \mathcal{M}_{p,q}$ and $\widetilde{\mathcal{M}} := \widetilde{\mathcal{M}}_{p,q} = G^{-1}(\mathcal{M})$. For any $z = (x,y) \in \widetilde{\mathcal{M}}$, the tangent space $\mathcal{T}_{z}\widetilde{\mathcal{M}}$ can be represented as
    \begin{equation}\label{eq:TzMt-repr}
        \mathcal{T}_{z}\widetilde{\mathcal{M}} = \bigl\{ (v_x,v_y) \in \mathbb{X} \times \mathbb{S}^n \mid \nabla g(x)^{*}v_x + v_y \in \mathcal{T}_{G(z)}\mathcal{M} \bigr\}.
    \end{equation}
    In particular, the linear mapping $\phi_z : \mathcal{T}_{z}\widetilde{\mathcal{M}} \to \mathbb{X} \times \mathcal{T}_{G(z)}\mathcal{M}$ given by
    \begin{equation}\label{eq:phi_z}
        \phi_z(v_x,v_y) := \bigl(v_x,\ \nabla g(x)^{*}v_x + v_y\bigr)
    \end{equation}
    is a linear isomorphism with inverse $\phi_z^{-1} : \mathbb{X} \times \mathcal{T}_{G(z)}\mathcal{M} \to \mathcal{T}_{z}\widetilde{\mathcal{M}}$ given by
    \begin{equation}\label{eq:phi_z_inv}
        \phi_z^{-1}(v_x,H) := \bigl(v_x,\ H - \nabla g(x)^{*}v_x\bigr).
    \end{equation}
    For notational convenience, we identify a tangent vector $v \in \mathcal{T}_z \widetilde{\mathcal{M}}$ with the pair $(v_x, v_y)$ when viewed within the ambient space $\mathbb{X} \times \mathbb{S}^n$, and with $(v_x, H)$ when mapped via the isomorphism $\phi_z$. Moreover, the assignments $z \mapsto \phi_z$ and $z \mapsto \phi_z^{-1}$ are smooth (in the standard sense of smooth maps into the space of linear operators).
\end{lemma}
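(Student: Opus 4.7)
The plan is to derive the lemma from the preimage theorem applied to the smooth map $G:\Xbb\times\Sbb^n\to\Sbb^n$, $G(x,y)=g(x)+y$. First I would verify that $G$ is a submersion at every point: for any $(x,y)\in\Xbb\times\Sbb^n$ and any $w\in\Sbb^n$, one has $dG_{(x,y)}(0,w)=w$, so $dG_{(x,y)}$ is surjective. Since $\Mcal=\Mcal_{p,q}$ is an embedded submanifold of $\Sbb^n$ (as recalled after \eqref{eq:M_pq}), the standard regular-value / preimage theorem for submersions (see, e.g., \cite[Corollary~6.31]{lee2012introduction}, already invoked for $\widetilde{\Mcal}$ in the paragraph preceding the lemma) yields that $\widetilde{\Mcal}=G^{-1}(\Mcal)$ is an embedded submanifold of $\Xbb\times\Sbb^n$ and that
\begin{equation*}
\Tcal_z\widetilde{\Mcal}=(dG_z)^{-1}\bigl(\Tcal_{G(z)}\Mcal\bigr)\qquad \text{for every } z\in\widetilde{\Mcal}.
\end{equation*}

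Next I would compute $dG_z$ explicitly. Writing $G(x,y)=g(x)+y$ and using the canonical identifications of the tangent spaces of the Euclidean factors $\Xbb$ and $\Sbb^n$ with the factors themselves, one obtains $dG_z(v_x,v_y)=g'(x)v_x+v_y$. Recalling the convention $\nabla g(x):=(g'(x))^*$ from Section~\ref{section:pre}, this is the same as $\nabla g(x)^*v_x+v_y$, and substituting into the above preimage formula yields the claimed characterization~\eqref{eq:TzMt-repr}.

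For the isomorphism statement, the map $\phi_z(v_x,v_y):=(v_x,\nabla g(x)^*v_x+v_y)$ is manifestly linear, and \eqref{eq:TzMt-repr} shows that its image lies in $\Xbb\times\Tcal_{G(z)}\Mcal$. A direct computation then verifies both $\phi_z\circ\phi_z^{-1}=\mathrm{id}_{\Xbb\times\Tcal_{G(z)}\Mcal}$ and $\phi_z^{-1}\circ\phi_z=\mathrm{id}_{\Tcal_z\widetilde{\Mcal}}$ with $\phi_z^{-1}$ as in \eqref{eq:phi_z_inv}; note that $\phi_z^{-1}$ genuinely lands in $\Tcal_z\widetilde{\Mcal}$ precisely because $H\in \Tcal_{G(z)}\Mcal$, via the characterization~\eqref{eq:TzMt-repr}. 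The smoothness of the assignments $z\mapsto\phi_z$ and $z\mapsto\phi_z^{-1}$ follows from the $C^2$-smoothness of $g$, since both are assembled by fixed linear-algebraic operations from $x\mapsto\nabla g(x)^*$, which is smooth as a map into the space of bounded linear operators.

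The argument is largely bookkeeping, and I do not expect any substantive obstacle. The only points that deserve care are invoking the correct submersion / transversality version of the preimage theorem, and keeping the conventions for $\nabla g(x)$ versus $g'(x)$ consistent with those established in Section~\ref{section:pre}. For this reason the authors can reasonably state that the proof is omitted for brevity.
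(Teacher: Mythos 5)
Your argument is correct and is precisely the ``direct computation'' the paper has in mind: the paper omits the proof but already invokes, in the paragraph preceding the lemma, the submersion property of $G$ and \cite[Corollary~6.31]{lee2012introduction}, from which $\Tcal_z\widetilde{\Mcal}=(dG_z)^{-1}(\Tcal_{G(z)}\Mcal)$ and the formula $dG_z(v_x,v_y)=g'(x)v_x+v_y=\nabla g(x)^{*}v_x+v_y$ give \eqref{eq:TzMt-repr}, with the isomorphism and smoothness claims following by the same routine verifications you describe. Your handling of the convention $\nabla g(x)=(g'(x))^{*}$ is also consistent with Section~\ref{section:pre}, so no gap remains.
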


Applying the chain rule alongside the differential of $\Pi_{\mathbb{S}^n_+}$ restricted to $\mathcal{M}$ (Theorem~\ref{thm:diff-PiK}), Lemma~\ref{lemma:TzMt} yields an explicit representation for the manifold differential of $F$ along $\widetilde{\mathcal{M}}$. Specifically, for $z=(x,y) \in \widetilde{\mathcal{M}}$ and $v = (v_x,v_y) \in \mathcal{T}_z\widetilde{\mathcal{M}}$, we set
\[
    H := \nabla g(x)^{*}v_x + v_y \in \mathcal{T}_{G(z)}\mathcal{M}.
\]
Then, the manifold differential evaluates to
\begin{equation}\label{eq:dF_z(v)}
\begin{aligned}
    dF_z(v_x,v_y)
    &=
    \begin{bmatrix}
        \nabla^2_{xx}L(z)v_x + \nabla g(x)v_y \\
        -\nabla g(x)^{*}v_x + \xi_{G(z)}\bigl(dG(z)(v_x,v_y)\bigr)
    \end{bmatrix} \\[3pt]
    &=
    \begin{bmatrix}
        \nabla^2_{xx}L(z)v_x + \nabla g(x)\bigl(H-\nabla g(x)^{*}v_x\bigr) \\
        -\nabla g(x)^{*}v_x + \xi_{G(z)}(H)
    \end{bmatrix} \\[3pt]
    &=
    \begin{bmatrix}
        \bigl(\nabla^2_{xx}L(z)-\nabla g(x)\nabla g(x)^{*}\bigr)v_x + \nabla g(x)H \\
        -\nabla g(x)^{*}v_x + \xi_{G(z)}(H)
    \end{bmatrix}.
\end{aligned}
\end{equation}
Equivalently, with respect to the coordinates $(v_x,H) \in \mathbb{X} \times \mathcal{T}_{G(z)}\mathcal{M}$ induced by~\eqref{eq:phi_z}, this linear operator admits the following block matrix representation:
\begin{equation}\label{eq:dF_z_block}
    dF_z(v_x,H) =
    \begin{bmatrix}
        \nabla^2_{xx}L(z)-\nabla g(x)\nabla g(x)^{*} & \nabla g(x) \\
        -\nabla g(x)^{*} & \xi_{G(z)}
    \end{bmatrix}
    \begin{bmatrix}
        v_x\\
        H
    \end{bmatrix}.
\end{equation}

The remainder of this section comprises two subsections. We first conduct a variational analysis restricted to a fixed stratum. Subsequently, we investigate the propagation of these regularity properties across adjacent strata. Together, these theoretical developments lay the foundation for designing globally convergent algorithms with rapid local convergence rates under conditions significantly weaker than classical requirements.

\subsection{Variational properties on stratum}\label{subsec:variational-properties}

In this subsection, we first characterize \emph{stratum-restricted strong metric regularity} (Definition~\ref{def:stratum-restricted strongly metric regular}) on a fixed stratum. To describe the geometric behavior of the KKT mapping along directions confined to that stratum, we subsequently analyze a related transversality condition, which yields a geometric interpretation of the proposed regularity framework.

\subsubsection{Stratum-restricted strong metric regularity}\label{sec:subsub-srsmr}

We introduce a natural stratum-restricted extension of strong metric regularity (Definition~\ref{def:strongly metric regular}). This localized property is significantly weaker than its classical ambient-space counterpart, particularly in the presence of degeneracy.

\begin{definition}\label{def:stratum-restricted strongly metric regular}
    Let $\mathbb{X}$ be a stratified space. A set-valued mapping $\Phi: \mathbb{X} \rightrightarrows \mathbb{Y}$ is said to be \emph{stratum-restricted strongly metrically regular} on a stratum $\mathcal{M} \subseteq \mathbb{X}$ at $(\overline{x}, \overline{y})$ if $\overline{x} \in \mathcal{M}$, $\overline{y} \in \Phi(\overline{x})$, and there exist a constant $\kappa > 0$ and neighborhoods $\mathcal{U}$ of $\overline{x}$ and $\mathcal{V}$ of $\overline{y}$ such that for all $x \in \mathcal{U} \cap \mathcal{M}$ and $y \in \mathcal{V} \cap \Phi(\mathcal{M})$, the following inequality holds:
    \begin{equation*}
        \operatorname{dist}\bigl(x, \Phi^{-1}(y) \cap \mathcal{M}\bigr) \leq \kappa \cdot \operatorname{dist}\bigl(y, \Phi(x)\bigr),
    \end{equation*}
    and $(\Phi^{-1}(y) \cap \mathcal{M}) \cap \mathcal{U}$ is a singleton for every $y \in \mathcal{V} \cap \Phi(\mathcal{M})$.
\end{definition}

\begin{remark}\label{remark:lip-homeo}
    In our setting, because $F$ is single-valued and smooth on $\widetilde{\mathcal{M}}$, the stratum-restricted strong metric regularity of $F$ on $\widetilde{\mathcal{M}}$ at $(\overline{z}, F(\overline{z}))$ implies that the restriction $F|_{\widetilde{\mathcal{M}}}$ acts locally as a Lipschitz homeomorphism near $\overline{z}$.
\end{remark}

The next lemma enables us to characterize the second order form $\mathcal{Q}_z$ in second order conditions by controlling the behavior of $(v_x, H)$ on the $\alpha\gamma$-blocks, which will be essential throughout this paper for connecting nonsingularity with second order conditions.

\begin{lemma}\label{lemma:identity-Q(v)}
    For $z = (x, y)\in \Xbb\times\Sbb^n$ with an IED $(\alpha, \beta, \gamma, p, q, P, \lambda)$ of $G(z)$. Then, for any $v_x \in \mathbb{X}$ and $H \in \mathbb{S}^n$ satisfying
    \[
        \bigl(-\nabla g(x)^* v_x + \xi_{G(z)}(H)\bigr)_{\alpha\gamma} = 0,
    \]
    the following identity holds:
    \begin{align}\nonumber
        \big\langle \xi_{G(z)}(H),\, H - \xi_{G(z)}(H) \big\rangle
        = 2 \sum_{i \in \alpha} \sum_{j \in \gamma} \frac{-\lambda_j}{\lambda_i}
        \left( P^\top \bigl(\nabla g(x)^* v_x\bigr) P \right)_{ij}^2.
    \end{align}
\end{lemma}

\begin{proof}
    Let $\widetilde{H}=P^{^\top}HP$, we have
    \begin{equation}\nonumber
    \begin{aligned}
        &\la  \xi_{G(z)}  H,\,H- \xi_{G(z)}  H\ra \\[3pt]
        =&\Big\la P\begin{bmatrix}
        \widetilde{H}_{\alpha \alpha} & \widetilde{H}_{\alpha \beta} & \Xi_{\alpha \gamma} \circ \widetilde{H}_{\alpha \gamma} \\
        \widetilde{H}_{\alpha \beta}^\top & 0 & 0 \\
        \widetilde{H}_{\alpha \gamma}^\top \circ \Xi_{\alpha \gamma}^\top & 0 & 0
        \end{bmatrix}P^{\top},\,
        P\begin{bmatrix}
        0 & 0 & (E_{\alpha\gamma}-\Xi_{\alpha \gamma}) \circ \widetilde{H}_{\alpha \gamma} \\
        0 & \widetilde{H}_{\beta \beta} & \widetilde{H}_{\beta \gamma} \\
        \widetilde{H}_{\alpha \gamma}^\top \circ (E_{\alpha\gamma}-\Xi_{\alpha \gamma})^\top & \widetilde{H}_{\beta \gamma}^\top & \widetilde{H}_{\gamma \gamma}
        \end{bmatrix}P^{\top} \Big\ra \\[3pt]
        =&2\la \Xi_{\alpha \gamma} \circ \widetilde{H}_{\alpha \gamma},\,(E_{\alpha\gamma}-\Xi_{\alpha \gamma}) \circ \widetilde{H}_{\alpha \gamma}\ra
        =2\sum_{i\in\alpha}\sum_{j\in\gamma}\frac{-\lambda_i\,\lambda_j}{(\lambda_i-\lambda_j)^2}\,\widetilde{H}_{ij}^2 \\[3pt]
        =&2\sum_{i\in\alpha}\sum_{j\in\gamma}\frac{-\lambda_i\,\lambda_j}{(\lambda_i-\lambda_j)^2}
        \left(\frac{\lambda_i-\lambda_j}{\lambda_i}\right)^2
        \bigl(P^{\top}(-\nabla g(x)^* v_x)P\bigr)_{ij}^2
        = 2\sum_{i\in\alpha}\sum_{j\in\gamma}\frac{-\lambda_j}{\lambda_i}
        \bigl(P^{\top}(\nabla g(x)^* v_x)P\bigr)_{ij}^2.
    \end{aligned}
    \end{equation}
    Here, $E_{\alpha\gamma}\in\mathbb{R}^{|\alpha|\times|\gamma|}$ denotes the all-ones matrix, and the fourth equality follows from the condition $(-\nabla g(x)^* v_x + \xi_{G(z)} H)_{\alpha\gamma} = 0$. \ep
\end{proof}

Building on the definitions of the W-SOC and W-SRCQ, and utilizing the manifold differential of the KKT mapping $F$ given in~\eqref{eq:dF_z_block}, we present the main result of this subsection. This theorem provides a comprehensive characterization of the stratum-restricted strong metric regularity of $F$ on $\widetilde{\mathcal{M}}$.

\begin{theorem}\label{thm:WR-dF-nons}
For $z = (x, y) \in \mathbb{X} \times \mathbb{S}^n$, the following statements hold:
\begin{itemize}
    \item[(a)] if the W-SOC and the W-SRCQ hold at $z$, then the manifold differential $dF_{z} : \mathcal{T}_{z}\widetilde{\mathcal{M}} \to \mathbb{X} \times \mathbb{S}^n$ given by~\eqref{eq:dF_z_block} is injective;
    \item[(b)] if $dF_{z}$ is injective, then the W-SRCQ holds at $z$;
    \item[(c)] if $dF_{z}$ is injective and, in addition, the W-SONC holds at $z$, then the W-SOC holds at $z$.
\end{itemize}
\end{theorem}

\begin{proof}
    Let $(\alpha,\beta,\gamma,p,q,P,\lambda)$ be an IED of $G(z)$.

    \textbf{(a)} Suppose the W-SOC and the W-SRCQ hold at $z$. Assume that $(v_x, v_y) \in \mathcal{T}_{z}\widetilde{\mathcal{M}}$ satisfies $dF_{z}(v_x, v_y) = 0$. By~\eqref{eq:dF_z_block}, we have
    \begin{equation}\label{eq:null-dF}
        \begin{bmatrix}
            \nabla^2_{xx} L(z) - \nabla g(x) \nabla g(x)^* &  \nabla g(x)  \\
            -\nabla g(x)^* &  \xi_{G(z)}
        \end{bmatrix}
        \begin{bmatrix}
            v_x  \\
            H
        \end{bmatrix} = 0,
    \end{equation}
    where $H = \nabla g(x)^*v_x + v_y \in \mathcal{T}_{G(z)}\mathcal{M}$. The second block row implies $-\nabla g(x)^*v_x + \xi_{G(z)}(H) = 0$. Utilizing the explicit formula for $\xi_{G(z)}$ in~\eqref{eq:xi_A} and the matrix $\Xi$ in~\eqref{eq:Xi}, this relation translates to
    \begin{equation*}
        \begin{cases}
            \bigl[P^\top(\nabla g(x)^*v_x)P\bigr]_{ij} = \bigl[P^\top H P\bigr]_{ij}   &  \text{if } i \in \alpha \text{ and } j \in \alpha \cup \beta, \\[3pt]
            \bigl[P^\top(\nabla g(x)^*v_x)P\bigr]_{ij} = 0  & \text{if } i,j \in \beta \cup \gamma, \\[3pt]
            \bigl[P^\top(\nabla g(x)^*v_x)P\bigr]_{ij} = \Xi_{ij}\bigl[P^\top H P\bigr]_{ij} & \text{if } i \in \alpha \text{ and } j \in \gamma.
        \end{cases}
    \end{equation*}
    By \eqref{eq:def-appl-nonKKT}, this immediately guarantees $v_x \in \operatorname{appl}(z)$. Taking the inner product of the first block row of~\eqref{eq:null-dF} with $v_x$ yields
    \begin{equation}\label{eq:null-dF-W-SOC}
    \begin{aligned}
        0 &= \langle v_x, \nabla^2_{xx} L(z)v_x - \nabla g(x) \nabla g(x)^*v_x + \nabla g(x)H \rangle \\
        &= \langle v_x, \nabla^2_{xx} L(z)v_x \rangle + \langle \nabla g(x)^*v_x, -\nabla g(x)^*v_x + H \rangle \\
        & = \langle v_x, \nabla^2_{xx} L(z)v_x \rangle + \langle\xi_{G(z)}(H), H-\xi_{G(z)}(H) \rangle\\
        &= \mQ_{z}(v_x),
    \end{aligned}
    \end{equation}
    where the last equation follows from Lemma \ref{lemma:identity-Q(v)} as $-\nabla g(x)^*v_x + \xi_{G(z)}(H) = 0$. Because the W-SOC holds at $z$, this equality strictly enforces $v_x = 0$. Consequently, the system~\eqref{eq:null-dF} collapses to
    \begin{equation} \label{eq:null-dF-W-SRCQ}
        \begin{cases}
            \nabla g(x) H = 0, \\
            -\xi_{G(z)}(H) = 0.
        \end{cases}
    \end{equation}
    Together with the structure of $\xi_{G(z)}$ in~\eqref{eq:xi_A} and the W-SRCQ condition at $z$, this implies $H = 0$. Thus, $dF_{z}$ is injective.

    \textbf{(b)} Suppose $dF_{z}$ is injective. If the W-SRCQ fails at $z$, Definition~\ref{def:W-SRCQ-nonKKT} ensures the existence of a nonzero matrix $H \in \mathcal{T}_{G(z)}\mathcal{M}$ satisfying
    \begin{equation} \label{eq:W-SRCQ-violated}
        \begin{cases}
            \nabla g(x) H = 0, \\
            -\xi_{G(z)}(H) = 0.
        \end{cases}
    \end{equation}
    This directly implies that $(0,H) \in \mathbb{X} \times \mathcal{T}_{G(z)}\mathcal{M}$ is a nontrivial solution to~\eqref{eq:null-dF}, contradicting the injectivity of $dF_{z}$.

    \textbf{(c)} Suppose $dF_{z}$ is injective and the W-SONC holds at $z$. Define a linear operator $B_0 : \mathbb{S}^n \to \mathbb{S}^n$ such that for any $H \in \mathbb{S}^n$,
    \begin{equation*}
        B_0(H) = P
        \begin{bmatrix}
            \widetilde{H}_{\alpha \alpha} & \widetilde{H}_{\alpha \beta} & \Xi^{-1}_{\alpha \gamma} \circ \widetilde{H}_{\alpha \gamma} \\
            \widetilde{H}_{\alpha \beta}^\top & 0 & 0 \\
            \widetilde{H}_{\alpha \gamma}^\top \circ (\Xi^{-1}_{\alpha \gamma})^\top & 0 & 0
        \end{bmatrix}
        P^\top,
    \end{equation*}
    where $\widetilde{H} = P^\top H P$ and $\Xi^{-1}_{\alpha \gamma}$ denotes the elementwise reciprocal of $\Xi_{\alpha \gamma}$. We further construct the linear operator $B : \mathbb{X} \to \mathbb{S}^n$ by $B(v_x) = B_0(\nabla g(x)^*v_x)$. By construction, $-\nabla g(x)^*v_x + \xi_{G(z)}\bigl(B(v_x)\bigr) = 0$ for all $v_x \in \operatorname{appl}(x, y)$. Because $dF_{z}$ is injective, it follows that any $v_x \in \operatorname{appl}(x, y)$ satisfying
    \begin{equation*}
        \begin{bmatrix}
            I & B^*
        \end{bmatrix}
        \begin{bmatrix}
            \nabla^2_{xx} L(z) - \nabla g(x) \nabla g(x)^* &  \nabla g(x)  \\
            \nabla g(x)^* &  -\xi_{G(z)}
        \end{bmatrix}
        \begin{bmatrix}
            I \\
            B
        \end{bmatrix}
        v_x = 0
    \end{equation*}
    must be zero (i.e., $v_x = 0$). Following the identical computational steps as in~\eqref{eq:null-dF-W-SOC}, we obtain
    \begin{equation}\label{eq:IB}
        \left\langle v_x,
        \begin{bmatrix}
            I & B^*
        \end{bmatrix}
        \begin{bmatrix}
            \nabla^2_{xx} L(z) - \nabla g(x) \nabla g(x)^* &  \nabla g(x)  \\
            \nabla g(x)^* &  -\xi_{G(z)}
        \end{bmatrix}
        \begin{bmatrix}
            I \\
            B
        \end{bmatrix}
        v_x \right\rangle = \mQ_{z}(v_x).
    \end{equation}
    Consequently, the self-adjoint linear operator forming the quadratic form in~\eqref{eq:IB} is positive semidefinite (due to the W-SONC) and nonsingular on $\operatorname{appl}(x, y)$. Therefore, it must be strictly positive definite on $\operatorname{appl}(x, y)$, confirming that the W-SOC holds at $z$. \ep
\end{proof}

When $\overline{z} = (\overline{x}, \overline{y})$ is a KKT pair of~\eqref{prog:SDP}, the following result characterizes the stratum-restricted strong metric regularity of $F$ at $\overline{z}$.

\begin{theorem}\label{thm:WR-dF-nons-KKT}
Let $\overline{z} = (\overline{x}, \overline{y})$ be a KKT pair of~\eqref{prog:SDP} with an IED $(\alpha, \beta, \gamma, p, q, \overline{P}, \overline{\lambda})$ of $G(\overline{z})$. The following two statements are equivalent:
\begin{itemize}
    \item[(a)] the manifold differential $dF_{\overline{z}}$ is injective;
    \item[(b)] the KKT mapping $F$ defined by~\eqref{eq:kkt-mapping} is stratum-restricted strongly metrically regular on $\widetilde{\mathcal{M}}$ at $(\overline{z}, F(\overline{z}))$.
\end{itemize}
Moreover, if the W-SONC holds at $\overline{z}$, they are further equivalent to:
\begin{itemize}
    \item[(c)] The W-SOC and the W-SRCQ hold at $\overline{z}$.
\end{itemize}
\end{theorem}

\begin{proof}
{\bf (a)$\Rightarrow$(b)}: By \cite[Proposition~5.18]{lee2012introduction}, there exist a neighborhood $\mathcal{U} \subseteq \mathcal{M}$ of $\overline{z}$ and a neighborhood $\mathcal{U}'$ containing $F(\overline{z})$, embedded in $\Xbb \times \Sbb^n$, such that $F$ is a diffeomorphism from $\mathcal{U}$ onto $\mathcal{U}'$. Denote the restriction of $F$ to $\mathcal{U}$ by $F_{\mathcal{U}}$. Consequently, $(F_{\mathcal{U}})^{-1}$ is single-valued and smooth on $\mathcal{U}'$, which yields (b).

{\bf (b)$\Rightarrow$(a)}: By Remark \ref{remark:lip-homeo}, statement (b) implies implies that $F_{\Ucal}$ is bijective and $F_{\Ucal}^{-1}$ is Lipschitz. Therefore, $d(F_{\Ucal}^{-1})$ exists in an open dense subset $\Ucal''$ in $\Ucal'$. Hence $d(F_{\Ucal}) \circ d(F_{\Ucal}^{-1}) = id$ holds in $\Ucal''$. Since $d(F_{\Ucal})$ is always smooth and has bounded-below norm, $d(F_{\Ucal}^{-1})$ can be smoothly extended to $\Ucal'$. Therefore, $d(F_{\Ucal})$ is always invertible in $\Ucal$, which implies (a).

Finally, the equivalence to (c) under the W-SONC follows directly from Theorem~\ref{thm:WR-dF-nons}.
\ep
\end{proof}

Next, by Theorem~\ref{thm:WR-dF-nons-KKT}, we immediately obtain the stratum-restricted analogue to the classical local error bound (Definition~\ref{def:local error bound}).

\begin{corollary}\label{coro:REB}
    Let $\overline{z}$ be a KKT pair of~\eqref{prog:SDP}. If the W-SOC and the W-SRCQ hold at $\overline{z}$, then there exist a constant $C_{\mathrm{REB}} > 0$ and a neighborhood $\mathcal{U} \subseteq \widetilde{\mathcal{M}}$ of $\overline{z}$ such that $(F|_{\widetilde{\mathcal{M}}})^{-1}(0) \cap \mathcal{U} = \{\overline{z}\}$ and
    \begin{equation}\label{eq:res-local-EB}
        \|F(z)\| \ge C_{\mathrm{REB}} \|z - \overline{z}\| \quad \forall\, z \in \mathcal{U}.
    \end{equation}
    We refer to~\eqref{eq:res-local-EB} as the \emph{stratum-restricted local error bound}.
\end{corollary}

At KKT pairs, the examples provided in~\cite{feng2025quadratically} illustrate that the W-SOC and W-SRCQ are significantly weaker than classical regularity assumptions. Specifically,~\cite[Example~2]{feng2025quadratically} demonstrates that even when the W-SOC and W-SRCQ hold, every matrix within the Bouligand generalized Jacobian may still be singular. Furthermore,~\cite[Examples~3--7]{feng2025quadratically} detail several scenarios where classical strong regularity fails completely while the W-SOC and W-SRCQ remain valid.

The following simple semidefinite program highlights that the W-SOC and W-SRCQ are strictly weaker than problem-level regularity conditions conventionally imposed in the literature. Crucially, this example also demonstrates that while the standard local error bound condition (Definition~\ref{def:local error bound}) fails, the stratum-restricted local error bound~\eqref{eq:res-local-EB} remains active.

\begin{example}\label{example:9.2}
    Consider the following instance of~\eqref{prog:SDP}:
    \begin{equation}\label{prog:example-1}
        \begin{aligned}
            \min_{x\in \mathbb{R}^5} \quad & x_1 \\
            \mathrm{s.t.} \quad &
            \begin{bmatrix}
                1 & 0 & 0 & x_4+x_5\\
                0 & x_4-x_5 & 0 & x_3 \\
                0 & 0 & 0 &x_2 \\
                x_4+x_5 & x_3 & x_2 & x_1
            \end{bmatrix} \in \mathbb{S}_{+}^4.
        \end{aligned}
    \end{equation}
    Clearly, $\overline{x} = (0,0,0,0,0)^\top$ is an optimal solution with a corresponding Lagrange multiplier $\overline{y} = \operatorname{Diag}(0,0,0,-1)$. Direct verification confirms that both the W-SOC and W-SRCQ hold at $(\overline{x}, \overline{y})$, whereas the SOSC and SRCQ fail. Consequently, neither the S-SOSC nor constraint nondegeneracy holds. As shown in~\cite[Proposition~5]{feng2025quadratically}, this causes the standard generalized Jacobians $\mathcal{U}_0$ and $\mathcal{U}_{\mathcal{I}}$ of $F$ to be degenerate, severely compromising the stability of standard semismooth Newton methods.
    Moreover, considering the perturbation trajectory
    \begin{equation*}
        y(t) = \begin{bmatrix}
        0 & 0 & 0 & t^2\\
        0 & -t & 0 & 0\\
        0 & 0 & 0 & 0\\
        t^2 & 0 & 0 & 1
        \end{bmatrix}, \quad t > 0,
    \end{equation*}
    a direct calculation yields $\|F(\overline{x}, y(t))\| = \Theta(t^2)$, whereas the distance to the solution set is $\operatorname{dist}\bigl((\overline{x}, y(t)), F^{-1}(0)\bigr) = \Theta(t)$ (where $\Theta(\cdot)$ denotes the exact asymptotic order). Thus, the classical local error bound condition collapses at $(\overline{x},\overline{y})$.

    Conversely, let $\overline{z} = (\overline{x}, \overline{y})$. By the differential formula~\eqref{eq:dF_z(v)}, any tangent vector $v \in \mathcal{T}_{\overline{z}}\widetilde{\mathcal{M}}$ satisfying $dF_{\overline{z}}(v) = 0$ must satisfy the coupled system:
    \begin{equation*}
        \begin{bmatrix}
            v_1 + H_{44} \\
            2v_2 + 2H_{34} \\
            2v_3 + 2H_{24} \\
            3v_4 + v_5 + H_{22} + 2H_{14} \\
            v_4 + 3v_5 - H_{22} + 2H_{14}
        \end{bmatrix} = 0
        \quad \text{and} \quad
        \begin{bmatrix}
            H_{11} & H_{12} & H_{13} & v_4 + v_5 + \frac{1}{2}H_{14} \\
            H_{21} & v_4 - v_5 & 0 & v_3 \\
            H_{31} & 0 & 0 & v_2 \\
            v_4 + v_5 + \frac{1}{2}H_{14} & v_3 & v_2 & v_1
        \end{bmatrix} = 0,
    \end{equation*}
    where we utilized the isomorphism $v \mapsto (v_x, H)$ from Lemma~\ref{lemma:TzMt}, with $v_x = (v_1, v_2, v_3, v_4, v_5)^\top$ and $H = (H_{ij}) \in \mathcal{T}_{G(\overline{z})}\mathcal{M}$. Leveraging the geometric characterization $\mathcal{T}_{G(\overline{z})}\mathcal{M} = \{ H \in \mathbb{S}^4 \mid H_{22} = H_{23} = H_{33} = 0 \}$, one readily verifies that the trivial solution $v=0$ is unique. Thus, $dF_{\overline{z}}$ is strictly injective, which, by~\cite[Proposition~5.18]{lee2012introduction} and Corollary~\ref{coro:REB}, rigorously guarantees the validity of the stratum-restricted local error bound.
\end{example}

By continuity, the injectivity of $dF_z$ persists within a local neighborhood. This localized robustness establishes two critical properties for our subsequent algorithmic analysis: the uniform boundedness of the inverse of the associated normal matrix, and the local uniqueness of the solution along the stratum.

\begin{corollary}\label{coro:WR-uniform-bounded}
Let $(\alpha,\beta,\gamma,p,q,P,\lambda)$ be an IED of $G(z)$ at $z = (x, y) \in \mathbb{X} \times \mathbb{S}^n$. If the W-SOC and the W-SRCQ hold at $z$, there exists a neighborhood $\mathcal{U} \subseteq \widetilde{\mathcal{M}}$ of $z$ such that for all $z' \in \mathcal{U}$,
\begin{itemize}
    \item[(a)] the manifold differential $dF_{z'}$ is injective;
    \item[(b)] there exists a constant $C_{\mathrm{ub}} > 0$ depending only on $z$ such that $\|(dF_{z'}^* dF_{z'})^{-1}\| \le C_{\mathrm{ub}}$.
\end{itemize}
\end{corollary}

\begin{proof}
    \textbf{(a):} Because $z' \in \widetilde{\mathcal{M}}$, the active manifold $\mathcal{M}$ at $z'$ is identical to that at $z$, preserving the index sets $\alpha$, $\beta$, and $\gamma$. By Theorem~\ref{thm:WR-dF-nons}, $dF_{z}$ is injective. A standard continuity argument guarantees that $dF_{z'}$ remains injective for all $z' \in \widetilde{\mathcal{M}}$ sufficiently close to $z$.

    \textbf{(b):} The injectivity of $dF_{z}$ guarantees that the self-adjoint operator $dF_{z}^* dF_{z}$ is invertible. By continuity, the inverse $(dF_{z'}^* dF_{z'})^{-1}$ exists and its operator norm remains uniformly bounded over a sufficiently small neighborhood $\mathcal{U} \subseteq \widetilde{\mathcal{M}}$. \ep
\end{proof}

The Clarke generalized Jacobian~\cite{clarke1990optimization} plays a central role in the convergence theory of classical semismooth Newton methods. We now demonstrate that at a KKT pair $\overline{z}$, the failure of the manifold differential $dF_{\overline{z}}$ to be injective renders \emph{every} matrix in the Clarke Jacobian $\partial_C F(\overline{x}, \overline{y})$ singular. This confirms that stratum-restricted strong regularity is an essential prerequisite for the superlinear convergence of such methods.

To formalize this, we recall the structural characterization of $\partial_C F$. By~\cite[Lemma~11]{pang2003semismooth}, given an IED $(\alpha,\beta,\gamma,p,q, \overline{P}, \overline{\lambda})$ of $A \in \mathbb{S}^n$, an operator $\mathcal{V} \in \partial_C \Pi_{\mathbb{S}^n_+}(A)$ if and only if there exists $\mathcal{W} \in \partial_C\Pi_{\mathbb{S}^{|\beta|}_+}(0)$ such that for any $H \in \mathbb{S}^n$,
\begin{equation}\label{eq:VH}
\mathcal{V}(H) = \overline{P}
\begin{bmatrix}
    \widetilde{H}_{\alpha\alpha} & \widetilde{H}_{\alpha\beta} & \overline{\Xi}_{\alpha\gamma}\circ\widetilde{H}_{\alpha\gamma} \\
    \widetilde{H}_{\beta\alpha} & \mathcal{W}(\widetilde{H}_{\beta\beta}) & 0 \\
    \overline{\Xi}_{\gamma\alpha}\circ\widetilde{H}_{\gamma\alpha} & 0 & 0
\end{bmatrix}
\overline{P}^\top,
\end{equation}
where $\overline{\Xi}$ is defined via~\eqref{eq:Xi} and $\widetilde{H} = \overline{P}^\top H \overline{P}$. Furthermore,~\cite[Lemma~2.1]{sun2006strong} states that $\overline{\mathcal{U}} \in \partial_C F(\overline{x},\overline{y})$ if and only if there exists $\overline{\mathcal{V}} \in \partial_C\Pi_{\mathbb{S}^n_+}(g(\overline{x}) + \overline{y})$ such that for any vector $v = (v_x,v_y) \in \mathbb{X} \times \mathbb{S}^n$,
\begin{equation}\label{eq:UH}
    \overline{\mathcal{U}}v =
    \begin{bmatrix}
    \nabla_{xx}^2L(\overline{x},\overline{y})v_x + \nabla g(\overline{x})v_y \\
    -\nabla g(\overline{x})^*v_x + \overline{\mathcal{V}}\bigl(\nabla g(\overline{x})^*v_x + v_y\bigr)
    \end{bmatrix}.
\end{equation}

\begin{proposition}\label{prop:dF_pCF}
    At a KKT pair $\overline{z} = (\overline{x},\overline{y})$ of~\eqref{prog:SDP}, if the manifold differential $dF_{\overline{z}}$ is not injective, then there exists a nonzero vector $v \in \mathbb{X} \times \mathbb{S}^n$ such that $\overline{\mathcal{U}}v = 0$ for every generalized Jacobian matrix $\overline{\mathcal{U}} \in \partial_C F(\overline{x},\overline{y})$.
\end{proposition}
\begin{proof}
    Because $dF_{\overline{z}}$ is not injective, there exists a nonzero tangent vector $v = (v_x,v_y) \in \mathcal{T}_{\overline{z}}\widetilde{\mathcal{M}}$ such that $dF_{\overline{z}}v = 0$. By~\eqref{eq:UH}, for any arbitrary element $\overline{\mathcal{U}} \in \partial_C F(\overline{x},\overline{y})$, there exists a corresponding $\overline{\mathcal{V}} \in \partial_C\Pi_{\mathbb{S}^n_+}(G(\overline{z}))$ such that
    \begin{align*}
        \overline{\mathcal{U}}v
        &= \begin{bmatrix}
            \nabla_{xx}^2L(\overline{x},\overline{y})v_x + \nabla g(\overline{x})v_y \\
            -\nabla g(\overline{x})^*v_x + \overline{\mathcal{V}}\bigl(\nabla g(\overline{x})^*v_x + v_y\bigr)
        \end{bmatrix} \\
        &= \begin{bmatrix}
            \bigl(\nabla_{xx}^2L(\overline{x},\overline{y}) - \nabla g(\overline{x})\nabla g(\overline{x})^*\bigr)v_x + \nabla g(\overline{x})H \\
            -\nabla g(\overline{x})^*v_x + \overline{\mathcal{V}}(H)
        \end{bmatrix},
    \end{align*}
    where $H = \nabla g(\overline{x})^*v_x + v_y$. Given the IED $(\alpha,\beta,\gamma,p,q, \overline{P}, \overline{\lambda})$ of $G(\overline{z})$,~\eqref{eq:VH} implies the existence of $\mathcal{W} \in \partial_C\Pi_{\mathbb{S}^{|\beta|}_+}(0)$ such that
    \begin{equation*}
        \overline{\mathcal{V}}(H) = \overline{P}
        \begin{bmatrix}
        \widetilde{H}_{\alpha\alpha} & \widetilde{H}_{\alpha\beta} & \overline{\Xi}_{\alpha\gamma}\circ\widetilde{H}_{\alpha\gamma} \\
        \widetilde{H}_{\beta\alpha} & \mathcal{W}(\widetilde{H}_{\beta\beta}) & 0 \\
        \overline{\Xi}_{\gamma\alpha}\circ\widetilde{H}_{\gamma\alpha} & 0 & 0
        \end{bmatrix}
        \overline{P}^\top,
    \end{equation*}
    with $\widetilde{H} = \overline{P}^\top H \overline{P}$. Because $v \in \mathcal{T}_{\overline{z}} \widetilde{\mathcal{M}}$, Lemma~\ref{lemma:TzMt} and Proposition~\ref{prop:TAM} dictate that $\widetilde{H}_{\beta\beta} = 0$. Consequently, the action of $\overline{\mathcal{V}}$ simplifies identically to the manifold differential operator:
    \begin{equation*}
        \overline{\mathcal{V}}(H) = \xi_{G(\overline{z})}(H).
    \end{equation*}
    Invoking~\eqref{eq:dF_z_block}, we conclude that
    \begin{equation*}
        \overline{\mathcal{U}}(v_x,v_y) = dF_{\overline{z}}(v_x,v_y) = 0.
    \end{equation*}
    Thus, the identical nonzero vector $v$ lies in the null space of every $\overline{\mathcal{U}} \in \partial_C F(\overline{x},\overline{y})$, completing the proof. \ep
\end{proof}

By Theorem~\ref{thm:WR-dF-nons}, the above result generalizes \cite[Proposition 6]{feng2025quadratically}, which establishes for convex SDPs that the existence of a nonsingular element in $\partial_C F(\overline{z})$ implies the W-SOC and the W-SRCQ. In this convex setting, the W-SONC required in Proposition~\ref{prop:dF_pCF} is automatically satisfied, as shown in Remark~\ref{remark:convex-W-SONC}. Moreover, as demonstrated in \cite[Proposition 4]{feng2025quadratically}, the W-SOC together with the W-SRCQ implies the existence of a nonsingular element in $\partial_C F(\overline{z})$. Consequently, we immediately obtain the following corollary.

\begin{corollary}\label{coro:WW_pCF}
    At a KKT pair $\overline{z} = (\overline{x},\overline{y})$ of~\eqref{prog:SDP}, if the W-SONC holds at $\overline{z}$, then the following statements are equivalent:
    \begin{itemize}
        \item[(a)] the W-SOC and the W-SRCQ hold at $\overline{z}$;
        \item[(b)] the manifold differential $dF_{\overline{z}}$ is injective;
        \item[(c)] the KKT mapping $F$ is stratum-restricted strongly metrically regular on $\widetilde{\mathcal{M}}$ at $(\overline{z}, F(\overline{z}))$;
        \item[(d)] there exists a nonsingular element in $\partial_C F(\overline{z})$.
    \end{itemize}
    In particular, if~\eqref{prog:SDP} is a convex program, then the W-SONC holds automatically, and thus the above equivalence holds.
\end{corollary}

\begin{remark}\label{remark:dF_pCF}
    Proposition~\ref{prop:dF_pCF} and Corollary~\ref{coro:WW_pCF} yield an important algorithmic consequence. At a KKT pair $\overline{z}$, if the W-SONC holds, which is satisfied automatically when~\eqref{prog:SDP} is convex, but the W-SOC and the W-SRCQ do not hold simultaneously, then the stratum-restricted strong metric regularity fails. As a result, \emph{every} element of the Clarke generalized Jacobian $\partial_C F(\overline{z})$ is singular. Therefore, any semismooth Newton system built from an element of $\partial_C F(\overline{z})$ is necessarily degenerate. This shows that the stratum-restricted strong metric regularity is a necessary condition for Newton-type methods to achieve superlinear convergence.
\end{remark}

\subsubsection{Transversality, stability and genericity}\label{subsubsec: trans}

Having established the exact correspondence between the W-SOC, the W-SRCQ, and the stratum-restricted strong metric regularity of the KKT mapping of the NLSDP~\eqref{prog:SDP}, we now explore the underlying geometric properties of the W-SRCQ on a fixed stratum.

For the NLSDP~\eqref{prog:SDP}, it is well-known that constraint nondegeneracy holds at a feasible point $x$ if and only if the constraint mapping $g$ intersects the manifold $\Mcal_{p,0}$ transversally (see~\cite[(4.182)]{bonnans2013perturbation}). In the same spirit, we geometrically interpret the W-SRCQ (Definition~\ref{def:W-SRCQ-nonKKT}) in terms of a transversality relation and investigate its stability and genericity within a prescribed stratum.

To cast the W-SRCQ as a transversality condition (Definition~\ref{def:transversal}), we first define a parametric family of target submanifolds over a local neighborhood. For each $z = (x,y) \in \mathbb{X} \times \mathbb{S}^n$, suppose $z \in \tMpq$, we define
\begin{equation}\label{eqn:def-of-Uz}
    \Ucal_z := \Mpo \cap \Bcal \big( {\Pi_{\mathbb{S}^n_+}(G(z))}, \frac{\delta(z)}{2} \big)
\end{equation}
where $\delta(z)$ is the the minimal non-vanishing eigenvalue modulus of $G(z)$ defined by
\begin{equation}\label{eqn: minieigen}
    \delta(z) := \min \bigl\{ |\lambda_i(G(z))| \mid \lambda_i(G(z)) \neq 0 \bigr\},
\end{equation}
and the open ball $\Bcal(A,r)$ is defined by
$$\Bcal(A,r) := \{ A' \in \Sbb^n \mid \| A-A' \|_2 < r \}.$$
Then we define
\begin{equation}\label{eq:N_z}
    \mathcal{N}_{z} := \Ucal_z + \Pi_{\mathbb{S}^n_-}\bigl(G(z)\bigr) - y,
\end{equation}
where $\mathbb{S}^n_-$ denotes the cone of negative semidefinite matrices. The geometric interpretation of classical constraint nondegeneracy (Definition~\ref{def:CN-nonKKT}) via $\mathcal{N}_z$ is established as follows.

\begin{proposition}\label{prop:trans=CN}
    Let $(\alpha,\beta,\gamma,p,q, P, \lambda)$ be an IED of $G(z)$ at $z=(x, y) \in \mathbb{X}\times\mathbb{S}^n$. The following statements are equivalent:
    \begin{itemize}
        \item[(a)] $g \pitchfork_{x} \mathcal{N}_{z}$ in $\mathbb{S}^n$;
        \item[(b)] the constraint nondegeneracy holds at $z = (x,y)$.
    \end{itemize}
\end{proposition}
\begin{proof}
   Directly computing the tangent space $\mathcal{T}_{g(x)} \mathcal{N}_z$ yields
    \begin{align*}
        \mathcal{T}_{g(x)} \mathcal{N}_z
        &= \mathcal{T}_{\Pi_{\Sbb^n_+}(G(z))} \Ucal_z  = \mathcal{T}_{\Pi_{\Sbb^n_+}(G(z))} \Mpo  \\
        &= \bigl\{ P B P^\top \mid B \in \mathbb{S}^n,\ B_{\beta \beta} = 0,\ B_{\beta \gamma} = 0,\ B_{\gamma\gamma} = 0 \bigr\}.
    \end{align*}
    Therefore, the transversality condition $g \pitchfork_x \mathcal{N}_z$ evaluates exactly to
    \begin{equation*}
        g'(x) \mathbb{X} + \bigl\{ P B P^\top \mid B \in \mathbb{S}^n,\ B_{\beta \beta} = 0,\ B_{\beta \gamma} = 0,\ B_{\gamma\gamma} = 0 \bigr\} = \mathbb{S}^n,
    \end{equation*}
    which recovers Definition~\ref{def:CN-nonKKT}.
    \ep
\end{proof}

\begin{remark}
    When $z = (x, y)$ is a KKT pair of~\eqref{prog:SDP}, one can verify that $\mathcal{N}_{z} = \Ucal_z$, which is an open set in $\Mpo$. In this case, the condition $g \pitchfork_{x} \Ncal_z$ $\mathbb{S}^n$ or equivalently, $g \pitchfork_{x} \Mpo$ in $\mathbb{S}^n$ coincides with the classical constraint nondegeneracy condition formulated in~\cite[Definition~5.70]{bonnans2013perturbation}.
\end{remark}

The subsequent lemma demonstrates that for $z \in \widetilde{\mathcal{M}}_{p,q}$, the set $\mathcal{N}_z$ is embedded in the translated stratum $\mathcal{M}_{p,q} - y$ near $g(x)$. The proof is deferred to Appendix~\ref{appendix:transverse}.
\begin{lemma}\label{lemma:N_z-in-Mpq}
    Let $z \in \widetilde{\mathcal{M}}_{p,q}$ and set $A := G(z) \in \mathcal{M}_{p,q}$. Denote
    \begin{equation*}
        A_+ := \Pi_{\mathbb{S}^n_+}(A) \in \mathcal{M}_{p,0} \quad \text{and} \quad A_- := \Pi_{\mathbb{S}^n_-}(A) \in \mathcal{M}_{0,q},
    \end{equation*}
    so that $A = A_+ + A_-$. Then $\Ucal_z$ defined in \eqref{eqn:def-of-Uz} by
    \begin{equation}
        \Ucal_z := \Mpo \cap \Bcal \big( {\Pi_{\mathbb{S}^n_+}(G(z))}, \frac{\delta(z)}{2} \big)
    \end{equation}
    is an open neighborhood of $A_+$ in $\mathcal{M}_{p,0}$ such that $\Ucal_z + A_-$ is smoothly embedded in $\mathcal{M}_{p,q}$.
\end{lemma}

To encompass the relaxed W-SRCQ, we expand $\mathcal{N}_z$ by attaching the normal bundle of $\Mpq$. For $z = (x,y) \in \widetilde{\mathcal{M}}_{p,q}$ and each point $A \in \mathcal{U} - y \subseteq \mathcal{N}_z \cap (\mathcal{M}_{p,q} - y)$, the normal space to $\mathcal{M}_{p,q} - y$ in $\mathbb{S}^n$ is given by
\begin{equation*}
    \mathcal{N}_A (\mathcal{M}_{p,q}-y) = \bigl\{ H \in \mathbb{S}^n \mid (P^{\top}HP)_{ij} = 0 \text{ for all } (i,j) \notin \beta \times \beta \bigr\},
\end{equation*}
where $(\alpha,\beta,\gamma,p,q,P,\lambda)$ is an IED of $A+y$. We define the corresponding geometric union:
\begin{equation}\label{eq:DN_z}
    \mathcal{D}\mathcal{N}_z := \bigcup_{A \in \mathcal{N}_z} \bigl( A + \mathcal{N}_A (\mathcal{M}_{p,q}-y) \bigr) = \bigl\{ A + W \mid A \in \mathcal{N}_z,\ W \in \mathcal{N}_A (\mathcal{M}_{p,q}-y) \bigr\} \subseteq \mathbb{S}^n.
\end{equation}
It turns out that $\Dcal\Ncal_z$ forms a smooth manifold. The main result of this subsection formally links the W-SRCQ to transversality against the family $\{\mathcal{D}\mathcal{N}_z\}$.

\begin{theorem}\label{thm:trans=WSRCQ}
    For the NLSDP~\eqref{prog:SDP}, let $z = (x, y) \in \mathbb{X} \times \mathbb{S}^n$ possess an IED $(\alpha, \beta, \gamma, p, q, P, \lambda)$ of $G(z)$ associated with $\mathcal{M}_{p,q}$. The following statements are equivalent:
    \begin{itemize}
        \item[(a)] $g \pitchfork_{x} \mathcal{D}\mathcal{N}_{z}$ in $\mathbb{S}^n$;
        \item[(b)] the W-SRCQ (Definition~\ref{def:W-SRCQ-nonKKT}) holds at $z = (x,y)$.
    \end{itemize}
\end{theorem}

For clarity, we informally outline why the transversality condition reduces to the W-SRCQ, deferring the rigorous differential-topological arguments (including the verification that $\mathcal{D}\mathcal{N}_z$ constitutes a smooth manifold) to Appendix~\ref{appendix:transverse}. By construction, $\mathcal{D}\mathcal{N}_z$ attaches the normal space $\mathcal{N}_A\mathcal{M}_{p,q}$ to each point $A \in \mathcal{N}_z$. At the base point $g(x) = G(z) - y$, the tangent space splits as
\begin{align*}
    \mathcal{T}_{g(x)} \mathcal{D}\mathcal{N}_{z} &= \mathcal{T}_{G(z)}(\mathcal{D}\mathcal{N}_{z}+y) = \mathcal{T}_{G(z)}\mathcal{N}_{z} \oplus \mathcal{N}_{G(z)}\mathcal{M}_{p,q} \\
    &= \bigl\{ PBP^{\top} \mid B \in \mathbb{S}^n,\ B_{\beta\gamma} = 0,\ B_{\gamma\gamma} = 0 \bigr\}.
\end{align*}
Consequently, the transversality relation $g \pitchfork_{x} \mathcal{D}\mathcal{N}_{z}$ in $\mathbb{S}^n$ algebraically evaluates to
\begin{equation}\label{eq:transv-to-WSRCQ}
    dg_x(\mathbb{X}) + \bigl\{ PBP^{\top} \mid B \in \mathbb{S}^n,\ B_{\beta\gamma} = 0,\ B_{\gamma\gamma} = 0 \bigr\} = \mathbb{S}^n,
\end{equation}
which is exactly the W-SRCQ condition.

Figure~\ref{fig:trans} conceptually visualizes this transversality mechanism. The ambient space depicts $\mathbb{S}^n$. The light blue plane represents the embedded translated stratum $\mathcal{M}_{p,q} - y$, while the dark blue line denotes the locally embedded subset $\mathcal{N}_z \subseteq \mathcal{M}_{p,q} - y$ (see~\eqref{eq:N_z}). The yellow vertical extensions illustrate $\mathcal{D}\mathcal{N}_z$ (see~\eqref{eq:DN_z}), formed by adjoining the normal bundle of $\mathcal{M}_{p,q} - y$ over $\mathcal{N}_z$. The red and green planes represent the images of two distinct constraint mappings, $g_1$ and $g_2$. In the left subfigure, $g_1 \pitchfork_x \mathcal{N}_z$ guarantees that both classical constraint nondegeneracy and the W-SRCQ hold at $z$. In the right subfigure, $g_2 \pitchfork_x \mathcal{D}\mathcal{N}_z$ holds, yet $g_2$ fails to intersect $\mathcal{N}_z$ transversally. This geometry captures a scenario where the W-SRCQ is satisfied despite the failure of constraint nondegeneracy.

\begin{figure}[htbp]
    \centering
    \includegraphics[width=0.8\textwidth]{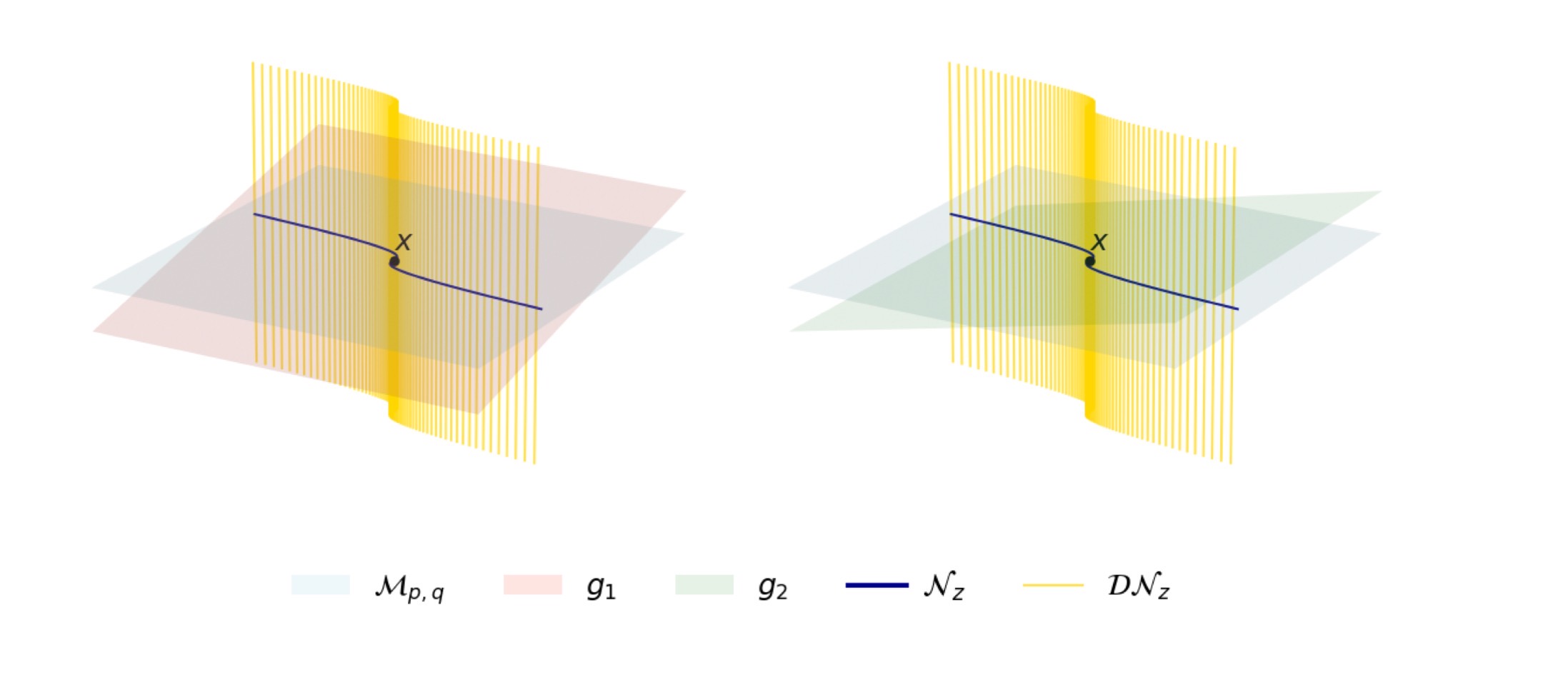}
    \caption{Geometric visualization of transversality: W-SRCQ vs. Constraint Nondegeneracy.}
    \label{fig:trans}
\end{figure}

The following corollary formalizes the geometric intuition that constraint nondegeneracy is a boundary case of the W-SRCQ when the stratum achieves maximum dimension.
\begin{corollary}
    Suppose $z = (x,y) \in \widetilde{\mathcal{M}}_{p,q}$ with $p+q = n$. Then, the W-SRCQ holds at $z$ if and only if the constraint nondegeneracy holds at $z$.
\end{corollary}
\begin{proof}
    When $p+q = n$, the stratum $\mathcal{M}_{p,q}$ is an open submanifold of $\mathbb{S}^n$. Consequently, the normal space collapses to $\mathcal{N}_{g(x)}(\mathcal{M}_{p,q} - y) = \mathcal{N}_{G(z)}\mathcal{M}_{p,q} = \{ 0 \}$. This dictates that $\mathcal{D}\mathcal{N}_z = \mathcal{N}_z$. The equivalence follows immediately from Proposition~\ref{prop:trans=CN} and Theorem~\ref{thm:trans=WSRCQ}. \ep
\end{proof}

This corollary shows that constraint nondegeneracy can be viewed as a special case of the W-SRCQ. Nevertheless, constraint nondegeneracy occupies a distinguished position in the theory. In the next subsection we shall see that it plays a particularly important role when one investigates variational properties \emph{across} different strata.
On the other hand, the classical transversality characterization of constraint nondegeneracy~\cite[(4.182)]{bonnans2013perturbation} implies that constraint nondegeneracy is stable under small perturbations. This naturally raises the question of whether an analogous stability property holds for the W-SRCQ. The following theorem answers this question in the affirmative and establishes the stability of the W-SRCQ along the stratum $\widetilde{\Mcal}_{p,q}$ via its transversality characterization.

\begin{theorem}\label{thm:stability-on-stratum}
    If the W-SRCQ holds at $z = (x, y) \in \widetilde{\mathcal{M}}_{p,q}$, then there exists a neighborhood $\widetilde{\mathcal{U}} \subseteq \widetilde{\mathcal{M}}_{p,q}$ of $z$ such that the W-SRCQ holds at every point in $\widetilde{\mathcal{U}}$.
\end{theorem}

We outline the geometric rationale here, deferring the rigorous proof to Appendix~\ref{appendix:transverse}. Fundamentally, transversality encodes a first order full-rank (nonsingularity) condition, requiring the Minkowski sum of the respective tangent spaces to span the ambient space. Because full-rank conditions characterize open sets in the space of linear operators, they inherently persist under sufficiently small data perturbations. Thus, defining the W-SRCQ via transversality immediately secures its local stability.

\begin{remark}
    A refined perturbation analysis across strata is provided in Section~\ref{section:subsub-CQ}, where the stability of the W-SRCQ along a fixed stratum emerges as a direct consequence of Proposition~\ref{prop:open-W-SRCQ}. We introduce this geometric derivation here to expose the transversality structure of the W-SRCQ, highlighting its deep conceptual parallel to classical constraint nondegeneracy.
\end{remark}

We conclude this subsection by establishing the \emph{genericity} of the W-SRCQ, mirroring the classical genericity of constraint nondegeneracy. Fix integers $p, q \in \mathbb{Z}_+$ and let $g : \mathbb{X} \to \mathbb{S}^n$ be a smooth mapping. Introducing a translation parameter $b \in \mathbb{S}^n$, we define the perturbed mapping
\[
     g_b(x) := g(x) + b, \quad x \in \mathbb{X},
\]
and construct the corresponding shifted quantities for $z = (x,y) \in \mathbb{X} \times \mathbb{S}^n$ analogously. The following theorem, whose proof resides in Appendix~\ref{appendix:transverse}, confirms that the W-SRCQ is a generic property over the perturbation space.

\begin{theorem}\label{thm:genericity-of-WSRCQ}
    For a generic perturbation parameter $b \in \mathbb{S}^n$, the W-SRCQ holds at every pair $z = (x, y)$ satisfying the complementarity condition $\mathbb{S}^n_+ \ni g_b(x) \perp y \in \mathbb{S}^n_-$.
\end{theorem}

\begin{remark}
    Having securely grounded the W-SRCQ in a transversality framework, an analogous geometric interpretation for the W-SOC can be derived via the dual characterization established in~\cite[Section~4]{feng2025quadratically}, provided the primal problem~\eqref{prog:SDP} is a convex quadratic program. While a primal geometric formulation for the general W-SOC and S-SOSC could theoretically be developed, its complexity would likely preclude practical utility. We therefore leave this as a direction for future research.
\end{remark}

\subsection{Variational properties across strata}\label{sec:sub-vp-across-stra}

Having characterized the variational behavior \emph{within} a fixed stratum, this subsection investigates how these properties propagate \emph{across} different strata. We demonstrate that the problem- and solution-level regularity conditions exhibit natural topological openness and closedness properties, governed strictly by the adjacency relations among strata. Furthermore, we establish that classical strong-form regularity conditions are mathematically equivalent to the local \emph{uniform validity} of their weak-form counterparts.

Notice that in the previous section we used $\overline{z}$ to denote KKT pairs and $z$ for an arbitrary point in $\mathbb{X} \times \mathbb{S}^n$. In this section, we adopt a different convention: we use $\overline{z}$ to denote a target point in $\mathbb{X} \times \mathbb{S}^n$, and $z$ for points near $\overline{z}$, for the sake of simplicity, as we are concerned with perturbations of arbitrary points in this section.

\subsubsection{The W-SRCQ across strata}\label{section:subsub-CQ}

We first examine the perturbation properties of the W-SRCQ across strata. Proposition~\ref{prop:open-W-SRCQ} establishes a stratified openness property: the W-SRCQ remains locally stable under small perturbations that preserve the negative inertia index.

\begin{proposition}\label{prop:open-W-SRCQ}
    Let $\overline{z} = (\overline{x}, \overline{y}) \in \mathbb{X} \times \mathbb{S}^n$, and let $q$ be the negative inertia index of $G(\overline{z})$. If the W-SRCQ holds at $\overline{z}$, then there exists a neighborhood $\mathcal{U}$ of $\overline{z}$ such that the W-SRCQ holds at all $z \in \mathcal{U}$ for which the negative inertia index of $G(z)$ is $q$.
\end{proposition}

\begin{proof}
    Let $(\alpha, \beta, \gamma, p, q, P, \lambda)$ be an IED of $G(\overline{z})$. Suppose, for the sake of contradiction, that the assertion fails. Then, there exists a sequence $z^\nu = (x^\nu, y^\nu) \in \mathbb{X} \times \mathbb{S}^n$ such that $z^\nu \to \overline{z}$, the negative inertia index of $G(z^\nu)$ is exactly $q$, and the W-SRCQ fails at each $z^\nu$. Let $(\alpha^\nu, \beta^\nu, \gamma, p^\nu, q, P^\nu, \lambda^\nu)$ be an IED of $G(z^\nu)$. Because $z^\nu \to \overline{z}$, we have $G(z^\nu) \to G(\overline{z})$. By \cite[Lemma 3]{chen2003non}, passing to a subsequence if necessary, we may assume $P^\nu \to P'$ for some
    $P' \in \mathcal{O}^n(G(\overline{z}))$, and that the index set satisfies $\beta^\nu = \beta'$ for all $\nu$ with some fixed index set $\beta'$. By eigenvalue continuity, we must have $\beta' \subseteq \beta$.

    Since the W-SRCQ fails at $z^\nu$, there exists $0\neq W^\nu\in\mathbb S^n$ such that
    \[
    \langle W^\nu,\, g'(x^\nu)u\rangle =0\quad \forall\,u\in\mathbb X
    \quad\mbox{and}\quad
    \langle W^\nu,\, P^\nu B(P^\nu)^\top\rangle =0\quad \forall\, B\in\mathbb{S}^n:\ B_{\beta'\gamma}=0,\ B_{\gamma\gamma}=0.
    \]
    By compactness, we may assume $\|W^\nu\|=1$ and $W^\nu\to W\neq 0$, and hence, letting $\nu\to\infty$ and using
    $g'(x^\nu)\to g'(\overline x)$ and $P^\nu\to P'$, we obtain
    \[
    \langle W,\, g'(\overline x)u\rangle =0\quad \forall\,u\in\mathbb X\quad\mbox{and}\quad
    \langle W,\, P' B(P')^\top\rangle =0\quad \forall\,B\in\mathbb{S}^n:\ B_{\beta'\gamma}=0,\ B_{\gamma\gamma}=0,
    \]
    which implies
    \[
    g'(\overline{x})\mathbb{X} + \{ P' B (P')^\top \mid B_{\beta'\gamma}=0,\ B_{\gamma\gamma}=0 \}\neq \mathbb{S}^n.
    \]
    Since $P' \in \mathcal{O}^n(G(\overline{z}))$ and $\beta' \subseteq \beta$, this contradicts the assumption that the W-SRCQ holds at $\overline{z}$. The proof is then completed.\ep
\end{proof}

This proposition shows that, locally, the validity of the W-SRCQ propagates to higher-dimensional strata as long as the negative inertia index remains fixed, a behavior we interpret as a form of openness. By contrast, an analogous openness property for constraint nondegeneracy (Definition~\ref{def:CN-nonKKT}) holds without imposing any restriction on the negative inertia index.

This proposition demonstrates that the validity of the W-SRCQ propagates locally to higher-dimensional strata, provided the negative inertia index is fixed. In contrast, an analogous openness property for constraint nondegeneracy (Definition~\ref{def:CN-nonKKT}) holds unconditionally regarding inertia indices.

\begin{corollary}\label{coro:open-CN}
    Let $\overline  z = (\overline  x, \overline  y) \in \mathbb{X} \times \Sbb^n$. If the constraint nondegeneracy holds at $\overline  z$, then there exists a neighborhood $\mU$ of $\overline  z$ such that the constraint nondegeneracy holds at all $z \in \mU$.
\end{corollary}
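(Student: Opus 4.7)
The plan is to mimic the contradiction argument in the proof of Proposition~\ref{prop:open-W-SRCQ}, exploiting the fact that, for constraint nondegeneracy, the block indexed by $(\beta\cup\gamma)\times(\beta\cup\gamma)$ can only \emph{shrink} under small perturbations, so no restriction on the negative inertia index is needed. Suppose, toward a contradiction, there exists a sequence $z^\nu=(x^\nu,y^\nu)\to\overline z$ at which constraint nondegeneracy fails. Fix IEDs $(\alpha,\beta,\gamma,p,q,P,\lambda)$ of $G(\overline z)$ and $(\alpha^\nu,\beta^\nu,\gamma^\nu,p^\nu,q^\nu,P^\nu,\lambda^\nu)$ of $G(z^\nu)$. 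By eigenvalue continuity (\cite[Theorem VII.3.1]{bhatia2013matrix}), compactness of $\mathcal{O}^n$, and finiteness of possible index partitions, pass to a subsequence so that $P^\nu\to P'\in\mathcal{O}^n(G(\overline z))$ and $(\alpha^\nu,\beta^\nu,\gamma^\nu)\equiv(\alpha',\beta',\gamma')$ is constant.

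The key structural observation is that continuity of the eigenvalues forces $\alpha\subseteq\alpha'$ and $\gamma\subseteq\gamma'$ (strictly positive or strictly negative eigenvalues of $G(\overline z)$ retain their sign under small perturbations), so $\beta'\cup\gamma'\subseteq\beta\cup\gamma$. Consequently, every $B\in\Sbb^n$ satisfying $B_{\beta\beta}=B_{\beta\gamma}=B_{\gamma\gamma}=0$ automatically satisfies $B_{\beta'\beta'}=B_{\beta'\gamma'}=B_{\gamma'\gamma'}=0$, so the set of ``allowable'' $B$-matrices appearing in Definition~\ref{def:CN-nonKKT} at $z^\nu$ \emph{contains}, after the limit has been taken, the corresponding set at $\overline z$ expressed through $P'$.

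Finally, I would take the limit of the failure relation
\begin{equation*}
    g'(x^\nu)\Xbb + \left\{ P^\nu B (P^\nu)^\top \mid B\in\Sbb^n,\ B_{\beta'\beta'}=B_{\beta'\gamma'}=B_{\gamma'\gamma'}=0 \right\} \neq \Sbb^n,
\end{equation*}
using $x^\nu\to\overline x$ together with $P^\nu\to P'$, to deduce
\begin{equation*}
    g'(\overline x)\Xbb + \left\{ P' B (P')^\top \mid B\in\Sbb^n,\ B_{\beta'\beta'}=B_{\beta'\gamma'}=B_{\gamma'\gamma'}=0 \right\} \neq \Sbb^n.
\end{equation*}
Combined with the set containment above and the fact that constraint nondegeneracy at $\overline z$ is invariant under the choice of $P'\in\mathcal{O}^n(G(\overline z))$, this would contradict the standing hypothesis at $\overline z$. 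The main potential obstacle, namely controlling how the index partition may split under perturbation, is precisely what forced the restriction $q^\nu=q$ in Proposition~\ref{prop:open-W-SRCQ}; here the relevant block can only contract, so the argument closes without any such assumption.
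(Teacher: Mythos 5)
Your proposal is correct and follows essentially the same route as the paper's proof: a contradiction argument using eigenvector convergence and a fixed index pattern along a subsequence, passing the failure relation to the limit, and exploiting that $\alpha$ can only grow (so the zeroed block only shrinks) to contradict constraint nondegeneracy at $\overline z$, with the same implicit appeal to invariance under the choice of $P\in\mathcal{O}^n(G(\overline z))$.
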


\begin{proof}
    Let $(\alpha,\beta,\gamma,\overline p,\overline q,P,\lambda)$ be an IED of $G(\overline z)$.
    Suppose, for the sake of contradiction, that the assertion fails. Then there exists a sequence
    $z^\nu=(x^\nu,y^\nu)\to\overline z$ such that constraint nondegeneracy fails at each $z^\nu$.
    Let $(\alpha^\nu,\beta^\nu,\gamma^\nu,p^\nu,q^\nu,P^\nu,\lambda^\nu)$ be an IED of $G(z^\nu)$.
    Because $z^\nu\to\overline z$, we have $G(z^\nu)\to G(\overline z)$.
    By \cite[Lemma 3]{chen2003non}, passing to a subsequence if necessary, we may assume $P^\nu\to P'$ for some
    $P'\in\mathcal O^n(G(\overline z))$, and that $\alpha^\nu=\alpha'$ for all $\nu$ with some fixed index set $\alpha'$.
    By eigenvalue continuity, we must have $\alpha'\supseteq \alpha$.

    By~\eqref{eq:CN-nonKKT}, the failure of constraint nondegeneracy at $z^\nu$ implies there exists $0\neq W^\nu\in\mathbb{S}^n$ such that
    \[
    \langle W^\nu,\, g'(x^\nu)u\rangle =0\quad \forall\, u\in\mathbb{X} \quad \mbox{and} \quad
    \big\langle W^\nu,\, P^\nu B(P^\nu)^\top\big\rangle =0\quad  \forall\,B\in\mathbb{S}^n:\
    B_{\beta^\nu\beta^\nu}=0,\ B_{\beta^\nu\gamma}=0,\ B_{\gamma\gamma}=0 .
    \]
    Passing to a subsequence, assume $\|W^\nu\|=1$, $W^\nu\to W\neq 0$, and letting $\nu\to\infty$ yields
    \[
    \langle W,\, g'(\overline{x})u\rangle =0\quad \forall\, u\in\mathbb{X} \quad \mbox{and} \quad
    \big\langle W,\, P' B(P')^\top\big\rangle =0\quad \forall\,B\in\mathbb{S}^n:\
    B_{\beta'\beta'}=0,\ B_{\beta'\gamma}=0,\ B_{\gamma\gamma}=0 ,
    \]
    and hence
    \[
    g'(\overline{x})\mathbb{X}
    + \{ P' B (P')^\top \mid B_{\beta'\beta'}=0,\ B_{\beta'\gamma}=0,\ B_{\gamma\gamma}=0 \}
    \neq \mathbb{S}^n .
    \]
    Since $P'\in \mathcal{O}^n(G(\overline{z}))$ and $\beta'\subseteq \beta$ (equivalently, $\alpha'\supseteq\alpha$), this contradicts constraint nondegeneracy holds at $\overline{z}$. The proof is then completed.\ep
\end{proof}

The following result is an immediate consequence of Corollary~\ref{coro:open-CN}. It demonstrates that constraint nondegeneracy holds if and only if the W-SRCQ holds collectively across all relevant strata in a local neighborhood.

\begin{theorem}\label{thm:CN-W-SRCQ}
    Let $\overline{z} = (\overline{x}, \overline{y}) \in \mathbb{X} \times \Sbb^n$. The following statements are equivalent:
    \begin{itemize}
        \item[(a)] the constraint nondegeneracy holds at $(\overline  x,\overline  y)$;
        \item[(b)] the W-SRCQ holds at every pair $(x, y)$ sufficiently close to $(\overline  x, \overline  y)$;
        \item[(c)] the W-SRCQ holds at every pair $(\overline  x, y)$ with $y$ sufficiently close to $\overline  y$.
    \end{itemize}
\end{theorem}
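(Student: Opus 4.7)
The plan is to handle the implications $(a)\Rightarrow(b)$ and $(b)\Rightarrow(c)$ by direct observations, and to reduce $(c)\Rightarrow(a)$ to a carefully chosen perturbation of $\overline{y}$ that ``absorbs'' the zero eigenvalues of $G(\overline{z})$ into the negative spectrum. For $(a)\Rightarrow(b)$, I would first invoke Corollary~\ref{coro:open-CN} to obtain a neighborhood $\mathcal{U}$ of $\overline{z}$ on which constraint nondegeneracy persists. Comparing Definitions~\ref{def:W-SRCQ-nonKKT} and~\ref{def:CN-nonKKT}, for any $z=(x,y)$ with IED $(\alpha,\beta,\gamma,p,q,P,\lambda)$ of $G(z)$ the set
\begin{equation*}
\{\,PBP^{\top}\mid B\in\mathbb{S}^n,\ B_{\beta\beta}=0,\ B_{\beta\gamma}=0,\ B_{\gamma\gamma}=0\,\}
\end{equation*}
is contained in $\{\,PBP^{\top}\mid B_{\beta\gamma}=0,\ B_{\gamma\gamma}=0\,\}$, so CN at $z$ immediately forces W-SRCQ at $z$. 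This yields $(b)$. The implication $(b)\Rightarrow(c)$ is trivial, since $(c)$ is the restriction of $(b)$ to pairs of the form $(\overline{x},y)$.

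For $(c)\Rightarrow(a)$, the key idea is that CN at $\overline{z}$ can be rewritten as W-SRCQ evaluated at a point where the zero block has been perturbed into strictly negative eigenvalues. Fix an IED $(\alpha,\beta,\gamma,p,q,\overline{P},\overline{\lambda})$ of $G(\overline{z})$ and, for $t>0$, set
\begin{equation*}
y_t := \overline{y} - t\,\overline{P}\,\mathrm{Diag}(0_{\alpha},I_{\beta},0_{\gamma})\,\overline{P}^{\top}.
\end{equation*}
A direct computation gives $G(\overline{x},y_t)=\overline{P}\,\mathrm{Diag}(\overline{\lambda}_{\alpha},-t I_{\beta},\overline{\lambda}_{\gamma})\,\overline{P}^{\top}$, so for all sufficiently small $t>0$ the new positive, zero, and negative index sets are $\alpha_t=\alpha$, $\beta_t=\emptyset$, and $\gamma_t=\beta\cup\gamma$, and $\overline{P}$ (up to a harmless column permutation that leaves the set $\{\overline{P}B\overline{P}^{\top}\mid\cdot\}$ invariant) remains an orthogonal matrix in an IED of $G(\overline{x},y_t)$. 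Substituting this IED into Definition~\ref{def:W-SRCQ-nonKKT} and using $\beta_t=\emptyset$, the W-SRCQ at $(\overline{x},y_t)$ becomes
\begin{equation*}
g'(\overline{x})\mathbb{X} + \big\{\,\overline{P}B\overline{P}^{\top} \mid B\in\mathbb{S}^n,\ B_{(\beta\cup\gamma)(\beta\cup\gamma)}=0\,\big\} = \mathbb{S}^n,
\end{equation*}
which is exactly the constraint nondegeneracy at $\overline{z}$ rewritten through the original IED. Since $y_t\to\overline{y}$ as $t\downarrow 0$, assumption $(c)$ supplies the W-SRCQ at $(\overline{x},y_t)$ for all small $t>0$, and hence $(a)$.

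The only potentially delicate point is the perturbation step in $(c)\Rightarrow(a)$: one has to choose the right direction (into the negative spectrum rather than the positive one, which would have left the W-SRCQ constraints strictly weaker than CN) and verify that the perturbed IED admits the same orthogonal matrix. Both are straightforward once the decomposition~\eqref{eq:eig-de} is invoked, so I do not expect any serious obstacle, and the theorem follows.
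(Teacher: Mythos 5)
Your proposal is correct and follows essentially the same route as the paper: $(a)\Rightarrow(b)$ via Corollary~\ref{coro:open-CN} plus the inclusion of the CN subspace in the W-SRCQ subspace, $(b)\Rightarrow(c)$ trivially, and $(c)\Rightarrow(a)$ by the same perturbation $y_t=\overline{y}-t\,\overline{P}\,\mathrm{Diag}(0,I_{\beta},0)\,\overline{P}^{\top}$ that pushes the zero eigenvalues into the negative spectrum so that the W-SRCQ at $(\overline{x},y_t)$ reads exactly as constraint nondegeneracy at $\overline{z}$. Your added verification that $\overline{P}$ remains a valid IED matrix and that the perturbation direction must be negative is a correct (and slightly more explicit) rendering of the step the paper states "by definition."
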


\begin{proof}
    \textbf{(a) $\Rightarrow$ (b):} By Corollary~\ref{coro:open-CN}, \eqref{eq:CN-nonKKT} holds for all $(x, y)$ sufficiently close to $(\overline{x}, \overline{y})$, which directly implies the validity of the W-SRCQ~\eqref{eq:W-SRCQ-nonKKT}.

    \textbf{(b) $\Rightarrow$ (c):} This implication is trivial.

    \textbf{(c) $\Rightarrow$ (a):} Let $(\alpha,\beta,\gamma,p,q,P,\lambda)$ be an IED of $G(\overline{z})$, and define the perturbed multiplier
    \begin{equation*}
        y^t = \overline{y} - P\begin{bmatrix}
            0 & 0 & 0\\
            0 & tI & 0\\
            0 & 0 & 0
        \end{bmatrix}P^\top,
    \end{equation*}
    where $t > 0$ is sufficiently small to ensure $y^t$ is close to $\overline{y}$. By assumption, the W-SRCQ holds at $(\overline{x}, y^t)$. Invoking~\eqref{eq:W-SRCQ-nonKKT} at $y^t$ immediately recovers~\eqref{eq:CN-nonKKT} at $\overline{y}$, proving the constraint nondegeneracy at $(\overline{x}, \overline{y})$. \ep
\end{proof}

Because the classical constraint nondegeneracy (\cite[Definition~5.70]{bonnans2013perturbation}) is a special case of the extended formulation (Definition~\ref{def:CN-nonKKT}), the preceding theorem yields an analogous characterization for the classical setting. We omit the proof, which follows directly from the proof of Theorem~\ref{thm:CN-W-SRCQ}.

\begin{corollary}
    Let $\overline x$ be a feasible point of \eqref{prog:SDP} and $\overline y\in M(\overline x)$. Then the following  statements are equivalent:
    \begin{itemize}
        \item[(a)] the constraint nondegeneracy (\cite[Definition~5.70]{bonnans2013perturbation}) holds at $\overline  x$;
        \item[(b)] the W-SRCQ holds at every pair $(x, y)$ sufficiently close to $(\overline  x, \overline  y)$;
        \item[(c)] there exists $y$ strictly complementary to $g(\overline x)$, such that the W-SRCQ holds at $(\overline  x, y)$.
    \end{itemize}
\end{corollary}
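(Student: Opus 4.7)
The plan is to mirror the three-way implication structure used in the proof of Theorem~\ref{thm:CN-W-SRCQ}, while exploiting the bridge provided by the remark after Definition~\ref{def:CN-nonKKT}, which identifies the classical constraint nondegeneracy at a feasible point $\overline x$ with the extended constraint nondegeneracy (Definition~\ref{def:CN-nonKKT}) at any complementary primal--dual pair $(\overline x, y)$, i.e., any $y$ with $\mathbb{S}^n_+\ni g(\overline x)\perp y\in \mathbb{S}^n_-$.

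For (a)$\Rightarrow$(b), I would first use the remark to translate the classical constraint nondegeneracy at $\overline x$ into the extended constraint nondegeneracy at $(\overline x,\overline y)$, noting that $\overline y\in M(\overline x)$ is automatically complementary to $g(\overline x)$. Corollary~\ref{coro:open-CN} then guarantees that the extended constraint nondegeneracy persists on a whole neighborhood of $(\overline x,\overline y)$, and comparing Definitions~\ref{def:W-SRCQ-nonKKT} and~\ref{def:CN-nonKKT} shows that constraint nondegeneracy is strictly stronger than W-SRCQ, the only difference being the extra block condition $B_{\beta\beta}=0$. Hence W-SRCQ holds throughout that neighborhood.

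For (b)$\Rightarrow$(c), I would exhibit a strictly complementary $y$ lying arbitrarily close to $\overline y$, so that (b) furnishes the W-SRCQ at $(\overline x,y)$ for free. Taking an IED $(\alpha,\beta,\gamma,p,q,P,\lambda)$ of $G(\overline z)$ and setting
\[
y := \overline y - t\, P_\beta P_\beta^{\top}, \qquad t>0 \text{ small},
\]
shifts only the $\beta\beta$-block of $G(\overline z)$ by $-tI$, turning the previously zero eigenvalues into strictly negative ones while leaving the positive spectrum of $g(\overline x)$ untouched. Thus $y\in \mathbb{S}^n_-$, $y\perp g(\overline x)$, and, crucially, $G(\overline x, y)$ has no zero eigenvalues, so $y$ is strictly complementary to $g(\overline x)$ and sits in the neighborhood supplied by (b).

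For (c)$\Rightarrow$(a), the key observation is that strict complementarity of $y$ with $g(\overline x)$ forces the index set $\beta$ in the IED of $G(\overline x, y)$ to be empty. Under $\beta=\emptyset$, the constraints $B_{\beta\beta}=0$ and $B_{\beta\gamma}=0$ become vacuous, and the matrix sets appearing in Definitions~\ref{def:W-SRCQ-nonKKT} and~\ref{def:CN-nonKKT} coincide. Therefore W-SRCQ at $(\overline x,y)$ is equivalent to the extended constraint nondegeneracy at $(\overline x,y)$, which by the remark after Definition~\ref{def:CN-nonKKT} is in turn equivalent to the classical constraint nondegeneracy at $\overline x$. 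The only mildly delicate step is the bookkeeping of the IED under the perturbation used in (b)$\Rightarrow$(c); the rest of the argument is a direct adaptation of Theorem~\ref{thm:CN-W-SRCQ}, with strict complementarity acting as the bridge that collapses W-SRCQ to constraint nondegeneracy.
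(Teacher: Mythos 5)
Your proposal is correct and follows essentially the same route the paper intends: it omits the proof precisely because it is the argument of Theorem~\ref{thm:CN-W-SRCQ} adapted via the remark after Definition~\ref{def:CN-nonKKT}, with the same perturbation $y=\overline y-t\,P_\beta P_\beta^{\top}$ (the paper's $y^t$) supplying the strictly complementary multiplier, and with strict complementarity forcing $\beta=\emptyset$ so that the W-SRCQ set collapses to the constraint-nondegeneracy set. Your bookkeeping of the IED and of the complementarity of $\overline y$ (from $\overline y\in M(\overline x)$) is accurate, so no gaps remain.
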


Complementing the openness property in Proposition~\ref{prop:open-W-SRCQ}, the next result establishes a stratified closedness property for the W-SRCQ. Specifically, the validity of the W-SRCQ within a local neighborhood of a stratum extends to the boundary along directions that preserve the positive inertia index.

\begin{proposition}\label{prop:close-W-SRCQ}
    Let $\mU$ be an open set in $\mathbb{X} \times \Sbb^n$ and let $\tMcal_{p,q}$ be a stratum of $\mathbb{S}^n$. If the W-SRCQ holds at every $z \in \mU \cap \tMcal_{p,q}$ for some $p, q$, then the W-SRCQ holds at every $z \in \mU \cap \tMcal_{p,q'}$ for all $q'$ such that $0 \le q' \le q$.
\end{proposition}

\begin{proof}
    Fix $q'$ such that $0 \le q' \le q$. For any $z = (x, y) \in \mathcal{U} \cap \widetilde{\mathcal{M}}_{p,q'}$ with an IED $(\alpha, \beta', \gamma', p, q', P, \lambda)$ of $G(z)$, we define the perturbation direction
    \begin{equation*}
        \Delta = P \begin{bmatrix}
            0 & 0 & 0 & 0 \\
            0 & 0 & 0 & 0 \\
            0 & 0 & -I & 0 \\
            0 & 0 & 0 & 0
        \end{bmatrix} P^\top,
    \end{equation*}
    where the block partition matches dimensions $p$, $n-p-q$, $q-q'$, and $q'$, and $I$ is the identity matrix of order $q-q'$. The trajectory $z^t = (x, y + t\Delta)$ resides strictly in the stratum $\widetilde{\mathcal{M}}_{p,q}$ for all $t > 0$, and $z^t \to z$ as $t \downarrow 0$. Because $\mathcal{U}$ is an open neighborhood of $z$, there exists $t_0 > 0$ such that $z^t \in \mathcal{U}$ for all $t \in (0, t_0]$. By hypothesis, the W-SRCQ holds at each $z^t$ on this segment.

    According to~\eqref{eq:W-SRCQ-nonKKT}, the W-SRCQ at $z^t$ asserts that
    \begin{equation}\label{eq:char-W-SRCQ-z-t}
        g'(x)\mathbb{X} + \bigl\{ P B P^\top \mid B \in \mathbb{S}^n,\ B_{\beta\gamma} = 0,\ B_{\gamma\gamma} = 0 \bigr\} = \mathbb{S}^n,
    \end{equation}
    where $\beta = \{p+1, \ldots, n-q\}$ and $\gamma = \{n-q+1, \ldots, n\}$. Because $\beta \subseteq \beta'$ and $\gamma \supseteq \gamma'$, the subspace corresponding to the W-SRCQ at $z$,
    \begin{equation*}
        \bigl\{ P B P^\top \mid B \in \mathbb{S}^n,\ B_{\beta'\gamma'} = 0,\ B_{\gamma'\gamma'} = 0 \bigr\},
    \end{equation*}
    is strictly contained within the corresponding subspace in~\eqref{eq:char-W-SRCQ-z-t}. Therefore, the subspace addition naturally spans $\mathbb{S}^n$, verifying that the W-SRCQ holds at $z$. \ep
\end{proof}

\subsubsection{The W-SOC across strata}\label{section:subsub-SOC}

By exploiting the primal-dual relationships between the W-SOC and the W-SRCQ established in~\cite[Section~4]{feng2025quadratically}, a dual counterpart to the results in Section~\ref{section:subsub-CQ} can be developed. For general (possibly nonconvex) instances of~\eqref{prog:SDP}, analogous conclusions follow via similar arguments. Specifically, Proposition~\ref{prop:open-W-SOC} establishes a stratified openness property for the W-SOC. As the dual analogue to Proposition~\ref{prop:open-W-SRCQ}, it requires the \emph{positive} inertia index $p$ to remain invariant, rather than the negative inertia index.

\begin{proposition}\label{prop:open-W-SOC}
    Let $\overline{z} = (\overline{x}, \overline{y}) \in \mathbb{X} \times \mathbb{S}^n$, and let $p$ be the positive inertia index of $G(\overline{z})$. If the W-SOC holds at $\overline{z}$, then there exists a neighborhood $\mathcal{U}$ of $\overline{z}$ such that the W-SOC holds at every $z \in \mathcal{U}$ for which $G(z)$ has positive inertia index $p$.
\end{proposition}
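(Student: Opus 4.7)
The plan is to argue by contradiction, mirroring the strategy used for Proposition~\ref{prop:open-W-SRCQ} but with the positive inertia $p$ playing the role that the negative inertia $q$ played there. I would suppose the conclusion fails and produce a sequence $z^\nu=(x^\nu,y^\nu)\to\overline z$ along which $G(z^\nu)$ has positive inertia $p$ but the W-SOC fails, so that there exist $v_x^\nu\in\appl(z^\nu)\setminus\{0\}$, which I normalize by $\|v_x^\nu\|=1$, at which the curvature functional in Definition~\ref{def:W-SOC-nonKKT} vanishes. Choosing IEDs $(\alpha^\nu,\beta^\nu,\gamma^\nu,p,q^\nu,P^\nu,\lambda^\nu)$ of $G(z^\nu)$ and $(\alpha,\beta,\gamma,p,q,\overline P,\overline\lambda)$ of $G(\overline z)$, the goal is to extract a limit $v_x^*\in\appl(\overline z)\setminus\{0\}$ that contradicts the W-SOC at $\overline z$.

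The first step is a standard continuity/compactness extraction. Since the ordered eigenvalues depend continuously on the matrix and the positive inertia is identically $p$, one forces $\alpha^\nu=\alpha=\{1,\ldots,p\}$ and $\beta^\nu\cup\gamma^\nu=\beta\cup\gamma=\{p+1,\ldots,n\}$ for all $\nu$, while for large $\nu$ the adjacency constraints give $\gamma\subseteq\gamma^\nu$ and $\beta^\nu\subseteq\beta$, with the extra indices $j\in\gamma^\nu\setminus\gamma$ satisfying $\lambda_j^\nu\to 0$ from below. After passing to a subsequence I may assume $P^\nu\to P'\in\mathcal{O}^n(G(\overline z))$ and $v_x^\nu\to v_x^*$ with $\|v_x^*\|=1$.

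The second step is to pass to the limit in the two conditions encoding the failure of the W-SOC. Using the characterization \eqref{eq:appl-char}, the membership $v_x^\nu\in\appl(z^\nu)$ is equivalent to
\[
\bigl[(P^\nu)^\top(\nabla g(x^\nu)^*v_x^\nu)P^\nu\bigr]_{(\beta\cup\gamma)(\beta\cup\gamma)}=0,
\]
which in the limit yields $v_x^*\in\appl(\overline z)$. The core of the argument is then the analysis of the curvature sum
\[
\sigma_{z^\nu}(v_x^\nu)=2\sum_{i\in\alpha}\sum_{j\in\gamma^\nu}\frac{-\lambda_j^\nu}{\lambda_i^\nu}\bigl[(P^\nu)^\top(\nabla g(x^\nu)^*v_x^\nu)P^\nu\bigr]_{ij}^2.
\]
The terms with $j\in\gamma$ converge coefficientwise to the corresponding terms of $\sigma_{\overline z}(v_x^*)$ since $\lambda_i^\nu\to\overline\lambda_i>0$ and $\lambda_j^\nu\to\overline\lambda_j<0$, while the ``extra'' terms with $j\in\gamma^\nu\setminus\gamma$ vanish because $\lambda_i^\nu$ remains bounded away from zero, $-\lambda_j^\nu\to 0$, and the bracketed squared factor is uniformly bounded. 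Combined with the continuity of $\nabla^2_{xx}L$, this yields
\[
\langle v_x^*,\nabla^2_{xx}L(\overline z)v_x^*\rangle-\sigma_{\overline z}(v_x^*)=0,
\]
contradicting the W-SOC at $\overline z$ since $v_x^*\in\appl(\overline z)\setminus\{0\}$.

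The main obstacle I anticipate is the careful passage to the limit in the curvature sum along the varying index set $\gamma^\nu$: a priori the factor $1/\lambda_i^\nu$ could blow up and the summation domain is not constant. Fixing the positive inertia $p$ is precisely what rescues the argument, because it forces $\alpha^\nu=\alpha$ and keeps every $\lambda_i^\nu$ appearing in the denominator uniformly bounded away from zero; the only eigenvalues that tend to zero show up in the numerator via $\lambda_j^\nu$ for $j\in\gamma^\nu\setminus\gamma$, where they harmlessly kill the corresponding summands. This asymmetry is exactly what explains why the dual openness statement in Proposition~\ref{prop:open-W-SOC} must preserve the positive inertia index rather than the negative one.
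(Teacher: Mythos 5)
Your proposal is correct and follows essentially the same route as the paper's proof: contradiction along a sequence with fixed positive inertia, extraction of convergent subsequences of eigenvector matrices, normalized directions and (constant) index sets, passage to the limit in the $\appl$ membership via the full $(\beta\cup\gamma)\times(\beta\cup\gamma)$ block, and the observation that the extra curvature terms indexed by $\gamma^\nu\setminus\gamma$ vanish because the denominators $\lambda_i^\nu$ stay bounded away from zero while $\lambda_j^\nu\to 0$. The only cosmetic difference is that you split the curvature sum before taking limits, whereas the paper takes the limit of the whole sum and then uses $\overline\lambda_j=0$ on the extra indices; these are equivalent.
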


\begin{proof}
    Let $(\alpha, \beta, \gamma, p, q, P, \lambda)$ be an IED of $G(\overline{z})$. Suppose, for the sake of contradiction, that the assertion fails. There exists a sequence $z^\nu = (x^\nu, y^\nu) \to \overline{z}$ such that the positive inertia index of $G(z^\nu)$ is $p$, but the W-SOC fails at each $z^\nu$. Let $(\alpha, \beta^\nu, \gamma^\nu, p, q^\nu, P^\nu, \lambda^\nu)$ be an IED of $G(z^\nu)$. Because $z^\nu \to \overline{z}$, we have $\operatorname{dist}\bigl(P^\nu, \mathcal{O}^n(G(\overline{z}))\bigr) \to 0$. By \cite[Lemma 3]{chen2003non}, passing to a subsequence if necessary, we assume $P^\nu \to P'$ for some $P' \in \mathcal{O}^n(G(\overline{z}))$, and $(\beta^\nu, \gamma^\nu) = (\beta', \gamma')$ for all $\nu$. By eigenvalue continuity, $\gamma' \supseteq \gamma$.

    The failure of the W-SOC at $z^\nu$ implies the existence of a vector $d^\nu \in \mathbb{X}$ with $\|d^\nu\| = 1$ such that
    \begin{equation*}
        \left\langle d^\nu, \nabla^2_{xx} L(z^\nu) d^\nu \right\rangle + 2 \sum_{i \in \alpha} \sum_{j \in \gamma'} \frac{-\lambda^\nu_j}{\lambda^\nu_i} \big[(P^\nu)^\top (\nabla g(x^\nu)^* d^\nu) P^\nu\big]_{ij}^2 = 0,
    \end{equation*}
    while satisfying the index block conditions:
    \begin{equation*}
        \bigl[(P^\nu)^\top (\nabla g(x^\nu)^*d^\nu)P^\nu\bigr]_{\beta'\beta'} = 0, \quad \bigl[(P^\nu)^\top (\nabla g(x^\nu)^*d^\nu)P^\nu\bigr]_{\beta'\gamma'} = 0 \quad \text{and} \quad \bigl[(P^\nu)^\top (\nabla g(x^\nu)^*d^\nu)P^\nu\bigr]_{\gamma'\gamma'} = 0.
    \end{equation*}
    Taking a further subsequence such that $d^\nu \to d^\infty \in \mathbb{X}$ (with $\|d^\infty\| = 1$), and passing to the limit as $\nu \to \infty$, we obtain
    \begin{equation*}
        \left\langle d^\infty, \nabla^2_{xx} L(\overline{z}) d^\infty \right\rangle + 2 \sum_{i \in \alpha} \sum_{j \in \gamma'} \frac{-\overline{\lambda}_j}{\overline{\lambda}_i} \big[(P')^\top (\nabla g(\overline{x})^* d^\infty) P'\big]_{ij}^2 = 0,
    \end{equation*}
    along with the limit block conditions:
    \begin{equation*}
        \bigl[(P')^\top (\nabla g(\overline{x})^*d^\infty)P'\bigr]_{\beta'\beta'} = 0, \quad \bigl[(P')^\top (\nabla g(\overline{x})^*d^\infty)P'\bigr]_{\beta'\gamma'} = 0 \quad \text{and} \quad \bigl[(P')^\top (\nabla g(\overline{x})^* d^\infty)P'\bigr]_{\gamma'\gamma'} = 0.
    \end{equation*}
    Because $P' \in \mathcal{O}^n(G(\overline{z}))$, $\gamma' \supseteq \gamma$, $\beta' \cup \gamma' = \{1, \ldots, n\} \setminus \alpha$, and $\overline{\lambda}_j = 0$ for all $j \in \gamma' \setminus \gamma$, this strictly contradicts the assumption that the W-SOC holds at $\overline{z}$. \ep
\end{proof}

Recall from Section~\ref{section:subsub-CQ} that the openness property of the W-SRCQ (Proposition \ref{prop:open-W-SRCQ}) and the constraint nondegeneracy (Proposition \ref{coro:open-CN}) lead to Theorem~\ref{thm:CN-W-SRCQ}, which characterizes constraint nondegeneracy via the local collective validity of the W-SRCQ. For second order conditions, however, the picture is more subtle: the strong second order sufficient condition (S-SOSC) is not, in general, stable in the same stratified sense. Consequently, establishing a dual analogue of Theorem~\ref{thm:CN-W-SRCQ}, that is, relating the S-SOSC to the local collective validity of the W-SOC, requires additional assumptions and more delicate technical tools.
Observe that if a self-adjoint linear operator $\mathcal{Q}$ on a Euclidean space $\mathbb{X}$ satisfies
\[
\langle \mathcal{Q}x, x\rangle \neq 0 \quad \forall\, x\in \mathcal{H}\setminus\{0\},
\]
for some subspace $\mathcal{H}\subseteq \mathbb{X}$, then by continuity of the mapping
$x\mapsto \langle \mathcal{Q}x, x\rangle$ and compactness of the unit sphere in $\mathcal{H}$, there exists a constant $c>0$ such that
\[
|\langle \mathcal{Q}x, x\rangle| \ge c\|x\|^2 \quad \forall\, x\in \mathcal{H}.
\]
Hence, if the W-SONC and W-SOC hold at some $z$, then the right-hand side in \eqref{eq:W-SOC-nonKKT} can be changed to $c\|v_x\|^2$ for some fixed $c>0$.
Motivated by this elementary fact, we introduce a uniform version of the W-SOC in a neighborhood of a given point $\overline{z}\in \mathbb{X}\times\mathbb{S}^n$.

\begin{definition}\label{def:W-SOC-nonKKT-uniform}
    Let $\overline{z} = (\overline{x}, \overline{y}) \in \mathbb{X} \times \mathbb{S}^n$. The W-SOC is said to hold \emph{uniformly on a neighborhood} $\mathcal{U}$ of $\overline{z}$ if there exists a constant $c > 0$ such that, for every $z = (x,y) \in \mathcal{U}$ with an IED $(\alpha,\beta,\gamma,p,q,P,\lambda)$ of $G(z)$, we have
    \begin{equation}\label{eq:W-SOC-nonKKT-uniform}
        \mQ_{z}(v_x) \ge c\|v_x\|^2 \quad \forall\, v_x \in \operatorname{appl}(x,y),
    \end{equation}
    where $\operatorname{appl}(x,y)$ is defined in~\eqref{eq:def-appl-nonKKT}.
\end{definition}

With Definitions~\ref{def:W-SOC-nonKKT-uniform} in place, we can now establish the desired relationship between the S-SOSC and the uniform validity of the W-SOC.

\begin{theorem}\label{thm:S-SOSC-W-SOC}
Let $\overline{z}=(\overline{x},\overline{y})\in \mathbb{X}\times\Sbb^n$. The following statements are equivalent:
\begin{itemize}
\item[(a)] the S-SOSC holds at $\overline{z}$;
\item[(b)] the W-SOC holds uniformly on a neighborhood of $\overline{z}$;
\item[(c)] the W-SOC holds uniformly for $(\overline{x},y)$ with $y$ in a neighborhood of $\overline{y}$.
\end{itemize}
\end{theorem}

\begin{proof}
    Let $(\overline{\alpha}, \overline{\beta}, \overline{\gamma}, \overline{p}, \overline{q}, \overline{P}, \overline{\lambda})$ be an IED of $G(\overline{z})$.

    \textbf{(a) $\Rightarrow$ (b):} We argue by contradiction. Suppose that there exist a sequence $z^\nu = (x^\nu, y^\nu) \to \overline{z}$ and scalars $c^\nu \downarrow 0$ such that the uniform W-SOC fails at each $z^\nu$. Let $(\alpha^\nu, \beta^\nu, \gamma^\nu, p^\nu, q^\nu, P^\nu, \lambda^\nu)$ be an IED of $G(z^\nu)$. By \cite[Lemma 3]{chen2003non}, passing to a subsequence if necessary, we may assume $P^\nu \to P \in \mathcal{O}^n(G(\overline{z}))$, $(\alpha^\nu, \beta^\nu, \gamma^\nu) = (\alpha, \beta, \gamma)$ for all $\nu$, and $\lambda^\nu \to \overline{\lambda}$. Without loss of generality, $\alpha \supseteq \overline{\alpha}$, $\beta \subseteq \overline{\beta}$, and $\gamma \supseteq \overline{\gamma}$.

    The failure of the uniform W-SOC guarantees the existence of $d^\nu \in \mathbb{X}$ with $\|d^\nu\| = 1$ such that
    \begin{equation}\label{eq:W-SOC-c^nu-fail}
        \left\langle d^\nu, \nabla^2_{xx} L(z^\nu) d^\nu \right\rangle + 2 \sum_{i \in \alpha} \sum_{j \in \gamma} \frac{-\lambda^\nu_j}{\lambda^\nu_i} \big[ (P^\nu)^\top (\nabla g(x^\nu)^* d^\nu) P^\nu\big]_{ij}^2 < c^\nu
    \end{equation}
    and $d^\nu \in \operatorname{appl}(z^\nu)$, satisfying:
    \begin{equation}\label{eq:appl-nu}
        \bigl[(P^\nu)^\top (\nabla g(x^\nu)^*d^\nu) P^\nu\bigr]_{\beta\beta} = 0,\quad
        \bigl[(P^\nu)^\top (\nabla g(x^\nu)^*d^\nu) P^\nu\bigr]_{\beta\gamma} = 0,\quad
        \bigl[(P^\nu)^\top (\nabla g(x^\nu)^*d^\nu) P^\nu\bigr]_{\gamma\gamma} = 0.
    \end{equation}
    For $i \in \alpha \setminus \overline{\alpha}$ and $j \in \overline{\gamma}$, we have $\lambda_i^\nu \downarrow 0$ and $\lambda_j^\nu \to \overline{\lambda}_j < 0$, making the ratio $-\lambda_j^\nu / \lambda_i^\nu \to +\infty$. Because the quadratic sequence in~\eqref{eq:W-SOC-c^nu-fail} is bounded, this enforces
    \begin{equation}\label{eq:alpha-setminus-gamma}
        \big[ (P^\nu)^\top (\nabla g(x^\nu)^* d^\nu) P^\nu\big]_{ij} \to 0 \quad \forall\, i \in \alpha\setminus\overline{\alpha},\ j \in \overline{\gamma}.
    \end{equation}
    Restricting the sum to $\overline{\alpha}$ and $\overline{\gamma}$, we maintain
    \begin{equation}\label{eq:W-SOC-nu-bar-alpha-gamma}
        \left\langle d^\nu, \nabla^2_{xx} L(z^\nu) d^\nu \right\rangle + 2 \sum_{i \in \overline{\alpha}} \sum_{j \in \overline{\gamma}} \frac{-\lambda^\nu_j}{\lambda^\nu_i} \big[ (P^\nu)^\top (\nabla g(x^\nu)^* d^\nu) P^\nu\big]_{ij}^2 < c^\nu.
    \end{equation}
    Assuming $d^\nu \to d^\infty$ with $\|d^\infty\| = 1$, we pass to the limit in~\eqref{eq:appl-nu},~\eqref{eq:alpha-setminus-gamma}, and~\eqref{eq:W-SOC-nu-bar-alpha-gamma} to obtain
    \begin{align*}
        &\left\langle d^\infty, \nabla^2_{xx} L(\overline{z}) d^\infty \right\rangle + 2 \sum_{i \in \overline{\alpha}} \sum_{j \in \overline{\gamma}} \frac{-\overline{\lambda}_j}{\overline{\lambda}_i} \big[ P^\top (\nabla g(\overline{x})^* d^\infty) P\big]_{ij}^2 \le 0,\\
        &\bigl[P^\top (\nabla g(\overline{x})^*d^\infty) P\bigr]_{\beta\beta} = 0, \quad \bigl[P^\top (\nabla g(\overline{x})^*d^\infty) P\bigr]_{\beta\gamma} = 0, \quad \bigl[P^\top (\nabla g(\overline{x})^*d^\infty) P\bigr]_{\gamma\gamma} = 0,\\
        \text{and} \quad &\bigl[ P^\top (\nabla g(\overline{x})^*d^\infty) P\bigr]_{ij} = 0 \quad \forall\, i \in \alpha \setminus \overline{\alpha},\ j \in \overline{\gamma}.
    \end{align*}
    These conditions guarantee that $d^\infty \in \operatorname{app}(\overline{z})$. However, the resulting nonpositive quadratic form directly contradicts the S-SOSC at $\overline{z}$, confirming (a) $\Rightarrow$ (b).

    \textbf{(b) $\Rightarrow$ (c):} This implication is trivial.

    \textbf{(c) $\Rightarrow$ (a):} Define the block matrix
    \begin{equation*}
        \Delta = \overline{P} \begin{bmatrix}
            0 & 0 & 0 \\
            0 & I & 0 \\
            0 & 0 & 0
        \end{bmatrix} \overline{P}^\top,
    \end{equation*}
    corresponding to dimensions $\overline{p}$, $n-\overline{p}-\overline{q}$, and $\overline{q}$, where $I$ is the relevant identity matrix. The sequence $z^t = (\overline{x}, \overline{y} + t\Delta) \to \overline{z}$ as $t \downarrow 0$. By assumption, there exists $t_0 > 0$ such that the W-SOC holds uniformly at $z^t$ for all $t \in (0, t_0]$. Thus, there exists a constant $c > 0$ such that
    \begin{equation}\label{eq:char-quadform-zt2}
        \left\langle v_x, \nabla^2_{xx} L(z^t) v_x \right\rangle + 2 \sum_{i\in\overline{\alpha}} \sum_{j \in \overline{\gamma}} \frac{-\overline{\lambda}_j}{\overline{\lambda}_i} \big[\overline{P}^\top (\nabla g(\overline{x})^* v_x) \overline{P}\big]_{ij}^2 + 2 \sum_{i=\overline{p}+1}^{n-\overline{q}} \sum_{j \in \overline{\gamma}} \frac{-\overline{\lambda}_j}{t} \big[\overline{P}^\top (\nabla g(\overline{x})^* v_x) \overline{P}\big]_{ij}^2 \ge c\|v_x\|^2
    \end{equation}
    for all $v_x \in \operatorname{appl}(z^t) = \bigl\{v_x \in \mathbb{X} \mid \big[ \overline{P}^\top (\nabla g(\overline{x})^*v_x) \overline{P}\big]_{\overline{\gamma}\,\overline{\gamma}} = 0\bigr\}$.

    If the S-SOSC fails at $\overline{z}$, there exists a nonzero $d \in \operatorname{app}(\overline{z})$ such that
    \begin{equation}\label{eq:char-quadform-bar-z}
        \left\langle d, \nabla^2_{xx} L(\overline{z}) d \right\rangle + 2 \sum_{i \in \overline{\alpha}} \sum_{j \in \overline{\gamma}} \frac{-\overline{\lambda}_j}{\overline{\lambda}_i} \big[\overline{P}^\top (\nabla g(\overline{x})^* d)\overline{P}\big]_{ij}^2 \le 0.
    \end{equation}
    Because $d \in \operatorname{app}(\overline{z})$, it satisfies $\bigl[\overline{P}^\top (\nabla g(\overline{x})^*d) \overline{P}\bigr]_{\overline{\gamma}\,\overline{\gamma}} = 0$ and $\bigl[\overline{P}^\top (\nabla g(\overline{x})^*d) \overline{P}\bigr]_{\overline{\beta}\,\overline{\gamma}} = 0$. Consequently, $d \in \operatorname{appl}(z^t)$. Substituting $v_x = d$ into~\eqref{eq:char-quadform-zt2} and letting $t \downarrow 0$ yields
    \begin{equation*}
        \left\langle d, \nabla^2_{xx} L(\overline{z}) d \right\rangle + 2 \sum_{i\in\overline{\alpha}} \sum_{j \in \overline{\gamma}} \frac{-\overline{\lambda}_j}{\overline{\lambda}_i} \big[\overline{P}^\top (\nabla g(\overline{x})^* d) \overline{P}\big]_{ij}^2 \ge c\|d\|^2 > 0,
    \end{equation*}
    which contradicts~\eqref{eq:char-quadform-bar-z}. Thus, the S-SOSC must hold at $\overline{z}$. \ep
\end{proof}

We finalize this subsection by establishing the stratified closedness of the uniform W-SOC.

\begin{proposition}\label{prop:close-W-SOC}
    Let $\mathcal{U}$ be an open set in $\mathbb{X} \times \mathbb{S}^n$ and let $\widetilde{\mathcal{M}}_{p,q}$ be a stratum. If the W-SOC holds uniformly at every $z \in \mathcal{U} \cap \widetilde{\mathcal{M}}_{p,q}$ for some $p, q$, then the W-SOC holds uniformly at every $z \in \mathcal{U} \cap \widetilde{\mathcal{M}}_{p',q}$ for all $0 \le p' \le p$.
\end{proposition}

\begin{proof}
    For $0 \le p' \le p$ and $z = (x, y) \in \mathcal{U} \cap \widetilde{\mathcal{M}}_{p',q}$ with an IED $(\alpha', \beta', \gamma, p', q, P, \lambda)$ of $G(z)$, define the perturbation direction
    \begin{equation*}
        \Delta = P \begin{bmatrix}
            0 & 0 & 0 & 0 \\
            0 & I & 0 & 0 \\
            0 & 0 & 0 & 0 \\
            0 & 0 & 0 & 0
        \end{bmatrix} P^\top,
    \end{equation*}
    partitioned according to dimensions $p'$, $p-p'$, $n-p-q$, and $q$, where $I$ is the identity matrix of order $p-p'$. The sequence $z^t = (x, y + t\Delta)$ resides strictly in $\widetilde{\mathcal{M}}_{p,q}$ for $t > 0$, converging to $z$ as $t \downarrow 0$. Since $\mathcal{U}$ is open, $z^t \in \mathcal{U}$ for sufficiently small $t \in (0, t_0]$. Let $\alpha = \{1, \ldots, p\}$ and $\beta = \{p+1, \ldots, n-q\}$. The tuple $(\alpha, \beta, \gamma, p, q, P, \lambda + t\operatorname{diag}(\Delta))$ serves as a valid IED for $G(z^t)$. Note that $\alpha \supseteq \alpha'$ and $\beta \subseteq \beta'$.

    By assumption, the uniform W-SOC at $z^t$ guarantees a constant $c > 0$ such that
    \begin{equation}\label{eq:W-SOC-z^t}
        \langle v_x, \nabla^2_{xx} L(z^t) v_x \rangle + 2 \sum_{i \in \alpha'} \sum_{j \in \gamma} \frac{-\lambda_j}{\lambda_i} \bigl[ P^\top (\nabla g(x)^* v_x) P\bigr]_{ij}^2 + 2 \sum_{i \in \alpha \setminus \alpha'} \sum_{j \in \gamma} \frac{-\lambda_j}{t} \bigl[ P^\top (\nabla g(x)^* v_x) P\bigr]_{ij}^2 \ge c\|v_x\|^2
    \end{equation}
    for all nonzero $v_x \in \operatorname{appl}(z^t)$, where
    \begin{equation*}
        \operatorname{appl}(z^t) = \bigl\{ v_x \in \mathbb{X} \mid \bigl[P^\top (\nabla g(x)^*v_x) P\bigr]_{\beta\beta} = 0,\ \bigl[P^\top (\nabla g(x)^*v_x) P\bigr]_{\beta\gamma} = 0,\ \bigl[P^\top (\nabla g(x)^*v_x) P\bigr]_{\gamma\gamma} = 0 \bigr\}.
    \end{equation*}
    Because $\beta \subseteq \beta'$, we inherently have $\operatorname{appl}(z) \subseteq \operatorname{appl}(z^t)$.

    If the W-SOC fails at $z$, there exists a nonzero vector $d \in \operatorname{appl}(z)$ such that
    \begin{equation}\label{eq:W-SOC-z3}
        \langle d, \nabla^2_{xx} L(z) d \rangle + 2 \sum_{i \in \alpha'} \sum_{j \in \gamma} \frac{-\lambda_j}{\lambda_i} \bigl[ P^\top (\nabla g(x)^* d) P \bigr]_{ij}^2 < c\|d\|^2.
    \end{equation}
    Because $d \in \operatorname{appl}(z)$, the cross terms satisfy $\bigl[ P^\top (\nabla g(x)^* d) P \bigr]_{ij} = 0$ for all $i \in \alpha \setminus \alpha'$ and $j \in \gamma$. Substituting $v_x = d$ into~\eqref{eq:W-SOC-z^t} and letting $t \downarrow 0$ yields a strict contradiction against~\eqref{eq:W-SOC-z3}. This verifies that the W-SOC uniformly extends to $z$. \ep
\end{proof}

\subsubsection{The problem-level regularity across strata}\label{section:subsub-regularity}

A foundational result established in~\cite{sun2006strong} elegantly links problem-level constraint qualifications to solution-level strong metric regularity for the NLSDP. In Section~\ref{subsec:variational-properties}, we derived a purely stratum-restricted analogue to this equivalence. We now systematically integrate the \emph{across-strata} perturbation results from Sections~\ref{section:subsub-CQ} and~\ref{section:subsub-SOC}, effectively demonstrating that strong-form problem-level conditions at a point naturally equate to the \emph{local uniform validity} of their weak-form counterparts across a neighborhood.

\begin{theorem}\label{thm:regularity-across}
    Let $\overline{x}$ be a local minimizer of~\eqref{prog:SDP} satisfying the RCQ, and let $\overline{y} \in M(\overline{x})$. The following statements (a)--(d) are equivalent, and each implies (e):
    \begin{itemize}
        \item[(a)] the S-SOSC and the constraint nondegeneracy hold simultaneously at $(\overline{x}, \overline{y})$;
        \item[(b)] every element in the Clarke generalized Jacobian $\partial_C F(\overline{x}, \overline{y})$ is nonsingular;
        \item[(c)] the KKT mapping $F$ is strongly metrically regular at $(\overline{x}, \overline{y})$;
        \item[(d)] the W-SOC holds uniformly, and the W-SRCQ holds at every pair $(x, y)$ sufficiently close to $(\overline{x}, \overline{y})$;
        \item[(e)] for every pair $(x,y)$ sufficiently close to $(\overline{x},\overline{y})$, the manifold differential $dF_{(x, y)}$ is injective.
    \end{itemize}
\end{theorem}

\begin{proof}
    The equivalence \textbf{(a) $\Leftrightarrow$ (b) $\Leftrightarrow$ (c)} is classically established in~\cite[Theorem~4.1]{sun2006strong}. The equivalence \textbf{(a) $\Leftrightarrow$ (d)} follows directly from Theorem~\ref{thm:CN-W-SRCQ} and Theorem~\ref{thm:S-SOSC-W-SOC}. Finally, the implication \textbf{(d) $\Rightarrow$ (e)} was proved in Theorem~\ref{thm:WR-dF-nons}. \ep
\end{proof}

\begin{remark}
    If statement (e) is strengthened to the following \emph{uniform} version:
    \begin{quote}
        \emph{(e$'$)} \emph{for every $(x,y)$ sufficiently close to $(\overline{x},\overline{y})$, the manifold differential $dF_{(x, y)}$ is uniformly injective and positively (or negatively) oriented,}
    \end{quote}
    then this uniform condition (e$'$) is equivalent to statements (a)--(d) in Theorem~\ref{thm:regularity-across}. We omit the requisite definitions and formal proofs, as they require topological degree theory peripheral to our core message. The paramount takeaway from Theorem~\ref{thm:regularity-across} is that classical strong-form regularity conditions are exactly equivalent to the local uniform satisfaction of their stratum-restricted weak counterparts.
\end{remark}

\begin{remark}\label{remark:condition-uniform-validity}
    If the constraint mapping $g$ is affine, the requirement of uniform validity over a local neighborhood can be safely relaxed to pointwise validity. In such cases, the quadratic bound~\eqref{eq:W-SOC-z^t} is satisfied globally, immediately yielding the requisite theoretical equivalences.
\end{remark}

\begin{remark}
    From a primal-dual perspective, it is natural to ask why statement~(d) demands a \emph{uniform} version of the W-SOC, whereas no such uniformity is required for the W-SRCQ. We offer an intuitive geometric explanation here, leaving a rigorous analysis for future work. The W-SRCQ is governed by the affine structure of the linearized constraint mapping $\nabla g(x)^{*}$; as noted in Remark~\ref{remark:condition-uniform-validity}, this affine nature automatically enforces uniformity once the condition holds pointwise locally. In contrast, the W-SOC (the dual analogue of the W-SRCQ) loses this affine characteristic due to the inherent spectral nonlinearity of the nonpolyhedral cone $\mathbb{S}^n_+$. Consequently, an explicit uniform formulation is mathematically indispensable.
\end{remark}

\section{A globalized Gauss--Newton method based on stratification}\label{section:global}

In this section, we leverage the stratification framework to develop a globally convergent Newton-type method with rapid local convergence for solving the KKT system~\eqref{eq:kkt-mapping}. By restricting the nonsmooth KKT mapping $F$ to individual strata, our approach systematically resolves the inherent nonsmoothness. Specifically, to compute a KKT pair of~\eqref{prog:SDP}, we consider the unconstrained least-squares reformulation:
\begin{equation}\label{prog:global-P}
\begin{aligned}
    \min \quad & \varphi(z) := \frac{1}{2}\|F(z)\|^2 \\
    \mathrm{s.t.} \quad & z \in \mathbb{X}\times\mathbb{S}^n.
\end{aligned}
\end{equation}

Because the nonsmoothness of $F$ is resolved only by restricting it to a fixed stratum, any global method must be capable of navigating seamlessly between strata of varying dimensions. Accordingly, our algorithm integrates three complementary mechanisms:
\begin{itemize}
    \item \emph{Tangential steps along a stratum:} Compute a Levenberg--Marquardt step tangent to the current stratum, implemented via a line-search-compatible retraction to ensure monotonic descent of $\varphi$.
    \item \emph{Normal steps for stratum escape:} Exploit explicitly constructed normal directions to escape from stationary behavior confined solely to a local stratum. These steps are accepted without a line search.
    \item \emph{Eigenvalue-triggered stratum correction:} When a sufficiently small eigenvalue is detected, deploy a correction mechanism to identify a lower-dimensional stratum harboring a more appropriate inertia.
\end{itemize}

Under mild assumptions, we establish the global convergence of the proposed method and rigorously prove its superlinear or quadratic local convergence. Crucially, the regularity prerequisites guaranteeing the local convergence rate are significantly weaker than the classical nondegeneracy assumptions typically imposed on existing Newton-type frameworks.

\subsection{Algorithm formulation}

We first outline the proposed algorithm at a conceptual level, deferring precise analytical definitions to subsequent discussions. We refer to the method as the \emph{Stratified Gauss--Newton method} (SGN; see Algorithm~\ref{alg:SGN}). At each iteration, the current iterate $z^k$ is first potentially corrected to $\widehat{z}^k$, aiming to project it onto a stratum where the requisite regularity properties hold. Starting from $\widehat{z}^k$, we compute a stratum LM-normal step, denoted by $\mathrm{SLMN}(\widehat{z}^k)$ (see Algorithm~\ref{alg:SLMN}), and apply an acceptance test based on the sufficient decrease of the merit function $\varphi$. This mechanism strictly enforces a monotonic decrease in the sequence $\{\varphi(z^k)\}$, rigorously precluding any potential residual inflation induced by the correction step.

\begin{algorithm}[H]
\caption{Stratified Gauss--Newton Method with correction (SGN)}
\label{alg:SGN}
\begin{algorithmic}[1]
\State \textbf{Initialize:} $z^0 \in \mathbb{X} \times \mathbb{S}^n$, correction threshold $\delta > 0$, and tolerance $\epsilon > 0$.
\For{$k = 0, 1, \ldots$ while $s(z^k) > \epsilon$ (Definition~\ref{def:d-sta-mea})}
    \State Compute the corrected point $\widehat{z}^k$ from $z^k$ via~\eqref{eq:hat-z} using the threshold $\delta$.
    \If{$\varphi\bigl( \mathrm{SLMN}(\widehat{z}^k) \bigr) \leq \varphi(z^k)$}
        \State $z^{k+1} \leftarrow \mathrm{SLMN}(\widehat{z}^k)$
    \Else
        \State $z^{k+1} \leftarrow \mathrm{SLMN}(z^k)$
    \EndIf
\EndFor
\end{algorithmic}
\end{algorithm}

Algorithm~\ref{alg:SGN} relies on three functional subroutines: a stationarity-measure function $s(\cdot)$ (see Definition \ref{def:d-sta-mea}), a stratum LM-normal step $\mathrm{SLMN}(\cdot)$ (see Algorithm \ref{alg:SLMN}), and a correction step $\widehat{(\cdot)}$ (see \eqref{eq:hat-z}). We detail and analyze these components in the following subsections.

\subsubsection{Directional stationarity and the stationarity measure}

A suitable stopping criterion is essential for practical iterative algorithms. Conventionally, the gradient norm of the merit function serves this purpose. Due to the nonsmoothness of $\varphi$, however, we must adopt the broader concept of \emph{directional stationarity}. We first recall its standard definition in nonsmooth optimization (see, e.g.,~\cite[Section~8.2]{facchinei2003finite}).

\begin{definition}\label{def:D-stationary}
    Let $\phi : \mathbb{R}^n \to \mathbb{R}$ be a locally Lipschitz continuous and directionally differentiable function. A point $z^* \in \mathbb{R}^n$ is called a \emph{directionally stationary point} (or simply \emph{D-stationary point}) of $\phi$ if
    \begin{equation*}
        \phi'(z^*; v) \ge 0 \quad \forall\, v \in \mathbb{R}^n.
    \end{equation*}
\end{definition}

Since problem~\eqref{prog:global-P} is unconstrained, it is natural to use its objective function $\varphi$ as the merit function. It follows directly from the definition that every KKT pair of~\eqref{prog:SDP}, or equivalently, every global minimizer of~\eqref{prog:global-P}, is a D-stationary point of $\varphi$. We now compute the directional derivative of $\varphi$. While previous sections restricted the tangent vector $v = (v_x,v_y)$ to $\mathcal{T}_z\widetilde{\mathcal{M}}_{p,q}$, we now consider an arbitrary direction $v \in \mathbb{X} \times \mathbb{S}^n$  (not necessarily tangent to a fixed stratum), retaining the notation $v = (v_x, v_y)$ for uniformity.

Fix $z = (x,y) \in \mathbb{X} \times \mathbb{S}^n$, and let $G(z)$ admit an IED $(\alpha,\beta,\gamma,p,q,P,\lambda)$. Recall the orthogonal decomposition of the ambient matrix space:
\begin{equation}\label{eq:decomp-of-Y}
    \mathbb{S}^n = \mathcal{T}_{G(z)}\mathcal{M}_{p,q} \oplus \mathcal{N}_{G(z)}\mathcal{M}_{p,q},
\end{equation}
where
\begin{equation*}
    \mathcal{T}_{G(z)}\mathcal{M}_{p,q} = \bigl\{ P\widetilde{H}P^{\top} \mid \widetilde{H} \in \mathbb{S}^n,\ \widetilde{H}_{\beta\beta} = 0 \bigr\}
\end{equation*}
and
\begin{equation*}
    \mathcal{N}_{G(z)}\mathcal{M}_{p,q} = \bigl\{ P\widetilde{H}P^{\top} \mid \widetilde{H} \in \mathbb{S}^n,\ \widetilde{H}_{ij} = 0 \text{ for all } (i,j) \notin \beta \times \beta \bigr\}.
\end{equation*}

The following lemma lifts this  decomposition to $\mathbb{X} \times \mathbb{S}^n$ in a manner compatible with the coordinate isomorphism $\phi_z$ defined in~\eqref{eq:phi_z}.

\begin{lemma}\label{lemma:decomp-of-XxY}
    For an arbitrary $z \in \mathbb{X} \times \mathbb{S}^n$ with $G(z)$ admitting an IED $(\alpha,\beta,\gamma,p,q,P,\lambda)$, and for any direction $v = (v_x,v_y) \in \mathbb{X} \times \mathbb{S}^n$, the isomorphism $\phi_z$ defined in~\eqref{eq:phi_z} induces the orthogonal decomposition $\phi_z(v) = u_1 + u_2$, where
    \begin{equation*}
        u_1 = \begin{bmatrix} v_x \\ H_1 \end{bmatrix} \in \mathbb{X} \times \mathcal{T}_{G(z)} \mathcal{M}_{p,q}, \quad \text{with } H_1 \in \mathcal{T}_{G(z)} \mathcal{M}_{p,q},
    \end{equation*}
    and
    \begin{equation*}
        u_2 = \begin{bmatrix} 0 \\ H_2 \end{bmatrix} \in \{ 0 \} \times \mathcal{N}_{G(z)} \mathcal{M}_{p,q}, \quad \text{with } H_2 \in \mathcal{N}_{G(z)} \mathcal{M}_{p,q}.
    \end{equation*}
    Defining the normal pullback $\mathcal{N}_z\widetilde{\mathcal{M}}_{p,q} := \phi_z^{-1} \bigl(\{ 0 \} \times \mathcal{N}_{G(z)}\mathcal{M}_{p,q}\bigr)$, the ambient space uniquely decomposes as the direct sum
    \begin{equation*}
        \mathbb{X} \times \mathbb{S}^n = \mathcal{T}_z \widetilde{\mathcal{M}}_{p,q} \oplus \mathcal{N}_z \widetilde{\mathcal{M}}_{p,q}.
    \end{equation*}
    Consequently, we obtain the unique decomposition $v = v_1 + v_2$, with $v_1 \in \mathcal{T}_z\widetilde{\mathcal{M}}_{p,q}$ and $v_2 \in \mathcal{N}_z\widetilde{\mathcal{M}}_{p,q}$.
\end{lemma}

\begin{proof}
    The verification is straightforward that the mapping $\phi_z$ established in Lemma~\ref{lemma:TzMt} naturally extends to the entire ambient space $\mathbb{X} \times \mathbb{S}^n$ via identical mapping rules:
    \begin{align*}
        \phi_z(v_x, v_y) &= \bigl(v_x,\ \nabla g(x)^*v_x + v_y\bigr), \\
        \phi_z^{-1}(v_x, H) &= \bigl(v_x,\ H - \nabla g(x)^*v_x\bigr).
    \end{align*}
    These constitute a pair of linear isomorphisms on $\mathbb{X} \times \mathbb{S}^n$. Leveraging~\eqref{eq:decomp-of-Y}, the product space decomposes identically as
    \begin{equation}\label{eq:decom-XxY}
        \mathbb{X} \times \mathbb{S}^n = \bigl(\mathbb{X} \times \mathcal{T}_{G(z)} \mathcal{M}_{p,q}\bigr) \oplus \bigl(\{ 0 \} \times \mathcal{N}_{G(z)} \mathcal{M}_{p,q}\bigr).
    \end{equation}
    Projecting $\phi_z(v)$ onto these respective subspaces yields the vectors $u_1$ and $u_2$. Applying $\phi_z^{-1}$ yields $v_1 = \phi_z^{-1}(u_1) \in \mathcal{T}_z \widetilde{\mathcal{M}}_{p,q}$ and $v_2 = \phi_z^{-1}(u_2) \in \mathcal{N}_z \widetilde{\mathcal{M}}_{p,q}$. \ep
\end{proof}

\begin{remark}
    For readers familiar with Riemannian geometry, the above construction can be interpreted as twisting the Riemannian metric on $\Xbb \times \Sbb^n$ via the map $g$. Consequently, $\Ncal_z \tMcal$ is indeed normal to $\Tcal_z \tMcal$ with respect to this twisted metric. This perspective also highlights a general guiding principle: an appropriate choice of Riemannian metric can induce a direct-sum decomposition that aligns naturally with the underlying structure of the problem.
\end{remark}

We now are able to compute the directional derivative $\varphi'(z;v)$ at $z=(x,y)\in \Xbb\times\mathbb{S}^n$ for an arbitrary direction $v\in \Xbb\times\mathbb{S}^n$. The next proposition provides an explicit formula and its proof is deferred to Appendix~\ref{appendix:alg-global}.

\begin{proposition}\label{prop:dir-der-of-phi}
    For an arbitrary $z \in \mathbb{X} \times \mathbb{S}^n$ with $G(z)$ admitting an IED $(\alpha,\beta,\gamma,p,q,P,\lambda)$ and any direction $v = (v_x,v_y) \in \mathbb{X} \times \mathbb{S}^n$, the directional derivative of $\varphi$ at $z$ along $v$ evaluates to
    \begin{equation}\label{eq:dir-der-of-phi}
        \varphi'(z; v) = \left\langle dF_z^* \bigl(F(z)\bigr), \begin{bmatrix} v_x \\ H_1 \end{bmatrix} \right\rangle + \left\langle \nabla g(x)^* \bigl(F_1(z)\bigr), \Pi_{\mathbb{S}^n_-}(H_2) \right\rangle + \left\langle \nabla g(x)^* \bigl(F_1(z)\bigr) + F_2(z), \Pi_{\mathbb{S}^n_+}(H_2) \right\rangle,
    \end{equation}
    where $F_1(z) := \nabla_x L(x,y) = \nabla f(x) + \nabla g(x)y$, $F_2(z) := -g(x)+\Pi_{\Sbb^n_+}(G(z)) $, and  $H_1$ and $H_2$ are defined in Lemma~\ref{lemma:decomp-of-XxY}.
\end{proposition}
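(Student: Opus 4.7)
The plan is to reduce the computation of $\varphi'(z;v)$ to a stratified chain-rule calculation based on the tangent/normal decomposition of Lemma \ref{lemma:decomp-of-XxY}. Since $F$ is Bouligand-differentiable and $\varphi = \tfrac{1}{2}\|F\|^2$, the standard chain rule yields
\[
\varphi'(z;v) = \langle F(z), F'(z;v)\rangle,
\]
so the real task is to evaluate $F'(z;v)$. To that end, I decompose $v = v_1 + v_2$ with $v_1\in\Tcal_z\tMpq$, $v_2\in\Ncal_z\tMpq$, and set $\phi_z(v_1) = (v_x, H_1)$, $\phi_z(v_2) = (0, H_2)$ with $H_1 \in \Tcal_{G(z)}\Mpq$ and $H_2 \in \Ncal_{G(z)}\Mpq$.

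The key technical step is then the computation of the directional derivative of the projection along $H := \nabla g(x)^{*}v_x + v_y = H_1 + H_2$. In the basis induced by an IED of $G(z)$, the matrix $\widetilde{H}_1 = P^{\top}H_1 P$ has vanishing $\beta\beta$-block, while $\widetilde{H}_2 = P^{\top}H_2 P$ is supported solely on the $\beta\beta$-block. Feeding $H_1 + H_2$ into \eqref{eq:Pi-dir}, the off-$\beta\beta$ blocks come entirely from $H_1$ and the $\beta\beta$-block comes entirely from $H_2$, so
\[
\Pi_{\Sbb^n_+}'(G(z); H_1 + H_2) \;=\; \xi_{G(z)}(H_1) \;+\; \Pi_{\Sbb^n_+}(H_2),
\]
where the second summand uses the orthogonal equivariance of $\Pi_{\Sbb^{|\beta|}_+}$ together with the fact that $H_2$ is supported on the $\beta\beta$-block. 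Combining this identity with the smoothness of $F_1$ and the block expression \eqref{eq:dF_z(v)} of $dF_z$, I obtain the additive decomposition
\[
F'(z;v) \;=\; dF_z(v_1) + \begin{bmatrix} \nabla g(x)\,H_2 \\ \Pi_{\Sbb^n_+}(H_2) \end{bmatrix}.
\]

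It remains to pair this with $F(z) = (F_1(z), F_2(z))$. The $v_1$-contribution is, by definition of the adjoint and of $\phi_z$, precisely $\langle dF_z^{*}F(z),\,(v_x, H_1)\rangle$. For the $v_2$-contribution, the adjoint identity $\langle F_1(z), \nabla g(x) H_2\rangle = \langle \nabla g(x)^{*} F_1(z), H_2\rangle$ together with the orthogonal split $H_2 = \Pi_{\Sbb^n_+}(H_2) + \Pi_{\Sbb^n_-}(H_2)$ regroups the remaining terms exactly into the last two inner products appearing in \eqref{eq:dir-der-of-phi}.

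The delicate point of the argument is the additivity $F'(z; v_1 + v_2) = F'(z; v_1) + F'(z; v_2)$, which is false for a generic Bouligand-differentiable mapping. It holds here precisely because the tangent/normal splitting of Lemma \ref{lemma:decomp-of-XxY} is aligned with the spectral block structure of $\Pi_{\Sbb^n_+}'$: the tangent piece $H_1$ lives in the blocks on which the directional derivative acts linearly, while the normal piece $H_2$ activates only the $\beta\beta$-block, where the nonlinear $\Pi_{\Sbb^{|\beta|}_+}$ reduces to the closed-form $\Pi_{\Sbb^n_+}(H_2)$. This geometric compatibility is the essential ingredient that makes the clean formula \eqref{eq:dir-der-of-phi} possible.
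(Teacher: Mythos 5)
Your proposal is correct and takes essentially the same route as the paper's proof: chain rule for the B-differentiable map $F$, the split $H=H_1+H_2$ aligned with the block structure of \eqref{eq:Pi-dir} so that $\Pi_{\Sbb^n_+}'(G(z);H)=\xi_{G(z)}(H_1)+\Pi_{\Sbb^n_+}(H_2)$, then the adjoint pairing with $F(z)$ and the $\Pi_{\Sbb^n_+}/\Pi_{\Sbb^n_-}$ regrouping of $H_2$. The paper phrases the key step as additivity of the projection's directional derivative with respect to this particular decomposition rather than additivity of $F'(z;\cdot)$, but the content is identical.
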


Proposition~\ref{prop:dir-der-of-phi} provides the algebraic foundation to rigorously characterize the D-stationarity of $\varphi$. We first define two critical normal directions. Let $z = (x, y) \in \widetilde{\mathcal{M}}_{p,q}$ with an IED $(\alpha, \beta, \gamma, p, q, P, \lambda)$. Recalling the tangent-normal orthogonal split of $\mathbb{S}^n$, we define the respective projection operators mapping an arbitrary matrix $H \in \mathbb{S}^n$ onto these subspaces:
\begin{equation}\label{eqn:pi^2}
    \pi_{z, \mathbb{S}^n}^1(H) := P \begin{bmatrix}
        \widetilde{H}_{\alpha \alpha} & \widetilde{H}_{\alpha \beta} & \widetilde{H}_{\alpha \gamma} \\
        \widetilde{H}_{\beta \alpha} & 0 & \widetilde{H}_{\beta \gamma} \\
        \widetilde{H}_{\gamma \alpha} & \widetilde{H}_{\gamma \beta} & \widetilde{H}_{\gamma \gamma}
    \end{bmatrix} P^\top
    \quad \text{and} \quad
    \pi_{z, \mathbb{S}^n}^2(H) := P \begin{bmatrix}
        0 & 0 & 0 \\
        0 & \widetilde{H}_{\beta \beta} & 0 \\
        0 & 0 & 0
    \end{bmatrix} P^\top,
\end{equation}
where $\widetilde{H} = P^\top H P$. Using these projections, we further define two specific normal descent directions $W_1(z)$ and $W_2(z) \in \mathcal{N}_{G(z)} \mathcal{M}_{p,q}$:
\begin{equation}\label{eq:def-W_1}
    W_1(z) := \Pi_{\mathbb{S}^n_-}\Bigl( -\pi_{z, \mathbb{S}^n}^2 \bigl( \nabla g(x)^* F_1(z) \bigr) \Bigr)
\end{equation}
and
\begin{equation}\label{eq:def-W_2}
    W_2(z) := \Pi_{\mathbb{S}^n_+}\Bigl( -\pi_{z, \mathbb{S}^n}^2 \bigl( \nabla g(x)^* F_1(z) + F_2(z) \bigr) \Bigr).
\end{equation}

\begin{proposition}\label{prop:char_of_dstat}
    A point $z = (x,y) \in \mathbb{X} \times \mathbb{S}^n$ is a D-stationary point of $\varphi$ if and only if the following three conditions hold simultaneously:
    \begin{itemize}
        \item[(a)] $dF_z^*\bigl(F(z)\bigr) = 0$;
        \item[(b)] $W_1(z) = 0$, or equivalently, $\pi_{z,\mathbb{S}^n}^2 \bigl(\nabla g(x)^*F_1(z)\bigr) \in \mathbb{S}^n_-$;
        \item[(c)] $W_2(z) = 0$, or equivalently, $\pi_{z,\mathbb{S}^n}^2 \bigl(\nabla g(x)^* F_1(z) + F_2(z)\bigr) \in \mathbb{S}^n_+$.
    \end{itemize}
\end{proposition}
\begin{proof}
    Because $dF_z^*\bigl(F(z)\bigr) \in \mathcal{T}_z\widetilde{\mathcal{M}}$, the inner product $\bigl\langle dF_z^* \bigl(F(z)\bigr), [v_x^\top, H_1^\top]^\top \bigr\rangle \ge 0$ holds for all valid tangent pairs if and only if $dF_z^* \bigl(F(z)\bigr) = 0$. Furthermore, by the geometric properties of metric projection onto the closed convex cone $\mathbb{S}^n_+$, the condition $\bigl\langle \nabla g(x)^* F_1(z), \Pi_{\mathbb{S}^n_-}(H_2) \bigr\rangle \ge 0$ holds universally for all $H_2$ if and only if the normal projection $\pi_{z,\mathbb{S}^n}^2 \bigl(\nabla g(x)^* F_1(z)\bigr)$ strictly resides in $\mathbb{S}^n_-$. This geometrically forces $W_1(z) = 0$. A similar argument applied to the third term ensures $W_2(z) = 0$. \ep
\end{proof}

\begin{remark}
    Notice that condition (a) in Proposition~\ref{prop:char_of_dstat} corresponds exactly to the classical first order stationarity condition for the stratum-restricted subproblem of~\eqref{prog:global-P}.
\end{remark}

The next theorem provides conditions under which a D-stationary point is also a KKT pair, thereby connecting our global convergence result with the local convergence rate analysis in Section~\ref{subsection: Convergence analysis}.

\begin{theorem}\label{thm:dstat-implies-kkt}
    Let $z=(x,y)\in \widetilde{\mathcal{M}}_{p,q}$ with an IED $(\alpha,\beta,\gamma,p,q,P,\lambda)$ of $G(z)$. If the SOSC and the SRCQ hold at $z$, then the following are equivalent:
    \begin{itemize}
        \item[(a)] $z$ is a D-stationary point of $\varphi(z)$;

        \item[(b)] $z$ is a KKT pair of~\eqref{prog:SDP}.
    \end{itemize}
\end{theorem}

\begin{proof}
Obviously, a KKT pair is a D-stationary point. Now we suppose $z$ is D-stationary for $\varphi$, Proposition~\ref{prop:char_of_dstat} implies that
\begin{equation}\label{eq:dstat-sys-rewrite}
    dF_z^*\bigl(F(z)\bigr)=0,\quad W_1(z)=0\quad \mbox{and}\quad  W_2(z)=0.
\end{equation}
Recall that we write $F(z) = (F_1(z), F_2(z))$ in Proposition~\ref{prop:dir-der-of-phi}, and here we further abbreviate it as $(F_1, F_2)$ for simplicity. Define $U:=\nabla g(x)^*F_1\in \mathbb{S}^n$. Recalling the orthogonal decomposition
$\mathbb{S}^n=\mathcal{T}_{G(z)}\mathcal{M}_{p,q}\oplus
\mathcal{N}_{G(z)}\mathcal{M}_{p,q}$
given in~\eqref{eq:decomp-of-Y}, we decompose
\[
    U=U_{\mathcal{T}}+U_{\mathcal{N}}
    \quad \mbox{and} \quad
    F_2=(F_2)_{\mathcal{T}}+(F_2)_{\mathcal{N}}.
\]
In view of the IED $(\alpha,\beta,\gamma,p,q,P,\lambda)$ of $G(z)$,
the normal components admit the block representations
\begin{equation}\label{eq:UN-F2N}
U_{\mathcal{N}}
=
P\begin{bmatrix}
    0 & 0 & 0 \\
    0 & N & 0 \\
    0 & 0 & 0
\end{bmatrix}P^\top
\quad \mbox{and} \quad
(F_2)_{\mathcal{N}}
=
P\begin{bmatrix}
    0 & 0 & 0 \\
    0 & M & 0 \\
    0 & 0 & 0
\end{bmatrix}P^\top,
\end{equation}
where $N,M\in\mathbb{S}^{|\beta|}$ are given by
\[
    N:=(P^\top U P)_{\beta\beta}\in \mathbb{S}^{|\beta|}
    \quad \mbox{and} \quad
    M:=(P^\top F_2 P)_{\beta\beta}\in \mathbb{S}^{|\beta|}.
\]
By Proposition~\ref{prop:char_of_dstat}, $W_1(z)=0$ is equivalent to
$\pi_{z,\mathbb{S}^n}^2(U)\in \mathbb{S}^n_-$, i.e., $N\preceq 0$.
Similarly, $W_2(z)=0$ is equivalent to
$\pi_{z,\mathbb{S}^n}^2(U+F_2)\in \mathbb{S}^n_+$, i.e., $N+M\succeq 0$.

Expanding the equation $dF_z^*(F_1,F_2)=0$ using~\eqref{eq:dF_z(v)} (and the definition of the adjoint) gives
\begin{align}
(\nabla^2_{xx}L(z)-\nabla g(x)\nabla g(x)^*)F_1-\nabla g(x)F_2&=0, \label{eq:adj-expanded-1}\\
U_{\mT}+\xi_{G(z)}\bigl(F_2\bigr)&=0. \label{eq:adj-expanded-2}
\end{align}
Taking the inner product of~\eqref{eq:adj-expanded-1} with $F_1$ yields
\begin{equation}\label{eq:energy-start}
\langle F_1,\nabla^2_{xx}L(z)F_1\rangle-\langle U,\,U+F_2\rangle=0.
\end{equation}
Using the orthogonal decomposition, we have
\[
\langle U,\,U+F_2\rangle=\langle U_{\mathcal{T}},\,U_{\mathcal{T}}+(F_2)_{\mathcal{T}}\rangle
+\langle U_{\mathcal{N}},\,U_{\mathcal{N}}+F_2\rangle.
\]
Then, Substituting ~\eqref{eq:adj-expanded-2} and $\langle U_{\mathcal{N}},\,U_{\mathcal{N}}+(F_2)_{\mathcal{N}}\rangle
=\langle N,\,N+M\rangle$, which follows from \eqref{eq:UN-F2N}, we arrive at
\begin{equation}\label{eq:energy-identity}
\langle F_1,\nabla^2_{xx}L(z)F_1\rangle+\langle \xi_{G(z)}(F_2),\,F_2-\xi_{G(z)}(F_2)\rangle
-\langle N,\,N+M\rangle=0.
\end{equation}
Since $N\preceq 0$ and $N+M\succeq 0$, we know
\begin{equation}\label{eq:NNM-nonneg}
\langle -N,\,N+M\rangle\ge 0.
\end{equation}
As \eqref{eq:adj-expanded-2} implies $\bigl(-\nabla g(x)^* F_1 + \xi_{G(\overline{z})}(F_2)\bigr)_{\alpha\gamma} = 0$, by Lemma~\ref{lemma:identity-Q(v)} and~\eqref{eq:NNM-nonneg}, we deduce from~\eqref{eq:energy-identity} that
\begin{equation}\label{eq:Q-nonpos}
\mQ_z(F_1) \le 0.
\end{equation}

Next we show that $F_1=0$. Suppose to the contrary that $F_1\neq 0$.
We first show that $-F_1$ lies in the critical cone $\mathcal{C}(z)$  associated with~\eqref{prog:SDP} (see \eqref{eq:def-C-nonKKT} for definition).
Indeed, by~\eqref{eq:adj-expanded-2} and the explicit form of $\xi_{G(z)}$ in~\eqref{eq:xi_A}, we know $[P^\top (\nabla g(x)^*(-F_1))P]_{\beta\gamma}=0$ and
$[P^\top (\nabla g(x)^*(-F_1))P]_{\gamma\gamma}=0$.
Moreover, since $\pi_{z,\mathbb{S}^n}^2(U)\in \mathbb{S}^n_-$, $(P^\top(\nabla g(x)^*(-F_1))P)_{\beta\beta}$ is positive semidefinite. Hence $-F_1\in\mathcal{C}(z)$.

Therefore, by SOSC (Definition \ref{def:SOSC-nonKKT}), we have $\mQ_z(-F_1)>0$.
Since $\mQ_z(\cdot)$ is a quadratic form, $\mQ_z(F_1)=\mQ_z(-F_1)>0$.
This contradicts~\eqref{eq:Q-nonpos}. The contradiction proves $F_1=0$.

Since $F_1=0$, we have $U=\nabla g(x)^*F_1=0$ and hence $N=0$.
From $N+M\succeq 0$ it follows that $M\succeq 0$, namely,
the $\beta\beta$ block of $F_2$ is positive semidefinite in the IED basis.
Moreover, \eqref{eq:adj-expanded-1} reduces to $\nabla g(x)F_2=0$, that is,
\begin{equation}\label{eq:F2-kernel}
    F_2\in \KerOp(\nabla g(x)).
\end{equation}
In addition, \eqref{eq:adj-expanded-2} becomes $\xi_{G(z)}(F_2)=0$. By the explicit form of $\xi_{G(z)}$ in~\eqref{eq:xi_A},
this implies that the $\alpha$-blocks of $F_2$ vanish.

Next we show that that $F_2=0$. Suppose to the contrary that $F_2\neq 0$.
Since $F_2\in\KerOp(\nabla g(x))$ and its $\alpha$-blocks are zero while
its $\beta\beta$ block is positive semidefinite and nonzero,
there exists a matrix
\[
Y
=
P
\begin{bmatrix}
0 & 0 & 0 \\
0 & -I_{|\beta|} & 0 \\
0 & 0 & 0
\end{bmatrix}
P^\top
\]
such that
\begin{equation}\label{eq:Y-F2-neg}
    \langle Y, F_2\rangle <0.
\end{equation}

By the SRCQ condition in Definition~\ref{def:SRCQ-nonKKT},
there exist $x_0\in\mathbb X$ and
$Y_0\in \Bigl\{\overline P B\overline P^\top\in\mathbb{S}^n \,\Big|\,
B_{\beta\beta}\succeq 0,\ B_{\beta\gamma}=0,\ B_{\gamma\gamma}=0\Bigr\}$
such that $\nabla g(x)^*x_0+Y_0=Y$. Taking the inner product with $F_2$ gives
\[
\langle Y, F_2\rangle
=
\langle x_0, \nabla g(x)F_2\rangle
+
\langle Y_0, F_2\rangle.
\]
Using \eqref{eq:F2-kernel}, the first term vanishes.
Since $Y_0\in \Bigl\{\overline P B\overline P^\top\in\mathbb{S}^n \,\Big|\,
B_{\beta\beta}\succeq 0,\ B_{\beta\gamma}=0,\ B_{\gamma\gamma}=0\Bigr\}$
and $F_2$ has zero $\alpha$-blocks, a positive semidefinite (and nonzero) $\beta\beta$-block, and arbitrary other blocks consistent with symmetry, we have $\langle Y_0, F_2 \rangle \ge 0$.
Consequently, $\langle Y, F_2\rangle\ge 0$, which contradicts \eqref{eq:Y-F2-neg}. The contradiction proves $F_2=0$.
Hence $F(z)=0$, and the proof is complete. \ep
\end{proof}

Motivated by the exact geometric equivalence established in Proposition~\ref{prop:char_of_dstat}, we naturally quantify proximity to D-stationarity via the norms of these critical vectors.

\begin{definition}\label{def:d-sta-mea}
    For any $z \in \mathbb{X} \times \mathbb{S}^n$, the \emph{D-stationarity measure} of $\varphi$ at $z$ is the scalar function
    \begin{equation*}
        s(z) := \max \bigl\{ \| W_1(z)\|,\ \| W_2(z) \|,\ \| v^{\mathrm{LM}}(z) \| \bigr\},
    \end{equation*}
    where $W_1(z)$ and $W_2(z)$ are defined in~\eqref{eq:def-W_1} and~\eqref{eq:def-W_2}, and $v^{\mathrm{LM}}(z)$ is the stratum-LM direction formally introduced in~\eqref{eq:SLM-direction}.
\end{definition}

\begin{remark}
    Because the definitions of $W_1$, $W_2$, and $v^{\mathrm{LM}}$ depend intrinsically on the active tangent space $\mathcal{T}_z\widetilde{\mathcal{M}}_{p,q}$, the measure $s(z)$ is not globally continuous. As the eigenvalues of $G(z)$ cross zero, the dimension of the tangent space changes discretely, inducing structural jumps in the stationarity measure.
\end{remark}

\subsubsection{The SLMN descent step}\label{section:sub-SLMN}

Globalizing the algorithm requires identifying descent directions for the unconstrained problem~\eqref{prog:global-P}. Motivated by the stratified geometry of $\mathbb{X} \times \mathbb{S}^n$, we utilize two distinct classes of directions: \emph{tangential directions} that reduce the residual while locked onto the current stratum, and \emph{normal directions} that force transitions across strata. Combining these yields the \emph{stratum LM-normal step} ($\mathrm{SLMN}$). At an iterate $z^k$, the $\mathrm{SLMN}$ step integrates these components to guarantee monotonic descent of the merit function $\varphi(z)=\tfrac12\|F(z)\|^2$. The procedure is formalized in Algorithm~\ref{alg:SLMN}.

\vspace{1ex}
\noindent\textbf{Tangential direction.}
We first compute a search step that navigates the current stratum $\widetilde{\mathcal{M}}_{p,q}$ to minimize the residual. We consider the strictly stratum-restricted subproblem:
\begin{equation}\label{prog:stratum-P}
    \min_{z \in \widetilde{\mathcal{M}}_{p,q}} \quad \varphi(z).
\end{equation}
A standard Gauss--Newton step minimizes the linearized quadratic model of $\varphi$ over the local tangent space:
\begin{equation}\label{prog:GN-P}
\begin{aligned}
    \min_{v \in \mathcal{T}_{z^k}\widetilde{\mathcal{M}}_{p,q}} \quad & \varphi_k(v) := \frac{1}{2}\bigl\|F(z^k) + dF_{z^k}(v)\bigr\|^2.
\end{aligned}
\end{equation}
This convex quadratic program resolves to the normal equation:
\begin{equation}\label{eq:GN-dif}
    dF_{z^k}^* dF_{z^k}(v) = -dF_{z^k}^*F(z^k), \quad v \in \mathcal{T}_{z^k}\widetilde{\mathcal{M}}_{p,q}.
\end{equation}
By Theorem~\ref{thm:WR-dF-nons}, if the W-SOC and W-SRCQ hold at $z^k$, \eqref{eq:GN-dif} admits a unique solution. To ensure global convergence, we apply Levenberg--Marquardt (LM) regularization when far from optimal regions.

\begin{definition}
    The \emph{stratum-LM direction} for $F$ at $z \in \mathbb{X} \times \mathbb{S}^n$ is defined as
    \begin{equation}\label{eq:SLM-direction}
        v^{\mathrm{LM}}(z) := -\bigl(\mu(z) I + dF_{z}^* dF_{z}\bigr)^{-1} dF_{z}^* F(z),
    \end{equation}
    where $\mu(z) := \|F(z)\|^2$ serves as the dynamic regularization parameter, and $I$ denotes the identity operator on $\mathcal{T}_z\widetilde{\mathcal{M}}$.
\end{definition}

\begin{remark}\label{remark:local-rate-LMGN}
    {Building upon the classical Levenberg--Marquardt methods (see, e.g.,~\cite{yamashita2001rate}), it is straightforward to construct a local algorithm that solves~\eqref{prog:stratum-P} quadratically, provided that the algorithm starts sufficiently close to a KKT pair $\overline{z}$, lies on $\tMcal$, and the W-SOC and W-SRCQ hold at $\overline{z}$. This result will be incorporated into the local analysis of our global algorithm, as detailed in Proposition~\ref{prop:quad-rate}.}
\end{remark}

Once the stratum-LM direction is computed, the corresponding step is obtained by $z^{k+1} = R_{z^k}(v^k)$, where $R_{z^k}$ is a retraction map that projects $z^k + t v^k$ back onto the current stratum $\tMcal_{p,q}$. We will discuss this retraction operation in more detail below.

For a general manifold $\mathcal{M}$, a classical candidate for retraction is the Riemannian exponential map, which was used in~\cite{smith2014optimization} to establish the local quadratic convergence of the Riemannian Newton method. However, computing the Riemannian exponential directly is often intractable due to the need to solve geodesic equations explicitly. A more practical alternative is the metric projection onto $\mathcal{M}$, which is well-defined and locally smooth, provided the projection is unique~\cite{vandereycken2013lowrank,absil2008optimization}.
In this work, rather than directly applying the metric projection on $\tMcal_{p,q}$, we introduce an induced retraction. Specifically, we first define a retraction on $\Mcal_{p,q}$ using the metric projection, and then extend it to $\tMpq$. This approach leverages the computational efficiency of the metric projection on $\Mcal_{p,q}$, which can be computed via truncated SVD, instead of projecting onto the more complicated manifolds $\tMcal_{p,q}$. It should be noted that while a direct projection-based retraction is feasible, the induced retraction also offers computational benefits, particularly for evaluating $\varphi(\cdot)$, which is essential for line search procedures.

We begin by introducing a retraction for $\Mcal = \Mcal_{p,q}$ and then extend it to $\tMcal = \tMcal_{p,q}$. Since $\Mcal_{p,q}$ is a $C^\infty$ smooth submanifold of $\mathbb{S}^n$, it follows from \cite[Lemma 3.1]{absil2008optimization} that the mapping $R_{\Mcal}: \mathcal{T}\Mcal\to \Mcal$,
\[
R_{\Mcal}(A,H) = \underset{M \in \Mcal}{\argmin} \|M - (A + H)\|^2
\]
is well defined and constitutes a smooth retraction on $\Mcal$. It is well known that this retraction can be computed explicitly via spectral truncation.

Next, we define the retraction $R_{\tMcal}: \Tcal \tMcal \to \tMcal$ for $\tMcal$ as follows: for a tangent vector $v = (v_x, v_y) \in \Tcal_z \tMcal$ at $z = (x, y) \in \tMcal$,
\begin{equation}\label{retraction_tM}
    R_{\tMcal}(z,v):= (x + v_x, R_{\Mcal}(A, H) - g(x + v_x)),
\end{equation}
where $A = G(z) = g(x) + y \in \Mcal$ since $z = (x, y) \in \tMcal$, and $H = \nabla G(z)^*(v) = \nabla g(x)^*(v_x) + v_y \in \Tcal_A \Mcal$. We also define the notation $R_{\widetilde{\mathcal{M}},z}(v) := R_{\widetilde{\mathcal{M}}}(z, v)$, and simplify to $R_z(v)$ when $\widetilde{\mathcal{M}}$ is clear from the context.

\begin{remark}
    For readers familiar with fiber bundles, the retraction $R_{\tMcal}$ on $\tMcal$ can be interpreted as performing retraction along both the fiber and base directions of the fiber bundle induced by the submersion $G: \tMcal \to \Mcal$.
\end{remark}

\begin{proposition}\label{prop:R_M-smooth-retraction}
    $R_{\tMcal}$ is a smooth retraction.
\end{proposition}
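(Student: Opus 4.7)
The plan is to verify directly the three defining properties of a smooth retraction (see \cite[Definition 4.1.1]{absil2008optimization}): (i) $R_{\tMcal}(z,v)\in \tMcal$ for every $(z,v)$ in a neighborhood of the zero section of $\Tcal\tMcal$, (ii) $R_{\tMcal}$ is smooth on that neighborhood, and (iii) for each $z\in\tMcal$, $R_{\tMcal,z}(0_z)=z$ and $dR_{\tMcal,z}(0_z)=\mathrm{id}_{\Tcal_z\tMcal}$. Since $R_\Mcal$ on the stratum $\Mcal=\Mcal_{p,q}$ is already known to be a smooth retraction, as noted in the excerpt using \cite[Proposition~3.1/Lemma~3.1]{absil2008optimization}, the task reduces to transferring these properties through the definition \eqref{retraction_tM} and the submersion structure of $G$.

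First, I would check the range condition. Writing $A=G(z)\in\Mcal$ and $H=\nabla g(x)^{*}v_x+v_y$, Lemma~\ref{lemma:TzMt} guarantees that $H\in \Tcal_A\Mcal$ whenever $v=(v_x,v_y)\in\Tcal_z\tMcal$, so $R_{\Mcal}(A,H)$ is defined and lies in $\Mcal$ for $\|v\|$ small enough. A direct computation then yields
\begin{equation*}
G\bigl(R_{\tMcal}(z,v)\bigr)=g(x+v_x)+\bigl(R_{\Mcal}(A,H)-g(x+v_x)\bigr)=R_{\Mcal}(A,H)\in\Mcal,
\end{equation*}
so $R_{\tMcal}(z,v)\in G^{-1}(\Mcal)=\tMcal$, as required. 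Smoothness then follows because $R_{\tMcal}$ is the composition of the smooth maps $g$, $G$, $\nabla g(\cdot)^{*}$, and the smooth retraction $R_{\Mcal}$ on $\Mcal$, all evaluated on smooth coordinate expressions in $(z,v)$.

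Next, I would verify the two pointwise retraction axioms. Setting $v=0_z=(0,0)$ gives $H=0\in\Tcal_A\Mcal$, hence $R_{\Mcal}(A,0)=A$ by the retraction property of $R_{\Mcal}$, and therefore
\begin{equation*}
R_{\tMcal}(z,0_z)=\bigl(x,\ A-g(x)\bigr)=(x,y)=z.
\end{equation*}
For the differential at $0_z$, fix $v=(v_x,v_y)\in\Tcal_z\tMcal$ and differentiate $t\mapsto R_{\tMcal}(z,tv)$ at $t=0$. The first component gives $v_x$; for the second component, the chain rule and $dR_{\Mcal,A}(0_A)=\mathrm{id}_{\Tcal_A\Mcal}$ yield
\begin{equation*}
\frac{d}{dt}\Big|_{t=0}\bigl(R_{\Mcal}(A,tH)-g(x+tv_x)\bigr)=H-\nabla g(x)^{*}v_x=v_y.
\end{equation*}
Hence $dR_{\tMcal,z}(0_z)v=(v_x,v_y)=v$, establishing the local rigidity condition.

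The only nontrivial obstacle is ensuring that all these steps make sense in a uniform neighborhood of the zero section of $\Tcal\tMcal$; this relies on $\Mcal_{p,q}$ being an embedded $C^\infty$ submanifold so that $R_{\Mcal}$ is defined and smooth on an open neighborhood of the zero section of $\Tcal\Mcal$, combined with continuity of the maps $z\mapsto A=G(z)$ and $(z,v)\mapsto H$. Collecting the three items completes the proof. \ep
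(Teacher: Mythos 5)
Your proposal is correct and takes essentially the same approach as the paper: the paper's proof simply declares the verification of the retraction axioms routine and attributes smoothness to that of $R_{\Mcal}$, and your argument spells out exactly that verification (range condition via $G\circ R_{\tMcal}=R_{\Mcal}$, smoothness by composition, and the centering and local rigidity conditions $R_{\tMcal}(z,0_z)=z$, $dR_{\tMcal,z}(0_z)=\mathrm{id}_{\Tcal_z\tMcal}$ by direct differentiation).
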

\begin{proof}
    Verifying the definition is routine. First, for any $z \in \tMcal$, denoting $A = g(x)+y$, by the definition of $R_{\tMcal}$, we have
    $$R_{\tMcal}(z,0) = ( x+0, R_{\Mcal}(A,0) - g(x+0)) = (x,A-g(x)) = (x,y) = z$$
    and for any $v=(v_x,v_y) \in \Tcal_z \tMcal$ since $d(R_{\tMcal,A})_0 = id$, we have
    $$d(R_{\tMcal,z})_0 (v_x,v_y) = (v_x,d(R_{\Mcal,A})_0 \bigl( \nabla g(x)^*(v_x)+v_y \bigr) - \nabla g(x)^*(v_x) ) = (v_x, v_y).$$
    Finally, the smoothness of $R_{\tMcal}$ follows from the smoothness of $R_{\Mcal}$.
    \ep
\end{proof}

\vspace{1ex}
\noindent{\bf Normal direction:}
The stratum LM direction remains within the current stratum, but to move between strata, we shall follow normal directions to escape from a ``bad'' stratum that does not contain a solution of~\eqref{prog:global-P}. These directions come from \eqref{eq:def-W_1} and \eqref{eq:def-W_2} we defined above.

Since $W_i(z) \in \Ncal_{G(z)} \Mpq$, then by the definition of $\phi_z$ in \eqref{eq:phi_z} and definition of $\Ncal_z \tMpq$ in Lemma \ref{lemma:decomp-of-XxY}, an explicit calculation shows
\[
 \left[ \begin{array}{c} 0 \\ W_i(z) \end{array} \right] = \phi_z^{-1}\left( \left[ \begin{array}{c} 0 \\ W_i(z) \end{array} \right] \right) \in \Ncal_z \tMcal_{p,q} \subseteq \mathbb{X} \times \mathbb{S}^n,
\]
and we can discuss the step $z + \left[ \begin{array}{c} 0 \\ W_i(z) \end{array} \right]$. These normal directions $\left[ \begin{array}{c} 0 \\ W_i(z) \end{array} \right]$ are used to transition to higher-dimensional strata. The following lemma characterizes how $\varphi(z)$ varies along these directions.

\begin{lemma}\label{lemma:normal-control}
    Let $z = (x,y) \in \mathbb{X} \times \mathbb{S}^n$. If $W_1(z) \neq 0$, then $\|\nabla g(x)W_1(z)\| \neq 0$, and the exact line search minimum satisfies
    \begin{equation*}
        \mathop{\mathrm{argmin}}_{t \ge 0} \ \varphi \left(z + \begin{bmatrix} 0 \\ tW_1(z) \end{bmatrix} \right) = \frac{\| W_1(z)\|^2}{\| \nabla g(x)W_1(z)\|^2},
    \end{equation*}
    achieving a guaranteed reduction:
    \begin{equation*}
        \varphi(z) - \min_{t \ge 0 } \varphi \left(z + \begin{bmatrix} 0 \\ tW_1(z) \end{bmatrix} \right) = \frac{1}{2} \frac{\| W_1(z)\|^4}{\| \nabla g(x)W_1(z)\|^2}.
    \end{equation*}
    Analogously, if $W_2(z) \neq 0$, the optimal stepsize and the corresponding reduction are
    \begin{equation*}
        \mathop{\mathrm{argmin}}_{t \ge 0} \ \varphi \left(z + \begin{bmatrix} 0 \\ tW_2(z) \end{bmatrix} \right) = \frac{\| W_2(z)\|^2}{\| W_2(z)\|^2 + \| \nabla g(x)W_2(z)\|^2}
    \end{equation*}
    and
    \begin{equation*}
        \varphi(z) - \min_{t \ge 0} \varphi \left(z + \begin{bmatrix} 0 \\ tW_2(z) \end{bmatrix} \right) = \frac{1}{2} \frac{\| W_2(z)\|^4}{\| W_2(z)\|^2 + \| \nabla g(x)W_2(z)\|^2}.
    \end{equation*}
\end{lemma}

Since $\varphi \left(z + t\left[\begin{array}{c}
      0 \\
      W_i (z)
    \end{array}\right] \right)$ is a quadratic function of $t$, the conclusion is obtained by direct computation again. Its detailed proof is deferred to Appendix \ref{appendix:alg-global}. With the construction of the directions, the following is the precise form of our descent step algorithm \SLMN($z^k$).

\begin{algorithm}[H]
\caption{Stratum LM-normal step ($\mathrm{SLMN}$)}
\label{alg:SLMN}
\begin{algorithmic}[1]
\State \textbf{Input:} Current iterate $z^k \in \mathbb{X} \times \mathbb{S}^n$, line search parameters $\eta \in (0, 1/2)$ and $\rho \in (0, 1)$.
\vspace{0.3em}
\State \textbf{Compute Candidate Steps:}
    \begin{itemize}[label=\textbullet, leftmargin=*]
        \item If $W_1^k := W_1(z^k) \neq 0$ (see~\eqref{eq:def-W_1}), construct the first normal step:
            \begin{equation*}
                z^{k+1}_1 = z^k + \frac{\|W_1^k \|^2}{\| \nabla g(x^k)W_1^k \|^2} \begin{bmatrix} 0 \\ W_1^k \end{bmatrix}.
            \end{equation*}
        \item If $W_2^k := W_2(z^k) \neq 0$ (see~\eqref{eq:def-W_2}), construct the second normal step:
            \begin{equation*}
                z^{k+1}_2 = z^k + \frac{\| W_2^k\|^2}{\| W_2^k\|^2 + \| \nabla g(x^k)W_2^k\|^2} \begin{bmatrix} 0 \\ W_2^k \end{bmatrix}.
            \end{equation*}
        \item Perform a manifold Armijo backtracking line search along the tangent direction $v^k := v^{\mathrm{LM}}(z^k)$ (see~\eqref{eq:SLM-direction}). Find the smallest integer $j_k \ge 0$ such that
            \begin{equation*}
                \varphi\bigl(R_{z^k}(\rho^{j_k} v^k)\bigr) - \varphi(z^k) \le \eta \rho^{j_k} \varphi'(z^k; v^k),
            \end{equation*}
            and construct the retracted tangent step $z^{k+1}_3 = R_{z^k}(\rho^{j_k} v^k)$.
    \end{itemize}
\vspace{0.3em}
\State \textbf{Output:} Select the candidate yielding the maximal objective reduction:
\begin{equation*}
    \mathrm{SLMN}(z^k) \leftarrow \mathop{\arg\min}_{z \in \{z^{k+1}_1, z^{k+1}_2, z^{k+1}_3 \}} \varphi(z),
\end{equation*}
where undefined branches are strictly excluded from the comparison.
\end{algorithmic}
\end{algorithm}

In Algorithm~\ref{alg:SLMN}, $z^{k+1}_1$ and $z^{k+1}_2$ are computed based on Lemma~\ref{lemma:normal-control}, resulting in a decrease in the function value, where $W_1^k \ne 0$ implies $\nabla g(x^k) W_1^k \ne 0$. The line search step clearly leads to a decrease in $\varphi$, provided that $j_k$ exists, which is proved in Proposition \ref{prop:armijo}.  Therefore, we can conclude that \SLMN\ always results in a decrease of $\varphi$ at non-D-stationary points.

\subsubsection{The Correction strategy}

While Algorithm~\ref{alg:SLMN} navigates efficiently within a given stratum, traversing near boundaries naturally obscures the differential information characterizing the optimal adjacent stratum. To overcome this issue, we incorporate an eigenvalue correction strategy proposed in~\cite{feng2025quadratically}, drawing structural inspiration from globalization techniques for smooth functions on stratified spaces~\cite{Olikier2023FirstorderOO}. Provided the target solution satisfies adequate regularity, this operator controls the sequence into the correct active stratum.

Given a threshold $\delta > 0$ and an iterate $z = (x, y) \in \mathbb{X} \times \mathbb{S}^n$ with an IED $(\alpha, \beta, \gamma, p, q, P, \lambda)$ of $G(z)$, we define the corrected point by
\begin{equation}\label{eq:hat-z}
    \widehat{z} := \biggl(x,\ y - \sum_{i \in \theta(z)} \lambda_i\bigl(G(z)\bigr) P_i P_i^\top \biggr),
\end{equation}
where the threshold-active index set is defined as $\theta(z) := \bigl\{ i \mid |\lambda_i(G(z))| \le \delta \bigr\}$.

We observe that the operator $\widehat{\cdot}$ can be viewed as a locally smooth mapping from $\Xbb \times \Sbb^n$ to itself, a property that follows directly from the smoothness of the L\"owner operator.

Crucially, the following lemma establishes that the decrement terms derived in the preceding two parts constitute the dominant contribution, thereby ensuring the validity of our subsequent arguments.

\begin{lemma}\label{lemma:corr-almost-proj}
    Suppose a sequence $\{z^k\}$ converges to a limit $z^\infty \in \widetilde{\mathcal{M}}$. Then, for sufficiently large $k$, we have
    \begin{equation*}
        \| \widehat{z}^k - z^k \| \le \mathcal{O}\bigl(\| z^k -z^\infty \|\bigr),
    \end{equation*}
    which directly implies
    \begin{equation*}
        \| \widehat{z}^k - z^\infty \| \le \mathcal{O}\bigl(\|z^k -z^\infty \|\bigr).
    \end{equation*}
\end{lemma}

\begin{proof}
    Define the respective index sets $\alpha^k = \alpha(G(z^k))$, $\widehat{\alpha}^k = \alpha(G(\widehat{z}^k))$, and $\alpha^\infty = \alpha(G(z^\infty))$. Analogous definitions apply for $\beta$ and $\gamma$. Because the correction $\widehat{z}^k$ precisely preserves the $x$-component of $z^k$, their distance reduces to
    \begin{equation}\label{eq:z-zhat-1}
        \|\widehat{z}^k - z^k\| = \bigl\| \bigl(x^k, G(\widehat{z}^k) - g(x^k)\bigr) - \bigl(x^k, G(z^k) - g(x^k)\bigr) \bigr\| = \| G(\widehat{z}^k) - G(z^k) \|.
    \end{equation}
    Decompose the respective matrices using the bases $P^k$ and $P^\infty$:
    \begin{equation*}
        G(z^k) = P^k \begin{bmatrix}
            \Lambda^k_{\alpha^k} & 0 & 0 \\
            0 & 0_{\beta^k} & 0 \\
            0 & 0 & \Lambda^k_{\gamma^k}
        \end{bmatrix} (P^k)^\top \quad \text{and} \quad
        G(z^\infty) = P^\infty \begin{bmatrix}
            \Lambda^{\infty}_{\alpha^\infty} & 0 & 0 \\
            0 & 0_{\beta^\infty} & 0 \\
            0 & 0 & \Lambda^{\infty}_{\gamma^\infty}
        \end{bmatrix} (P^\infty)^\top.
    \end{equation*}
    By construction, $G(\widehat{z}^k)$ matches $G(z^k)$ except that specific elements within $\Lambda^k_{\alpha^k}$ and $\Lambda^k_{\gamma^k}$ falling below the threshold $\delta$ are zeroed out. The residual difference is:
    \begin{equation*}
        G(z^k) - G(\widehat{z}^k) = P^k \begin{bmatrix}
            \widetilde{\Lambda}^k_{\alpha^k} & 0 & 0 \\
            0 & 0_{\beta^k} & 0 \\
            0 & 0 & \widetilde{\Lambda}^k_{\gamma^k}
        \end{bmatrix} (P^k)^\top.
    \end{equation*}
    Because $G(z^k) \to G(z^\infty)$, eigenvalue continuity dictates that the limit matrix retains at least as many zero eigenvalues; thus, the index sets eventually satisfy the strict inclusions $\alpha^k \supseteq \widehat{\alpha}^k = \alpha^\infty$ and $\gamma^k \supseteq \widehat{\gamma}^k = \gamma^\infty$. For sufficiently large $k$, the eigenvalues corresponding to the non-truncated $\widehat{\alpha}^k$-block are strictly bounded from below by $\delta$, because they converge to the positive eigenvalues of $G(z^\infty)$ (which satisfy $\delta(z^\infty) > \delta$). Conversely, the eigenvalues indexed by $\alpha^k \setminus \widehat{\alpha}^k$ strictly converge to zero. Thus, they eventually fall beneath the threshold $\delta$ and are cleanly truncated. A symmetric logic applies to the negative indices. Consequently, the truncated eigenvalues $\widetilde{\Lambda}^k$ concentrate entirely within the limiting zero-block $\beta^\infty$ (or equivalently $\widehat{\beta}^k$). Standard eigenvalue perturbation theory yields:
    \begin{align}\label{eq:z-zhat-2}
        \| G(z^k) - G(\widehat{z}^k)\| &= \|\widetilde{\Lambda}^k\| = \bigl\| \Lambda_{\widehat{\beta}^k}\bigl(G(z^k)\bigr) - \Lambda_{\beta^\infty}\bigl(G(z^\infty)\bigr) \bigr\| \nonumber\\
        &\le \|\Lambda\bigl(G(z^k)\bigr) - \Lambda\bigl(G(z^\infty)\bigr)\| \le \| G(z^k) - G(z^\infty) \| \le \mathcal{O}\bigl(\| z^k-z^\infty \|\bigr).
    \end{align}
    Substituting~\eqref{eq:z-zhat-2} into~\eqref{eq:z-zhat-1} establishes the primary bound. Applying the standard triangle inequality $\| \widehat{z}^k - z^\infty \| \le \| z^k - \widehat{z}^k \| + \| z^k - z^\infty \|$ completes the proof.\ep
\end{proof}

\subsection{The convergence analysis}\label{subsection: Convergence analysis}

This subsection establishes the rigorous convergence analysis of Algorithm~\ref{alg:SGN}. The global convergence properties are stated below and we defer the highly technical proof to Appendix~\ref{appendix:alg-global}. To systematically quantify the spectrum, we define the minimal non-vanishing eigenvalue modulus for any $z \in \mathbb{X} \times \mathbb{S}^n$ in \eqref{eqn: minieigen} by
\begin{equation*}
    \delta(z) := \min \bigl\{ |\lambda_i(G(z))| \mid \lambda_i(G(z)) \neq 0 \bigr\}.
\end{equation*}

\begin{theorem}\label{thm:global-convergence}
    Let $\epsilon = 0$ and let $z^0$ be an arbitrary initial point. Then Algorithm~\ref{alg:SGN} either terminates finitely at a D-stationary point of $\varphi$ or generates an infinite sequence whose every accumulation point $z^\infty$ with $\delta(z^\infty)>\delta$ is a D-stationary point of $\varphi$.
\end{theorem}

\begin{remark}
    Theorem~\ref{thm:global-convergence} theoretically assumes that the hyperparameter $\delta$ is scaled relative to the unknown accumulation point. One could circumvent this dependency by deploying a \emph{partial correction} sweep over all admissible truncation thresholds (paralleling the methodology in~\cite{Olikier2023FirstorderOO}). However, while ensuring parameter-free globalization, such an approach introduces substantial complexity to the local convergence analysis. We prioritize mathematical clarity here, leaving hyperparameter-free implementations for future refinement.
\end{remark}

We next proceed to establish the local quadratic convergence rate of the SGN algorithm (Algorithm~\ref{alg:SGN}). The primary tool of our analysis is the underlying stratification geometry proposed in Section \ref{sec:geometric}. In particular, by deploying the W-SOC (Definition~\ref{def:W-SOC-nonKKT}) and SRCQ (Definition~\ref{def:SRCQ-nonKKT}) we prove that the iterates identify the exact active stratum and trigger a local quadratic collapse of the residual. Crucially, these regularity prerequisites are much weaker than the classical nondegeneracy assumptions typically required by the local convergence analysis of the classical Newton-type methods. The details are deferred to Appendix~\ref{appendix:alg-local}.

The next theorem provides conditions for local quadratic convergence, assuming that an accumulation point is a KKT pair of~\eqref{prog:SDP}.

\begin{theorem}\label{thm:local-quad-rate}
    Suppose $z^\infty$ is an accumulation point of the sequence generated by Algorithm~\ref{alg:SGN}. If the correction threshold bound satisfies $0 < \delta < \delta(z^\infty)$ and $z^\infty$ is a KKT pair at which the W-SOC and the SRCQ hold simultaneously, then the whole sequence converges quadratically to $z^\infty$ in the sense that
    \begin{equation*}
        \|z^{k+1} - z^\infty\| \leq O(\|z^{k} - z^\infty\|^2).
    \end{equation*}
    Moreover, for all sufficiently large $k$, the sequence $\{z^k\}$ lies in the active stratum containing $z^\infty$.
\end{theorem}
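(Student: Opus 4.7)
The plan is to combine the identification capability of the correction step $\zhatk$ with the stratum-wise quadratic rate of Proposition \ref{prop:quad-rate}, and show that once iterates enter a sufficiently small neighborhood of $z^\infty$, the algorithm (i) places the corrected iterate on the distinguished stratum $\tMpq$ containing $z^\infty$, (ii) selects the LM candidate over the two normal candidates inside the \SLMN\ subroutine, and (iii) contracts the distance to $z^\infty$ quadratically.

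First, I would set up the identification step. Since $z^\infty$ is a KKT pair, $F(z^\infty)=0$; let $(\alpha,\beta,\gamma,p,q,\overline{P},\overline{\lambda})$ be an IED of $G(z^\infty)$ and write $\tMcal=\tMpq$. Because $0<\delta<\delta(z^\infty)$, a continuity argument on the eigenvalues of $G$ shows that for $z^k$ close enough to $z^\infty$ the correction index set $\theta(z^k)$ coincides with $\beta$, so $\zhatk\in\tMcal$. Lemma~\ref{lemma:corr-almost-proj} then yields $\|\zhatk-z^\infty\|=O(\|z^k-z^\infty\|)$. Set $r:=\|\zhatk-z^\infty\|$.

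Second, I would compare the three \SLMN\ candidates at $\zhatk\in\tMcal$. The LM candidate $z^{k+1}_3=R_{\zhatk}(v^k)$ with $v^k=\vLM(\zhatk)$ stays on $\tMcal$ and, by Proposition~\ref{prop:quad-rate} (applicable since SRCQ implies W-SRCQ and W-SOC is assumed), satisfies $\|z^{k+1}_3-z^\infty\|=O(r^2)$; the Lipschitz continuity of $F$ then gives $\varphi(z^{k+1}_3)=O(r^4)$. For the normal candidates $z^{k+1}_i$ ($i=1,2$), Proposition~\ref{prop:esti-cor} (which requires precisely W-SOC and SRCQ) gives $\varphi(z^{k+1}_i)\ge \varphi(\zhatk)/c_3$, and the stratum-restricted error bound of Corollary~\ref{coro:REB} applied to $\zhatk\in\tMcal$ yields $\varphi(\zhatk)\ge (C_{REB}^2/2)\,r^2$. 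Hence $\varphi(z^{k+1}_i)=\Omega(r^2)$ dominates $\varphi(z^{k+1}_3)=O(r^4)$ for $r$ small, so $\SLMN(\zhatk)$ returns the LM candidate $z^{k+1}_3\in\tMcal$.

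Third, I would verify the outer acceptance test $\varphi(\SLMN(\zhatk))\le \varphi(z^k)$ and conclude. Since $\varphi(\SLMN(\zhatk))=O(\|z^k-z^\infty\|^4)$ and $\varphi(z^k)\gtrsim\|z^k-z^\infty\|^2$ in a neighborhood of $z^\infty$ (obtained by splitting $z^k-z^\infty$ into its tangent part to $\tMcal$, controlled by Corollary~\ref{coro:REB}, and its normal part, controlled by the Lipschitz lower bound of Lemma~\ref{lemma:bound-J} combined with Lemma~\ref{lemma:assu-to-isolated}, both under W-SOC and SRCQ), the acceptance succeeds for all $z^k$ sufficiently close to $z^\infty$. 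Consequently, every such iteration executes the on-stratum LM step from $\zhatk$, producing $\|z^{k+1}-z^\infty\|\le O(\|z^k-z^\infty\|^2)$ with $z^{k+1}\in\tMcal$. A standard telescoping argument, using that $z^\infty$ is an accumulation point so some $z^{k_0}$ enters the contraction neighborhood, then lifts the quadratic rate from a subsequence to the whole tail.

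The main obstacle will be the off-stratum lower bound $\varphi(z^k)\gtrsim\|z^k-z^\infty\|^2$ needed to trigger the acceptance test: on-stratum it is given by Corollary~\ref{coro:REB}, but for iterates perturbed into higher-dimensional strata one must glue the tangential bound with the normal-direction bound of Lemma~\ref{lemma:bound-J}. This is precisely where SRCQ (strictly stronger than W-SRCQ) is indispensable, since it prevents spurious near-zeros of $F$ along normal directions and thereby yields the uniform two-sided comparability between $\|F(z)\|$ and $\|z-z^\infty\|$ near $z^\infty$.
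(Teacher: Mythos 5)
Your overall strategy is the same as the paper's (correction identifies the stratum containing $z^\infty$, the on-stratum LM step contracts quadratically via Proposition~\ref{prop:quad-rate}, Proposition~\ref{prop:esti-cor} prevents the normal candidates and the outer acceptance test from interfering, then a recursion lifts the rate to the whole tail), but there are two genuine gaps. First, you treat the LM candidate as the \emph{full} step $R_{\widehat{z^k}}(v^k)$, whereas Algorithm~\ref{alg:SLMN} returns $z^{k+1}_3=R_{\widehat{z^k}}(\rho^{j_k}v^k)$ with $j_k$ determined by the Armijo backtracking. Proposition~\ref{prop:quad-rate} only controls the unit step, so your $O(r^2)$ contraction (and hence the $O(r^4)$ value bound used in the candidate comparison) does not follow until you prove that the line search eventually accepts $j_k=0$. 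The paper devotes Proposition~\ref{proposition:str-Newton-Taylor} (the expansion $\varphi(R_z(v))=\varphi(z)+\tfrac12\varphi'(z,v)+o(\|v\|^2)$) together with Lemma~\ref{Lemma:ww-dir-of-phi-order} precisely to this point; your proposal omits it entirely.

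Second, your route to the outer acceptance test relies on an off-stratum lower bound $\varphi(z^k)\gtrsim\|z^k-z^\infty\|^2$ obtained by "gluing" Corollary~\ref{coro:REB} (which holds only on the stratum) with Lemma~\ref{lemma:bound-J} (which bounds $J_{\overline z}$ only for purely normal perturbations based at the exact solution). This gluing is not justified: writing $z^k=\widehat{z^k}+\bigl[\begin{smallmatrix}0\\ \Delta^k\end{smallmatrix}\bigr]$ one has $F(z^k)=F(\widehat{z^k})+J_{\widehat{z^k}}(\Delta^k)$, and a priori the two terms could nearly cancel; ruling out this cancellation is exactly the content of the angle condition in the proof of Proposition~\ref{prop:esti-cor}, not of the two lemmas you cite (and the paper never establishes a full ambient error bound, cf.\ Example~\ref{example:9.2}). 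The fix is simple and is what the paper does: since $z^k-\widehat{z^k}$ is itself a normal perturbation of $\widehat{z^k}\in\widetilde{\Mcal}_{p,q}$, Proposition~\ref{prop:esti-cor} applied with $z=\widehat{z^k}$ yields directly $\varphi(\widehat{z^k})\le c_3\,\varphi(z^k)$, and combining with $\varphi(R_{\widehat{z^k}}(\widehat{v^k}))=O(\varphi(\widehat{z^k})^2)$ gives both the selection of the LM candidate and the acceptance $\varphi(\SLMN(\widehat{z^k}))\le\varphi(z^k)$, without any ambient error bound. With these two repairs (unit-step acceptance and the esti-cor-based comparison), your argument coincides with the paper's proof.
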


\begin{remark}\label{remark:compare}
    The main difference between our method and conventional manifold optimization is that we do not solve a given smooth problem subject to a manifold constraint. Instead, we decompose a nonsmooth problem in Euclidean space into smooth subproblems on strata from a stratification perspective. As mentioned in the introduction, this idea is also central to the concept of partial smoothness~\cite{lewis2002active}, where smoothness is imposed only after restricting to an active manifold (termed ``restricted smoothness'' in~\cite[Definition~2.7(i)]{lewis2002active}). Partial smoothness, however, also requires a ``normal sharpness" property~\cite[Definition~2.7(iii)]{lewis2002active}. For the merit function $\varphi$ given in \eqref{prog:global-P}, this property fails, so the partial smoothness theory is not directly applicable here.
\end{remark}

\begin{remark}
    From a different perspective, Deng et al.~\cite{deng2025efficient} recently proposed a manifold based primal-dual semismooth Newton method for~\eqref{prog:SDP}. Their approach also combines the correction idea of~\cite{feng2025quadratically} to drive the sequence toward the active manifold. Superlinear convergence was established (see~\cite[Theorem~10]{deng2025efficient}) under an error bound condition together with an assumption that the iterates eventually lie on the active manifold. In contrast, the present approach does not require the error bound to hold over the entire ambient space, nor does it rely on the prior existence of iterates on the active stratum. Instead, by investigating the stratification, we have distilled an equivalent condition required for the convergence of Newton’s method applied to the manifold-restricted subproblem, which is shown to be significantly weaker than classical regularity conditions. Furthermore, by exploiting the variational properties induced by the stratification, the proposed algorithm, through a specific design, drives the iterates toward the active stratum and ultimately achieves manifold identification.
\end{remark}

A natural question is why Theorem~\ref{thm:global-convergence} establishes only convergence to D-stationary points, whereas Theorem~\ref{thm:local-quad-rate} requires the limit point $z^\infty$ to be a KKT pair. While such an assumption is standard in the local analysis of Gauss--Newton-type methods, the next corollary shows that, in our setting, the gap can be closed by combining Theorems~\ref{thm:dstat-implies-kkt}, \ref{thm:global-convergence}, and~\ref{thm:local-quad-rate}.

\begin{corollary}\label{cor:quad-conv}
    Suppose that $z^\infty$ is an accumulation point of the sequence generated by Algorithm~\ref{alg:SGN}. If the correction threshold satisfies $0<\delta<\delta(z^\infty)$, and both the SOSC and the SRCQ hold at $z^\infty$, then the whole sequence converges quadratically to $z^\infty$. Moreover, $z^\infty$ is a KKT pair, and, for all sufficiently large $k$, the iterates $\{z^k\}$ lie in the active stratum containing $z^\infty$.
\end{corollary}

\begin{proof}
    By Theorem~\ref{thm:global-convergence}, any accumulation point $z^\infty$ satisfying the correction threshold condition is a D-stationary point. If, in addition, the SOSC and the SRCQ hold at $z^\infty$, then Theorem~\ref{thm:dstat-implies-kkt} implies that $z^\infty$ is a KKT pair. Since the SOSC implies the W-SOC, Theorem~\ref{thm:local-quad-rate} then yields quadratic convergence of the whole sequence to $z^\infty$.
\end{proof}

\begin{remark}
    The significance of Corollary~\ref{cor:quad-conv} is that it bridges the gap between global and local analyses, which are typically treated separately even in the classical smooth Gauss--Newton literature. Indeed, for such methods applied to $\varphi(z) = \frac{1}{2}\|F(z)\|^2$ with smooth $F$, the global convergence usually yields merely the first-order condition $F'(z)^* F(z) = 0$, while the local superlinear or quadratic convergence is typically established only near zeros of $F$ under suitable regularity conditions \cite{dennis1996numerical,nocedal2006numerical,zhou2010global}. In our setting, global convergence likewise yields only D-stationarity, but the stratification framework enables us to characterize when such a point is already a KKT point.
\end{remark}

\section{Numerical experiments}\label{sec:numerical}

\subsection{Instances satisfying the proposed condition}

In this section, we present two examples to illustrate the behavior predicted by the convergence analysis in Section~\ref{subsection: Convergence analysis}. In both examples, Algorithm~\ref{alg:SGN} achieves global convergence and eventually attains a quadratic local convergence rate under the W-SOC and the SRCQ, despite the failure of the classical strong-form assumptions. As reported in~\cite[Section 6]{feng2025quadratically}, the classical convergence theory for semismooth Newton methods is not applicable to these problems due to the degeneracy of the generalized Jacobian. In fact, even the method itself may not be executed.

The first example is adapted from~\cite[Example~6]{feng2025quadratically}. Although simple, it highlights a typical situation in which the classical SOSC fails because the solution set is not isolated, while the W-SOC still holds and is sufficient for fast local behavior. In~\cite{feng2025quadratically}, the local quadratic convergence of a semismooth Newton method is demonstrated by selecting a specific Bouligand generalized Jacobian element $\mathcal{U}_0$. In contrast, Algorithm~\ref{alg:SGN} does not require an \emph{a priori} choice of generalized Jacobians and is designed to be globally convergent.

\begin{example}\label{example:block_SDP}

Consider the following convex quadratic SDP:
    \begin{equation}\label{prog:block_SDP}
    \begin{aligned}
        \min_{X \in \mathbb{S}^n} \quad & \frac{1}{2}\| X_{11}-I\|_F^2 + \frac{1}{2}\| X_{12}\|_F^2 + \frac{1}{2}\| X_{21}\|_F^2 \\
        \mathrm{s.t.} \quad & X \in \mathbb{S}^n_+,
    \end{aligned}
    \end{equation}
    where the variable is partitioned as $X = \begin{bmatrix} X_{11} & X_{12} \\ X_{21} & X_{22} \end{bmatrix}$ with $X_{11} \in \mathbb{S}^{l_1}$ and $X_{22} \in \mathbb{S}^{l_2}$. The optimal solution set is given by
    \begin{equation*}
        \mathcal{S} = \left\{ \begin{bmatrix} I & 0 \\ 0 & X_{22} \end{bmatrix} \mathrel{\Bigg|} X_{22} \in \mathbb{S}^{l_2}_+ \right\}.
    \end{equation*}
    At any optimal solution $\overline{X} \in \mathcal{S}$, the unique Lagrange multiplier is $\overline{Y} = 0$. One can verify that the W-SOC and constraint nondegeneracy hold at $(\overline{X}, \overline{Y})$, whereas the SOSC fails because the solution is not isolated.

    We test this instance with dimensions $l_1 = l_2 = 100$ and a correction threshold $\delta = 0.2$, initialized from a randomly generated starting point. Figure~\ref{fig:block_SDP} explicitly demonstrates both the global convergence and the asymptotic quadratic convergence rate of Algorithm~\ref{alg:SGN}.
    \begin{figure}[H]
    \centering
    \includegraphics[width=1\textwidth]{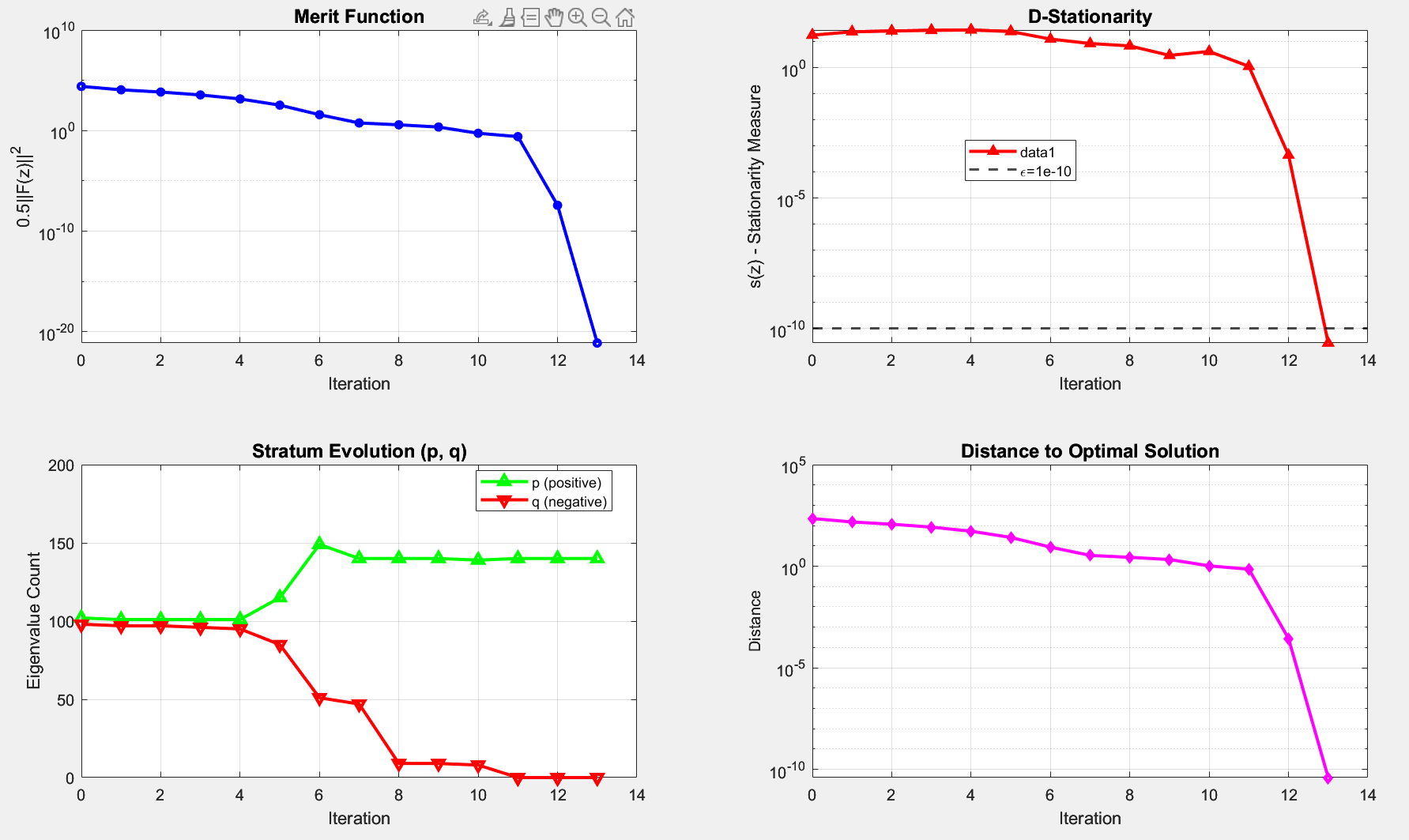}
    \caption{Convergence of Algorithm~\ref{alg:SGN} for Example~\ref{example:block_SDP}.}
    \label{fig:block_SDP}
    \end{figure}
    Two features are worth noting. First, the upper-right panel of Fig. \ref{fig:block_SDP} indicates that the stationarity measure $s(z)$ (Definition~\ref{def:d-sta-mea}) need not be monotone, even though the merit function is forced to decrease by the acceptance test.
    Second, the lower-left panel of Fig. \ref{fig:block_SDP} shows that the stratum-identification phase may exhibit a visible plateau: the iterates move slowly as the algorithm explores nearby strata. Although the iterates evolve only marginally during this phase, the eventual identification of the correct stratum triggers a sharp acceleration in convergence. This behavior matches the design goal of SGN: identification enables the method to exploit stratum-restricted regularity even in the absence of classical regularity.
\end{example}

The second example is adapted from~\cite[Example~5]{feng2025quadratically} (see also~\cite[Example~3]{ding2017characterization} and~\cite{zhou2017unified}). It includes an additional inequality constraint, which is handled by the product stratification discussed in Remark~\ref{remark:add_SDPs}.

\begin{example}\label{example:SI}
Let $B = \begin{bmatrix} 3/2 & -2 \\ -2 & 3 \end{bmatrix}$ and $b = B^{-1/2} \begin{bmatrix} 5/2 \\ -1 \end{bmatrix}$. Consider the following dual SDP:
    \begin{equation}\label{prog:prob_example4_D}
    \begin{aligned}
        \max_{Y \in \mathbb{S}^2} \quad & -\frac{1}{2}\| \mathcal{A} Y - b \|^2 - \langle I, Y \rangle \\
        \mathrm{s.t.} \quad & \langle E, Y \rangle \le 1, \\
        & Y \in \mathbb{S}^2_+,
    \end{aligned}
    \end{equation}
    where the linear operator is defined as $\mathcal{A} Y = B^{1/2}\operatorname{Diag}(Y)$ for all $Y \in \mathbb{S}^2$, and $E$ is the matrix of all ones. As demonstrated in~\cite{ding2017characterization}, the resulting KKT solution mapping for this problem lacks calmness. The unique optimal solution is $\overline{Y} = \begin{bmatrix} 1 & 0 \\ 0 & 0 \end{bmatrix}$ with the unique corresponding multiplier $\overline{y} = (0,0) \in \mathbb{R} \times \mathbb{S}^2$. It is straightforward to verify that both the W-SOC and constraint nondegeneracy hold at the KKT pair $\overline{z} = (\overline{Y}, \overline{y})$, whereas the S-SOSC fails.

    We test this instance using $\delta = 0.2$, initializing from a randomly generated point sufficiently distant from the optimal solution to clearly observe the transition into the asymptotic quadratic convergence regime. Figure~\ref{fig:ex_SI} corroborates the global convergence and local quadratic convergence of Algorithm~\ref{alg:SGN}.
    \begin{figure}[H]
    \centering
    \includegraphics[width=1\textwidth]{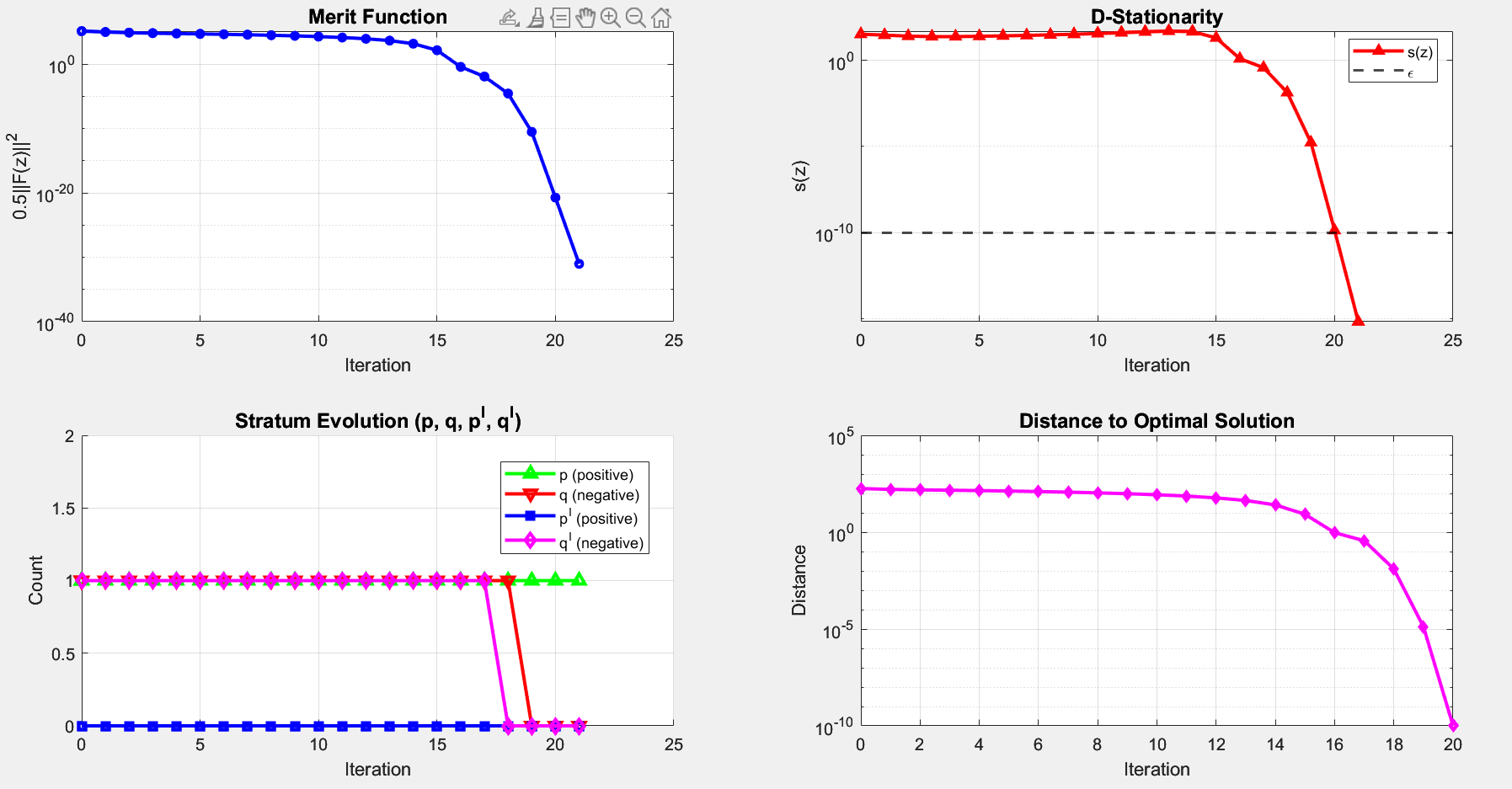}
    \caption{Convergence of Algorithm~\ref{alg:SGN} for Example~\ref{example:SI}.}
    \label{fig:ex_SI}
    \end{figure}

    In particular, the figure tracks the evolution of the active-stratum indices $(p,q)$ associated with the PSD constraint and $(p^I, q^I)$ associated with the inequality constraint. As evidenced by the lower-left panel of Figure~\ref{fig:ex_SI}, the iterates typically exhibit a clear phase transition: convergence is slow while the algorithm remains on incorrect strata, and the quadratic regime appears once the correct stratum is identified.
\end{example}

\subsection{An instance violating the proposed condition}

Next, we apply the proposed algorithm to the instance defined in Example~\ref{example:9.2}. As shown therein, the W-SOC and the W-SRCQ are satisfied at $(\overline{x},\overline{y})$, where the optimal solution is $\overline{x} = (0,0,0,0,0)^\top$ with the corresponding multiplier $\overline{y} = \operatorname{Diag}(0,0,0,-1)$. However, the SOSC and the SRCQ fail at this point. Consequently, the assumptions of Theorem~\ref{thm:local-quad-rate} are not satisfied, and the local quadratic convergence is not guaranteed.

More importantly, numerical experiments show that the convergence rate of Algorithm~\ref{alg:SGN} may degenerate to linear even when the initial point is sufficiently close to $(\overline{x},\overline{y})$, as illustrated in Figure~\ref{fig:ex9.2}.

\begin{figure}[H]
    \centering
    \includegraphics[width=1\textwidth]{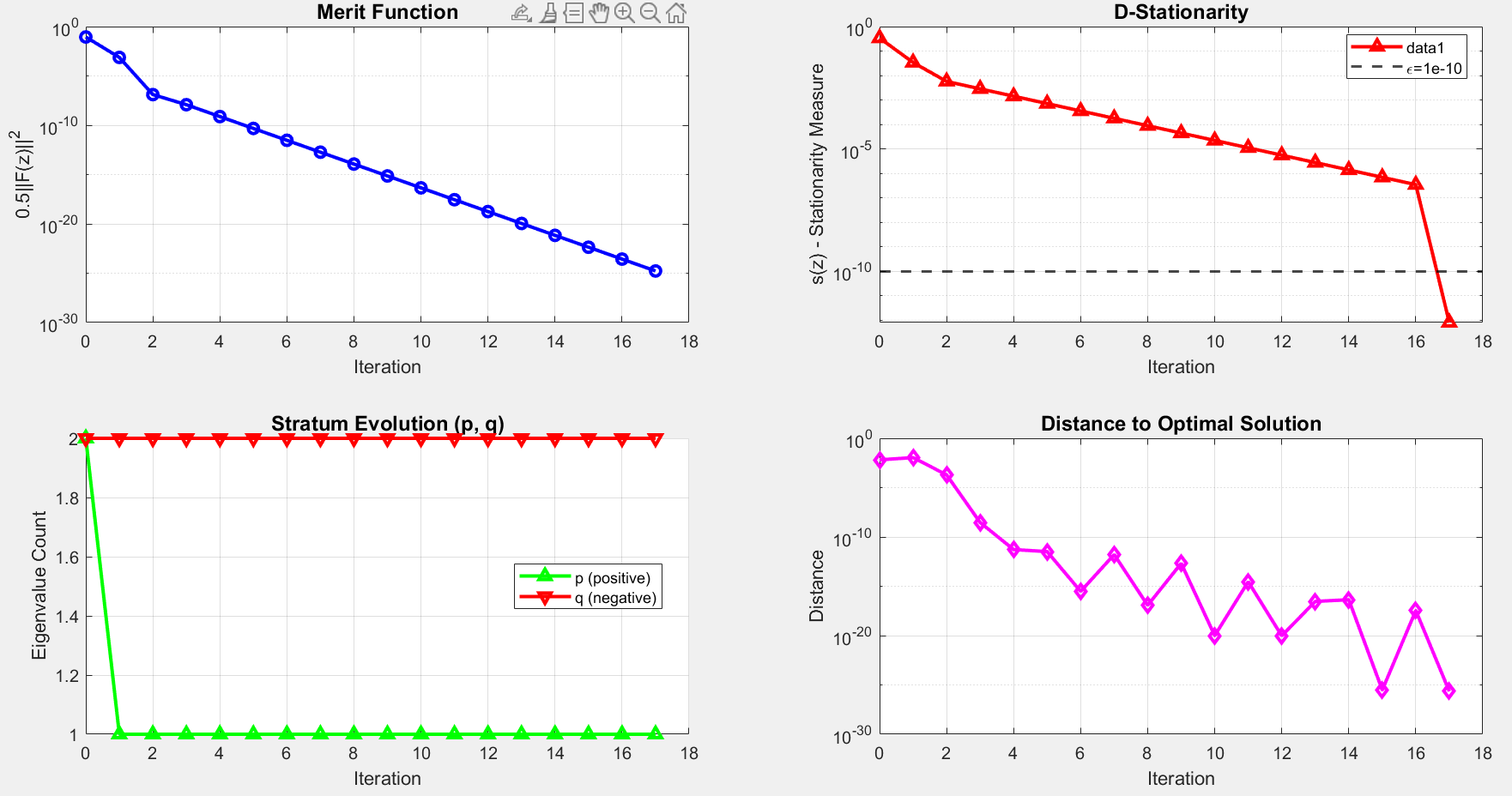}
    \caption{Convergence of Algorithm~\ref{alg:SGN} for Example~\ref{example:9.2}.}
    \label{fig:ex9.2}
\end{figure}

It can be observed that Algorithm~\ref{alg:SGN} fails to identify the correct stratum $\tMcal_{1,1}$ on which $(\overline{x},\overline{y})$ lies. This indicates that W-SRCQ and W-SOC alone are not sufficient to ensure quadratic convergence, even locally. This observation further justifies our strengthening of the W-SRCQ to the SRCQ in the convergence rate analysis.

A more theoretical explanation for this behavior follows from a finer regularity analysis of~\eqref{prog:example-1}. Indeed, there exists a sequence of KKT pairs of~\eqref{prog:example-1} converging to $(\overline{x},\overline{y})$ for which the W-SOC fails. According to Remark~\ref{remark:dF_pCF}, quadratic convergence of semismooth Newton-type methods is generally not available in a neighborhood of such points. Hence, even when the initial point is sufficiently close to $(\overline{x},\overline{y})$, the generated iterates may drift toward these unfavorable nearby KKT pairs, in which case only linear convergence can be expected asymptotically.

Moreover, if the initial point is randomly chosen near $(\overline{x},\overline{y})$ but lies on the correct stratum $\tMcal_{1,1}$, quadratic convergence is also attained, as shown in Figure~\ref{fig:ex9.2_random_2}. This observation is consistent with the statement in Remark~\ref{remark:local-rate-LMGN}.

\begin{figure}[H]
    \centering
    \includegraphics[width=1\textwidth]{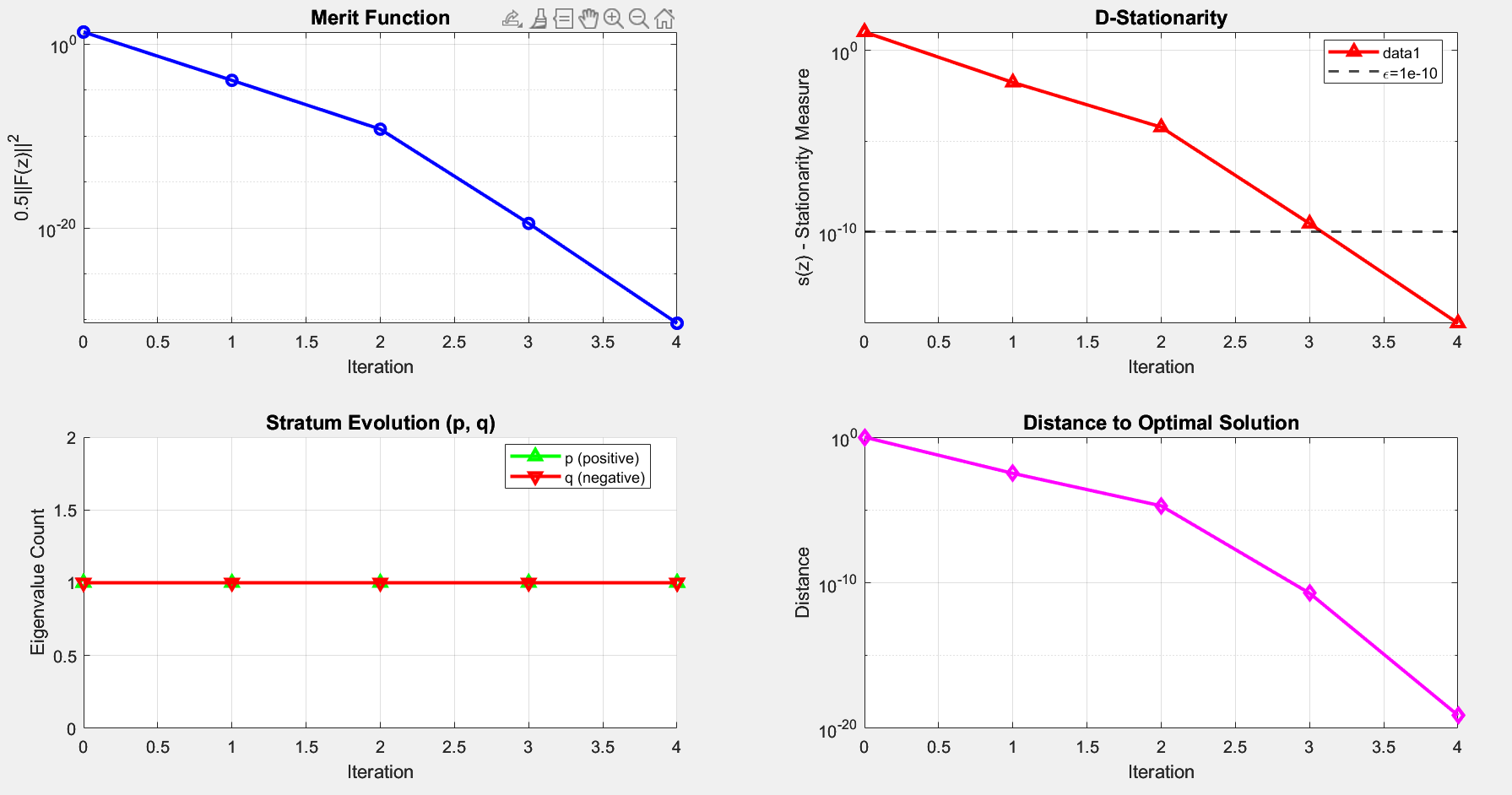}
    \caption{Initial point on stratum $\tMcal_{1,1}$}
    \label{fig:ex9.2_random_2}
\end{figure}

\section{Conclusion}\label{sect:conclusion}

This paper develops a stratification-based framework for nonlinear semidefinite programming. By decomposing $\mathbb{S}^n$ through the inertia stratification and lifting this structure to the primal-dual space, we reveal a hidden smooth geometry within the inherently nonsmooth KKT system: the metric projection onto $\mathbb{S}^n_{+}$ is $C^\infty$-smooth on each stratum, and the KKT mapping admits an explicit manifold differential when restricted to a fixed stratum. This leads to a clear separation between analysis \emph{within a stratum}, where standard smooth-manifold calculus applies, and analysis \emph{across strata}, where perturbations are governed by the adjacency structure of the stratification. Within this framework, we demonstrate that weak and verifiable problem-level regularity conditions are equivalent to solution-level strong metric regularity in a stratum-restricted sense. Furthermore, the classical strong-form regularity conditions are proven to be equivalent to the local uniform validity of these stratum-restricted properties near a local minimizer satisfying the RCQ. The transversality interpretation further clarifies the geometry of W-SRCQ and yields stability and genericity results on a fixed stratum that parallel the classical theory for constraint nondegeneracy. The stratification viewpoint also informs algorithm design. We propose a globally convergent stratified Gauss--Newton (SGN) method, combined with Levenberg--Marquardt steps with explicit normal moves and an eigenvalue-triggered correction mechanism. The resulting scheme guarantees descent of the least-squares merit function while allowing controlled transitions between strata. Under mild assumptions, it converges globally to D-stationary points and attains local quadratic convergence to KKT pairs under W-SOC and SRCQ, with eventual identification of the active stratum.

Beyond its specific application to the KKT system, the proposed stratification framework may provide a new geometric perspective on NLSDP and, more broadly, on nonpolyhedral matrix optimization. From a theoretical standpoint, an important direction for future research is revisiting other central concepts in perturbation analysis, including the robust isolated calmness, the Aubin property, and other related concepts. We anticipate that leveraging this stratified geometric perspective, underpinned by stratum-wise smoothness, will yield sharper characterizations of these classical properties. On the algorithmic side, this stratification framework may suggest a new approach for degenerate regimes, where classical nondegeneracy assumptions are too restrictive, and it motivates extending the analysis beyond Newton-type schemes to other NLSDP solvers, notably augmented Lagrangian methods. Finally, it is of practical interest to remove the dependence on the eigenvalue-threshold parameter $\delta$, for instance, through partial-correction strategies that retain both robust globalization and transparent local rate guarantees.

\bibliographystyle{spmpsci}
\bibliography{Str-NLSDP-bibliography}

@book{absil2008optimization,
  title={Optimization algorithms on matrix manifolds},
  author={Absil, P-A and Mahony, Robert and Sepulchre, Rodolphe},
  year={2008},
  publisher={Princeton University Press}
}

@article{andreani2017second,
  title={A second-order sequential optimality condition associated to the convergence of optimization algorithms},
  author={Andreani, Roberto and Haeser, Gabriel and Ramos, Alberto and Silva, Paulo JS},
  journal={IMA Journal of Numerical Analysis},
  volume={37},
  number={4},
  pages={1902--1929},
  year={2017},
  publisher={Oxford University Press}
}

@article{andreani2018note,
  title={A note on the convergence of an augmented Lagrangian algorithm to second-order stationary points},
  author={Andreani, Roberto and Secchin, Leonardo D},
  journal={Proceeding Series of the Brazilian Society of Computational and Applied Mathematics},
  volume={6},
  number={1},
  year={2018}
}

@book{arnold2012singularities,
  title={Singularities of differentiable maps, volume 2: monodromy and asymptotics of integrals},
  author={Arnold, Elionora and Gusein-Zade, Sabir Medzhidovich and Varchenko, Alexander N},
  year={2012},
  publisher={Springer Science \& Business Media}
}

@article{bonnans1998sensitivity,
  title={Sensitivity analysis of optimization problems under second order regular constraints},
  author={Bonnans, J Fr{\'e}d{\'e}ric and Cominetti, Roberto and Shapiro, Alexander},
  journal={Mathematics of Operations Research},
  volume={23},
  number={4},
  pages={806--831},
  year={1998},
  publisher={INFORMS}
}

@article{bonnans1999second,
  title={Second order optimality conditions based on parabolic second order tangent sets},
  author={Bonnans, J Fr{\'e}d{\'e}ric and Cominetti, Roberto and Shapiro, Alexander},
  journal={SIAM Journal on Optimization},
  volume={9},
  number={2},
  pages={466--492},
  year={1999},
  publisher={SIAM}
}

@book{bonnans2013perturbation,
  title={Perturbation Analysis of Optimization Problems},
  author={Bonnans, J Fr{\'e}d{\'e}ric and Shapiro, Alexander},
  year={2000},
  publisher={Springer Verlag}
}

@article{boumal2020deterministic,
  title={Deterministic Guarantees for {B}urer-{M}onteiro Factorizations of Smooth Semidefinite Programs},
  author={Boumal, Nicolas and Voroninski, Vladislav and Bandeira, Afonso S},
  journal={Communications on Pure and Applied Mathematics},
  volume={73},
  number={3},
  pages={581--608},
  year={2020},
  publisher={Wiley Online Library}
}

@article{burer2003nonlinear,
  title={A nonlinear programming algorithm for solving semidefinite programs via low-rank factorization},
  author={Burer, Samuel and Monteiro, Renato DC},
  journal={Mathematical programming},
  volume={95},
  number={2},
  pages={329--357},
  year={2003},
  publisher={Springer}
}

@article{burer2005local,
  title={Local minima and convergence in low-rank semidefinite programming},
  author={Burer, Samuel and Monteiro, Renato DC},
  journal={Mathematical programming},
  volume={103},
  number={3},
  pages={427--444},
  year={2005},
  publisher={Springer}
}

@book{conway1994course,
  title={A course in functional analysis},
  author={Conway, John B},
  volume={96},
  year={1994},
  publisher={Springer Science \& Business Media}
}

@article{deng2025efficient,
  title={An efficient primal dual semismooth Newton method for semidefinite programming},
  author={Deng, Zhanwang and Hu, Jiang and Deng, Kangkang and Wen, Zaiwen},
  journal={arXiv preprint arXiv:2504.14333},
  year={2025}
}

@article{ding2017characterization,
  title={Characterization of the robust isolated calmness for a class of conic programming problems},
  author={Ding, Chao and Sun, Defeng and Zhang, Liwei},
  journal={SIAM Journal on Optimization},
  volume={27},
  number={1},
  pages={67--90},
  year={2017},
  publisher={SIAM}
}

@book{dontchev2009implicit,
  title={Implicit functions and solution mappings},
  author={Dontchev, Asen L and Rockafellar, R Tyrrell},
  volume={543},
  year={2009},
  publisher={Springer}
}

@article{drusvyatskiy2014optimality,
  title={Optimality, identifiability, and sensitivity},
  author={Drusvyatskiy, Dmitriy and Lewis, Adrian S},
  journal={Mathematical Programming},
  volume={147},
  number={1},
  pages={467--498},
  year={2014},
  publisher={Springer}
}

@article{duarte2016model,
  title={Model-based optimal design of experiments—Semidefinite and nonlinear programming formulations},
  author={Duarte, Belmiro PM and Wong, Weng Kee and Oliveira, Nuno MC},
  journal={Chemometrics and Intelligent Laboratory Systems},
  volume={151},
  pages={153--163},
  year={2016},
  publisher={Elsevier}
}

@book{facchinei2003finite,
  title={Finite-dimensional variational inequalities and complementarity problems},
  author={Facchinei, Francisco and Pang, Jong-Shi},
  year={2003},
  publisher={Springer}
}

@article{feng2025quadratically,
	title = {A quadratically convergent semismooth {Newton} method for nonlinear semidefinite programming without generalized {Jacobian} regularity},
	copyright = {CC0 1.0 Universal Public Domain Dedication},
	issn = {0025-5610, 1436-4646},
	url = {https://link.springer.com/10.1007/s10107-025-02198-0},
	doi = {10.1007/s10107-025-02198-0},
	abstract = {We introduce a quadratically convergent semismooth Newton method for nonlinear semideﬁnite programming that eliminates the need for the generalized Jacobian regularity, a common yet stringent requirement in existing approaches. Our strategy involves identifying a single nonsingular element within the Bouligand generalized Jacobian, thus avoiding the standard requirement for nonsingularity across the entire generalized Jacobian set, which is often too restrictive for practical applications. The theoretical framework is supported by introducing the weak second order condition (W-SOC) and the weak strict Robinson constraint qualiﬁcation (W-SRCQ). These conditions not only guarantee the existence of a nonsingular element in the generalized Jacobian but also forge a primal-dual connection in linearly constrained convex quadratic programming. The theoretical advancements further lay the foundation for the algorithmic design of a novel semismooth Newton method, which integrates a correction step to address degenerate issues. Particularly, this correction step ensures the local convergence as well as a superlinear/quadratic convergence rate of the proposed method. Preliminary numerical experiments corroborate our theoretical ﬁndings and underscore the practical effectiveness of our method.},
	language = {en},
	urldate = {2025-02-08},
	journal = {Mathematical Programming},
	author = {Feng, Fuxiaoyue and Ding, Chao and Li, Xudong},
	month = feb,
	year = {2025},
}

@article{fukuda2023weak,
  title={On the weak second-order optimality condition for nonlinear semidefinite and second-order cone programming},
  author={Fukuda, Ellen H and Haeser, Gabriel and Mito, Leonardo M},
  journal={Set-Valued and Variational Analysis},
  volume={31},
  number={2},
  pages={15},
  year={2023},
  publisher={Springer}
}

@article{gao2022riemannian,
  title={A Riemannian rank-adaptive method for low-rank matrix completion},
  author={Gao, Bin and Absil, P-A},
  journal={Computational Optimization and Applications},
  volume={81},
  number={1},
  pages={67--90},
  year={2022},
  publisher={Springer}
}

@incollection{goresky1988stratified,
  title={Stratified morse theory},
  author={Goresky, Mark and MacPherson, Robert},
  booktitle={Stratified Morse Theory},
  pages={3--22},
  year={1988},
  publisher={Springer}
}

@article{gould1999note,
  title={A note on the convergence of barrier algorithms to second-order necessary points},
  author={Gould, Nick and Toint, Philippe},
  journal={Mathematical programming},
  volume={85},
  number={2},
  pages={433--438},
  year={1999},
  publisher={Springer-Verlag GmbH and Co. KG}
}

@article{HELMKE19951,
title = {Critical points of matrix least squares distance functions},
journal = {Linear Algebra and its Applications},
volume = {215},
pages = {1-19},
year = {1995},
issn = {0024-3795},
doi = {https://doi.org/10.1016/0024-3795(93)00070-G},
url = {https://www.sciencedirect.com/science/article/pii/002437959300070G},
author = {Uwe Helmke and Mark A. Shayman},
}

@book{hirsch2012differential,
  title={Differential topology},
  author={Hirsch, Morris W},
  volume={33},
  year={2012},
  publisher={Springer Science \& Business Media}
}

@article{ioffe2009invitation,
  title={An invitation to tame optimization},
  author={Ioffe, Alexander D},
  journal={SIAM Journal on Optimization},
  volume={19},
  number={4},
  pages={1894--1917},
  year={2009},
  publisher={SIAM}
}

@book{jost2005riemannian,
  title={Riemannian geometry and geometric analysis},
  author={Jost, J{\"u}rgen},
  year={2005},
  publisher={Springer}
}

@article{kang2025global,
  title={Global contact-rich planning with sparsity-rich semidefinite relaxations},
  author={Kang, Shucheng and Liu, Guorui and Yang, Heng},
  journal={arXiv preprint arXiv:2502.02829},
  year={2025}
}

@book{lee2012introduction,
  title={Introduction to Smooth Manifolds},
  author={Lee, John},
  volume={218},
  year={2012},
  publisher={Springer Science \& Business Media}
}

@inproceedings{lee2016sequential,
  title={A sequential parametric convex approximation method for solving bilinear matrix inequalities},
  author={Lee, Donghwan and Hu, Jianghai},
  booktitle={2016 IEEE 55th Conference on Decision and Control (CDC)},
  pages={1965--1970},
  year={2016},
  organization={IEEE}
}

@book{dennis1996numerical,
  title={Numerical Methods for Unconstrained Optimization and Nonlinear Equations},
  author={Dennis, J. E. and Schnabel, R. B.},
  year={1996},
  publisher={SIAM}
}

@article{zhou2010global,
  title={Global Convergence of the Gauss--Newton Method for Convex Composite Optimization},
  author={Zhou, Wen-Yuan and Chen, Xiao-Jun and Dai, Yu-Hong},
  journal={SIAM Journal on Optimization},
  volume={20},
  number={4},
  pages={1908--1930},
  year={2010}
}

@book{nocedal2006numerical,
  author    = {Jorge Nocedal and Stephen J. Wright},
  title     = {Numerical Optimization},
  publisher = {Springer},
  edition   = {2},
  year      = {2006}
}

@article{chen2003non,
  title={Non-interior continuation methods for solving semidefinite complementarity problems},
  author={Chen, Xin and Tseng, Paul},
  journal={Mathematical Programming},
  volume={95},
  number={3},
  pages={431--474},
  year={2003},
  publisher={Springer}
}

@article{lewis2002active,
  title={Active sets, nonsmoothness, and sensitivity},
  author={Lewis, Adrian S},
  journal={SIAM Journal on Optimization},
  volume={13},
  number={3},
  pages={702--725},
  year={2002},
  publisher={SIAM}
}

@article{ma2025aubin,
  title={The {Aubin} Property for Generalized Equations over {$C^2$}-cone Reducible Sets},
  author={Ma, Jiaming and Sun, Defeng},
  journal={arXiv preprint arXiv:2509.14194},
  year={2025}
}

@book{mather1970notes,
  title={Notes on topological stability},
  author={Mather, John},
  volume={6},
  year={1970},
  publisher={Harvard University Cambridge}
}

@book{molina2020handbook,
  title={Handbook of Geometry and Topology of Singularities I},
  author={Molina, Jos{\'e} Luis Cisneros and L{\^e}, D{\~u}ng Tr{\'a}ng and Seade, Jos{\'e}},
  year={2020},
  publisher={Springer}
}

@article{Olikier2023FirstorderOO,
  title={First-order optimization on stratified sets},
  author={Guillaume Olikier and Kyle A. Gallivan and Pierre-Antoine Absil},
  journal={ArXiv},
  year={2023},
  volume={abs/2303.16040},
  url={https://api.semanticscholar.org/CorpusID:257771438}
}

@article{pang2003semismooth,
  title={Semismooth homeomorphisms and strong stability of semidefinite and Lorentz complementarity problems},
  author={Pang, Jong-Shi and Sun, Defeng and Sun, Jie},
  journal={Mathematics of Operations Research},
  volume={28},
  number={1},
  pages={39--63},
  year={2003},
  publisher={INFORMS}
}

@article{qi1993nonsmooth,
  title={A nonsmooth version of Newton's method},
  author={Qi, Liqun and Sun, Jie},
  journal={Mathematical programming},
  volume={58},
  number={1},
  pages={353--367},
  year={1993},
  publisher={Springer}
}

@article{qi1993convergence,
  title={Convergence analysis of some algorithms for solving nonsmooth equations},
  author={Qi, Liqun},
  journal={Mathematics of operations research},
  volume={18},
  number={1},
  pages={227--244},
  year={1993},
  publisher={INFORMS}
}

@article{robinson1980strongly,
			title={Strongly regular generalized equations},
			author={Robinson, Stephen M},
			journal={Mathematics of Operations Research},
			volume={5},
			pages={43--62},
			year={1980},
			publisher={INFORMS}
}

@book{simon2015operator,
  title={Operator theory},
  author={Simon, Barry},
  volume={4},
  year={2015},
  publisher={American Mathematical Soc.}
}

@article{smith2014optimization,
  title={Optimization techniques on Riemannian manifolds},
  author={Smith, Steven Thomas},
  journal={arXiv preprint arXiv:1407.5965},
  year={2014}
}

@article{stinglsolution,
  title={On the Solution of Nonlinear Semidefinite Programs by Augmented Lagrangian Methods},
  author={Stingl, Michael}
}

@article{sun2002semismooth,
  title={Semismooth matrix-valued functions},
  author={Sun, Defeng and Sun, Jie},
  journal={Mathematics of Operations Research},
  volume={27},
  number={1},
  pages={150--169},
  year={2002},
  publisher={INFORMS}
}

@article{sun2006strong,
  title={The strong second-order sufficient condition and constraint nondegeneracy in nonlinear semidefinite programming and their implications},
  author={Sun, Defeng},
  journal={Mathematics of Operations Research},
  volume={31},
  number={4},
  pages={761--776},
  year={2006},
  publisher={INFORMS}
}

@article{thom1969ensembles,
  title={Ensembles et morphismes stratifi{\'e}s},
  author={Thom, Ren{\'e}},
  year={1969}
}

@article{vandereycken2013lowrank,
  title={Low-rank matrix completion by {R}iemannian optimization},
  author={Vandereycken, Bart},
  journal={SIAM Journal on Optimization},
  volume={23},
  number={2},
  pages={1214--1236},
  year={2013},
  publisher={SIAM}
}

@book{van1998tame,
  title={Tame topology and o-minimal structures},
  author={Van den Dries, Lou},
  volume={248},
  year={1998},
  publisher={Cambridge university press}
}

@article{whitney1965local,
  title={Local Properties of Analytic Varieties},
  author={Whitney, Hassler},
  journal={Differential and Combinatorial Topology},
  pages={205--244},
  year={1965},
  publisher={Princeton University Press}
}

@article{whitney1965tangents,
  title={Tangents to an Analytic Variety},
  author={Whitney, Hassler},
  journal={The Annals of Mathematics},
  volume={81},
  number={3},
  pages={496},
  year={1965},
  publisher={JSTOR}
}

@article{zhou2016riemannian,
  title={A Riemannian rank-adaptive method for low-rank optimization},
  author={Zhou, Guifang and Huang, Wen and Gallivan, Kyle A and Van Dooren, Paul and Absil, Pierre-Antoine},
  journal={Neurocomputing},
  volume={192},
  pages={72--80},
  year={2016},
  publisher={Elsevier}
}

@article{zhou2017unified,
  title={A unified approach to error bounds for structured convex optimization problems},
  author={Zhou, Zirui and So, Anthony Man-Cho},
  journal={Mathematical Programming},
  volume={165},
  pages={689--728},
  year={2017},
  publisher={Springer}
}

@book{clarke1990optimization,
  title={Optimization and nonsmooth analysis},
  author={Clarke, Frank H},
  year={1990},
  publisher={SIAM}
}

@book{rockafellar_variational_1998,
    series = {Springer {Berlin} {Heidelberg}},
    title = {Variational {Analysis}},
    volume = {317},
    isbn = {978-3-540-62772-2},
    url = {http://link.springer.com/10.1007/978-3-642-02431-3},
    abstract = {From its origins in the minimization of integral functionals, the notion of 'variations' has evolved greatly in connection with applications in optimization, equilibrium, and control. It refers not only to constrained movement away from a point, but also to modes of perturbation and approximation that are best describable by 'set convergence', variational convergence of functions' and the like. This book develops a unified framework and, in finite dimension, provides a detailed exposition of variational geometry and subdifferential calculus in their current forms beyond classical and convex analysis. Also covered are set-convergence, set-valued mappings, epi-convergence, duality, maximal monotone mappings, second-order subderivatives, measurable selections and normal integrands. The changes in the 3rd printing mainly concern various typographical, corrections, and reference omissions which came to light in the previous printings. Many of these reached the authors' notice through their own re-reading, that of their students and a number of colleagues mentioned in the Preface. The authors also included a few telling examples as well as improved a couple of statements: slightly weaker assumptions or provided a converse.},
    language = {English},
    publisher = {Springer Berlin Heidelberg},
    author = {Rockafellar, R Tyrrell and Wets, Roger J B},
    year = {1998},
    doi = {10.1007/978-3-642-02431-3},
}

@article{yamashita2001rate,
  title={On the rate of convergence of the {L}evenberg--{M}arquardt method},
  author={Yamashita, Nobuo and Fukushima, Masao},
  journal={Computing},
  volume={67},
  number={3},
  pages={239--252},
  year={2001},
  publisher={Springer},
  doi={10.1007/s006070170004}
}

@article{jongen1987inertia,
  title={On inertia and Schur complement in optimization},
  author={Jongen, H Th and M{\"o}bert, T and R{\"u}ckmann, J and Tammer, K},
  journal={Linear Algebra and its Applications},
  volume={95},
  pages={97--109},
  year={1987},
  publisher={Elsevier}
}

@article{bonnans_pseudopower_1995,
    title = {Pseudopower expansion of solutions of generalized equations and constrained optimization problems},
    volume = {70},
    doi = {10.1007/bf01585932},
    language = {English},
    number = {1-3},
    journal = {Mathematical Programming},
    author = {Bonnans, Joseph Frédéric and Sulem, Agnès},
    month = oct,
    year = {1995},
    pages = {123 -- 148},
}

@article{dontchev_characterizations_1996,
    title = {Characterizations of Strong Regularity for Variational Inequalities over Polyhedral Convex Sets},
    volume = {6},
    doi = {10.1137/s1052623495284029},
    abstract = {Linear and nonlinear variational inequality problems over a polyhedral convex set are analyzed parametrically. Robinson's notion of strong regularity , as a criterion for the solution set to be a singleton depending Lipschitz continuously on the parameters, is characterized ...},
    language = {English},
    number = {4},
    journal = {SIAM Journal on Optimization},
    author = {Dontchev, Asen L and Rockafellar, R Tyrrell},
    month = nov,
    year = {1996},
    pages = {1087 -- 1105},
}

@article{chen_Aubin_25,
author = {Chen, Liang and Chen, Ruoning and Sun, Defeng and Zhu, Junyuan},
title = {Aubin Property and Strong Regularity Are Equivalent for Nonlinear Second-Order Cone Programming},
journal = {SIAM Journal on Optimization},
volume = {35},
number = {2},
pages = {712-738},
year = {2025},
doi = {10.1137/24M1670676},

URL = { 
    
        https://doi.org/10.1137/24M1670676
    
    

},
eprint = { 
    
        https://doi.org/10.1137/24M1670676
    
    

}
,
    abstract = { Abstract. This paper solves a fundamental open problem in variational analysis on the equivalence between the Aubin property and the strong regularity for nonlinear second-order cone programming (SOCP) at a locally optimal solution. We achieve this by introducing a reduction approach to the Aubin property characterized by the Mordukhovich criterion and a lemma of alternative choices on cones to replace the S-lemma used in Outrata and Ramírez [SIAM J. Optim., 21 (2011), pp. 789–823] and Opazo, Outrata, and Ramírez [SIAM J. Optim., 27 (2017), pp. 2143–2151], where the same SOCP was considered under the strict complementarity condition except for possibly only one block of constraints. As a byproduct, we also offer a new approach to the well-known result of Dontchev and Rockafellar [SIAM J. Optim., 6 (1996), pp. 1087–1105] on the equivalence of the two concepts in conventional nonlinear programming. }
}

@article{chen_characterizations_2026,
	title = {Characterizations of the {Aubin} property of the solution mapping for nonlinear semidefinite programming},
	volume = {215},
	issn = {0025-5610, 1436-4646},
	url = {https://link.springer.com/10.1007/s10107-025-02231-2},
	doi = {10.1007/s10107-025-02231-2},
	abstract = {In this paper, we study the Aubin property of the Karush-Kuhn-Tucker solution mapping for the nonlinear semideﬁnite programming (NLSDP) problem at a locally optimal solution. In the literature, it is known that the Aubin property implies the constraint nondegeneracy by Fusek (SIAM J. Optim. 23:1041–1061, 2013) and the second-order sufﬁcient condition by Ding et al. (SIAM J. Optim. 27:67–90, 2017). Based on the Mordukhovich criterion, here we further prove that the strong second-order sufﬁcient condition is also necessary for the Aubin property to hold. Consequently, several equivalent conditions including the strong regularity are established for NLSDP’s Aubin property. Together with the recent progress made by Chen et al. (SIAM J. Optim. 35:712–738, 2025) on the equivalence between the Aubin property and the strong regularity for nonlinear second-order cone programming, this paper constitutes a signiﬁcant step forward in characterizing the Aubin property for general non-polyhedral C2-cone reducible constrained optimization problems.},
	language = {en},
	number = {1-2},
	urldate = {2026-01-17},
	journal = {Mathematical Programming},
	author = {Chen, Liang and Chen, Ruoning and Sun, Defeng and Zhang, Liping},
	month = jan,
	year = {2026},
	pages = {637--668},
	file = {Chen et al. - 2026 - Characterizations of the Aubin property of the solution mapping for nonlinear semidefinite programmi.pdf:/Users/chao/Zotero/storage/LSYJ3LGL/Chen et al. - 2026 - Characterizations of the Aubin property of the solution mapping for nonlinear semidefinite programmi.pdf:application/pdf},
}

@article{coogan2017offset,
  title={Offset optimization in signalized traffic networks via semidefinite relaxation},
  author={Coogan, Samuel and Kim, Eric and Gomes, Gabriel and Arcak, Murat and Varaiya, Pravin},
  journal={Transportation Research Part B: Methodological},
  volume={100},
  pages={82--92},
  year={2017},
  publisher={Elsevier}
}

@inproceedings{ouyang2018conic,
  title={Conic approximation with provable guarantee for traffic signal offset optimization},
  author={Ouyang, Yi and Zhang, Richard Y. and Lavaei, Javad and Varaiya, Pravin},
  booktitle={2018 IEEE 57th Annual Conference on Decision and Control (CDC)},
  pages={229--236},
  year={2018},
  organization={IEEE}
}

\appendix
\normalsize

\renewcommand{\thesection}{\Alph{section}}
\section{Smoothness of $\Pi_{\Sbb^n_+}$ on the fixed-index manifold}\label{appendix:smooth}

In this appendix, we prove the $C^\infty$-smoothness of the metric projector $\Pi_{\mathbb{S}^n_+}$ when restricted to a fixed-index stratum $\mathcal{M}_{p,q}$, and we derive the differential formula in Theorem~\ref{thm:diff-PiK}.  Equivalently, one may view this as a stratum-wise smoothness result on the constant-rank manifold
\[
\mathcal{M}_r:=\{A\in \mathbb{S}^n \mid \operatorname{rank}(A)=r\},
\qquad r=1,\ldots,n,
\]
since $\mathcal{M}_r$ decomposes into $r+1$ connected components
\[
\mathcal{M}_r=\bigcup_{p=0}^r \mathcal{M}_{p,r-p}.
\]
For notational simplicity, we suppress the indices $p,q$ and write $\mathcal{M}$ for the fixed stratum under consideration.

The $C^\infty$-smoothness of $\Pi_{\mathbb{S}^n_+}$ on $\mathcal{M}_{p,q}$ guarantees that the normal directions $W_1(z)$ and $W_2(z)$ defined in Proposition~\ref{prop:char_of_dstat} are smooth on $\widetilde{\mathcal{M}}_{p,q}$ (which is formally established in Lemma~\ref{lemma: W-smooth-on-M}). Concurrently, this property ensures that the retraction $R_{\widetilde{\mathcal{M}}}$ introduced in Section~\ref{section:sub-SLMN} preserves this smoothness on $\widetilde{\mathcal{M}}_{p,q}$. For notational brevity, we suppress the subscripts $p, q$ and denote the relevant manifolds simply as $\mathcal{M}$ and $\widetilde{\mathcal{M}}$, as the specific indices are mathematically immaterial to the local analysis. The subsequent proofs rely on analytic functional calculus. We briefly recall the relevant facts and refer to~\cite[Chapter~VII]{conway1994course} or~\cite[Sections~2.2--2.3]{simon2015operator} for background.

Recall that in complex analysis, the classical Cauchy integral formula provides an integral representation of a scalar analytic function $f(\zeta)$ evaluated at $\zeta \in \mathbb{C}$ via the contour integral:
\begin{equation*}
    f(\zeta) = \frac{1}{2\pi i} \int_{\Gamma} \frac{f(z)}{z - \zeta} \, dz,
\end{equation*}
where the contour $\Gamma$ encloses $\zeta$. This representation facilitates differentiation under the integral sign, thereby guaranteeing infinite differentiability. Analogously, for a symmetric matrix $A \in \mathbb{S}^n$, analytic functional calculus defines the matrix-valued function $f(A)$ for an analytic function $f$ via the contour integral:
\begin{equation*}
    f(A) = \frac{1}{2\pi i} \int_{\Gamma} f(z) (zI - A)^{-1} \, dz,
\end{equation*}
where the contour $\Gamma \subset \mathbb{C}$ strictly encloses the spectrum (i.e., the set of eigenvalues) of $A$, and $(zI - A)^{-1}$ denotes the resolvent matrix. This construction directly mirrors the scalar Cauchy formula. For a rigorous treatment, we refer the reader to the classical text on functional analysis by Conway~\cite[Chapter~VII]{conway1994course} or to Simon~\cite[Sections~2.2 and~2.3]{simon2015operator} for a detailed exposition.

\begin{appthm}[{\cite[Chapter~VII]{conway1994course}}]\label{thm:functional_calculus}
    Let $f$ be an analytic function on an open set $\Omega \subseteq \mathbb{C}$ containing the spectrum of $A \in \mathbb{S}^n$, and let the boundary $\Gamma = \partial \Omega$  enclose this spectrum. Then, the matrix-valued function
    \begin{equation*}
        f(A) = \frac{1}{2\pi i} \int_{\Gamma} f(z) (zI - A)^{-1} \, dz
    \end{equation*}
    depends analytically on $A$. In particular, the mapping $A \mapsto f(A)$ is $C^\infty$-smooth and locally expressible as a convergent power series on the domain of symmetric matrices whose spectra lie within $\Omega$.
\end{appthm}

To establish the smoothness of the metric projection, we construct three auxiliary functions. Fix $A \in \mathcal{M}$. Since the inertia indices $|\alpha(A)|$, $|\beta(A)|$, and $|\gamma(A)|$ are structurally constant on the stratum $\mathcal{M}$, we can select three mutually disjoint open disks $D_1, D_2, D_3 \subset \mathbb{C}$ enclosing the positive eigenvalues, the negative eigenvalues, and the origin, respectively. Define $D := D_1 \cup D_2 \cup D_3$. By the continuity of eigenvalues, there exists a sufficiently small neighborhood of $A$ in $\mathcal{M}$ such that the positive, negative, and zero parts of the spectrum for any matrix in this neighborhood remain within $D_1$, $D_2$, and $D_3$, respectively. We define the following scalar functions on $D$:
\begin{equation*}
    f_+(z) := \begin{cases} z, & z \in D_1, \\ 0, & z \in D_2 \cup D_3, \end{cases} \quad
    f_{\dagger}(z) := \begin{cases} 1/z, & z \in D_1 \cup D_2, \\ 0, & z \in D_3, \end{cases} \quad \text{and} \quad
    p_0(z) := \begin{cases} 0, & z \in D_1 \cup D_2, \\ 1, & z \in D_3. \end{cases}
\end{equation*}
Since $D_1$, $D_2$, and $D_3$ are disjoint, these piecewise definitions are analytic on the open set $D$. Applying Theorem~\ref{thm:functional_calculus}, the induced matrix functions $f_+(A)$, $f_{\dagger}(A)$, and $p_0(A)$ are well-defined and depend analytically on $A$ within this local neighborhood. We are now in a position to prove Theorem~\ref{thm:diff-PiK}.

\begin{proof}[of Theorem \ref{thm:diff-PiK}]
    For any $A \in \Mcal \subseteq \Sbb^n$, consider the local exponential chart with respect to the induced Riemannian metric on the embedded submanifold $\Mcal$ in $\Sbb^n$
    $$\exp_A : \mathcal{U} \to \mathcal{U}'$$
    which maps from a neighborhood $\mathcal{U} \subseteq \mathcal{T}_A\mathcal{M}$ of the origin to a neighborhood $\mathcal{U}' \subseteq \mathcal{M}$ of $A$. Since the induced matrix-valued function $M \mapsto f_+(M)$ is analytic (and hence $C^\infty$-smooth) on an open subset of $\mathbb{S}^n$ containing $\mathcal{U}'$, the composition $v \mapsto f_+\bigl(\exp_A(v)\bigr)$ smoothly maps $\mathcal{U}$ into $\mathbb{S}^n$. For any matrix $\widetilde{A} \in \mathcal{U}' \cap \mathcal{M}$, it is clear that $\Pi_{\mathbb{S}^n_+}(\widetilde{A}) = f_+(\widetilde{A})$, since both operations identically isolate the positive eigenspace of the matrix from others. Consequently, the local coordinate representation $v \mapsto \Pi_{\mathbb{S}^n_+}\bigl(\exp_A(v)\bigr)$ is smooth, which guarantees that the restriction $\Pi_{\mathbb{S}^n_+}|_{\mathcal{M}} : \mathcal{M} \to \mathbb{S}^n$ is $C^\infty$-smooth.

    Next, we shall derive the explicit manifold differential formula \eqref{eq:xi_A}. For any $A \in \Mcal$ and tangent vector $H\in {\Tcal}_{A} \Mcal$, the directional derivative calculated in \cite{sun2002semismooth} dictates that
    \begin{equation*}
        \Pi_{\mathbb{S}^n_+}^{\prime}(A; H) = P \begin{bmatrix}
            \widetilde{H}_{\alpha \alpha} & \widetilde{H}_{\alpha \beta} & \Xi_{\alpha \gamma} \circ \widetilde{H}_{\alpha \gamma} \\
            \widetilde{H}_{\alpha \beta}^\top & \Pi_{\mathbb{S}^{|\beta|}_+}(0) & 0 \\
            \widetilde{H}_{\alpha \gamma}^\top \circ \Xi_{\alpha \gamma}^\top & 0 & 0
        \end{bmatrix} P^\top
        = P \begin{bmatrix}
            \widetilde{H}_{\alpha \alpha} & \widetilde{H}_{\alpha \beta} & \Xi_{\alpha \gamma} \circ \widetilde{H}_{\alpha \gamma} \\
            \widetilde{H}_{\alpha \beta}^\top & 0 & 0 \\
            \widetilde{H}_{\alpha \gamma}^\top \circ \Xi_{\alpha \gamma}^\top & 0 & 0
        \end{bmatrix} P^\top,
    \end{equation*}
    where $\widetilde{H} = P^\top H P$. The second equality strictly follows from the geometric restriction $\widetilde{H}_{\beta \beta} = 0$ established in Proposition~\ref{prop:TAM} for tangent vectors, which vanishes the nonsmooth block since $\Pi_{\mathbb{S}^{|\beta|}_+}(0) = 0$. Because the restriction of $\Pi_{\mathbb{S}^n_+}$ to the smooth manifold $\mathcal{M}$ is smooth, its directional derivative  coincides with its Fr\'echet differential on the manifold, yielding:
    \begin{equation*}
        d\bigl(\Pi_{\mathbb{S}^n_+}|_{\mathcal{M}}\bigr)_A(H) = \Pi_{\mathbb{S}^n_+}^{\prime}(A; H).
    \end{equation*}
    This completes the proof. \ep
\end{proof}

The next lemma is used to justify the smoothness of the normal projection operator $\pi_{z,\mathbb{S}^n}^2$ in~\eqref{eqn:pi^2} along a fixed stratum, which will be used in the proof of Lemma~\ref{lemma: W-smooth-on-M}.

\begin{applemma}\label{applemma:pi-smooth}
    For $z \in \widetilde{\mathcal{M}}_{p,q}$, define the piecewise constant (and thus analytic) indicator function
    \begin{equation*}
        p_0(w) := \begin{cases}  0, & w \in D_1 \cup D_2, \\
        1, & w \in D_3, \end{cases}
    \end{equation*}
    where the mutually disjoint open disks $D_1, D_2$, and $D_3$ enclose the positive, negative, and zero eigenvalues of $G(z)$, respectively. Then, for any fixed matrix $H \in \mathbb{S}^n$, the normal projection operator $\pi_{z, \mathbb{S}^n}^2 : \mathbb{S}^n \to \mathcal{N}_{G(z)} \mathcal{M}_{p,q}$ defined according to Lemma~\ref{lemma:decomp-of-XxY} satisfies the algebraic identity:
    \begin{equation*}
        \pi_{z, \mathbb{S}^n}^2(H) = p_0\bigl(G(z)\bigr) H p_0\bigl(G(z)\bigr).
    \end{equation*}
    In particular, because $G(z)$ is smooth with respect to $z$, the smoothness of $p_0\bigl(G(z)\bigr)$ implies the $C^\infty$-smoothness of the operator $\pi_{z, \mathbb{S}^n}^2(H)$ with respect to $z$.
\end{applemma}
\begin{proof}
    It suffices to verify the matrix identity $\pi_{z, \mathbb{S}^n}^2(H) = p_0\bigl(G(z)\bigr) H p_0\bigl(G(z)\bigr)$. Let $(\alpha,\beta,\gamma,p,q, P, \lambda)$ be an IED of $G(z)$, and block-partition the orthogonal eigenvector matrix as $P = \begin{bmatrix} P_\alpha & P_\beta & P_\gamma \end{bmatrix}$. The columns of $P_{\alpha}$, $P_{\beta}$, and $P_{\gamma}$ form orthonormal bases for the eigenspaces of $G(z)$ corresponding to the positive, zero, and negative eigenvalues, respectively. By the properties of analytic functional calculus, $p_0(G(z))$, the so-called Riesz projector \cite[Theorem 2.3.3]{simon2015operator}, exactly reconstructs the orthogonal projector onto the zero eigenspace of $G(z)$. Thus, we have $p_0\bigl(G(z)\bigr) = P_\beta P_\beta^\top$. By~\eqref{eqn:pi^2}, the projection operator dictates:
    \begin{equation*}
        \pi_{z, \mathbb{S}^n}^2(H) = P \pi_{\beta\beta}(P^\top H P) P^\top,
    \end{equation*}
    where the inner block-extraction operator $\pi_{\beta\beta}$ is defined by
    \begin{equation*}
        \pi_{\beta\beta}(X):=\pi_{\beta\beta}\left( \left[\begin{array}{ccc}
    X_{\alpha \alpha} & X_{\alpha \beta} & X_{\alpha \gamma} \\
    X_{\beta \alpha} & X_{\beta \beta} & X_{\beta \gamma}\\
    X_{\gamma \alpha} & X_{\gamma \beta} & X_{\gamma \gamma}
    \end{array}\right] \right) = \left[\begin{array}{ccc}
    0 & 0 & 0\\
    0 & X_{\beta \beta} & 0\\
    0 & 0 & 0
    \end{array}\right] = \left[\begin{array}{ccc}
    0 & 0 & 0\\
    0 & I & 0\\
    0 & 0 & 0
    \end{array}\right] X \left[\begin{array}{ccc}
    0 & 0 & 0\\
    0 & I & 0\\
    0 & 0 & 0
    \end{array}\right], \quad X\in\mathbb{S}^n
    \end{equation*}
    Substituting this back into the expression for $\pi_{z, \mathbb{S}^n}^2(H)$ yields
    $$\pi_{z,{\Sbb^n}}^2(H) = P \pi_{\beta \beta}(P^\top H P) P^\top = P \left[\begin{array}{ccc}
    0 & 0 & 0\\
    0 & I_{|\beta|} & 0\\
    0 & 0 & 0
    \end{array}\right] (P^\top H P) \left[\begin{array}{ccc}
    0 & 0 & 0\\
    0 & I_{|\beta|} & 0\\
    0 & 0 & 0
    \end{array}\right] P^\top = (P_{\beta} P_{\beta}^\top) H (P_{\beta} P_{\beta}^\top) = p_0(G(z)) H p_0(G(z)),$$
    which completes the proof.
    \ep
\end{proof}

\section{Details on transversality}\label{appendix:transverse}

In this appendix, we prove the transversality, stability, and genericity results stated in Section~\ref{subsubsec: trans}. We begin by recalling the definition of genericity and by stating the classical transversality theorems from differential topology, which are essential to our analysis.

\begin{appdef}[{\cite[Chapter 3.1]{hirsch2012differential}}]
    A subset of a topological space $\mathcal{X}$ is called \emph{residual} if it contains the intersection of countably many dense open sets. A property parameterized by $v \in \mathcal{V}$ is said to hold \emph{generically} if it holds for all $v$ within a residual subset of $\mathcal{V}$.
\end{appdef}

The following classical theorem establishes the stability and genericity of transversality.

\begin{appthm}[{\cite[Theorem 3.2.1]{hirsch2012differential}}]\label{thm:transverse-generic-stable}
    Let $\mathcal{M}$ and $\mathcal{N}$ be smooth manifolds and $\mathcal{N}' \subseteq \mathcal{N}$ be a submanifold. Define the sets of transverse maps as
    \begin{equation*}
        \pitchfork_{\mathcal{L}}(\mathcal{M}, \mathcal{N} ; \mathcal{N}') := \bigl\{ f \in C^\infty (\mathcal{M}, \mathcal{N}) \mid f \pitchfork_{\mathcal{L}} \mathcal{N}' \bigr\} \quad \text{and} \quad
        \pitchfork(\mathcal{M}, \mathcal{N} ; \mathcal{N}') := \pitchfork_{\mathcal{M}} (\mathcal{M}, \mathcal{N} ; \mathcal{N}').
    \end{equation*}
    Then, the following properties hold:
    \begin{itemize}
        \item[(a)] $\pitchfork (\mathcal{M}, \mathcal{N} ; \mathcal{N}')$ is residual (and therefore dense) in $C^\infty(\mathcal{M}, \mathcal{N})$ under both the strong and weak topologies;
        \item[(b)] suppose $\mathcal{N}'$ is closed in $\mathcal{N}$. If $\mathcal{L} \subseteq \mathcal{M}$ is closed (resp., compact), then $\pitchfork_{\mathcal{L}} (\mathcal{M}, \mathcal{N} ; \mathcal{N}')$ is dense and open in $C^{\infty} (\mathcal{M}, \mathcal{N})$ under the strong topology (resp., the weak topology).
    \end{itemize}
\end{appthm}

The next theorem characterizes parameterized transversality and serves as the theoretical foundation for our subsequent stability proofs.

\begin{appthm}[{\cite[Theorem 3.2.7]{hirsch2012differential}}]\label{thm:family-transverse}
    Let $\mathcal{V}$, $\mathcal{M}$, and $\mathcal{N}$ be smooth manifolds without boundary and  $\mathcal{N}' \subseteq \mathcal{N}$ be a smooth submanifold. Let $F: \mathcal{V} \to C^\infty(\mathcal{M}, \mathcal{N})$ satisfy the following conditions:
    \begin{itemize}
        \item[(a)] the evaluation map $F^{\mathrm{ev}}: \mathcal{V} \times \mathcal{M} \to \mathcal{N}$, defined by $(v, x) \mapsto F_v(x)$, is $C^\infty$-smooth;
        \item[(b)] $F^{\mathrm{ev}}$ is transverse to $\mathcal{N}'$.
    \end{itemize}
    Then the set
    \begin{equation*}
        \pitchfork(F ; \mathcal{N}') := \bigl\{v \in \mathcal{V} \mid F_v \pitchfork \mathcal{N}' \bigr\}
    \end{equation*}
    is residual and therefore dense. Furthermore, if $\mathcal{N}'$ is closed in $\mathcal{N}$ and $F$ is continuous under the strong topology on $C^\infty(\mathcal{M}, \mathcal{N})$, then $\pitchfork(F; \mathcal{N}')$ is also open.
\end{appthm}

Next, we recall the concept of a tubular neighborhood. Geometrically, a tubular neighborhood provides a localized vector-bundle structure around a submanifold, ensuring that normal vectors uniquely parameterize nearby points.

\begin{appdef}[{\cite[Chapter 6]{lee2012introduction}}]
Let $\mathbb{Y}$ be a Euclidean space and  $\mathcal{S} \subset \mathbb{Y}$ be an embedded submanifold. The \emph{normal bundle} $\mathcal{N} \mathcal{S}$ of $\mathcal{S}$ in $\mathbb{Y}$ is the vector bundle over $\mathcal{S}$ defined fiberwise by
    \begin{equation*}
        \mathcal{N}_x \mathcal{S} := (\mathcal{T}_x \mathcal{S})^\perp \quad \text{for each } x \in \mathcal{S},
    \end{equation*}
    where the orthogonal complement is taken with respect to the standard inner product on $\mathbb{Y}$.
\end{appdef}

In our specific setting, we identify the ambient space $\mathbb{Y}$ with $\mathbb{S}^n$ and the submanifold $\mathcal{S}$ with $\mathcal{M}_{p,q}$. Recall from Proposition~\ref{prop:TAM} that the tangent space is explicitly characterized by $\mathcal{T}_A \mathcal{M}_{p,q} = \bigl\{ H \in \mathbb{S}^n \mid (P^{\top} H P)_{\beta\beta} = 0 \bigr\}$, where $P$ is the orthogonal matrix associated with an IED of $A$. Consequently, under the trace inner product, the normal space consists of matrices supported exclusively on the $(\beta,\beta)$-block:
\begin{equation*}
    \mathcal{N}_A \mathcal{M}_{p,q} = \bigl\{ H \in \mathbb{S}^n \mid (P^{\top} H P)_{ij} = 0 \text{ for all } (i,j) \notin \beta \times \beta \bigr\}.
\end{equation*}
Accordingly, the normal bundle $\mathcal{N}\mathcal{M}_{p,q} := \bigcup_{A \in \mathcal{M}_{p,q}} \{ A \} \times \mathcal{N}_A\mathcal{M}_{p,q}$ admits a smooth vector bundle structure. Furthermore, the addition map $E: \mathbb{S}^n \times \mathbb{S}^n \to \mathbb{S}^n$, defined by $E(y_1, y_2) = y_1 + y_2$, acts as a diffeomorphism when restricted to a sufficiently small neighborhood of the zero section in $\mathcal{N} \mathcal{M}_{p,q}$. For a comprehensive treatment, we refer the reader to~\cite[Chapter~6]{lee2012introduction}.

\begin{appdef}[{\cite[Chapter 6]{lee2012introduction}}]\label{def:tubu-nbhd}
A \emph{tubular neighborhood} of a submanifold \( \Scal \) in \( \Ybb \) is an open neighborhood \( \Dcal\Scal \subseteq \Ybb \) of \( \Scal \), which is the image under the diffeomorphism \( E : \Wcal \to \Dcal\Scal \), where \( \Wcal \) is an open neighborhood of the $\Scal$ in \( \Ncal \Scal \).
\end{appdef}

Building upon the preceding results, we formally construct the geometry of $\Dcal \Ncal_z$.
The following lemma establishes that for any $z \in \tMpq$, there exists a sufficiently small open set $\Ucal \subset \Mpo$ such that the translation $\Ucal + \{ \Pi_{\Sbb^n_-}( G(z) ) - y \}$ forms a neighborhood of $g(x)$ within $\Ncal_{z}$, embedded in $\Mpq -y$.

\begin{lemmarep}{lemma:N_z-in-Mpq}
    Let $z \in \widetilde{\mathcal{M}}_{p,q}$ and set $A := G(z) \in \mathcal{M}_{p,q}$. Denote
    \begin{equation*}
        A_+ := \Pi_{\mathbb{S}^n_+}(A) \in \mathcal{M}_{p,0} \quad \text{and} \quad A_- := \Pi_{\mathbb{S}^n_-}(A) \in \mathcal{M}_{0,q},
    \end{equation*}
    so that $A = A_+ + A_-$. Then $\Ucal_z$ defined in \eqref{eqn:def-of-Uz} by
    \begin{equation}\label{eqn:def-of-Uz-2}
        \Ucal_z := \Mpo \cap \Bcal \left( {\Pi_{\mathbb{S}^n_+}(G(z))}, \frac{\delta(z)}{2} \right)
    \end{equation}
    is an open neighborhood of $A_+$ in $\mathcal{M}_{p,0}$ such that $\Ucal_z + A_-$ is smoothly embedded in $\mathcal{M}_{p,q}$, where $\delta(z)$ is the the minimal non-vanishing eigenvalue modulus of $G(z)$ defined in \eqref{eqn: minieigen} by
    $$\delta(z):= \min \bigl\{ |\lambda_i(G(z))| \mid \lambda_i(G(z)) \neq 0 \bigr\}$$
    and the open ball $\Bcal(A,r)$ is defined by
    $$\Bcal(A,r) := \{ A' \in \Sbb^n \mid \| A-A' \|_2 < r \}.$$
\end{lemmarep}

\begin{proof}
    By the eigenvalue decomposition of $A$ in~\eqref{eq:eig-de}, we have:
    \begin{equation}\label{eq:A_+-on-Rbb_p}
    \begin{aligned}
        &P_\alpha^\top A_+ P_\alpha \succeq \delta(z) I \quad \text{on} \quad \mathbb{R}^p, \\
        &P_\alpha^\top A_- P_\alpha = 0 \quad \text{on} \quad \mathbb{R}^p, \quad
        P_\beta^\top A_- P_\beta = 0 \quad \text{on} \quad \mathbb{R}^{n-p-q}, \quad \text{and} \\
        & P_\gamma^\top A_- P_\gamma \prec 0  \quad \text{on} \quad \mathbb{R}^q.
    \end{aligned}
    \end{equation}
    Due to the continuity of eigenvalues,
    for any perturbed matrix $A'_+ \in \Ucal_{z}$, the strict positive definiteness is preserved:
    \begin{equation}\label{eq:A'_+-on-Rbb_p}
        P_\alpha^\top A'_+ P_\alpha \succ \frac{\delta(z)}{2} I \succ 0 \quad \text{on} \quad \mathbb{R}^p.
    \end{equation}
    Since $A'_+\in \Mcal_{p,0}$, \eqref{eq:A'_+-on-Rbb_p} gives the $p$ positive eigenvalues of $A'_+$ and $P_\alpha,P_\beta,P_\gamma$ are orthogonal, we have
    \begin{equation*}
        P_\beta^\top A'_+ P_\beta = 0 \quad \text{on} \quad \mathbb{R}^{n-p-q} \quad \text{and} \quad P_\gamma^\top A'_+ P_\gamma = 0 \quad \text{on} \quad \mathbb{R}^q.
    \end{equation*}
    Summing these components yields:
    \begin{align*}
        &P_\alpha^\top (A'_+ + A_-) P_\alpha \succ 0 \quad \text{on} \quad \mathbb{R}^p, \\
        &P_\beta^\top (A'_+ + A_-) P_\beta = 0 \quad \text{on} \quad \mathbb{R}^{n-p-q}, \\
        &P_\gamma^\top (A'_+ + A_-) P_\gamma \prec 0 \quad \text{on} \quad \mathbb{R}^q,
    \end{align*}
    which guarantees that $A'_+ + A_- \in \mathcal{M}_{p,q}$.
    \ep
\end{proof}

Now we define a smooth family of submanifolds parametrized by $z = (x,y) \in \tMpq$ as follows:
\begin{equation}\nn
    \Ncal_{z}:= \Ucal_z + \Pi_{\Sbb^n_-}(G(z)) - y,
\end{equation}
where $\Ucal_z$ is defined in \eqref{eqn:def-of-Uz-2}. Consider the normal bundle of the shifted stratum $\mathcal{M}_{p,q} - y$ within the ambient space $\mathbb{S}^n$, defined as
\begin{equation*}
    \mathcal{N}(\mathcal{M}_{p,q} - y) := \bigcup_{A \in \mathcal{M}_{p,q}} \mathcal{N}_A (\mathcal{M}_{p,q} - y) \subseteq \mathbb{S}^n \times \mathbb{S}^n.
\end{equation*}
By Lemma \ref{lemma:N_z-in-Mpq},  $\Ncal_z$ is embedded in $\Mpq -y$. So we can restrict this bundle to $\Ncal_z$ and obtain a family of vector bundles, denoted by $\{ \Ecal_z := \Ncal (\Mpq-y) |_{\Ncal_z} \}_{z \in \tMpq}$. Furthermore, invoking the theory of tubular neighborhoods \cite[Chapter 6.4]{lee2012introduction}, the addition map $E: \mathbb{S}^n \times \mathbb{S}^n \to \mathbb{S}^n$, defined by $E(A,H) = A+H$, serves as a diffeomorphism from a neighborhood of the zero section in $\mathcal{N}\mathcal{M}_{p,q} $ onto a tubular neighborhood $\mathcal{D}\mathcal{M}$ of $\mathcal{M}_{p,q}$ in $\mathbb{S}^n$.
We define the target family of extended submanifolds $\{ \mathcal{D}\mathcal{N}_{z} \}_{z \in \widetilde{\mathcal{M}}_{p,q}}$ via this restricted diffeomorphic image:
\begin{equation*}
    \mathcal{D}\mathcal{N}_{z} := E\bigl( \Ecal_z\bigr) \cap \bigl( \mathcal{D}\mathcal{M}-y \bigr).
\end{equation*}
We need to mention that intersecting with $\mathcal{D}\mathcal{M}-y$ is a technical step that guarantees $\Dcal\Ncal_z$ to be a manifold. Indeed, $\Ecal_z$ is a submanifold of $\Ncal (\Mpq-y)$ and $E^{-1}(\Dcal\Mcal-y)$ is an open set in $\Ncal (\Mpq-y)$. So $\Ecal_z \cap E^{-1}(\Dcal\Mcal-y)$ is a smooth manifold. Since $E$ is a diffeomorphism from a neighborhood of the zero section in $\Ncal(\Mpq-y)$ onto $\Dcal\Mcal-y$, $\Dcal\Ncal_{z} := E\bigl( \Ecal_z\bigr) \cap \bigl( \mathcal{D}\mathcal{M}-y \bigr)
$ is also a smooth manifold as the diffeomorphic image.
Restricting to a tubular neighborhood does not change the tangent space at $(g(x),0)$ and hence does not affect the transversality verification at the base point.
With the geometric structure of $\mathcal{D}\mathcal{N}_z$, we restate and prove Theorem~\ref{thm:trans=WSRCQ} below.

\begin{theoremrep}{thm:trans=WSRCQ}
    For the NLSDP~\eqref{prog:SDP}, let $z = (x, y) \in \mathbb{X} \times \mathbb{S}^n$ possess an IED $(\alpha, \beta, \gamma, p, q, P, \lambda)$ of $G(z)$ associated with $\mathcal{M}_{p,q}$. The following statements are equivalent:
    \begin{itemize}
        \item[(a)] $g \pitchfork_{x} \mathcal{D}\mathcal{N}_{z}$ in $\mathbb{S}^n$;
        \item[(b)] the W-SRCQ (Definition~\ref{def:W-SRCQ-nonKKT}) holds at $z = (x,y)$.
    \end{itemize}
\end{theoremrep}
\begin{proof}
    We begin with condition (a), which asserts the geometric transversality relation:
    \begin{equation}\label{eqn:transverse_of_gz}
        dg_{x}(\mathbb{X}) + \mathcal{T}_{g(x)} (\mathcal{D}\mathcal{N}_z) = \mathbb{S}^n.
    \end{equation}
    Given an IED of $G(z)$, we explicitly characterize the tangent space $\mathcal{T}_{g(x)} (\mathcal{D}\mathcal{N}_z)$.
    \begin{align}\label{eqn:cal-TDN}
        \mathcal{T}_{g(x)} (\mathcal{D}\mathcal{N}_z)
        &= dE_{(g(x),0)}(\Tcal_{(g(x),0)} \Ecal_z) \\
        &= dE_{(g(x),0)}(\mathcal{T}_{g(x)} \mathcal{N}_{z} \oplus \mathcal{N}_{g(x)} (\mathcal{M}_{p,q}-y)) \\
        &= \mathcal{T}_{g(x)} \mathcal{N}_{z} + \mathcal{N}_{g(x)} (\mathcal{M}_{p,q}-y) \\
        &= \bigl\{ P B P^\top \mid B \in \mathbb{S}^n,\ B_{\beta \beta} = 0,\ B_{\beta \gamma} = 0,\ B_{\gamma\gamma} = 0 \bigr\} \\
        &\quad + \bigl\{ P B P^\top \mid B \in \mathbb{S}^n,\ B_{ij} = 0 \text{ for all } (i,j) \notin \beta \times \beta \bigr\} \\
        &= \bigl\{ P B P^\top \mid B \in \mathbb{S}^n,\ B_{\beta \gamma} = 0,\ B_{\gamma\gamma} = 0 \bigr\},
    \end{align}
    where the first equality follows from that $E$ is a diffeomorphism near the zero section of $\Ecal_z$; the second and third follow from calculation of tangent space of a vector bundle at the zero section and linearization of addition map $E$ and the last two equalities are just definitions. Substituting this resultant tangent space back into~\eqref{eqn:transverse_of_gz}, we observe that condition (a) is equivalent to
    \begin{equation*}
        dg_{x}(\mathbb{X}) + \bigl\{ P B P^\top \mid B \in \mathbb{S}^n,\ B_{\beta \gamma} = 0,\ B_{\gamma\gamma} = 0 \bigr\} = \mathbb{S}^n,
    \end{equation*}
    which is the W-SRCQ condition specified in Definition~\ref{def:W-SRCQ-nonKKT}.
    \ep
\end{proof}

We proceed to prove Theorem~\ref{thm:stability-on-stratum}, the local stability of the W-SRCQ.

\begin{theoremrep}{thm:stability-on-stratum}
    If the W-SRCQ holds at $z = (x, y) \in \widetilde{\mathcal{M}}_{p,q}$, then there exists a neighborhood $\widetilde{\mathcal{U}} \subseteq \widetilde{\mathcal{M}}_{p,q}$ of $z$ such that the W-SRCQ holds at every point in $\widetilde{\mathcal{U}}$.
\end{theoremrep}

\begin{proof}
We give a purely geometric proof based on transversality, which reduces the transversality of the family $\Dcal\Ncal_{z'}$ (varying with $z'$) to transversality with respect to a fixed diagonal embedding.

Let $z=(x,y)\in\widetilde{\Mcal}_{p,q}$ be a point where W-SRCQ holds. By Theorem~\ref{thm:trans=WSRCQ} this is equivalent to
$g \pitchfork_{x} \Dcal\Ncal_{z}$ in $\Sbb^n$. We want to prove that there exists a neighborhood $\widetilde{\Ucal} \subseteq \widetilde{\Mcal}_{p,q}$ of $z$ such that for every $z'=(x',y')\in\widetilde{\Ucal}$ we have $g \pitchfork_{x'} \Dcal\Ncal_{z'}$ in $\Sbb^n$. Applying Theorem~\ref{thm:trans=WSRCQ} again then yields the stability of W-SRCQ.

We first construct a fixed total space for the family $\Dcal\Ncal_{z'}$. Consider the set
\[
\Ecal:= \cup_{z' \in \tMpq} \Ecal_{z'} = \bigl\{(z',A,H)\mid z'=(x',y')\in\widetilde{\Mcal}_{p,q},\; A\in\Ncal_{z'},\; H \in \Ncal_A(\Mcal_{p,q}-y')\bigr\}.
\]
We claim that $\Ecal$ is a smooth manifold. Indeed, by the smoothness of $\Pi_{\Sbb^n_-} \circ G$ on $\tMpq$,
\[
\Scal = \cup_{z' \in \tMpq} \bigl( \Mpo + \Pi_{\Sbb^n_-}(G(z')) - y' \bigr) \times \{ z' \}
\]
forms a smooth manifold. Since $\delta(z')$ is continuous on $\tMpq$, we also know that
\[
\tNcal :=\cup_{z' \in \tMpq} \Ncal_{z'} \times \{ z' \}
\]
is an open set in $\Scal$, and hence $\tNcal$ is a smooth manifold as well. By the definition of pullback bundle \cite[Chapter 4.2]{hirsch2012differential}, $\Ecal$ is the pullback bundle of $\Ncal \Mpq$ under the smooth map
\[
\Ncal \to \Mpq, \quad (A,z'=(x',y')) \mapsto A + y'.
\]
Now define the smooth map
\[
\tE:\Ecal\to\Sbb^n,\quad \tE(z',A,H) := A+H.
\]
At any point $z'=(x',y') \in \tMpq$, we have
\(
g(x') = \Pi_{\Sbb^n_+}(G(z')) + \Pi_{\Sbb^n_-}(G(z')) - y' \in \Ncal_{z'},
\)
so $(z',g(x'),0) \in \Ecal$ and $\tE(z',g(x'),0) = g(x')$.

Next we reformulate transversality to $\Dcal\Ncal_{z'}$ as transversality to a fixed diagonal. Let $\Delta\subset\Sbb^n\times\Sbb^n$ be the diagonal
\(
\Delta:=\{(S,S):S\in\Sbb^n\}
\), which is a fixed embedded submanifold of $\Sbb^n\times\Sbb^n$ with tangent space
\[
\Tcal_{(S,S)}\Delta=\{(H,H):H\in\Sbb^n\}.
\]
Define a smooth map
\[
\Phi: \Xbb \times \Ecal \to \Sbb^n \times \Sbb^n, \quad \Phi(x'',(z',A,H)) := \bigl( g(x'' ),\tE(z',A,H) \bigr).
\]
At the point $(x,(z,g(x),0)) \in \Xbb \times \Ecal$, we have
\( \Phi(x,(z,g(x),0))=(g(x),g(x))\in \Delta \).
We claim that
\begin{equation}\label{eqn:g-trans=phi-trans}
    g\pitchfork_{x} \Dcal\Ncal_{z} \; \Longleftrightarrow \; \Phi \pitchfork_{(x,(z,g(x),0))} \Delta.
\end{equation}
Indeed, the differential of $\Phi$ at $(x,(z,g(x),0))$ is given by $d\Phi_{(x,(z,g(x),0))} (\xi,\zeta)=\bigl(g'(x)\xi,\; d\tE_{(z,g(x),0)} \zeta\bigr)$, where $\xi\in\Xbb$ and $\zeta\in \Tcal_{(z,g(x),0)}\Ecal$. The transversality condition on the right-hand side means
\begin{equation}\label{eqn:Phi-trans-Delta}
    d\Phi_{(x,(z,g(x),0))} \bigl( \Xbb \times \Tcal_{(z,g(x),0)} \Ecal \bigr) + \Tcal_{(g(x),g(x))} \Delta = \Sbb^n \times \Sbb^n.
\end{equation}
Using
\(
\Tcal_{(g(x),g(x))} \Delta = \{ (H,H) \mid H \in \Sbb^n \}
\), we see that \eqref{eqn:Phi-trans-Delta} is equivalent to saying that for every $(U,V)\in\Sbb^n\times\Sbb^n$ there exist $\xi \in \Xbb$, $\zeta \in \Tcal_{(z,g(x),0)}\Ecal$, and $H \in \Sbb^n$ such that
\( (U,V)=\bigl(g'(x)\xi,\;d \tE_{(z,g(x),0)} \zeta\bigr)+(H,H) \).
Equivalently, there exists $H$ such that
$U - H \in g'(x) \Xbb$ and $V - H \in \im (d\tE_{(z,g(x),0)})$. Eliminating $H$ gives
\[
g'(x)\Xbb+\im(d\tE_{(z,g(x),0)})=\Sbb^n.
\]
As in \eqref{eqn:cal-TDN} in the proof of Theorem~\ref{thm:trans=WSRCQ}, a direct calculation of $d\tE$ shows that
\( \im( d \tE_{(z, g(x),0)}) = \Tcal_{g(x)} (\Dcal\Ncal_{z}) \). Therefore, \eqref{eqn:Phi-trans-Delta} is equivalent to
\[
g'(x) \Xbb + \Tcal_{g(x)} ( \Dcal\Ncal_{z}) = \Sbb^n,
\]
which is exactly the condition $g \pitchfork_{x} \Dcal\Ncal_{z}$. This proves \eqref{eqn:g-trans=phi-trans}.

Finally, we apply the classical stability of transversality to the fixed manifold $\Delta$. By assumption, W-SRCQ holds at $z$. Using Theorem~\ref{thm:trans=WSRCQ} together with \eqref{eqn:g-trans=phi-trans}, we obtain
\[
\Phi\pitchfork_{(x,(z,g(x),0))}\Delta.
\]
By Theorem~\ref{thm:transverse-generic-stable}(b), there exists a neighborhood $\Vcal\subset\Xbb\times\Ecal$ of $(x,(z,g(x),0))$ such that for every $(x'',(z',A,H)) \in \Vcal$ satisfying $\Phi(x'',(z',A,H))\in\Delta$, we have
\begin{equation}\label{eqn:Phi-general-trans}
    \Phi\pitchfork_{(x'',(z',A,H))} \Delta.
\end{equation}
Now choose $\widetilde{\Ucal} \subseteq \widetilde{\Mcal}_{p,q}$ small enough so that for every $z'=(x',y')\in\widetilde{\Ucal}$, the point $(x',(z',g(x'),0))$ belongs to $\Vcal$. For each such $z'$, we have $(z',g(x'),0) \in \Ecal$ and $\tE(z',g(x'),0) = g(x')$, hence $\Phi(x',(z',g(x'),0)) = (g(x'),g(x')) \in \Delta$. Therefore, \eqref{eqn:Phi-general-trans} yields
\[
\Phi \pitchfork_{(x',(z',g(x'),0))} \Delta.
\]
Applying the equivalence \eqref{eqn:g-trans=phi-trans} at $z'$ gives
\[
g \pitchfork_{x'} \Dcal\Ncal_{z'}.
\]
Using Theorem~\ref{thm:trans=WSRCQ} once again, we conclude that W-SRCQ holds for every $z' \in \widetilde{\Ucal}$. \ep
\end{proof}

Finally, we establish the genericity of the W-SRCQ condition. Fix inertia indices $p,q \in \Zbb_+$ and a smooth constraint mapping $g: \Xbb \rightarrow \Sbb^n$. For any perturbation vector $b \in \Sbb^n$, we define the perturbed mappings $g_b$ and $G_b$ by $g_b(x) = g(x)+b$ and $G_b(z) = g(x) + b + y$, where $z=(x,y) \in \Xbb \times \Sbb^n$.

\begin{theoremrep}{thm:genericity-of-WSRCQ}
    For a generic perturbation parameter $b \in \mathbb{S}^n$, the W-SRCQ holds at every pair $z = (x, y)$ satisfying the complementarity condition $\mathbb{S}^n_+ \ni g_b(x) \perp y \in \mathbb{S}^n_-$.
\end{theoremrep}
\begin{proof}

    Define the parametric evaluation map $\mathbf{G}:\mathbb{X}\times \mathbb{S}^n\to \mathbb{S}^n$ by
    \[
        \mathbf{G}(x,b):=g_b(x)=g(x)+b.
    \]
    Since $D_b\mathbf{G}(x,b)$ is the identity map on $\mathbb{S}^n$, the differential $D\mathbf{G}(x,b)$ is surjective for every $(x,b)$. Hence $\mathbf{G}$ is a smooth submersion. In particular, $\mathbf{G}$ is transverse to every embedded submanifold of $\mathbb{S}^n$, and therefore
    \[
        \mathbf{G}\pitchfork \mathcal{M}_{p,0}\qquad \text{for all } p\in\{0,\ldots,n\}.
    \]

    By the parametric transversality theorem (Theorem~\ref{thm:family-transverse}), for each $p\in\{0,\ldots,n\}$ there exists a residual set $\mathcal{B}_p\subseteq \mathbb{S}^n$ such that
    \[
        g_b \pitchfork \mathcal{M}_{p,0}\qquad \text{for all } b\in \mathcal{B}_p.
    \]
    Since the intersection of finitely many residual sets is again residual, the set
    \[
        \mathcal{B}:=\bigcap_{p=0}^n \mathcal{B}_p
    \]
    is residual in $\mathbb{S}^n$. Thus, for any $b\in\mathcal{B}$, the transversality relation $g_b\pitchfork \mathcal{M}_{p,0}$ holds simultaneously for all $p$.

    Fix $b\in\mathcal{B}$ and consider any pair $  z=(  x,  y)$ such that
    \[
    g_b(  x)\in \mathcal{M}_{p,0},\qquad
      y\in \mathcal{M}_{0,q},\qquad
    \langle g_b(  x),  y\rangle=0.
    \]
    Hence $\Pi_{\mathbb{S}^n_-}\bigl(g_b(  x)+  y\bigr)=  y$.     Recalling the definition of $\mathcal{N}_{  z}$ in~\eqref{eq:N_z}, we obtain
    \[
    \Ncal_{\zbar}
    =\Ucal_{\zbar}+\Pi_{\mathbb{S}^n_-}\bigl(g_b(  x)+  y\bigr)-  y
    =\Ucal_{\zbar}.
    \]
    Therefore $g_b\pitchfork_{x} \mathcal{N}_{z}$. Since $\mathcal{N}_{z} \subseteq \mathcal{D}\mathcal{N}_{z}$ is a submanifold, $\Tcal_{g(x)} \Ncal_{z} \subseteq \Tcal_{g(x)} \Dcal\Ncal_{z}$. Then it follows from the definition of transversality that
    \[
    g_b \pitchfork_{x}\,\mathcal{D}\mathcal{N}_{z}.
    \]
    As $z$ was arbitrary among pairs satisfying the complementarity relations, this shows that, for generic $b\in\mathbb{S}^n$, the W-SRCQ holds at every such pair.
    \ep
\end{proof}

\section{Proofs of convergence results}\label{appendix:alg-global}

In this appendix, we provide detailed proofs for the directional derivative formulas, descent properties, and the global and local convergence theorems presented in Section~\ref{section:global}.

We first present the proof of Proposition \ref{prop:dir-der-of-phi}.

\begin{propositionrep}{prop:dir-der-of-phi}
    For an arbitrary $z \in \mathbb{X} \times \mathbb{S}^n$ with $G(z)$ admitting an IED $(\alpha,\beta,\gamma,p,q,P,\lambda)$ and any direction $v = (v_x,v_y) \in \mathbb{X} \times \mathbb{S}^n$, the directional derivative of $\varphi$ at $z$ along $v$ evaluates to
    \begin{equation}
        \varphi'(z; v) = \left\langle dF_z^* \bigl(F(z)\bigr), \begin{bmatrix} v_x \\ H_1 \end{bmatrix} \right\rangle + \left\langle \nabla g(x)^* \bigl(F_1(z)\bigr), \Pi_{\mathbb{S}^n_-}(H_2) \right\rangle + \left\langle \nabla g(x)^* \bigl(F_1(z)\bigr) + F_2(z), \Pi_{\mathbb{S}^n_+}(H_2) \right\rangle,
    \end{equation}
    where $F_1(z) := \nabla_x L(x,y) = \nabla f(x) + \nabla g(x)y$, $F_2(z) := -g(x)+\Pi_{\Sbb^n_+}(G(z)) $, and  $H_1$ and $H_2$ are defined in Lemma~\ref{lemma:decomp-of-XxY}.
\end{propositionrep}
\begin{proof}
    Applying the chain rule for B-differentiable functions, we obtain
    \begin{equation*}
        \varphi'(z; v) = \langle F(z), F'(z; v) \rangle = \left\langle F(z), \begin{bmatrix}
            \bigl(\nabla^2_{xx} L(z) - \nabla g(x) \nabla g(x)^* \bigr)v_x + \nabla g(x) H \\
            - \nabla g(x)^*v_x + \Pi_{\mathbb{S}^n_+}'\bigl(G(z); H\bigr)
        \end{bmatrix} \right\rangle.
    \end{equation*}

    It follows from \eqref{eq:Pi-dir} that
    \begin{align*}
        \Pi_{\mathbb{S}^n_+}'\bigl(G(z); H\bigr)
        &= P \begin{bmatrix}
            \widetilde{H}_{\alpha\alpha} & \widetilde{H}_{\alpha\beta} & \Xi_{\alpha\gamma} \circ \widetilde{H}_{\alpha\gamma} \\
            \widetilde{H}_{\alpha\beta}^\top & 0 & 0 \\
            \Xi_{\alpha\gamma}^\top \circ \widetilde{H}_{\alpha\gamma}^\top & 0 & 0
        \end{bmatrix} P^\top
        + P \begin{bmatrix}
            0 & 0 & 0 \\
            0 & \Pi_{\mathbb{S}^{|\beta|}_+}(\widetilde{H}_{\beta\beta}) & 0 \\
            0 & 0 & 0
        \end{bmatrix} P^\top \\
        &= \Pi_{\mathbb{S}^n_+}'\bigl(G(z); H_1\bigr) + \Pi_{\mathbb{S}^n_+}'\bigl(G(z); H_2\bigr),
    \end{align*}
    where
    $H_1 = P \begin{bmatrix}
        \widetilde{H}_{\alpha\alpha} & \widetilde{H}_{\alpha\beta} & \widetilde{H}_{\alpha\gamma} \\
        \widetilde{H}_{\beta\alpha} & 0 & \widetilde{H}_{\beta\gamma} \\
        \widetilde{H}_{\gamma\alpha} & \widetilde{H}_{\gamma\beta} & \widetilde{H}_{\gamma\gamma}
    \end{bmatrix} P^\top \in \mathcal{T}_{G(z)} \mathcal{M}_{p,q}$
    and
    $H_2 = P \begin{bmatrix}
        0 & 0 & 0 \\
        0 & \widetilde{H}_{\beta\beta} & 0 \\
        0 & 0 & 0
    \end{bmatrix} P^\top \in \bigl(\mathcal{T}_{G(z)} \mathcal{M}_{p,q}\bigr)^\perp$, ensuring $H = H_1 + H_2$.
    Substituting this back into the inner product yields
    \begin{align*}
        \varphi'(z; v)
        &= \left\langle F(z), \begin{bmatrix}
            \bigl(\nabla^2_{xx} L(z) - \nabla g(x) \nabla g(x)^* \bigr)v_x + \nabla g(x) H_1 \\
            - \nabla g(x)^*v_x + \Pi_{\mathbb{S}^n_+}'\bigl(G(z); H_1\bigr)
        \end{bmatrix} \right\rangle + \left\langle F(z), \begin{bmatrix}
            \nabla g(x) H_2 \\
            \Pi_{\mathbb{S}^n_+}'\bigl(G(z); H_2\bigr)
        \end{bmatrix} \right\rangle \\
        &= \left\langle F(z), \begin{bmatrix}
            \nabla^2_{xx} L(z) - \nabla g(x) \nabla g(x)^* & \nabla g(x) \\
            -\nabla g(x)^* & \xi_{G(z)}
        \end{bmatrix} \begin{bmatrix} v_x \\ H_1 \end{bmatrix} \right\rangle + \left\langle F(z), \begin{bmatrix}
            \nabla g(x) H_2 \\
            \Pi_{\mathbb{S}^n_+}(H_2)
        \end{bmatrix} \right\rangle \\
        &= \left\langle F(z), dF_z \begin{bmatrix} v_x \\ H_1 \end{bmatrix} \right\rangle + \left\langle \begin{bmatrix} F_1(z) \\ F_2(z) \end{bmatrix}, \begin{bmatrix} \nabla g(x) H_2 \\ \Pi_{\mathbb{S}^n_+}(H_2) \end{bmatrix} \right\rangle \\
        &= \left\langle dF_z^* \bigl(F(z)\bigr), \begin{bmatrix} v_x \\ H_1 \end{bmatrix} \right\rangle + \left\langle \nabla g(x)^* \bigl(F_1(z)\bigr), H_2 \right\rangle + \left\langle F_2(z), \Pi_{\mathbb{S}^n_+}(H_2) \right\rangle \\
        &= \left\langle dF_z^* \bigl(F(z)\bigr), \begin{bmatrix} v_x \\ H_1 \end{bmatrix} \right\rangle + \left\langle \nabla g(x)^* \bigl(F_1(z)\bigr), \Pi_{\mathbb{S}^n_-}(H_2) \right\rangle + \left\langle \nabla g(x)^* \bigl(F_1(z)\bigr) + F_2(z), \Pi_{\mathbb{S}^n_+}(H_2) \right\rangle,
    \end{align*}
    where we utilized the orthogonal decomposition $H_2 = \Pi_{\mathbb{S}^n_+}(H_2) + \Pi_{\mathbb{S}^n_-}(H_2)$.
    \ep
\end{proof}

The proof of Lemma~\ref{lemma:normal-control}, which explicitly computes the exact line search along normal directions, is provided below.

\begin{lemmarep}{lemma:normal-control}
    Let $z = (x,y) \in \mathbb{X} \times \mathbb{S}^n$. If $W_1(z) \neq 0$, then $\|\nabla g(x)W_1(z)\| \neq 0$, and the exact line search minimum satisfies
    \begin{equation*}
        \mathop{\mathrm{argmin}}_{t \ge 0} \ \varphi \left(z + \begin{bmatrix} 0 \\ tW_1(z) \end{bmatrix} \right) = \frac{\| W_1(z)\|^2}{\| \nabla g(x)W_1(z)\|^2},
    \end{equation*}
    achieving a guaranteed reduction:
    \begin{equation*}
        \varphi(z) - \min_{t \ge 0 } \varphi \left(z + \begin{bmatrix} 0 \\ tW_1(z) \end{bmatrix} \right) = \frac{1}{2} \frac{\| W_1(z)\|^4}{\| \nabla g(x)W_1(z)\|^2}.
    \end{equation*}
    Analogously, if $W_2(z) \neq 0$, the optimal stepsize and the corresponding reduction are
    \begin{equation*}
        \mathop{\mathrm{argmin}}_{t \ge 0} \ \varphi \left(z + \begin{bmatrix} 0 \\ tW_2(z) \end{bmatrix} \right) = \frac{\| W_2(z)\|^2}{\| W_2(z)\|^2 + \| \nabla g(x)W_2(z)\|^2}
    \end{equation*}
    and
    \begin{equation*}
        \varphi(z) - \min_{t \ge 0} \varphi \left(z + \begin{bmatrix} 0 \\ tW_2(z) \end{bmatrix} \right) = \frac{1}{2} \frac{\| W_2(z)\|^4}{\| W_2(z)\|^2 + \| \nabla g(x)W_2(z)\|^2}.
    \end{equation*}
\end{lemmarep}
\begin{proof}
    Note that $W_1(z)$ and $W_2(z)$ are both in ${\Ncal}_{G(z)}\Mpq$. So we can separate the directional derivative as what we did in the calculation of $\varphi'(z,v)$.

    Since $W_1(z)\in \mathcal{N}_{G(z)}\mathcal{M}_{p,q}$, we have
    \begin{align*}
        2 \varphi \left(z + t\begin{bmatrix} 0 \\ W_1(z) \end{bmatrix}\right)
        &= \left\| F \left( z + t\begin{bmatrix} 0 \\ W_1(z) \end{bmatrix} \right) \right\|^2 \\
        &= \left\| \begin{bmatrix}
            \nabla f(x) + \nabla g(x)\bigl(y + t W_1(z)\bigr) \\
            -g(x) + \Pi_{\mathbb{S}^n_+}\bigl(G(z) + t W_1(z)\bigr)
        \end{bmatrix} \right\|^2 \\
        &= \left\| F(z) + t \begin{bmatrix}
            \nabla g(x)W_1(z) \\
            \Pi_{\mathbb{S}^n_+}'\bigl(G(z); W_1(z)\bigr)
        \end{bmatrix} \right\|^2 \\
        &= \| F(z) \|^2 + 2 t \left\langle F(z) , \begin{bmatrix} \nabla g(x) W_1(z) \\ \Pi_{\mathbb{S}^n_+}'\bigl(G(z); W_1(z)\bigr) \end{bmatrix} \right\rangle + t^2 \left\| \begin{bmatrix} \nabla g(x) W_1(z) \\ \Pi_{\mathbb{S}^n_+}'\bigl(G(z); W_1(z)\bigr) \end{bmatrix} \right\|^2.
    \end{align*}
    Since $W_1(z) \in \mathbb{S}^n_-$, we obtain that $\Pi_{\mathbb{S}^n_+}'\bigl(G(z); W_1(z)\bigr) = 0$. Consequently, the coefficients of the quadratic polynomial evaluate to
    \begin{equation*}
        \left\langle F(z) , \begin{bmatrix} \nabla g(x) W_1(z) \\ 0 \end{bmatrix} \right\rangle = -\| W_1(z) \|^2 \quad \text{and} \quad \left\| \begin{bmatrix} \nabla g(x) W_1(z) \\ 0 \end{bmatrix} \right\|^2 = \| \nabla g(x)W_1(z)\|^2.
    \end{equation*}
    The desired explicit optimum formulas then follow directly, provided that the leading coefficient $\|\nabla g(x)W_1(z)\|^2 \neq 0$.

    Next, we shall establish this non-vanishing property by contradiction. Suppose, to the contrary, that
    \begin{equation*}
        \nabla g(x)W_1(z) = 0.
    \end{equation*}
    By the definition of $W_1(z)$ from~\eqref{eq:def-W_1} and taking the inner product with $F_1(z)$ yields:
    \begin{align*}
        0 &= \left\langle F_1(z), \nabla g(x) \Pi_{\mathbb{S}^n_-}\Bigl(-\pi_{z,\mathbb{S}^n}^2 \bigl(\nabla g(x)^* F_1(z)\bigr)\Bigr) \right\rangle \\
          &= \left\langle \nabla g(x)^* F_1(z), \Pi_{\mathbb{S}^n_-}\Bigl(-\pi_{z,\mathbb{S}^n}^2 \bigl(\nabla g(x)^* F_1(z)\bigr)\Bigr) \right\rangle \\
          &= \left\langle -(\pi^1_{z,\mathbb{S}^n} + \pi_{z,\mathbb{S}^n}^2)\bigl(\nabla g(x)^* F_1(z)\bigr), \Pi_{\mathbb{S}^n_-}\Bigl(-\pi_{z,\mathbb{S}^n}^2 \bigl(\nabla g(x)^* F_1(z)\bigr)\Bigr) \right\rangle \\
          &= \left\langle -\pi_{z,\mathbb{S}^n}^2 \bigl(\nabla g(x)^* F_1(z)\bigr), \Pi_{\mathbb{S}^n_-}\Bigl(-\pi_{z,\mathbb{S}^n}^2 \bigl(\nabla g(x)^* F_1(z)\bigr)\Bigr) \right\rangle \quad \text{(since } \operatorname{Im}(\pi^1_{z,\mathbb{S}^n}) \perp \operatorname{Im}(\pi^2_{z,\mathbb{S}^n})) \\
          &= \left\langle (\Pi_{\mathbb{S}^n_+} + \Pi_{\mathbb{S}^n_-})\Bigl(-\pi_{z,\mathbb{S}^n}^2 \bigl(\nabla g(x)^* F_1(z)\bigr)\Bigr), \Pi_{\mathbb{S}^n_-}\Bigl(-\pi_{z,\mathbb{S}^n}^2 \bigl(\nabla g(x)^* F_1(z)\bigr)\Bigr) \right\rangle \\
          &= \left\| \Pi_{\mathbb{S}^n_-}\Bigl(-\pi_{z,\mathbb{S}^n}^2 \bigl(\nabla g(x)^* F_1(z)\bigr)\Bigr) \right\|^2 \quad \text{(since } \mathbb{S}^n_+ \perp \mathbb{S}^n_-) \\
          &= \| W_1(z)\|^2.
    \end{align*}
    This implies $W_1(z) = 0$, contradicting the premise $W_1(z) \neq 0$. A similar and easier argument validates the stepsize and reduction along $W_2(z)$, by noting that the corresponding denominator $\|W_2(z)\|^2 + \|\nabla g(x)W_2(z)\|^2\neq 0$.
    \ep
\end{proof}

We proceed to prove Theorem~\ref{thm:global-convergence}, which establishes the global convergence of Algorithm~\ref{alg:SGN}. Recall the definition of the lower modulus of nonzero eigenvalue $\delta(z)$, as introduced in \eqref{eqn: minieigen}:
$$
\delta(z) := \min \{ |\lambda(G(z))| : \lambda(G(z)) \ne 0 \}.
$$

\begin{theoremrep}{thm:global-convergence}
    Let $\epsilon = 0$ and let $z^0$ be an arbitrary initial point. Then Algorithm~\ref{alg:SGN} either terminates finitely at a D-stationary point of $\varphi$ or generates an infinite sequence whose every accumulation point $z^\infty$ with $\delta(z^\infty)>\delta$ is a D-stationary point of $\varphi$.
\end{theoremrep}
We only need to prove this theorem under the assumption that $\mu(z) = \| F(z) \|^2$ is bounded away from 0, i.e. $\mu(z) \geq \mubar$ for some fixed $\mubar > 0$. Otherwise, if $\mu(z^{k_j}) \to 0$ for some subsequence $z^{k_j} \to z^\infty$, we will have $F(z^\infty) = 0$. And by Proposition \ref{prop:dir-der-of-phi}, $\varphi'(z^\infty,v) = 0$ for any $v \in \Xbb \times \Sbb^n$, which means $z^\infty$ is a D-stationary point.  So in the following lemmas and propositions, we will assume $\mu(z) \geq \mubar > 0$.

To establish this theorem, several auxiliary lemmas are required to secure a uniform lower bound on the Armijo line-search stepsize. Let $\widetilde{\Omega} \subseteq \widetilde{\mathcal{M}}$ be an arbitrary compact domain within a stratum. We define $\Omega$ as the closed disk bundle of radius $l$ over $\widetilde{\Omega}$:
\begin{equation*}
    \Omega := \bigl\{ (z,v) \mid z \in \widetilde{\Omega}, \, v \in \mathcal{T}_z \widetilde{\mathcal{M}}, \, \| v \| \le l \bigr\}.
\end{equation*}
By adjusting the radius $l > 0$, we ensure the exponential map is well-defined on $\Omega$ and satisfies $\exp_z(v) \in \widetilde{\mathcal{M}}$.

The following lemma establishes that the directional derivative of $\varphi$, evaluated along either the stratum-LM direction or the two normal directions, is of the order of the squared norm of the direction.

\begin{applemma}\label{lemma: phi~d2}
    Suppose there exists a compact domain $\widetilde{\Omega} \subseteq \widetilde{\mathcal{M}}$ and a fixed constant $\bar{\mu} > 0$ such that $\mu(z) \ge \bar{\mu}$ for all $z \in \widetilde{\Omega}$. Then, there exist $M > m > 0$ only depending on $F$ and $\widetilde{\Omega}$ such that for any $z \in \widetilde{\Omega}$ and any corresponding direction $d \in \{ W_1(z), W_2(z), v^{\mathrm{LM}}(z) \}$, the directional derivative satisfies
    \begin{equation*}
        m \| d\|^2 \le -\varphi'(z; d) \le M \| d\|^2.
    \end{equation*}
\end{applemma}
\begin{proof}

    If $d = W_2(z) \in (\mathcal{T}_z\widetilde{\mathcal{M}})^\perp$, it is clear that
    \begin{align*}
        \varphi'(z; d)
        &= \left\langle \nabla g(x)^* F_1(z), \Pi_{\mathbb{S}^n_-}(W_2(z)) \right\rangle + \left\langle \nabla g(x)^* F_1(z) + F_2(z), \Pi_{\mathbb{S}^n_+}(W_2(z)) \right\rangle \\
        &= \left\langle \nabla g(x)^* F_1(z) + F_2(z), W_2(z) \right\rangle \\
        &= \left\langle \nabla g(x)^* F_1(z) + F_2(z), \Pi_{\mathbb{S}^n_+}\bigl(-(\nabla g(x)^* F_1(z) + F_2(z))\bigr) \right\rangle \\
        &= -\| W_2(z) \|^2 = -\| d \|^2.
    \end{align*}
    Similarly, for $d = W_1(z)$, we also have $\varphi'(z; d) = -\| d \|^2$.

    If $d = v^{\mathrm{LM}}(z) \in \mathcal{T}_z\widetilde{\mathcal{M}}$, the stratum-LM construction implies
    \begin{align*}
        \varphi'(z; d) &= -F(z)^\top dF_z \bigl(\mu(z) I + dF_z^* dF_z\bigr)^{-1} dF_z^* F(z) \\
        &= -\bigl(v^{\mathrm{LM}}(z)\bigr)^\top \bigl(\mu(z) I + dF_z^* dF_z\bigr) v^{\mathrm{LM}}(z).
    \end{align*}
    Since $F$ is smooth on the compact domain $\tOmega \subseteq \tMcal$,  the spectral norm of $dF_z$ is uniformly bounded  by some constant $L_F$, we have
    \begin{equation*}
        \bar{\mu} \|d\|^2 \le \mu(z) \|d\|^2 \le -\varphi'(z; d) \le \bigl(\mu_{\max} + L_F^2\bigr) \|d\|^2.
    \end{equation*}
    Selecting $m = \min\{1, \bar{\mu}\}$ and $M = \max\{1, \mu_{\max} + L_F^2\}$ uniformly satisfies the conditions for all three direction types.
    \ep
\end{proof}

The following lemma quantitatively bridges the gap between the constructed retraction $R_{\widetilde{\mathcal{M}}}$ and the true Riemannian exponential map, ensuring precise second order approximation.

\begin{applemma}\label{lemma:retr~exp}
    For an arbitrary compact domain $\widetilde{\Omega} \subseteq \widetilde{\mathcal{M}}$, there exists $l_{\widetilde{\Omega}} > 0$ such that for $\Omega = \{ (z,v) \mid z \in \widetilde{\Omega}, v\in \mathcal{T}_z \widetilde{\mathcal{M}}, \| v \| \le l_{\widetilde{\Omega}} \}$, we have
    \begin{itemize}
        \item[(a)] $R_{\widetilde{\mathcal{M}}}(\Omega) \subseteq \widetilde{\mathcal{M}}$ where $R_{\tMcal}$ is the retraction defined in (\ref{retraction_tM}) and $\exp(\Omega) \subseteq \widetilde{\mathcal{M}}$;
        \item[(b)] there exists a locally continuous coefficient $\kappa_z$ satisfying $\|R_z(v) - (z+v)\| \le \kappa_z \| v \|^2$ for all $(z,v) \in \Omega$.
    \end{itemize}
    Moreover, there exist constants $C'_{\Omega}, \kappa_{\widetilde{\Omega}}, C_{\varphi,\Omega}, L_{\varphi,\Omega} > 0$ not depending on $z$ such that for any $(z,v) \in \Omega$:
    \begin{itemize}
        \item[(c)] $\| \exp_z(v) - (z + v) \| \le C'_{\Omega} \| v \|^2$.
        \item[(d)] $\|R_z(v) - (z+v) \| \le \kappa_{\widetilde{\Omega}} \| v \|^2$.
        \item[(e)] $| \varphi(\exp_z(v)) - \varphi(z) - \varphi'(z; v) | \le C_{\varphi,\Omega} \| v \|^2$.
        \item[(f)] $|\varphi(R_z(v)) - \varphi(\exp_z(v))| \le L_{\varphi,\Omega} \| R_z(v) - \exp_z(v) \|$.
    \end{itemize}
\end{applemma}
\begin{proof}
    Conditions (a), (b), (c), and (d) are direct consequences of the $C^\infty$-smoothness of both the explicitly engineered retraction $R_{\widetilde{\mathcal{M}}}$ (Proposition~\ref{prop:R_M-smooth-retraction}) and the classical Riemannian exponential map. Conditions (e) and (f) naively derive from the local Lipschitz continuity and exact Taylor expansion of the smooth KKT objective $\varphi$ restricted across $\widetilde{\mathcal{M}}$. \ep
\end{proof}

Leveraging these approximation bounds, we verify that the Armijo line search condition holds, which implies that the line search step in Algorithm \ref{alg:SLMN} is well-defined.

\begin{appprop}\label{prop:armijo}
    Assume $z \in \widetilde{\Omega} \subseteq \widetilde{\mathcal{M}}$ for a fixed compact $\widetilde{\Omega}$, subject to $\mu(z) \ge \bar{\mu} > 0$. And $\Omega$ is constructed following the rules in Lemma \ref{lemma:retr~exp} as
    $$\Omega = \{ (z,v) \mid z \in \widetilde{\Omega}, v\in \mathcal{T}_z \widetilde{\mathcal{M}}, \| v \| \le l_{\widetilde{\Omega}} \}.$$
    For $v = v^{\mathrm{LM}}(z)$, the backtracking inequality is  guaranteed
    \begin{equation*}
        \varphi(z) - \varphi\bigl(R_z(tv)\bigr) \ge -\eta t \varphi'(z; v), \quad \forall\, t \in [0, \varepsilon_z),
    \end{equation*}
    where $\eta \in (0, 1/2)$ is the constant in Algorithm \ref{alg:SLMN} and the constant $\varepsilon_z$, only depends on $\Omega,\tOmega$ and the current point $z$, is given by
    \begin{equation}\label{def:epsilon_z}
        \varepsilon_z := \min \left\{ \frac{(1-\eta)m}{ L_{\varphi,\Omega} (\kappa_z + C'_{\Omega}) + C_{\varphi,\Omega}},\ \frac{l_{\widetilde{\Omega}}}{V(\widetilde{\Omega})} \right\} > 0,
    \end{equation}
    where $l_{\tOmega},\kappa_z,C_{\varphi,\Omega},C_{\Omega}'$ and $L_{\varphi,\Omega}$ are defined in Lemma \ref{lemma:retr~exp}, and $V(\tOmega):= \max \{ \|\vLM(z)\|  \mid z \in \tOmega \}$ is a finite constant depending only on $\tOmega$.
\end{appprop}
\begin{proof}

    Since $\varphi$ and the operators driving $v^{\mathrm{LM}}(z) = -(dF_z^* dF_z + \mu(z)I)^{-1} dF_z^* F(z)$  preserve uniform smoothness across $\widetilde{\mathcal{M}}$, we obtain that $V(\widetilde{\Omega})$ is finite. For any $t \in (0, l_{\widetilde{\Omega}}/V(\widetilde{\Omega})]$, we always have $(z,t \vLM(z)) \in \Omega$. By Lemma~\ref{lemma:retr~exp}, we obtain that
    \begin{align*}
        | \varphi\bigl(R_z(tv)\bigr) - \varphi\bigl(\exp_z(tv)\bigr) | &\le L_{\varphi,\Omega} \| R_z(tv) - \exp_z(tv) \| \\
        &\le L_{\varphi,\Omega} \bigl( \| R_z(tv) - (z+tv) \| + \| \exp_z(tv) - (z+tv) \| \bigr) \\
        &\le L_{\varphi,\Omega} (\kappa_z + C'_{\Omega}) \| tv \|^2.
    \end{align*}
    By $| \varphi(\exp_z(tv)) - \varphi(z) - \varphi'(z; tv) | \le C_{\varphi,\Omega} \| tv \|^2$, we know that
    \begin{equation*}
        | \varphi\bigl(R_z(tv)\bigr) - \varphi(z) - t\varphi'(z; v) | \le \bigl(L_{\varphi,\Omega} (\kappa_z + C'_{\Omega}) + C_{\varphi,\Omega}\bigr) \| tv \|^2.
    \end{equation*}
    By using the estimation $-\varphi'(z; v) \ge m \| v \|^2$ from Lemma~\ref{lemma: phi~d2}, we obtain that
    \begin{align*}
        \varphi(z) - \varphi\bigl(R_z(tv)\bigr) &\ge -t\varphi'(z; v) - \bigl( L_{\varphi,\Omega} (\kappa_z + C'_{\Omega}) + C_{\varphi,\Omega} \bigr) t^2 \| v \|^2 \\
        &\ge -t \left( 1 - \frac{L_{\varphi,\Omega} (\kappa_z + C'_{\Omega}) + C_{\varphi,\Omega}}{m} t \right) \varphi'(z; v).
    \end{align*}
     Therefore, by taking $\varepsilon_z = \min \left\{ \dfrac{(1-\eta) m}{ L_{\varphi,\Omega} (\kappa_z + C'_{\Omega}) + C_{\varphi,\Omega}}, \dfrac{l_{\tOmega}}{V(\tOmega)} \right\}$, we obtain the desired result.
    \ep
\end{proof}
\begin{appprop}\label{prop:armijo-bound}
    Under the assumption of Proposition~\ref{prop:armijo}, the final computed stepsize $\rho^j$ within the Armijo backtracking loop in Algorithm \ref{alg:SLMN} satisfies $\rho^j > \rho \varepsilon_z$. We obtain that
    \begin{equation*}
        \varphi(z) - \varphi\bigl(R_z(\rho^j v)\bigr) \ge \eta \rho \varepsilon_z \bar{\mu} \| v \|^2.
    \end{equation*}
\end{appprop}
\begin{proof}
    It is directly given by the definition of $j$, once $\rho^j < \varepsilon_z$, the Armijo condition will be satisfied and the iteration will stop. So the step before stopping has to satisfy $\rho^{j-1} > \varepsilon_z$, i.e. $\rho^j > \rho \varepsilon_z$. Then, by Proposition \ref{prop:armijo}, Lemma \ref{lemma: phi~d2}, and the above estimate of the stepsize, we obtain the desired result.
    \ep
\end{proof}

The subsequent lemma establishes that on the stratum containing the accumulation point $z^\infty$, the quantity $\varepsilon_z$ (see Definition \ref{def:epsilon_z}) is bounded from below by $\frac{1}{2} \varepsilon_{z^\infty}$. Consequently, a portion of the decrement is effectively controlled by the properties of $z^\infty$.
\begin{applemma}\label{lemma: epsilon-cont-on-M}
    Suppose a sequence $\{z^k\} \subseteq \widetilde{\mathcal{M}}$ convergences to $z^\infty \in \widetilde{\mathcal{M}}$. Taking a compact neighborhood $\tOmega$ of $z^{\infty}$ in $\tMcal$, we can define a compact set $\Omega$ following the rule of Lemma \ref{lemma:retr~exp}. Then for sufficiently large $k$, we have $\varepsilon_{z^k} > \frac{1}{2}\varepsilon_{z^\infty}$.
\end{applemma}
\begin{proof}
    $\{ z^k \}$ and $z$ are all on the same $\tMcal$. So we have $|\alpha(G(z^k))| = |\alpha(G(z^\infty))|$, $|\beta(G(z^k))| = |\beta(G(z^\infty))|$ and $|\gamma(G(z^k))| = |\gamma(G(z^\infty))|$. Therefore, positive and negative eigenvalues of $G(z^k)$ can only convergence to the positive and negative eigenvalues of $G(z^\infty)$ respectively. This implies that $\delta^{-1}_{z^k} < 2\delta^{-1}_{z^\infty} + \dfrac{C_{g,\Omega}+C'_\Omega + C_\Omega}{2(1+L^2_{g,\Omega})}$ when $k$ is large enough. By the definition of $\kappa_z$, we have $\kappa_{z^k} < 2\kappa_{z^\infty} + C'_\Omega + C_\Omega$, which in consequence implies $\varepsilon_{z^k} > \dfrac{1}{2} \varepsilon_{z^\infty}$.
    \ep
\end{proof}
\begin{applemma}\label{lemma: W-smooth-on-M}
   Restricted to a fixed stratum $\widetilde{\mathcal{M}}$, the geometric normal mappings
    \begin{equation*}
        W_1(z) = \Pi_{\mathbb{S}^n_-}\Bigl(-\pi_{z,\mathbb{S}^n}^2 \bigl(\nabla g(x)^* F_1(z)\bigr)\Bigr) \quad \text{and} \quad W_2(z) = \Pi_{\mathbb{S}^n_+}\Bigl(-\pi_{z,\mathbb{S}^n}^2 \bigl(\nabla g(x)^* F_1(z) + F_2(z)\bigr)\Bigr)
    \end{equation*}
    are two $C^\infty$-smooth functions from $\widetilde{\mathcal{M}}$ into $\mathbb{S}^n$.
\end{applemma}
\begin{proof}
    This relies on the smoothness of $\pi_{z,{\Sbb^n}}^2$ with respect to $z$, which has been proven in Lemma \ref{applemma:pi-smooth}.
    \ep
\end{proof}

With the necessary preliminary results established, we proceed to the proof of the global convergence theorem.

\begin{proof}[of Theorem~\ref{thm:global-convergence}]
If the algorithm terminates in finitely many iterations at some $z^k$, then the stopping test enforces $s(z^k)=0$. By the definition of the stationarity measure $s(\cdot)$, this implies that $z^k$ is a $D$-stationary point of $\varphi$.

Suppose that the algorithm generates an infinite sequence $\{z^k\}$. By the acceptance rule, $\{\varphi(z^k)\}$ is nonincreasing and bounded below by $0$, and hence convergent. In particular,
\begin{equation}\label{finitesum}
\sum_{k=0}^{\infty}\big(\varphi(z^k)-\varphi(z^{k+1})\big)\le \varphi(z^0)<\infty.
\end{equation}

Let $\{z^k\}_{k\in\Theta}$ be a convergent subsequence with $z^k\to z^\infty$ as $k\in\Theta\to\infty$, where $\delta(z^\infty)>\delta$. Let $\widetilde{\Mcal}$ be the stratum containing $z^\infty$, and let $\widetilde{\Omega}$ be a compact set such that $z^k\in\widetilde{\Omega}$ for all $k\in\Theta$ sufficiently large. By Lemma~\ref{lemma:corr-almost-proj}, the corrected points $\widehat z^k$ also lie in $\widetilde{\Omega}$ for all sufficiently large $k\in\Theta$ and satisfy $\widehat z^k\to z^\infty$.

If $\mu_k\to 0$ along $\Theta$, then $F(z^k)\to 0$ along $\Theta$, and by continuity of $F$ we obtain $F(z^\infty)=0$, i.e., $z^\infty$ is a KKT point. We therefore assume in the remainder of the proof that there exists $\bar\mu>0$ such that $\mu_k\ge \bar\mu$ for all $k\in\Theta$ sufficiently large.

Fix such a large $k\in\Theta$. Since $z^{k+1}$ is accepted, we have $\varphi(z^{k+1})\le \varphi\big(\mathrm{SLMN}(\widehat z^k)\big)$,
and hence
\begin{align*}
\varphi(z^k)-\varphi(z^{k+1})
&\ge \varphi(z^k)-\varphi\big(\mathrm{SLMN}(\widehat z^k)\big)\\
&\ge \varphi(\widehat z^k)-\varphi\big(\mathrm{SLMN}(\widehat z^k)\big)
      -\big|\varphi(z^k)-\varphi(\widehat z^k)\big|.
\end{align*}
By the definition of $\mathrm{SLMN}$, we obtain that
\begin{align*}
\varphi(\widehat z^k)-\varphi\big(\mathrm{SLMN}(\widehat z^k)\big)
\ge \max\Big\{&
\varphi(\widehat z^k)-\varphi\big(R_{\widehat z^k}(\widehat v^k)\big),\
\varphi(\widehat z^k)-\min_{t\ge 0}\varphi(\widehat z^k+t\widehat W_1^k),\\
&\varphi(\widehat z^k)-\min_{t\ge 0}\varphi(\widehat z^k+t\widehat W_2^k)\Big\}.
\end{align*}
Combining Proposition~\ref{prop:armijo-bound} with Lemma~\ref{lemma:normal-control} yields
\[
\varphi(\widehat z^k)-\varphi\big(\mathrm{SLMN}(\widehat z^k)\big)
\ge \max\left\{
\eta\rho\,\varepsilon_{\widehat z^k}\,\bar\mu\,\|\widehat v^k\|^2,\
\frac{1}{2L_{g,\Omega}^2}\|\widehat W_1^k\|^2,\
\frac{1}{2(L_{g,\Omega}^2+1)}\|\widehat W_2^k\|^2
\right\}.
\]
By Lemma~\ref{lemma: W-smooth-on-M} and $\widehat z^k\to z^\infty$, we have $\varepsilon_{\widehat z^k}\to \varepsilon_{z^\infty}>0$, and therefore there exists $\bar\varepsilon>0$ such that, for all $k\in\Theta$ sufficiently large,
\[
\varphi(\widehat z^k)-\varphi\big(\mathrm{SLMN}(\widehat z^k)\big)
\ge \bar\varepsilon\max\big\{\|\widehat v^k\|^2,\ \|\widehat W_1^k\|^2,\ \|\widehat W_2^k\|^2\big\}
=\bar\varepsilon\, s(\widehat z^k)^2.
\]
Moreover, Lemma~\ref{lemma:corr-almost-proj} implies $\big|\varphi(z^k)-\varphi(\widehat z^k)\big|\to 0$ along $\Theta$, and thus, for all large $k\in\Theta$,
\[
\varphi(z^k)-\varphi(z^{k+1}) \ \ge\ \tfrac12\bar\varepsilon\, s(\widehat z^k)^2.
\]
Passing to the limit along $\Theta$ and using $\widehat z^k\to z^\infty$ gives $s(\widehat z^k)\to s(z^\infty)$, so if $s(z^\infty)>0$ then the right-hand side is bounded away from $0$ on an infinite subset of $\Theta$, contradicting the summability in~\eqref{finitesum}. Hence $s(z^\infty)=0$, and therefore $z^\infty$ is a $D$-stationary point of $\varphi$. \ep
\end{proof}

\section{Proof of local convergence result}\label{appendix:alg-local}

We now establish the local quadratic convergence of Algorithm~\ref{alg:SGN}. Our analysis decomposes the iterate displacement into components along the tangent and normal spaces of the stratum $\mathcal{M}{p,q}$. Under the W-SOC and W-SRCQ, the tangential component exhibits a local quadratic rate; see Remark~\ref{remark:local-rate-LMGN}. To control the normal component, we strengthen the W-SRCQ to the SRCQ and exploit the specific design of Algorithm~\ref{alg:SGN} to ensure that motion in the normal directions cannot induce an excessive decrease in the merit function. This mechanism guarantees finite identification of the stratum $\mathcal{M}{p,q}$. Finally, to align with the global convergence results, we also provide conditions under which a D-stationary point is a KKT point.

Next, we begin with some elementary computations. Given the specific stratum $\mathcal{M} = \mathcal{M}_{p,q}$ and its preimage $\widetilde{\mathcal{M}} = G^{-1}(\mathcal{M})$, we analyze a point $z \in \widetilde{\mathcal{M}}$ perturbed by a normal variation $\Delta \in \mathcal{N}_{G(z)}\mathcal{M} = (\mathcal{T}_{G(z)}\mathcal{M})^\perp$. Proposition~\ref{prop:TAM} characterizes this space as
\begin{equation}\label{eq:NAM}
    \mathcal{N}_{G(z)} \mathcal{M} = \bigl\{ P \widetilde{H} P^\top \mid \widetilde{H} \in \mathbb{S}^n,\ \widetilde{H}_{\alpha \alpha} = 0,\ \widetilde{H}_{\alpha \beta} = 0,\ \widetilde{H}_{\alpha \gamma} = 0,\ \widetilde{H}_{\beta \gamma} = 0,\ \widetilde{H}_{\gamma \gamma} = 0 \bigr\}.
\end{equation}
Applying this transverse perturbation $z \mapsto z + (0, \Delta)$ evaluates the shifted KKT mapping as
\begin{equation}\label{eq:F-normal}
    F\left(z + \begin{bmatrix}0 \\ \Delta\end{bmatrix}\right)
    = \begin{bmatrix}
        \nabla f(x) + \nabla g(x)(y + \Delta) \\
        -g(x) + \Pi_{\mathbb{S}^n_+} (g(x) + y + \Delta)
    \end{bmatrix}
    = F(z) + \begin{bmatrix} \nabla g(x)\Delta \\ \Pi_{\mathbb{S}^n_+}(\Delta) \end{bmatrix}.
\end{equation}
We define the associated incremental operator:
\begin{equation*}
    J_z(\Delta) := \begin{bmatrix} \nabla g(x)\Delta \\ \Pi_{\mathbb{S}^n_+}(\Delta) \end{bmatrix}.
\end{equation*}
Expanding the squared merit function along the normal ray $t \ge 0$ yields
\begin{equation}\label{eq:varphi-normal}
    \varphi\left(z + t\begin{bmatrix}0 \\ \Delta\end{bmatrix}\right) = \varphi(z) + t \left\langle F(z), J_z(\Delta) \right\rangle + \frac{t^2}{2}\|J_z(\Delta)\|^2 \ge \left[1 - \left\langle \frac{F(z)}{\|F(z)\|}, \frac{J_z(\Delta)}{\|J_z(\Delta)\|} \right\rangle^2 \right] \varphi(z),
\end{equation}
where the normalized inner product is strictly defined as $0$ if either $F(z) = 0$ (implying $\varphi(z) = 0$) or $J_z(\Delta) = 0$. With this definition, it is easy to verify that~\eqref{eq:varphi-normal} holds for any $z$.

The next two lemmas characterize the zero and the Lipschitz behavior of the incremental mapping $J_z(\Delta)$.

\begin{applemma}\label{lemma:assu-to-isolated}
     Let $\overline{z}$ be a KKT pair of~\eqref{prog:SDP} equipped with an IED $(\alpha,\beta,\gamma,p,q,\overline{P},\overline{\lambda})$ of $G(\overline{z})$. If both the W-SOC and the SRCQ hold at $\overline{z}$, then $0=\Delta\in\mathbb{S}^n$ is the unique root of the restricted mapping $\Delta \mapsto F\bigl(\overline{z} + (0, \Delta)\bigr)$ on $\mathcal{N}_{G(\overline{z})}\mathcal{M}_{p,q}$.
\end{applemma}

\begin{proof}
Recall from~\eqref{eq:F-normal} that $J_{\overline z}(\Delta)=0$ implies
\[
\nabla g(\overline x)\Delta=0
\qquad\text{and}\qquad
\Pi_{\mathbb{S}^n_+}(\Delta)=0,
\]
and hence $\Delta\in\mathbb{S}^n_-$. Since $\Delta\in \mathcal{N}_{G(\overline z)}\mathcal{M}_{p,q}$, we also have that, in the coordinates of $\overline P$, only the block $\Delta_{\beta\beta}$ may be nonzero.

Assume to the contrary that there exists $0\neq \Delta\in \mathcal{N}_{G(\overline z)}\mathcal{M}_{p,q}$ with $J_{\overline z}(\Delta)=0$. Define
\[
Y:=\overline P
\begin{bmatrix}
0 & 0 & 0\\
0 & -I_{|\beta|} & 0\\
0 & 0 & 0
\end{bmatrix}
\overline P^\top.
\]
By the SRCQ at $\overline z$, there exist $u\in\mathbb{X}$ and $Y_0\in \Bigl\{\overline P B\overline P^\top\in\mathbb{S}^n \,\Big|\,
B_{\beta\beta}\succeq 0,\ B_{\beta\gamma}=0,\ B_{\gamma\gamma}=0\Bigr\}$ such that
\[
\nabla g(\overline x)^*u+Y_0=Y.
\]
Taking inner products with $\Delta$ gives
\[
\langle Y,\Delta\rangle=\langle u,\nabla g(\overline x)\Delta\rangle+\langle Y_0,\Delta\rangle=\langle Y_0,\Delta\rangle,
\]
where we used $\nabla g(\overline x)\Delta=0$. Combining $((\overline{P})^\top Y_0 \overline{P})_{\beta\beta} \succeq 0$, $\Delta \in \mathcal{N}_{G(\overline{z})}\mathcal{M}_{p,q}$, and $\Delta \preceq 0$, we have $\langle Y_0, \Delta \rangle \le 0$, and therefore $\langle Y, \Delta \rangle \le 0$.

On the other hand, by the definitions of $Y$ and $\Delta$,
\[
\langle Y,\Delta\rangle
=\left\langle -I_{|\beta|},((\overline{P})^\top\Delta\overline{P})_{\beta\beta}\right\rangle
=-\operatorname{tr}(((\overline{P})^\top\Delta\overline{P})_{\beta\beta})>0,
\]
since $((\overline{P})^\top\Delta\overline{P})_{\beta\beta}\preceq 0$ and $((\overline{P})^\top\Delta\overline{P})_{\beta\beta}\neq 0$. This contradiction shows that such a nonzero $\Delta$ cannot exist. \ep
\end{proof}

\begin{applemma}\label{lemma:bound-J}
The incremental mapping $J_z$ is positively homogeneous. If the W-SOC and the SRCQ hold simultaneously at a KKT pair $\overline{z}$, there exist constants $0 < c_1 < c_2$ such that
    \begin{equation*}
        c_1\|\Delta\| \le \|J_{\overline{z}}(\Delta)\| \le c_2\|\Delta\| \quad \forall\, \Delta \in \mathcal{N}_{G(\overline{z})}\mathcal{M}_{p,q},
    \end{equation*}
    where $\mathcal{M}_{p,q}$ is given by $\overline z$.
\end{applemma}

\begin{proof}
Positive homogeneity follows directly from the definition of $J_z$ and the positive homogeneity of the projection. The existence of $c_2$ is a consequence of the Lipschitz continuity of $\Pi_{\mathbb{S}^n_+}$ together with the boundedness of $\nabla g(\overline x)$.

For the lower bound, suppose by contradiction that no such $c_1$ exists. Then there exists a sequence $\Delta^\nu\in \mathcal{N}_{G(\overline z)}\mathcal{M}_{p,q}$ with $\|\Delta^\nu\|=1$ and $\|J_{\overline z}(\Delta^\nu)\|\to 0$. By compactness of the unit sphere, along a subsequence $\Delta^\nu\to \Delta^\infty$ with $\|\Delta^\infty\|=1$. By continuity of $J_{\overline z}$, we obtain $J_{\overline z}(\Delta^\infty)=0$, contradicting Lemma~\ref{lemma:assu-to-isolated}. \ep
\end{proof}

The following lemma provides upper and lower bounds for the directional derivative $\varphi'(z, v)$ in terms of the LM direction, under the assumption that the W-SOC and the W-SRCQ hold simultaneously.

\begin{applemma}\label{Lemma:ww-dir-of-phi-order}
    Let $\overline{z}$ be a KKT pair of~\eqref{prog:SDP} with an IED $(\alpha,\beta,\gamma,p,q,\Pbar,\overline \lambda)$ of $G(\zbar)$. If the W-SOC and the W-SRCQ hold simultaneously at $\zbar$, then there exist constants $0 < m < M$ such that for any point $z \in \widetilde{\mathcal{M}}_{p,q}$ sufficiently close to $\overline{z}$ and directional derivative associated with $v = v^{\mathrm{LM}}(z)$ the following estimation holds
    \begin{equation*}
        m \| v \|^2 \le \bigl\| \varphi'(z; v) \bigr\| \le M \| v \|^2.
    \end{equation*}
\end{applemma}
\begin{proof}
By the definition of $v^{\mathrm{LM}}(z)$, we have
\[
\varphi'(z;v)=\langle F(z),dF_z(v)\rangle
=-\big\langle v,\bigl(\mu(z)I+dF_z^*dF_z\bigr)v\big\rangle,
\]
where $\mu(z)\ge 0$ is the regularization parameter. On a fixed stratum, $dF_z$ depends smoothly on $z$, hence $\|dF_z\|$ is locally bounded. Moreover, by Corollary~\ref{coro:WR-uniform-bounded}, the W-SOC and the W-SRCQ yield a uniform lower bound on $dF_z^*dF_z$ over $z$ in a neighborhood of $\overline z$ within the stratum. These two bounds imply that the eigenvalues of $\mu(z)I+dF_z^*dF_z$ are uniformly bounded above and below on that neighborhood. This completes the proof. \ep
\end{proof}

Next, we present an estimation result establishing that, under the W-SOC and SRCQ assumptions, the decrease of $\varphi$ along the normal direction is well-controlled.

\begin{appprop}\label{prop:esti-cor}
    Let $\overline{z} = (\overline{x}, \overline{y})$ be a KKT pair of~\eqref{prog:SDP} with an IED $(\alpha, \beta, \gamma, p, q, \overline{P}, \overline{\lambda})$ of $G(\overline{z})$. If the W-SOC and the SRCQ hold at $\overline{z}$, then there exists a constant $c_3 > 0$ such that for any $z \in \tMpq$ sufficiently close to $\overline{z}$, $\Delta \in \mN_{G(z)}\Mpq$ and $t\in\Rbb$,
    \begin{equation}\label{eq:esti-cor}
        \varphi(z) \leq c_3 \, \varphi\left(z + t\begin{bmatrix}0 \\ \Delta\end{bmatrix}\right).
    \end{equation}
\end{appprop}
\begin{proof}
    If we have the following estimate for the coefficient multiplying $\varphi(z)$ on the right-hand side of~\eqref{eq:varphi-normal},
    \begin{equation}\label{eq:esti-coef}
        \limsup_{ z\to \overline z,\  z\in \tMpq}\left[\sup_{\Delta\in \mN_{G( z)}\Mpq}\la \frac{F( z)}{\|F( z)\|},\frac{J_{ z}(\Delta)}{\|J_{ z}(\Delta)\|}\ra\right]< 1,
    \end{equation}
    then the desired result~\eqref{eq:esti-cor} follows directly.
    We will prove \eqref{eq:esti-coef} by contradiction. Recall that when $F(z) = 0$ or $J_z(\Delta) = 0$, $\la \frac{F( z)}{\|F( z)\|},\frac{J_{ z}(\Delta)}{\|J_{ z}(\Delta)\|}\ra$ is defined to be zero. Then, noting the positive homogeneity of $J_z$ by Lemma \ref{lemma:bound-J}, the negation of the claim is equivalent to saying there exist $z^\nu\in \tMpq$, $z^\nu\to \overline z$, $\Delta^\nu\in \mN_{G(z^\nu)}\Mpq$  and $\|\Delta^\nu\|=1$ that
    \begin{equation*}
        \lim_{\nu\to\infty}\la \frac{F( z^\nu)}{\|F( z^\nu)\|},\frac{J_{ z^\nu}(\Delta^\nu)}{\|J_{ z^\nu}(\Delta^\nu)\|}\ra=1,
    \end{equation*}
    which is equivalent to
    \begin{equation}\label{eq:F-J=0}
        \lim_{\nu\to\infty} \frac{F( z^\nu)}{\|F( z^\nu)\|}-\frac{J_{ z^\nu}(\Delta^\nu)}{\|J_{ z^\nu}(\Delta^\nu)\|}=0.
    \end{equation}
    For the first part, by the smoothness of $F$ on $\tMpq$, we have that as $\nu\to\infty$,
    \begin{align}\label{eq:F/F-norm}
         \frac{F( z^\nu)}{\|F( z^\nu)\|}=\frac{F(\overline z)+ dF_{\overline z}(w^\nu) + o( \| z^\nu-\overline z\|)}{\|F( z^\nu)\|} = dF_{\overline z} (\frac{w^\nu}{\| w^\nu \|}) \frac{\| w^\nu \|}{\| F( z^\nu) \|}+\frac{o(\|z^\nu-\overline z\|)}{\|F( z^\nu)\|},
    \end{align}
    where $w^{\nu} = \exp_{z^\nu}^{-1}(\zbar)$ and we used the fact that $\| w^\nu \|$ has the same order with $\| z^\nu -\overline z\|$ implied by the smoothness of the exponential map.
    Since $z^\nu \in \tMpq$, the stratum-restricted local error bound \eqref{eq:res-local-EB} and the Lipschitz property of $F$ on $\Mpq$ imply that $\|z^\nu - \overline z\|/\|F(z^\nu)\|$ is bounded below away from zero and bounded above. Taking a subsequence if necessary, we can assume that
    \begin{align}\label{eq:vc}
        \frac{w^\nu}{\|w^\nu\|}\to v \in \Tcal_{\overline z}\tMpq \quad \text{ and } \quad \frac{\|w^\nu\|}{\|F( z^\nu)\|}\to c_4,
    \end{align}
    where $\|v \|=1$ and $c_4>0$.
    Putting \eqref{eq:vc} into \eqref{eq:F/F-norm}, we have
    \begin{align}\label{eq:lim-F/norm}
          \lim_{\nu\to\infty}\frac{F( z^\nu)}{\|F( z^\nu)\|}=c_4\dd F_{\overline z} (v).
    \end{align}
    For the second part, without loss of generality, we assume that $\Delta^{\nu}\to\Delta^\infty$. Since $\|\Delta^\nu \| = 1$, we know $0\neq\Delta^\infty\in\mN_{G(\overline z)}\Mpq$. Then
    \begin{equation}\label{eq:lim-J/norm}
        \lim_{\nu\to\infty} J_{ z^\nu}(\Delta^\nu)=\lim_{\nu\to\infty} \begin{bmatrix} \nabla g(x^\nu)\Delta^\nu \\\Pi_{\Sbb^n_+} (\Delta^\nu) \end{bmatrix}=\begin{bmatrix} \nabla g(\overline x)\Delta^\infty \\\Pi_{\Sbb^n_+} (\Delta^\infty) \end{bmatrix}=J_{\overline z}(\Delta^{\infty}).
    \end{equation}
    Combining \eqref{eq:F-J=0}, \eqref{eq:lim-F/norm} and \eqref{eq:lim-J/norm}, set $s:= 1/(c_4\|J_{\overline z}(\Delta^\infty)\|)>0$ so that
    \begin{align} \nn
        \dd F_{\overline z}(v)=sJ_{\overline z}(\Delta^{\infty}),
    \end{align}
    i.e.,
    \begin{align}\label{eq:dFv-JDelta-eq}
        \left\{
        \begin{aligned}
        &(\nabla^2_{xx} L^*-\nabla g(\overline x) \nabla g(\overline x)^*)v_x+ \nabla g(\overline x)H -s\nabla g(\overline x)\Delta^\infty = 0,  \\
        &-\nabla g(\overline x)^* v_x +  \xi_{G(\overline z)}  H-s\Pi_{\Sbb^n_+} (\Delta^\infty)=0 ,\\
        \end{aligned}
        \right.
    \end{align}
    where $v = (v_x, H) \in \Tcal_{\overline x} \Xbb \times \Tcal_{G(\overline z)} \Mpq \cong \Tcal_{\overline z}\tMpq$ and we have made use of the characterization of $\dd F_{\overline z}(v)$ given in~\eqref{eq:dF_z(v)}. The second equation in~\eqref{eq:dFv-JDelta-eq} implies that
    \begin{align}\label{eq:gvx-xiH-2}
        & (-\nabla g(\overline x)^* v_x +  \xi_{G(\overline z)}  H)_{\alpha\gamma} = 0 \quad \mbox{and}\quad v_x \in \appl(\overline z),
    \end{align}
    where the set $\appl(\overline z)$ is defined in \eqref{eq:def-appl-nonKKT}.
    Then, we have
    \begin{align}\nonumber
        0=&\la v_x,(\nabla^2_{xx} L^*-\nabla g(\overline x) \nabla g(\overline x)^*)v_x+ \nabla g(\overline x)H -s\nabla g(\overline x)\Delta^\infty\ra \\\nonumber
        =&\la v_x,\nabla^2_{xx} L^* v_x\ra +\la \nabla g(\overline x)^*v_x,H-s\Delta^\infty-\nabla g(\overline x)^*v_x\ra \\\nonumber
        =&\la v_x,\nabla^2_{xx} L^* v_x\ra +\la  \xi_{G(\overline z)}  H-s\Pi_{\Sbb^n_+} (\Delta^\infty),H-s\Delta^\infty- \xi_{G(\overline z)}  H+s\Pi_{\Sbb^n_+} (\Delta^\infty)\ra \\\nonumber
        =&\la v_x,\nabla^2_{xx} L^* v_x\ra +\la  \xi_{G(\overline z)}  H,H- \xi_{G(\overline z)}  H\ra -\la  s\Pi_{\Sbb^n_+} (\Delta^\infty),-s\Delta^\infty+s\Pi_{\Sbb^n_+} (\Delta^\infty)\ra \\ \nonumber
        =&\la v_x,\nabla^2_{xx} L^* v_x\ra +\la  \xi_{G(\overline z)}  H,H- \xi_{G(\overline z)}  H\ra , \\ \label{eq:vx-first-2}
        =&\mQ_{\overline z}(v_x),
    \end{align}
    where the fourth equation follows from $H \in \mathcal{T}_{G(\overline z)} \mathcal{M}_{p,q}$, $\Delta^\infty \in \mathcal{N}_{G(\overline z)}\mathcal{M}_{p,q}$ and definition of $\xi_{G(z)}$ in \eqref{eq:xi_A}, the fifth equation follows from the Moreau-Yosida decomposition of $K$, and the last equation follows from Lemma \ref{lemma:identity-Q(v)} as we already have $(-\nabla g(\overline x)^* v_x +  \xi_{G(\overline z)}  H)_{\alpha\gamma} = 0$ from \eqref{eq:gvx-xiH-2}.

    Combine \eqref{eq:vx-first-2}, \eqref{eq:gvx-xiH-2} and the W-SOC, we know $v_x=0$. Therefore, \eqref{eq:dFv-JDelta-eq} reduced to
    \begin{align}\label{eq:dFv-JDelta-eq-reduced}
        \left\{
        \begin{aligned}
        &\nabla g(\overline x)H -s\nabla g(\overline x)\Delta^\infty = 0,  \\
        &\xi_{G(\overline z)}  H-s\Pi_{\Sbb^n_+} (\Delta^\infty)=0 .\\
        \end{aligned}
        \right.
    \end{align}
    As $H \in \Tcal_{G(\overline z)} \Mpq$ and $\Delta^\infty \in \mN_{G(\overline z)}\Mpq$, we must have $\Delta^\infty\in \Sbb^n_-$, $\widetilde{H}_{\alpha\alpha}=0$ and $\widetilde{H}_{\alpha\beta}=0$. Then, for the following special choice of element in ${\Sbb^n}$
    $$
        Y = P\begin{bmatrix}
            0 & 0 & 0\\
            0 & -I_{|\beta|} & 0\\
            0 & 0 & 0
        \end{bmatrix}P^{^\top},
    $$
    by the SRCQ, there exist $x \in \Xbb$ and $Y_0\in \Bigl\{\overline P B\overline P^\top\in\mathbb{S}^n \,\Big|\,
    B_{\beta\beta}\succeq 0,\ B_{\beta\gamma}=0,\ B_{\gamma\gamma}=0\Bigr\}$ such that
    $$
        \nabla g(\overline x)^*x + Y_0 = Y.
    $$
    Noticing that $0 \ne \Delta^\infty\in\mN_{G(\overline z)}\Mpq\cap \Sbb^n_-$, we have
    \begin{align} \nn
        0 > &\langle Y, H-s\Delta^\infty \rangle \\ \nn
        = &\langle \nabla g(\overline x)^*x, H-s\Delta^\infty \rangle + \langle Y_0,H-s\Delta^\infty \rangle \\ \nn
        = &\langle x, \nabla g(\overline x)(H-s\Delta^\infty) \rangle + \langle Y_0,H \rangle-s\langle Y_0,\Delta^\infty \rangle \ge 0,
    \end{align}
    where the first inequality follows from $H \in \mathcal{T}_{G(\overline z)} \mathcal{M}_{p,q}$, $\Delta^\infty \in \Sbb^n_-$, and the definition of $Y$, while the last inequality further follows from \eqref{eq:dFv-JDelta-eq-reduced} and $Y_0\in \Bigl\{\overline P B\overline P^\top\in\mathbb{S}^n \,\Big|\,
    B_{\beta\beta}\succeq 0,\ B_{\beta\gamma}=0,\ B_{\gamma\gamma}=0\Bigr\}$. All in all, the contradiction shows that the statement of this proposition holds, and the proof is then completed.
    \ep
\end{proof}

The following proposition captures the second order behavior of $\varphi$ along the LM step under the retraction curve, where the linear term is scaled due to the implicit cancellation inherent in the Gauss--Newton system. It is essential for our local analysis, as it guarantees that the line search will eventually take a unit step.

\begin{appprop}\label{proposition:str-Newton-Taylor}
    Let $\zbar = (\xbar,\ybar)$ be a KKT pair of~\eqref{prog:SDP} with an IED $(\alpha,\beta,\gamma,p,q,\Pbar,\overline \lambda)$ of $G(\zbar)$. If the W-SOC and the SRCQ holds at $\zbar$, then, for $z \in \tMpq$ sufficiently close to $\zbar$ and the associated LM direction $v = v^{\mathrm{LM}}(z)$ given by~\eqref{eq:SLM-direction}, we have
    \begin{equation}\label{eq:str-Newton-Taylor}
        \varphi\bigl(R_z(v)\bigr) = \varphi(z) + \frac{1}{2} \varphi'(z; v) + o\bigl(\| v \|^2\bigr).
    \end{equation}
\end{appprop}
\begin{proof}
    By Corollary~\ref{coro:WR-uniform-bounded}, since the SRCQ implies the W-SRCQ, we know that when $z \in \tMpq$ is sufficiently close to $\overline{z}$, the norm $\left\| \left((dF_{z})^* dF_{z} \right)^{-1} \right\|$ is uniformly bounded, and thus $\left\| \left( (dF_{z})^* dF_{z} + \|F(z)\|^2 I \right)^{-1} \right\|$, is also uniformly bounded. As a result, $v=\vLM(z) \rightarrow 0$ when $z \rightarrow \overline{z}$.
    By the smoothness of $F$ on $\tMpq$, we can do Taylor expansion of $\varphi \circ R_{z}$ at $z \in \tMpq$ sufficiently close to $\overline{z}$:
    \begin{align}\label{eqn:Taylor-phi-R}
        \varphi(R_{z}\left( v \right)) = \varphi \left( z \right) +  \left(\varphi \circ R_{z} \right)' \left(0; v \right)  + \dfrac{1}{2} \nabla^2 \left( \varphi \circ R_{z} \right) \left(0;v,v \right) + O(\| v \|^3),
    \end{align}
    since $\varphi$ is of class $C^\infty$ on $\tMpq$ and the retraction $R_{\tMpq}: \Tcal\tMpq \rightarrow \tMpq$ is smooth by Proposition \ref{prop:R_M-smooth-retraction}.
    Since $d(R_{z})_0 =id$, we have $\left(\varphi \circ R_{z} \right)' \left(0; v \right) = \varphi' \left(z; v \right) = F \left( z \right)^\top dF_{z} \left( v \right)$. For the second order term, directly computing Riemannian Hessian \cite[Definition 5.5.1]{absil2008optimization} of composition of maps between Riemannian manifolds,
    \begin{align}\label{eqn:hessian-phi-R}
        \nabla^2 \left( \varphi \circ R_{z} \right) \left(0;v,v \right) = \dfrac{1}{2} \left\{ \Hess_{\tMpq} \varphi \left(z \right) \left( v, v \right)  + d\varphi_{z} \left( \Hess^{\tMpq}_{\Tcal_{z}\tMpq} R_{z}(0) \left(v,v \right) \right) \right\}
    \end{align}
    where $\Hess_{\tMpq} \varphi = \nabla (d\varphi)$ is the Riemannian Hessian of $\varphi$ on $\tMpq$ and $\Hess^{\tMpq}_{\Tcal_{z}\tMpq} R_{z} = \nabla \left( d R_{z} \right)$ is the second fundamental form (\cite[Definition 5.1.2]{jost2005riemannian}) of $R_{z} : \Tcal_{z} \tMpq \rightarrow \tMpq$ .
    For the first term in \eqref{eqn:hessian-phi-R}, we have
    \begin{align} \nn
            \Hess_{\tMpq} \varphi(v,v)
            & = \left( dF_{z}\left( v \right) \right)^\top dF_{z}\left( v \right) + F\left( z \right)^\top \Hess_{\tMpq}F \left(z \right) \left(v,v \right) \\ \nn
            & = -F \left( z \right)^\top dF_{z}\left( v \right) - \mu(z) \left\| v\right\|^2 + O \left(  \left\| F\left( z \right) \right\| \left\| v \right\|^2 \right) \\
            & = -\varphi' \left( z, v \right) + o \left( \left\| v \right\|^2 \right),\label{eqn:Hessian-phi}
    \end{align}
    where $\mu(z) = \|F(z)\|^2$ is the regularization term in the computation of $v = \vLM(z)$. Here, the first equality comes from direct calculation on Riemannian manifold $\tMpq$, the second is implied by the definition of $v$ and the smoothness of $F$ on $\tMpq$, and the last inequality follows from the continuity of $F$ on the fixed-index manifold $M^f_{p,q}$: since $F(\bar z)=0$, we have $\|F(z)\|\to 0$ as $z\to \bar z$. As for the second term in \eqref{eqn:hessian-phi-R}, by the smoothness shown in Proposition \ref{prop:R_M-smooth-retraction}, $\Hess^{\tMpq}_{\Tcal_{z}\tMpq} R_{z}(0) \left(v,v \right) = O\left( \left\| v \right\|^2\right)$. So
    \begin{align}\label{eqn:dphi-Hess-R}
        d\varphi_{z} \left( \Hess^{\tMpq}_{\Tcal_{z}\tMpq} R_{z}(0) \left(v,v \right) \right) = O \left(  \left\| F\left( z \right) \right\| \left\| v \right\|^2 \right) = o \left( \left\| v \right\|^2 \right).
    \end{align}
    Combining \eqref{eqn:Taylor-phi-R}, \eqref{eqn:hessian-phi-R}, \eqref{eqn:Hessian-phi} and \eqref{eqn:dphi-Hess-R}, we then obtain the desired result immediately.
    \ep
\end{proof}

The next proposition establishes the fundamental one-step quadratic convergence rate.

\begin{appprop}\label{prop:quad-rate}
    Let $\overline{z}$ be a KKT pair of \eqref{prog:SDP} with an IED $(\alpha,\beta,\gamma,p,q,\Pbar,\overline \lambda)$ of $G(\zbar)$. If the W-SOC and the W-SRCQ holds at $\zbar$, then for $z\in\tMpq$ sufficiently close to $\zbar$, we have
    \[
        \bigl\| R_{z}(v) - \overline{z} \bigr\| = O\bigl(\|z - \overline{z}\|^2\bigr),
    \]
    where $v = \vLM(z)$ is the associated LM direction given by~\eqref{eq:SLM-direction} and $R_z$ is the retraction of $\tMpq$ at $z$ given in~\eqref{retraction_tM}.
\end{appprop}
\begin{proof}
By Lemma~\ref{lemma:retr~exp}, the retraction $R_{z}$ admits a second order expansion. Thus, for all $z$ sufficiently close to $\overline{z}$,
\[
\bigl\| R_{z}(v) - \overline{z} \bigr\|
= \left\| z - \overline{z} - \bigl((dF_{z})^* dF_{z} + {\mu(z)} I\bigr)^{-1}(dF_{z})^* F(z) \right\| + O(\|v\|^2),
\]
where $\mu(z) = \|F(z)\|^2$ is the regularization parameter used in the definition of the Levenberg--Marquardt step $v = v_{\mathrm{LM}}(z)$.
Let $w(z) := \exp^{-1}_{z}(\overline{z}) \in T_{z}\widetilde{\mathcal{M}}_{p,q}$. By the smoothness of the exponential map $\exp: \Tcal\tMpq \to \tMpq$, we have
\[
w(z) = - (z - \overline{z}) + O(\|z - \overline{z}\|^2),
\]
and hence $z - \overline{z} = -w(z) + O(\|z - \overline{z}\|^2)$. Furthermore, since $\| v \| = O(\| z -\overline z\|)$ (implied by the boundedness of the operator inverse and smoothness of $F$), we have
\begin{align*}
&\bigl\| R_{z}(v) - \overline{z} \bigr\| \\
=& \left\| \bigl((dF_{z})^* dF_{z} + \mu(z) I\bigr)^{-1} \Bigl[ (dF_{z})^*\bigl(dF_{z} w(z) + F(z)\bigr) + \mu(z) w(z) \Bigr] \right\| + O(\|z - \overline{z}\|^2) \\
\leq& \left\| \bigl((dF_{z})^* dF_{z} + \mu(z) I\bigr)^{-1} \right\|   \Bigl( \|(dF_{z})^*\bigl(dF_{z} w(z) + F(z)\bigr)\| + \|\mu(z) w(z)\| \Bigr) + O(\|z - \overline{z}\|^2).
\end{align*}
We now bound each component:
\begin{enumerate}[label=(\roman*)]
    \item By Corollary~\ref{coro:WR-uniform-bounded}, the W-SOC and the W-SRCQ at $\overline{z}$ imply the uniform boundedness
    \[
        \left\| \bigl((dF_{z})^* dF_{z} + \mu(z) I\bigr)^{-1} \right\| = O(1).
    \]
    \item Because $w(z) = \exp^{-1}_{z}(\overline{z})$, the first order Taylor expansion of $F$ along the geodesic from $z$ to $\overline{z}$ gives
    \[
        F(z) + dF_{z} w(z) = O(\|z - \overline{z}\|^2),
    \]
    where the $O(\|z - \overline{z}\|^2)$ term is uniform still by the smoothness of exponential map and $F$. Hence, by the uniform boundedness of $dF_z$ implied by the smoothness of $F$ on $\tMpq$, we have $\|(dF_{z})^* (dF_{z} w(z) + F(z))\| = O(\|z - \overline{z}\|^2)$.
    \item By definition, $\mu(z) = \|F(z)\|^2 = O(\|z - \overline{z}\|^2)$, and $\|w(z)\| = O(\|z - \overline{z}\|)$. Thus,
    \[
        \|\mu(z) w(z)\| = O(\|z - \overline{z}\|^3).
    \]
\end{enumerate}
Combining estimates (i)–(iii), we finally conclude that
\[
    \bigl\| R_{z}(v) - \overline{z} \bigr\| = O(\|z - \overline{z}\|^2),
\]
which completes the proof.
\ep
\end{proof}

\begin{theoremrep}{thm:local-quad-rate}
    Suppose $z^\infty$ is an accumulation point of the sequence generated by Algorithm~\ref{alg:SGN}. If the correction threshold bound satisfies $0 < \delta < \delta(z^\infty)$ and $z^\infty$ is a KKT pair at which the W-SOC and the SRCQ hold simultaneously, then the whole sequence converges quadratically to $z^\infty$ in the sense that
    \begin{equation*}
        \|z^{k+1} - z^\infty\| \leq O(\|z^{k} - z^\infty\|^2).
    \end{equation*}
    Moreover, for all sufficiently large $k$, the sequence $\{z^k\}$ lies in the active stratum containing $z^\infty$.
\end{theoremrep}
\begin{proof}
    $z^\infty$ is a D-stationary point by Theorem \ref{thm:global-convergence}. If the additional conditions hold, let $\{z^k\}_{k\in\Theta}$ denote the subsequence converging to $z^\infty$. Then, by the continuity of eigenvalues and $0 < \delta < \delta(z^\infty)$, the correction point $\zhatk$ generated by $z^k$ (see \eqref{eq:hat-z}) lies on the stratum containing $z^\infty$ for large enough $k\in\Theta$. By Lemma \ref{lemma:corr-almost-proj}, as $z^k \to_{k \in \Theta} z^\infty$, we know $\zhatk \to_{k \in \Theta} z^\infty$.
    Now, consider any $z$ that lies on the stratum containing $z^\infty$. By Proposition \ref{proposition:str-Newton-Taylor}, once $z$ sufficiently close to $z^\infty$
    \begin{align} \nn
        \varphi(R_{z}(v))-\varphi(z)=\frac{1}{2}\varphi'(z,v)+o(\|v\|^2),
    \end{align}
    where $v=\vLM(z)$ is given by \eqref{eq:SLM-direction}. Using the fact that $\varphi'(z,v)$ is of the same order as $\|v\|^2$, which is implied by Lemma \ref{Lemma:ww-dir-of-phi-order}, and that $\eta\in(0,1/2)$, we conclude that the line search of the stratum LM step starting from $z$ takes the unit step. Combining this with Proposition \ref{prop:quad-rate}, which shows
    \begin{equation}\label{eq:quad-rate-R-sequence}
        \|R_{z}(v)-z^\infty\|\le O(\|z-z^\infty\|^2),
    \end{equation}
    we obtain
    \begin{equation}\label{eq:quad-rate-R-varphi}
        \varphi(R_{z}(v))\le O(\|R_{z}(v)-z^\infty\|^2)\le O(\|z-z^\infty\|^4) \le O(\varphi(z)^2),
    \end{equation}
    where the first inequality follows from the Lipschitz continuity of $F$, and the last inequality follows from the stratum-restricted local error bound~\eqref{eq:res-local-EB}.
    Now, for $k\in \Theta$ large enough, denote by $z^{k+1}_1$ and $z^{k+1}_2$ the normal steps from $\zhatk$ (see Algorithm~\ref{alg:SLMN}). Then, by Proposition~\ref{prop:esti-cor}, we have
    \begin{align} \nn
        \varphi(\zhatk) \leq c_3 \min\{\varphi(z^k), \varphi(z^{k+1}_1), \varphi(z^{k+1}_2)\},
    \end{align}
    which, together with~\eqref{eq:quad-rate-R-varphi} and the fact that $\varphi(\zhatk) \to 0$, implies
    \begin{align} \nn
        \varphi(R_{\zhatk}(\widehat{v}^k)) < \min\{\varphi(z^k), \varphi(z^{k+1}_1), \varphi(z^{k+1}_2)\},
    \end{align}
    where $\vhatk=\vLM(\zhatk)$. Finally, by the logic of Algorithm~\ref{alg:SGN}, we know that the next iterate must be $z^{k+1} = R_{\zhatk}(\widehat{v}^k)$. Moreover, we have $\widehat{z}^{k+1} = z^{k+1}$ and, by~\eqref{eq:quad-rate-R-sequence}, $\|z^{k+1} - z^\infty\| < \|z^{k} - z^\infty\|$.
    By a recursive argument, we can conclude that, for $k$ large enough, $z^k$ lies on the stratum containing $z^\infty$, $z^{k+1} = R_{{z}^k}(\vLM(z^k))$, and $z^k\to z^\infty$ at a quadratic rate.
    \ep
\end{proof}

\end{document}